\documentclass[11pt]{article}
\usepackage{amsmath,amsthm,amsfonts,amssymb,mathrsfs,bm,wasysym}
\usepackage{epsfig}
\usepackage{color}
\usepackage{verbatim}
\usepackage{multicol}
\usepackage{comment}
\usepackage{graphicx}
\usepackage{centernot}
\usepackage{tikz}
\usepackage{enumerate}
\usepackage{thmtools}
\usepackage{float}
\usepackage{caption}
\usepackage{subcaption}
\usepackage{textcomp}
\usepackage[normalem]{ulem}
\usepackage[utf8]{inputenc} 
\usepackage[T1]{fontenc}
\usepackage[color=green, textsize=tiny]{todonotes}
\usepackage{hyperref}
\usepackage{titlesec}
\usepackage{chngcntr}
\usepackage{cleveref}
\usepackage{verbatim}

\parskip \smallskipamount
\declaretheoremstyle[
qed={//}
]{defstyle}

\newtheorem{theorem}{Theorem}[section]
\declaretheorem[style=defstyle,sibling=theorem]{definition}

\newtheorem{lemma}[theorem]{Lemma}
\newtheorem{proposition}[theorem]{Proposition}
\newtheorem{corollary}[theorem]{Corollary}

\newtheorem{remark}[theorem]{Remark}

\newtheorem*{claim*}{Claim}
\newtheorem*{lemma*}{Lemma}
\newtheorem{maintheorem}{Theorem}

\numberwithin{equation}{section}

\newcommand{\eps}{\varepsilon}
\newcommand{\Z}{\mathbb{Z}}
\newcommand{\Znonneg}{\Z_{\ge0}}
\newcommand{\Zpos}{\Z_{\ge1}}

\newcommand{\pgreen}{p^{\mathrm{green}}}
\newcommand{\pred}{p^{\mathrm{red}}}
\newcommand{\pblue}{p^{\mathrm{blue}}}

\newcommand{\Vnice}{V_{\mathrm{nice}}}
\newcommand{\Vniceprime}{V'_{\mathrm{nice}}}

\newcommand{\Vg}{V_{\mathrm{g}}}
\newcommand{\Gg}{G_{\mathrm{g}}}
\newcommand{\Ggr}{G_{\mathrm{rg}}}

\newcommand{\Gb}{G_{\mathrm{b}}}
\newcommand{\dg}{d_{\mathrm{g}}}
\newcommand{\dr}{d_{\mathrm{r}}}
\newcommand{\db}{d_{\mathrm{b}}}
\newcommand{\dgr}{d_{\mathrm{rg}}}
\newcommand{\dgrb}{d_{\mathrm{rgb}}}

\newcommand{\dmax}{d_{\mathrm{max}}}
\newcommand{\cb}{c_{\mathrm{b}}}
\newcommand{\Cvol}{C_{\mathrm{vol}}}
\newcommand{\Cdeg}{C_{\mathrm{deg}}}
\newcommand{\Cblue}{C_{\mathrm{blue}}}
\newcommand{\cnice}{c_{\mathrm{nice}}}

\newcommand{\simg}{\sim_{\mathrm{g}}}
\newcommand{\simr}{\sim_{\mathrm{r}}}
\newcommand{\simb}{\sim_{\mathrm{b}}}
\newcommand{\simrgb}{\sim_{\mathrm{rgb}}}
\newcommand{\Rnice}{R_{\mathrm{nice}}}
\newcommand{\dcool}{d_{\mathrm{cool}}}
\newcommand{\Ccool}{C_{\mathrm{cool}}}
\newcommand{\tcool}{t_{\mathrm{cool}}}
\newcommand{\puncool}{p_{\mathrm{uncool}}}
\newcommand{\ctrunc}{c_{\mathrm{trunc}}}
\newcommand{\pesc}{p_{\mathrm{esc}}}
\newcommand{\qesc}{q_{\mathrm{esc}}}

\newcommand{\const}{\mathrm{const}}

\newcommand{\Geom}[1]{\mathrm{Geom}\!\left(#1\right)}

\newcommand{\Bin}[2]{\mathrm{Bin}\!\left(#1,#2\right)}

\newcommand{\Unif}[1]{\mathrm{Unif}\!\left(#1\right)}
\newcommand{\Bern}[1]{\mathrm{Bern}\!\left(#1\right)}

\newcommand{\GGeompos}[1]{\mathrm{GGeom}_{\ge1}\left(#1\right)}
\newcommand{\GGeomnonneg}[1]{\mathrm{GGeom}_{\ge0}\left(#1\right)}

\newcommand{\pr}[1]{\mathbb{P}\!\left(#1\right)}
\newcommand{\E}[1]{\mathbb{E}\!\left[#1\right]}
\newcommand{\estart}[2]{\mathbb{E}_{#2}\!\left[#1\right]}
\newcommand{\prstart}[2]{\mathbb{P}_{#2}\!\left(#1\right)}
\newcommand{\prcond}[2]{\mathbb{P}\left(#1\;\middle\vert\;#2\right)}
\newcommand{\prscond}[3]{\mathbb{P}_{#3}\!\left(#1\;\middle\vert\;#2\right)}
\newcommand{\prstartup}[3]{\mathbb{P}_{#3}^{#2}\!\left(#1\right)}
\newcommand{\econd}[2]{\mathbb{E}\!\left[#1\;\middle\vert\;#2\right]}

\newcommand{\vr}[1]{\mathrm{Var}\left(#1\right)}

\newcommand{\1}[1]{{\text{\Large $\mathfrak 1$}}_{#1}}

\newcommand{\tmixtext}{t_{\mathrm{mix}}}
\newcommand{\tmix}[1]{\tmixtext\left(#1\right)}

\newcommand{\trel}{t_\mathrm{rel}}
\newcommand{\trelabs}{\trel^{\mathrm{abs}}}
\newcommand{\diam}[1]{\mathrm{diam}\left(#1\right)}

\newcommand{\tH}[1]{t_{\mathrm{H}}\left(#1\right)}
\newcommand{\hit}[2]{\mathrm{hit}_{#1}\left(#2\right)}

\newcommand{\Reffin}[3]{R_{\mathrm{eff}}^{#3}\left(#1,\:#2\right)}

\newcommand{\ent}[2]{H_{#1}\left(#2\right)}
\newcommand{\entc}[3]{H_{#1}\left(#2\mid#3\right)}
\newcommand{\entr}[3]{H_{#1}^{#2}\left(#3\right)}
\newcommand{\entrc}[4]{H_{#1}^{#2}\left(#3\mid#4\right)}

\newcommand{\dtv}[2]{d_{\mathrm{TV}}\left({#1},{#2}\right)}

\newcommand{\til}[1]{\widetilde{#1}}
\newcommand{\what}[1]{\widehat{#1}}

\newcommand{\tauatyp}{\tau_{\mathrm{atyp}}}

\newcommand{\tauniceprime}{\tau_{\mathrm{nice}'}}
\newcommand{\tauunif}{\tau_{\mathrm{unif}}}

\newcommand{\taublue}{\tau_{\mathrm{blue}}}

\newcommand{\taubound}{\tau_{\partial}}
\newcommand{\tauboundary}{\tau_{\mathrm{boundary}}}

\newcommand{\Omegatyp}[1]{\Omega_{\mathrm{typ}}\left(#1\right)}
\newcommand{\Omegabound}[1]{\Omega_{\partial}\left(#1\right)}
\newcommand{\Succ}{\Omega_{\mathrm{succ}}}
\newcommand{\Succhat}{\what{\Omega}_{\mathrm{succ}}}
\newcommand{\Omeganbhd}{\Omega_{\mathrm{nbhd}}}

\newcommand{\trunc}[2]{\mathrm{Tr}\left(#1,#2\right)}
\newcommand{\truncprime}[1]{\mathrm{Tr}'\left(#1\right)}

\newcommand{\h}{\mathfrak{h}}
\newcommand{\V}{\mathfrak{V}}

\newcommand{\energy}{\mathcal{E}}
\newcommand{\A}{\mathcal{A}}

\newcommand{\F}{\mathcal{F}}
\newcommand{\D}{\mathcal{D}}
\newcommand{\Scal}{\mathcal{S}}
\newcommand{\I}{\mathcal{I}}
\newcommand{\G}{\mathcal{G}}

\newcommand{\eqdist}{\stackrel{\mathrm{d}}{=}}
\newcommand{\stle}{\le_{\mathrm{st}}}
\newcommand{\stge}{\ge_{\mathrm{st}}}

\title{Random walk on the small-world network model in 3 or more dimensions}
\author{Zsuzsanna Baran \thanks{Paris Dauphine University. {baran@ceremade.dauphine.fr}}
\and Jonathan Hermon \thanks{University of British Columbia. {jhermon@math.ubc.ca}}
\and An{\dj}ela \v{S}arkovi\'c \thanks{King’s College and DPMMS, University of Cambridge. {as2572@cam.ac.uk}}
\and Allan Sly \thanks{Department of Mathematics, Princeton University. {allansly@princeton.edu}}
\and Perla Sousi \thanks{University of Cambridge. {p.sousi@statslab.cam.ac.uk}}}
\date{}

\begin{document}

\maketitle

\begin{abstract}
We study the mixing time of a simple random walk on the small-world network defined by adding edges to $\Z_n^d$ as follows: for each pair $\{x,y\}$ we add an edge with probability $Z_n/\|x-y\|^{d}$ with $Z_n$ chosen so that the average number of added edges to every vertex is $1$. When $d\geq 3$, we show that with high probability the mixing time is of order~$\log n$ and that the random walk does not exhibit cutoff.
\end{abstract}

\section{Introduction}

Random graph models trying to mimic the structure of naturally arising networks and capture their properties, in particular the ``small-world phenomenon'' have been studied for a long time. An early variant was the model of Watts and Strogatz~\cite{SWN_Watts-Strogatz} introduced in 1998, where they started with a one-dimensional torus $\Z_n$ with pairs of vertices within a given distance connected, and then they randomly rewired some of the edges. In 2000, Kleinberg~\cite{Kleinberg} considered a model where they started from a two-dimensional torus $\Z_n^2$ with pairs of vertices within a given distance also connected, and added some further random (directed) edges, with the connection probabilities decaying as a power of the distance on the torus.

In this paper, we study the mixing properties of a random walk on the `small-world network model' as introduced by Dyer et al.~\cite{RWs_on_SWNs} in 2020, defined as follows.

\begin{definition}\label{def:small-world_network}
Let $G_n=(V_n,E_n)$ denote $\Z_n^d$, a $d$-dimensional torus with side length $n$, 
and let $\til{G}_n=(V_n,E_n\sqcup \til{E}_n)$ be a random (multi)graph obtained as follows. For each pair $\{x,y\}$ of distinct vertices, let us include the edge $\{x,y\}$ in $\til{E}_n$ with probability $p_{n,x,y}=\frac{Z_n}{|x-y|^r}$, independently, where $|\cdot|$ denotes the graph distance in $G_n$, and $Z_n$ is chosen such that $\sum_{y}p_{n,x,y}=1$ for all $x$. We refer to $\til{G}_n$ as a \emph{small-world network graph}. In what follows, we almost always drop the subscript $n$ from the notation.
\end{definition}

The paper~\cite{RWs_on_SWNs} focused on $d=2$ and proved that for $r<2$ the mixing time is $\Theta(\log n)$, for $r>2$ it is $n^{\Omega(1)}$ and for $r=2$ it is $O((\log n)^4)$. In the $r=2$ case, it was conjectured that the mixing time is in fact  $\Theta\left((\log n)^2\right)$.

We see that for $r<d$ a constant fraction of the added edges connect two vertices that lie at a distance of order $n$, whereas for $r>d$ shorter edges are strongly preferred. For $r=d$ the distance of the two endpoints of an added edge is uniform over logarithmic scales (i.e.\ the probability of taking values in $[2^k,2^{k+1}]$ is of the same order for all relevant values of $k$).
The above results show that there is a phase transition between these two regimes at $r=d=2$. While there has been a lot of recent progress on understanding the mixing time of random walks on random graphs, in the above model, the geometric structure of the added edges poses challenges.

As pointed out in~\cite{RWs_on_SWNs}, the techniques of that paper could be used to prove that for $r>d$ the mixing time is $n^{\Omega(1)}$. We conjecture that when $r\in [0,d)$, there is cutoff around a logarithmic time for $d\geq 3$. We believe that the techniques developed in~\cite{random_matching, weighted_random_matching} would be useful in proving cutoff from a typical starting point. 

In this work, we focus on the critical choice $r=d$ of the exponent, and we consider the case $d\ge3$. Before stating our main result, we recall the definition of mixing time and absolute relaxation time.

\begin{definition}\label{def:tmix_cutoff}
Let $X$ be a Markov chain on a finite state space $V$, with a unique invariant distribution $\pi$. Then for $\eps\in(0,1)$ we define the $\eps$-mixing time of $X$ as
\[
\tmix{\eps}:=\quad\max_{x\in V}\:\inf\left\{t\ge0:\:\dtv{\prstart{X_t=\cdot}{x}}{\pi(\cdot)}\le\eps\right\},
\]
where $d_{\mathrm{TV}}$ denotes the total variation distance defined as $\dtv{\mu}{\nu}=\frac12\sum_{x}|\mu(x)-\nu(x)|$ for two probability distributions $\mu$, $\nu$ on the same state space.
\end{definition}

\begin{definition}
We say that a sequence $(A_n)$ of events holds with high probability (whp) if\\ $\pr{A_n}=1-o(1)$ as $n\to\infty$. In what follows, the dependence on $n$ will often be suppressed from the notation.
\end{definition}

\begin{definition}
Let $X$ be an irreducible Markov chain with transition matrix $P$. Its absolute relaxation time is defined as $\trelabs=\frac{1}{1-\lambda^*}$ where $\lambda^*=\max\{|\lambda|:\:\lambda\ne1\text{ is an eigenvalue of }P\}$.
\end{definition}

Now we are ready to state the main result of our paper.

\begin{maintheorem}\label{thm:tmix_trel_order_no_cutoff}
Let $d\ge3$ and consider the small-world graph $\til{G}_n$ as in \Cref{def:small-world_network}, with $r=d$. There exist positive constants $c'$ and $C'$, and for all $\theta\in(0,1)$ there exist positive constants $c(\theta)$ and $C(\theta)$, such that with high probability the following hold.

For all $\theta\in(0,1)$ the mixing time of a simple random walk on $\til{G}_n$ satisfies
\[c(\theta)\log n\quad\le\quad\tmix{\theta}\quad\le\quad C(\theta)\log n,\]
and the absolute relaxation time of a simple random walk on $\til{G}_n$ satisfies
\[c'\log n\quad\le\quad\trelabs\quad\le\quad C'\log n.\]
\end{maintheorem}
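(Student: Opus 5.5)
We will prove the four bounds separately. Since $\trelabs \le \tmix{\theta}/\log(1/(2\theta))$ up to constants, and $\tmix{\theta}\le \trelabs\log(4/(\theta\pi_{\min}))$ with $\pi_{\min}\ge n^{-d}/\mathrm{poly}\log n$ whp (degrees are whp $O(\log n/\log\log n)$), the heart of the matter is the matching pair
\[\trelabs\le C'\log n,\qquad \tmix{\theta}\ge c(\theta)\log n,\]
together with an upper bound $\tmix{\theta}\le C(\theta)\log n$ obtained directly rather than via $\trel\log|V|$, which would lose a $\log n$.

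\textbf{Lower bounds.} For $\tmix{\theta}\ge c\log n$ we use a counting/entropy argument. The degree of a vertex is $2d+\mathrm{Poisson}(\approx1)$, so most vertices have bounded degree, and in $t=c\log n$ steps the walk from a typical vertex can visit at most $(2d+K)^t\ll n^{d}$ vertices once $c$ is small. For a lower bound holding for every $\theta$ we need more: the walk is at a graph-distance scale whose law is not concentrated. Because $r=d$, a long edge has length uniform over logarithmic scales, so at time $t=c\log n$ the number of long edges used is about $\mathrm{Bin}(t,\text{const})$ and the largest scale reached has a non-degenerate law on $\{1,\dots,\log n\}$. In particular, with probability bounded below (depending on $c$), the walk remains within distance $n^{1/2}$ of its start. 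This gives $d_{TV}\ge 1-\theta$ for $c=c(\theta)$ small.

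For the lower bound on $\trelabs$, this non-concentration is also what rules out cutoff. We build a test function: $f(x)=$ the indicator (suitably centred) of a box $B$ of side $n^{1/2}$. The walk started uniformly in $B$ stays in $B$ for $s\log n$ steps with probability $n^{-O(s)}$ only polynomially small, while $\pi(B)=n^{-d/2}$. Via the bottleneck ratio this gives an escape rate $\Phi(B)\lesssim 1/\log n$: the boundary consists of lattice boundary edges ($n^{(d-1)/2}$, negligible) plus long edges leaving $B$. An edge from $x\in B$ exits only if its length exceeds roughly the distance from $x$ to $\partial B$, which has probability $\asymp \log(n^{1/2}/\mathrm{dist})/\log n$ under $r=d$ normalization $Z_n\asymp 1/\log n$. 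Averaging over $x\in B$ gives $O(1/\log n)$, hence $\trel\ge c'\log n$ by Cheeger.

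\textbf{Upper bounds.} This is the main obstacle. The plan is a multiscale coupling/ spectral argument. First show whp that every vertex has, within $O(1)$ lattice distance, many ``nice'' vertices whose long edges lead to nice regions, so the walk from any start reaches a nice vertex in $O(\log n)$ steps with good probability. Then show that from a nice vertex, after $C\log n$ steps the walk has taken $\asymp\log n$ long edges whose lengths, being log-uniform, escalate through all scales. Since $d\ge3$, the lattice walk is transient, so the walk does not backtrack along long edges, and the position after taking an edge of scale $2^k$ is spread over a $2^k$-ball by the intermediate lattice steps.

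We then iterate a regeneration: condition on the walk reaching scale $n$ with an approximately uniform endpoint among the far endpoints. Because the edges are independent of the walk until first exploration (exploration process, with transience ensuring fresh edges), the endpoint law is close to uniform in $L^2$. This gives $\tmix{\theta}\le C\log n$ from every start, using the no-return property to get a uniform bound over starts. Finally, $\trelabs\le C'\log n$ follows from a Poincaré/canonical path estimate. Route each pair via lattice moves plus a hierarchy of long edges, with congestion $O(\log n)$ made possible by the log-uniform edge lengths at each scale. Laziness aside, we handle negative eigenvalues by noting that the lattice has even cycles and the long edges add odd cycles locally, bounding $1+\lambda_{\min}$ below.
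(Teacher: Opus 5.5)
\textbf{The lower bound on $\trelabs$ fails.} For a box $B$ of side $L=n^{1/2}$, an added edge at $x\in B$ leaves $B$ whenever its length exceeds $dL$. The expected number of such edges at $x$ is $\sum_{|y|\ge dL}p_{0,y}\asymp\log(n/L)/\log n\approx\frac12$. The probability that an added edge is longer than $\delta$ is $\asymp\log(n/\delta)/\log n$, not $\log(n^{1/2}/\delta)/\log n$: you dropped the edges longer than the box. So about half the added edges at every vertex of $B$ leave it, and $\Phi(B)\asymp1$. Your own estimate that the walk stays in $B$ for $s\log n$ steps with probability $n^{-O(s)}$ says the same thing: an escape rate of order one per step.

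To get $\Phi\lesssim1/\log n$ the set must have width of order $n$. The paper uses the half-torus $S=\{1,\dots,n/2\}\times\Z_n^{d-1}$. There are $\asymp n^{d-1}r^d$ pairs $x\in S$, $y\in S^c$ at distance $r$, so $\sum_{x\in S,y\in S^c}p_{x,y}\asymp n^d/\log n$, while $\sum_{v\in S}\deg(v)\asymp n^d$, and both are concentrated.

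\textbf{The scale-concentration claim is false but unnecessary.} It is not true that at time $c\log n$ the walk stays within distance $n^{1/2}$ of its start with probability bounded below. By then it has crossed $\asymp c\log n$ distinct added edges, each longer than $n^{1/2}$ with probability about $\frac12$. You do not need this claim. Your counting argument already gives the mixing lower bound, once made rigorous with a volume bound such as Corollary~\ref{volume_high_prob_bound} (degrees are unbounded, so $(2d+K)^t$ is not literally available). It shows that at time $c\log n$ the walk is supported on a set of stationary mass $o(1)$. That gives $\tmix{\theta}\ge c\log n$ for every $\theta\in(0,1)$.

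\textbf{The upper bound, which is the substance of the theorem, is not proved.} There are four problems.

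First, quenched versus annealed. That the endpoint law is close to uniform because unexplored edges are independent of the walk is an annealed statement. The theorem is quenched and must hold simultaneously for all starting points, so for each pair $(x,y)$ you need failure probability $o(N^{-2})$ over the graph. The paper gets this as follows:
\begin{enumerate}
\item It couples neighbourhoods with quasi-trees and proves positive speed and concentrated linear entropy of the walk there, via regeneration times and control of atypical balls.
\item It truncates edges so that every edge through which level $L^-$ can be reached has hitting probability at most $N^{-1/2}e^{-\sqrt{\log N}}$. Hence each blue half-edge is the first one crossed with probability at most $1/\sqrt{N\log N}$.
\item It uses a reserved sparse set of uniformly matched blue edges, independent of everything explored. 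This shows $\sum_{e,f}w_{e,f}\1{e\simb f}\gtrsim\frac{\log N}{N}$ with failure probability $o(N^{-2})$, where $w_{e,f}$ is the probability that the walks from $x$ and $y$ first cross blue half-edges $e$ and $f$.
\end{enumerate}
Nothing in your plan plays the role of this small-atom estimate plus fresh randomness. Also, for a fixed graph the spreading does not come from intermediate lattice steps, since there are only $O(1)$ of them between long edges; it comes from branching, i.e.\ entropy.

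Second, worst-case starts. It is false that whp every vertex has nice vertices within $O(1)$ lattice distance. Whp there are hubs and deserts, which are boxes of volume $\Theta(\log N)$ that no added edge leaves. Reaching a nice region in $O(\log N)$ steps from every start takes all of Section~\ref{sec:hit_nice}: balls of radius $c_R\log N$, truncation via $\theta_T$, cool boxes forcing $\gtrsim\log N/(\log\log N)^2$ red-edge crossings, and independent trials at the truncated exits.

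Third, canonical paths cannot give $\trelabs\lesssim\log n$. Whp typical distances are $\gtrsim\log n$, and the congestion is at least $\sum_{x,y}\pi(x)\pi(y)|\gamma_{xy}|^2\gtrsim(\log n)^2$. That step is redundant anyway, given $\trelabs\lesssim\tmix{1/4}$.

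Fourth, periodicity. Your argument only describes the walk at a random time. Turning that into a fixed-time bound for the non-lazy walk requires a quantitative bound $1+\lambda_{\min}\gtrsim1/\log n$, which ``local odd cycles'' does not supply. The paper proves a global far-from-bipartite property: every partition has a constant fraction of edges inside its parts, shown by a union bound over partitions. It combines this with Lemma~\ref{near_bipartiteness} and the hitting-time characterisation of mixing.
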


\begin{remark}
For the lazy random walk on $\til{G}_n$, i.e.\ the Markov chain with transition matrix $\frac12(I+P)$ where $P$ is the transition matrix of a simple random walk on $\til{G}_n$, the results of \Cref{thm:tmix_trel_order_no_cutoff} also hold. We see this by combining Lemmas~\ref{tmixlazy_upper_bound}, \ref{trelabs_lower_bound} and \ref{trel_asymp_trelabs}.
\end{remark}

\begin{remark}
In \Cref{thm:tmix_trel_order_no_cutoff} we establish that with high probability the mixing time and the absolute relaxation time of the walk on $\til{G}_n$ are of the same order, hence by \cite[Proposition 18.4]{MTMC} with high probability there is no cutoff. This means that there exist $c>0$ and $\eps>0$ such that with high probability $\frac{\tmix{\eps}}{\tmix{1-\eps}}\ge1+c$.
\end{remark}

\begin{remark}
We emphasise that the constants in \Cref{thm:tmix_trel_order_no_cutoff} depend on the dimension $d$. We believe that $\tmixtext\asymp d^2\log n$.
\end{remark}

\subsection{Relation to previous work}

As already mentioned earlier, the mixing properties of random walks on random graph models have been studied extensively, and often it was established that mixing happens at an entropic time, with cutoff, see for instance~\cite{RWs_on_random_graph, cutoff_NBRW_on_sparse_random_graphs, mixtures_of_permuted_MCs_reversible, BordCapSalez, SparseMarkovChains, BianchiPassuello, BianchiPassuelloQuattro}. 
In most of these models, there is no underlying geometric structure, and the random graphs are locally tree-like. Models where the local limit of the random graph is not tree-like have been studied in \cite{random_matching}, \cite{weighted_random_matching} and \cite{mixtures_of_permuted_MCs_reversible}. In \cite{random_matching} and \cite{weighted_random_matching}, the random edges are added according to a uniform perfect matching.

In the model of the present paper, we do not add edges uniformly at random, but the probabilities depend on the length of the edges. This distinguishes this model from the previous ones, as the added edges also have a geometric structure. This model also poses a number of additional challenges. A major source of difficulty is that the number of added edges emanating from a given vertex is neither bounded nor necessarily positive. The graph $\til{G}_n$ has, with high probability, regions with no added edges from them (`deserts') and also ones with many vertices with very large degrees (`hubs'). Because of this, one cannot directly apply the techniques developed in~\cite{RWs_on_random_graph} and~\cite{random_matching}. In the next section, we explain how we overcome these issues. 

The model of long-range percolation, where two vertices $x$ and~$y$ are connected by an edge with probability $1-\exp(-\beta\|x-y\|^{-s})$ for $s, \beta>0$, has a long history. We point out that $s=d$ and $\beta= \Theta(1/\log N)$ corresponds to the model we study in this paper. There is a large body of work focusing on the diameter in the finite setting~\cite{CopGamSviri, BiskupJeffrey}, the chemical distance~\cite{BenjaminiBerger, BiskupKrieger} and heat kernel bounds in the infinite setting~\cite{CrawfordSly}. 
We finally mention that~\cite{BenjaminiBergerYadin} focused on the one-dimensional case and established that with high probability the mixing time is at least $c(s,\beta) N^{s-1}$ and at most~$(\log N)^{C(s,\beta)} N^{s-1}$.
When $\beta\asymp 1$, from the perspective of chemical distance, the behaviour of long range percolation exhibits drastic change at $s=2d$. However, our normalisation, i.e.\ the choice of $Z_n$, leads to a different behaviour, where the point at which the mixing time changes is around~$s=d$.

 \subsection{Overview of the proof}
 
 In this section, we give an overview of the main ideas of the proof.
 
As it was often the case in previous works, our first step is to try to locally approximate our random graph $\til{G}_n$ with an auxiliary infinite graph, with a structure that makes the study of a random walk on it more amenable. Unlike many of the previously considered models (Erd\H{o}s-R\'enyi graph, configuration model, etc), the graph $\til{G}_n$ is clearly not locally tree-like (since it has the edges of the torus $\Z_n^d$), so the approximating structure cannot be a tree. This problem was already overcome in \cite{random_matching} by constructing a random tree-like structure $T$, called a `quasi-tree', where the added random edges play the role of the edges of the tree (`long-range edges') and smaller neighbourhoods of the underlying graph are viewed as vertices of the tree (`balls'). The tree-like structure of the quasi-tree $T$ makes the study of the long-term evolution of a walk on it more tractable, and one can prove concentration results about its speed and entropy. Then these results are transferred to the original graph by coupling its local neighbourhoods (chosen to cover the region where the walk is likely to visit by a given time) to agree with those of $T$.

Inspired by this, a natural approach would be to try to construct an analogous quasi-tree with the added random edges of $\til{G}_n$ serving as the long-range edges. The difficulty in our case is that in $\til{G}_n$ added edges are more likely to be short (joining vertices that are close in the graph distance of the underlying torus) and form cycles, hence, the neighbourhoods of $\til{G}_n$ cannot be coupled to agree with those of the quasi-tree.

We overcome this problem by treating the `short' and the `long' added edges in $\til{G}_n$ differently, and only using the long ones as the `long-range edges' of the quasi-tree. (The threshold between short and long edges is chosen such that, on average, a constant proportion of the edges belong in each category.) There is a part of our argument where it is helpful to consider a sparse set of random edges (of vanishing density) that are added uniformly at random, so we also reserve a small number of the added edges for this purpose. So overall we partition the added edges into three different categories: we `colour' a small number of them blue, and among the remaining ones, we colour the short ones green and the long ones red. The precise way of colouring is explained in \Cref{sec:colouring}.

We study a random walk on a random quasi-tree $T$, and we would like to establish a concentration of its speed and entropy by using the tree-like structure of $T$. Many of our techniques need assumptions that are satisfied for `typical' neighbourhoods in $\til{G}_n$, but as mentioned earlier, the graph $\til{G}_n$ can have neighbourhoods with more extreme properties, for example, very few added edges or very high degrees. Our estimates for the walk on $T$, including the proof of the essential property that it is transient, are only established in the case that it does not encounter this sort of atypical neighbourhood.

After understanding the behaviour of a walk on the quasi-tree (which is done in \Cref{sec:T}), we turn our focus back to the walk on $\til{G}_n$. Similarly to \cite{weighted_random_matching} we would like to bound its mixing time by constructing a random time when the distribution of the walk is close to stationary (and then using a comparison between mixing and hitting times from \cite{characterisation_of_cutoff}). To construct this random time, we first wait for the walk to reach a `typical' neighbourhood in $\til{G}_n$, then we wait for it to cross two blue edges (according to the colouring mentioned above), and finally we run it for $O(\log n)$ more steps so it can also reach vertices that do not have a blue edge emanating from them.

We use a method inspired by \cite{weighted_random_matching} (based on the ideas of \cite{cutoff-NBRW-on-sparse-random-graphs}) to show that starting from a typical neighbourhood of $\til{G}_n$, by the second time the walk crosses a blue edge, its distribution is close to uniform on the vertices that have a blue edge. To approximate the transition probability between two vertices $x$ and $y$ at this random time, we consider independent walks from $x$ and $y$ and estimate the probability that they meet by approximating the neighbourhoods of $x$ and $y$ with independent random quasi-trees and studying the walks on those. This argument is found in \Cref{sec:Gstar_and_T}.

The ideas presented above are used in a subsequent work~\cite{twisted_hypercube} establishing the mixing time of the random walk on a twisted hypercube, which is a graph $G^{(n)}$ recursively obtained by taking two independent copies of $G^{(n-1)}$ and adding an edge between each matched pair in an independent uniformly random perfect matching of their vertices, where $G^{(0)}$ is just a single vertex. In some sense, this model is a mean-field analogue of the small-world network analysed here, as it also satisfies that the number of added edges (which are all edges in this case) is uniform over logarithmic scales.  The twisted hypercube model is substantially less technical to analyse, as this graph is $n$-regular, and as one can pick for blue edges just the edges of the perfect matching that was added in the last iteration, and therefore this subsequent work might be helpful for understanding the main ideas behind some proofs presented here.

After this, the main remaining task is to prove that from any starting point, the walk on $\til{G}_n$ likely reaches a typical neighbourhood in order $\log n$ steps. This poses new challenges, as we need an argument that works for any starting neighbourhood that has a positive probability of appearing in $\til{G}_n$. The main step in the proof is showing that in a time of order $\log n$ the walk crosses at least order $\log\log n$ red edges, hence spreading out sufficiently that it is likely to land at a typical neighbourhood. Since we can only make assumptions that are likely to hold in all neighbourhoods of $\til{G}_n$, we are not able to approximate the neighbourhood of the starting point with a quasi-tree and assume that all balls that the walk enters have sufficiently nice properties. Instead, we only consider the immediate neighbourhood of the trajectory of the walk and decompose it into independent components called `boxes'. We show that a constant proportion of these boxes have a property that ensures that the walk is likely to quickly exit them via a red edge, and even for the rest of the boxes, the walk is unlikely to spend too much time in them. The details of this argument are presented in \Cref{sec:hit_nice}.

\subsection{Notation}

Throughout the paper, we assume that $r=d\ge3$ and consider $d$ fixed.

\subsubsection{Some general notation}

For vertices $x$ and $y$ we write $|x-y|$ for their graph distance on $G$. We write $N=n^d$ and $D=\diam{G}$. (Note that $D\asymp n$ and that $Z_n\asymp\frac{1}{\log N}$.)

We denote the set of nonnegative integers by $\Znonneg$ and the set of positive integers by $\Zpos$. For functions $a,b:\Znonneg\to[0,\infty)$ we write $a(n)\lesssim b(n)$ if there exists a constant $C>0$ such that for all sufficiently large $n$ we have $a(n)\le Cb(n)$, and write $a(n)\ll b(n)$ if for any constant $c>0$, for all sufficiently large $n$ (in terms of $c$) we have $a(n)\le cb(n)$. We define $\gtrsim$ and $\gg$ analogously. We write $a(n)\asymp b(n)$ if we have $a(n)\lesssim b(n)\lesssim a(n)$. Unless specified otherwise, these relations are considered as $n\to\infty$.

For random variables $A$ and $B$ taking values in an ordered set, we write $A\stle B$ or $B\stge A$ to mean that $B$ stochastically dominates $A$. For random variables $A$ and $B$ we write $A\eqdist B$ to mean that they have the same distribution. We also use this notation with a distribution in place of one or both of the random variables.

\subsubsection{Colouring of the edges, definition of an auxiliary graph $G^*$}\label{sec:colouring}

Our proof relies on treating the `short' and `long' added edges of $\til{G}$ differently, and also reserving a small proportion of the added edges for a third category.
In what follows, we introduce a colouring of the edges of $\til{G}$, and we also construct a slightly modified graph $G^*$ that can be coupled to $\til{G}$ to agree with it with high probability, but is more convenient to work with.

Let $\beta\in(0,1)$ be a constant strictly smaller than $\frac12$ (for example, we can take $\beta=0.1$). Let
\[
\pblue:=\quad\frac12\min_{x,y}p_{x,y}\quad\asymp\quad\frac{1}{N\log N}
\]
and for each $x$ and $y$ let
\begin{align*}
\pblue_{x,y}:=&\quad\pblue\\
\pgreen_{x,y}:=&\quad \left(p_{x,y}-\pblue_{x,y}\right)\1{|x-y|<D^{1-\beta}}\\
\pred_{x,y}:=&\quad \left(p_{x,y}-\pblue_{x,y}\right)\1{|x-y|\ge D^{1-\beta}}.
\end{align*}

Note that for each $x$ and $y$ we have $\pblue_{x,y}+\pgreen_{x,y}+\pred_{x,y}=p_{x,y}$, and $\pgreen_{x,y}+\pred_{x,y}\asymp p_{x,y}$.

We construct a copy of $\til{G}$ with its edges coloured as follows. We start from the torus $G$ and say that its edges are black. Then for each $x$ and $y$ in $G$, with probability $p_{x,y}$ we add an edge, which is blue with probability $\frac{\pblue_{x,y}}{p_{x,y}}$, green with probability $\frac{\pgreen_{x,y}}{p_{x,y}}$ and red with probability $\frac{\pred_{x,y}}{p_{x,y}}$. The presence and colour of edges are independent for different pairs $\{x,y\}$.

\begin{definition}\label{def:Gstar}
We construct a random graph $G^*$ as follows. We start from the torus $G$, and say that its edges are black. Then for each $x$ and $y$ in $G$ with probability $\pblue_{x,y}$ we add a blue edge between them. Then for each $x$ and $y$ with probability $\pgreen_{x,y}$ we add a green edge between them, and finally for each $x$ and $y$ with probability $\pred_{x,y}$ add a red edge between them. The presence or lack of these edges are all chosen independently from each other.
\end{definition}

Note that $G^*$ does not have the exact same distribution as $\til{G}$, since in $G^*$ it is possible that a pair of vertices is connected via both a blue and a red or green edge, while in $\til{G}$ this is not possible. The distribution of the two graphs is very similar, though.

\begin{lemma}\label{couple_Gtil_Gstar}
There exists a coupling of $G^*$ and $\til{G}$ such that the two graphs agree with probability $1-O\left(\frac{1}{\log N}\right)$.
\end{lemma}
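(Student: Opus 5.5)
We couple the two constructions pair by pair. Since both graphs are built from the same torus $G$ and the randomness for distinct pairs $\{x,y\}$ is independent in both models, it suffices to couple, independently for each pair, the coloured edge configuration on $\{x,y\}$ in $\til{G}$ with the one in $G^*$. Then we bound the probability that the configurations differ on at least one pair by a union bound.

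For a fixed pair $\{x,y\}$, in $G^*$ the blue edge is present with probability $\pblue$, independently of a non-blue edge, which is present with probability $q_{x,y}:=\pgreen_{x,y}+\pred_{x,y}=p_{x,y}-\pblue$. Only one of the green and red probabilities is nonzero for a given pair, so at most one non-blue edge appears. In $\til{G}$, exactly one of the events ``no edge'', ``one blue edge'', ``one non-blue edge'' occurs, with probabilities $1-p_{x,y}$, $\pblue$ and $q_{x,y}$ respectively. The $G^*$ configuration agrees with one of these three outcomes unless both a blue and a non-blue edge are present, which has probability $\pblue q_{x,y}$. Comparing the two distributions on the outcome space:

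\begin{itemize}
\item in $G^*$, blue only has probability $\pblue(1-q_{x,y})$, non-blue only has probability $q_{x,y}(1-\pblue)$, and none has probability $(1-\pblue)(1-q_{x,y})=1-p_{x,y}+\pblue q_{x,y}$;
\item in $\til{G}$, these probabilities are $\pblue$, $q_{x,y}$ and $1-p_{x,y}$.
\end{itemize}

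Summing the absolute differences, including the mass $\pblue q_{x,y}$ on the ``both'' outcome, gives $4\pblue q_{x,y}$. Hence the total variation distance between the two distributions is $2\pblue q_{x,y}$. By the maximal coupling, the two configurations on $\{x,y\}$ can be coupled to differ with probability at most $2\pblue q_{x,y}\le 2\pblue p_{x,y}$.

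Taking the product of these couplings over all pairs and applying a union bound, the probability that $G^*\ne\til{G}$ is at most
\[
\sum_{\{x,y\}}2\pblue p_{x,y}\quad=\quad\pblue\sum_x\sum_{y\ne x}p_{x,y}\quad=\quad\pblue N\quad\asymp\quad\frac{1}{\log N},
\]
using $\sum_y p_{x,y}=1$ and $\pblue\asymp\frac{1}{N\log N}$ (the latter because $Z_n\asymp 1/\log N$ and $\min p_{x,y}\asymp Z_n D^{-d}\asymp 1/(N\log N)$).

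The only real point is the single-pair total variation computation; the rest is bookkeeping. One must make sure the colours match as well as the presence of edges. This is automatic, because conditional on a non-blue edge being present the colour is determined by whether $|x-y|<D^{1-\beta}$, in both models.
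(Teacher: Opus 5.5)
Your proposal is correct and follows the paper's approach: an independent optimal coupling for each pair $\{x,y\}$, each failing with probability $2\pblue(\pgreen_{x,y}+\pred_{x,y})\lesssim \pblue p_{x,y}$, followed by a union bound giving $\lesssim \pblue N\asymp 1/\log N$. Your explicit single-pair total variation computation fills in the step that the paper states without derivation.
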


\begin{proof}
For each pair $\{x,y\}$ the probability that the optimal coupling between the added edges between them in $\til{G}$ and $G^*$ fails is $2\pblue_{x,y}\left(\pgreen_{x,y}+\pred_{x,y}\right)\asymp\pblue\cdot p_{x,y}$.

Let us use the optimal coupling for each $\{x,y\}$, independently for different pairs. Then the probability that at least one of these optimal couplings fails is $\lesssim\pblue\sum_{x,y}p_{x,y}\asymp\frac{1}{N\log N}N\asymp\frac{1}{\log N}$.
\end{proof}

This means that it is sufficient to prove \Cref{thm:tmix_trel_order_no_cutoff} for $G^*$ instead of $\til{G}$. For the rest of the paper, we work with $G^*$ instead of $\til{G}$.

\subsubsection{Further notation and terminology regarding $G^*$}\label{sec:further_notation_Gstar}

{Let $\Gg$ be the graph consisting of the vertices and edges of $G$ and the green edges in $G^*$. Let $d_g$ be a random variable distributed as the green degree of a given vertex (i.e.\ the number of green edges emanating from it). For a vertex $v$ let $\dg(v)$ denote the green degree of $v$ in $G^*$. We also define analogous objects with the other colours and with combinations of colours (e.g.\ $\Ggr$ includes the edges of $G$ and both the green and the red edges of $G^*$). We write $x\simg y$ to mean that there is a green edge running between $x$ and $y$, and we use analogous notation for the other colours. We let $\deg(v)$ denote the degree of $v$ in $G^*$.}

In what follows, we will often sample the green, red or blue degree of a given vertex $v$ in $G^*$ without necessarily revealing the other endpoints of the edges from $v$. In this case, we say that we add a given number of green, red or blue \emph{half-edges} from $v$. Given a number of half-edges from a vertex, we may reveal the other endpoints of the edges corresponding to these half-edges, or given a number of half-edges from different vertices, we may sample how they should be matched up to form edges of $G^*$.

In what follows, unless specified otherwise, $X$ denotes a simple random walk on $G^*$.

\subsection{Why the case $d=2$ is harder}
We exploit the transience of $\mathbb{Z}^d$ for $d \ge 3$ in order to establish that the random walk on the quasi-tree which we consider in this work has a positive speed and that its entropy grows linearly in time (as well as that they are both concentrated).  Establishing the same for $d \in \{1,2\}$ requires some new ideas and a more careful analysis. This is the main obstacle for extending our analysis of the order of the mixing time from a ``typical'' starting point to the case that  $d \in \{1,2\}$.

Perhaps surprisingly, the more substantial challenge is determining for $d \in \{1,2\}$  the order of the expected hitting time of the set of ``typical'' starting points, starting from the worst initial state. 
Whp there are many box-shaped ``deserts'' $B$ in $\til{G}$ (i.e., there are no long-range edges between $B$ and $B^c$) of size $\Theta(\log N)$. The expected time it takes the random walk to exit such a desert is $\asymp (\log N)^{2/d}$. Showing that the expected exit time from one such desert is up to a constant factor also the (worst-case) expected time it takes the walk to reach a typical starting point appears to be a genuinely difficult problem.

\subsection{Lack of cutoff from a typical starting point}

As opposed to the vast majority of previous works on random walks on random graphs, the lazy random walk on $\til{G}$ does not exhibit cutoff even from typical starting points. We believe that there exists $c:(0,1) \to (0,\infty)$ satisfying $\lim_{\varepsilon \to 0} c(\varepsilon)=\infty$ such that whp  $\til{G}$ satisfies that for all $\varepsilon \in (0,1)$ the minimal (over all starting points) $\varepsilon$-total variation mixing time is at least $c(\varepsilon) d \log N$. Below we establish the following weaker claim, which rules out the occurrence of cutoff starting from ``typical'' starting points for the lazy walk: for the set $A:=\{(z_1,\ldots,z_d) \in \mathbb{Z}_n^d: -n/4 < z_1 \le n/4 \}$ there is no cutoff for the lazy random walk on $\til{G}$ starting from initial distribution $\pi_A$ or $\pi_{A^c}$, where $\pi_D$ denotes the stationary distribution $\pi$ conditioned on $D$.

 In order to shed some light on the mechanism behind the lack of cutoff from a typical starting point, we describe a certain transitive annealed version of the random walk on $\til{G}$. It also does not exhibit cutoff (from any starting point, by transitivity). We also refer the reader to the previously mentioned work \cite{twisted_hypercube} for further intuition, since the mechanism behind the lack of cutoff in \cite{twisted_hypercube} is very similar and is more transparent. 

 Consider the following annealed version of the model, with state space $\mathbb{Z}_n^d$ and transition matrix $P_{\mathrm{annealed}}=\frac{2d}{2d+1}P_1+\frac{1}{2d+1}P_2$, where $P_1$ is the transition matrix of the simple random walk on the torus and $P_2(x,y)=p_{n,x,y}$ is as defined in \Cref{def:small-world_network} with $r=d$. This is a random walk on the group $\mathbb{Z}_n^d$ with increment distribution $\mu(x):=P_{\mathrm{annealed}}(o,x)$, where $o=(0,0,\ldots,0)$. The mixing time of $P_{\mathrm{annealed}}$ is $O(d \log N)$. Indeed, since $\min_{x,y}P_{\mathrm{annealed}}(x,y) \gtrsim \frac{1}{dN \log N}$ there exists a Geometric random variable $\tau$ with $\mathbb{E}[\tau] \lesssim d \log N$, independent of the walk associated with $P_{\mathrm{annealed}}$, such that at time $\tau$ the walk is uniformly distributed. This $\tau$ is the annealed analog of our $\tau_{\mathrm{blue}}$, defined as the first time at which the walk crosses a blue edge. Of course, this analogy between $\tau$ and $\tau_{\mathrm{blue}}$  is incomplete. Let $t$ be the time required for the entropy to become $\frac{1}{2}\log N$ started from a typical starting point. 
 The best one can hope for is that the random walk on $G^*$, starting from a typical starting point, mixes after $t$ additional steps past the first time $s>t$ at which the walk crosses a blue edge. This is not a faithful description of what we actually prove.

In the proof of \Cref{trelabs_lower_bound} we show that whp the bottleneck ratio of the above set $A$ with respect to the random walk on $G^*$ (and hence also $\til{G}$), denoted by $\phi(A):=\mathbb{P}_{\pi_A}(X_1 \notin A)$, satisfies that  $\phi(A) \lesssim \frac{1}{d\log N}$. The same applies for $P_{\mathrm{annealed}}$ via a similar (simpler) calculation. Using Cheeger's inequality, this shows that the product condition fails for both walks (the one on $\til{G}$ and $P_{\mathrm{annealed}}$) and hence neither exhibits cutoff (see e.g. \cite[Theorem 13.14 and Prop.\ 18.4]{MTMC}). As we now explain, the estimate $\phi(A) \lesssim \frac{1}{d\log N}$ shows that there is no cutoff even when the initial distribution is $\pi_A$ or~$\pi_{A^c}$. 

In \cite{characterisation_of_cutoff} it is proved that for reversible chains cutoff is equivalent in a precise sense to concentration of hitting times of ``worst'' large sets. In~\cite{Htechnical} this characterisation is extended to cutoff from a fixed initial distribution $\mu$ in continuous time. The same argument extends directly to lazy discrete-time chains; alternatively, one may use the standard equivalence between cutoff for the continuous-time chain and for its lazy discrete-time counterpart. In particular, writing $t$ for the usual (worst-case) mixing time of the lazy walk, cutoff from initial distribution $\mu$ implies that for all $c>0$, $\max_{B:\pi(B) \ge 1/4}\mathbb{P}_{\mu}(T_B>(1+c)t)$ vanishes as the index of the chain diverges. This condition can easily be seen to fail for the lazy random walk on $\til{G}$ for the initial distributions $\pi_A$ and $\pi_{A^c}$, using the general relation\footnote{This follows from the fact that for reversible lazy chains, $\mathbb{P}_{\pi_D}(T_{D^c} = \cdot)$ is distributed like a mixture of geometric random variables, for every proper subset $D$ of the state space (see e.g., \cite{characterisation_of_cutoff}).} $\mathbb{P}_{\pi_D}(T_{D^c} > k) \ge (\mathbb{P}_{\pi_D}(T_{D^c}> 1))^k=(1-\phi(D))^k$ for all $k \ge 1$, which holds for all $D$ for reversible lazy (i.e., $\min_x P(x,x) \ge \frac 12$) Markov chains, with $D \in \{A,A^c\}$, together with the general relation $\pi(A)\phi(A)=\pi(A^c)\phi(A^c)$.

\subsection{A rare behaviour of the random walk on $\til{G}$}

Consider $A(k):=\{(z_1,\ldots,z_d) \in \mathbb{Z}_n^d: -n2^{-k-2} < z_1 \le n2^{-k-2}  \}$. A similar calculation as in the proof of \Cref{trelabs_lower_bound} shows that whp $\til{G}$ satisfies that uniformly for all $1 \le k \le \log_2(n)-O_d(1)$ we have that $\phi(A(k)) \asymp \frac{k}{d \log N}$. The same applies for $P_{\mathrm{annealed}}$. Observe that $\sum_{k=1}^{\log_2(n)-2} \frac{1}{\phi(A(k)) } \gtrsim d \log N \log \log N \asymp_d t_{\mathrm{mix}} \log \log N$.
This suggests that one cannot obtain an $O(\log N)$ upper bound on the mixing time using any type of analytic quantity which is robust under comparison of Dirichlet forms (and so, in particular, under small perturbations).

For simple random walk on a graph of bounded average degree with a stationary distribution $\pi$ one should generically expect the order of the total variation mixing time from initial state $o$ to be bounded below by $\max \sum_{k=1}^{\log_2(1/\pi(o))-1} \frac{1}{\phi(B(k)) }$, where the maximum is over all nested $\{o\} \subset B(\log_2(1/\pi_*)-1) \subset \cdots \subset B(1)$ such that $\pi(B_k) \in (2^{-k-1},2^{-k}]$ for all $k$. We are aware only of a handful of other counter-examples with a bounded average degree. These are all pathological, in the sense they are obtained by making some non-trivial modifications to chains which were constructed to demonstrate either i) that small perturbations can change the order of the mixing time or ii) that the spectral profile determines the $\ell^{\infty}$-mixing time of a reversible Markov chain only up to a $\log \log(1/\pi_*)$ multiplicative factor, where $\pi_*:=\min_x \pi(x)$.

\subsection{The structure of the proof}\label{sec:overview}

As mentioned above, we work with $G^*$ instead of $\til{G}$ throughout the paper.

In order to bound the mixing time of a random walk on $G^*$, instead of trying to directly estimate its transition probabilities at a specific time, we use an equivalence between mixing and hitting times. The notion of hitting time we use is defined below. 

\begin{definition}\label{def:hit_time}
Let $X$ be a Markov chain on a finite state space $V$ with a unique invariant distribution $\pi$. For each $\alpha,\theta\in(0,1)$ we define the corresponding \emph{hit time} of $X$ as
\[
\hit{\alpha}{\theta}:=\quad\inf\{t:\:\forall x,\:\forall A\subseteq V\text{ with }\pi(A)\ge\alpha\text{ we have }\prstart{\tau_A>t}{x}\le\theta\},
\]
where $\tau_A=\inf\left\{t:\:X_t\in A\right\}$.
\end{definition}

The proof of \Cref{thm:tmix_trel_order_no_cutoff} relies on the following four results.

\begin{lemma}\label{hit_upper_bound}
For any $\theta,\alpha\in(0,1)$ there exists a positive constant $C$ such that whp the graph $G^*$ has the following property. The hit time of a simple random walk on $G^*$ with parameters $\alpha$ and $\theta$ satisfies $\hit{\alpha}{\theta}\le C\log N$.
\end{lemma}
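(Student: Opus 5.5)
We follow the strategy outlined in \Cref{sec:overview}: construct, for every starting vertex $x$, a random time $\tau$ with $\tau\le C\log N$ with probability close to $1$, such that $X_\tau$ is close in total variation to a distribution comparable to $\pi$. Given this, for a set $A$ with $\pi(A)\ge\alpha$ we get $\prstart{X_{\tau}\in A}{x}\ge c\alpha$. Iterating over $O(\log(1/\theta)/\alpha)$ independent attempts (Markov property at $\tau$) then gives $\prstart{\tau_A>C\log N}{x}\le\theta$. The time $\tau$ is built in three stages.

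\emph{Stage 1: reaching a typical neighbourhood.} Define a set $\Vnice$ of vertices whose neighbourhoods look like those of the quasi-tree $T$. This means no deserts or hubs nearby, and green/red degrees in a bounded typical range within a ball of radius polylogarithmic in $N$. Show that whp $G^*$ satisfies: for every $x$, $\prstart{\tau_{\Vnice}>C_1\log N}{x}\le\theta/4$. For this we look only at the immediate neighbourhood of the trajectory and decompose it into independent boxes. A constant fraction of boxes contain red edges that the walk exits through quickly, using transience of $\Z^d$, $d\ge3$. For the remaining boxes we bound the exit time by a tail estimate. This shows that in $C_1\log N$ steps the walk crosses at least order $\log\log N$ red edges. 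Each red edge lands at an essentially fresh location at distance at least $D^{1-\beta}$, so after $\log\log N$ of them the walk is in $\Vnice$ with high probability. A union bound over all $N$ starting points requires failure probabilities $o(1/N)$ per vertex, or an argument uniform over neighbourhood configurations.

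\emph{Stage 2: two blue edges.} Starting from $\Vnice$, let $\taublue^{(2)}$ be the second time the walk crosses a blue edge. Blue edges have density $\pblue N\asymp 1/\log N$ per vertex, so $\taublue^{(2)}\lesssim\log N$ with high probability. We then show $X_{\taublue^{(2)}}$ is close to uniform on the set of endpoints of blue edges. Following \cite{weighted_random_matching}, we estimate $\prstart{X_{\taublue^{(2)}}=y}{x}$ via the probability that independent walks from $x$ and $y$ meet through a blue edge. To do so, we couple the neighbourhoods of $x$ and $y$ with independent quasi-trees and use the speed and entropy concentration from \Cref{sec:T}. Positive entropy rate ensures that no single blue endpoint carries too much mass, and a second-moment argument gives near uniformity. \emph{Stage 3:} run $O(\log N)$ more steps to spread from blue endpoints to all of $V$. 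Since blue endpoints are uniformly spread, comparing with $\pi$ (degrees are bounded on average) yields $\prstart{X_\tau\in A}{x}\ge c\alpha$.

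The main obstacle is Stage 1. It needs a worst-case statement over all neighbourhoods that appear in $G^*$, including deserts of size $\Theta(\log N)$ and hubs. The quasi-tree approximation is unavailable there, so the box decomposition and the tail bound on the time spent in bad boxes carry the whole argument. I would first try to show that each box independently has probability bounded below of containing a red edge the walk exits through within $O(1)$ expected visits. The remaining boxes would then be handled with a geometric tail on exit times, giving a large-deviation bound for the total time.
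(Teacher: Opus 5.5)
Your overall architecture is the paper's. There the lemma follows in two lines from \Cref{hit_almost_nice_quickly} and \Cref{tauhat_transition_probs}, plus repeated trials, as in your first paragraph. However, Stage~3 is missing its key idea. The reasoning ``run $O(\log N)$ more steps; blue endpoints are uniformly spread; degrees are bounded on average'' gives no control over the mass reaching an arbitrary target $y$, for instance one inside a desert. So it does not yield $\prstart{X_\tau\in A}{x}\ge c\alpha$ for every $A$ with $\pi(A)\ge\alpha$.

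The paper uses reversibility with an independent uniform time $\tau_3\in\{1,\dots,C_3\log N+1\}$. Since $\deg(z)P^k(z,y)=\deg(y)P^k(y,z)$, the probability of going from a blue endpoint $z$ to $y$ in $\tau_3-1$ steps is $\frac{\deg(y)}{\deg(z)(C_3\log N+1)}\sum_{k\le C_3\log N}P^k(y,z)$. The sum is at least $\prstart{X_{\tau_1+\tau_2-1}=z}{y}\gtrsim\frac{\log N}{N}$, where $\tau_1+\tau_2\le C_3\log N$ is the truncated Stage~1--2 time. In other words, one re-runs Stages~1--2 \emph{from the target} $y$. This requires restricting $z$ to $\A_x\cap\A_y\cap\A_C$, the nice vertices of blue degree one such that:
\begin{itemize}
\item $z$ has bounded degree, because $\deg(z)$ sits in the denominator;
\item the blue partner of $z$ has bounded degree, so the walk immediately backtracks the blue edge and is at $z$ at time $\tau_1+\tau_2+1$;
\item the $K$-neighbourhood of $z$ is likely disjoint from those of the almost nice vertices first hit from $x$ and from $y$.
\end{itemize}
\Cref{most_blue_nice} gives $|\A_x\cap\A_y\cap\A_C|\asymp N/\log N$, whence $\prstart{X_\tau=y}{x}\gtrsim\deg(y)/N$ for every $y$.

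Two other claims also need fixing. In Stage~2, $X_{\taublue^{(2)}}$ is \emph{not} close in total variation to uniform on blue endpoints. For typical starts, each inter-crossing time is roughly geometric with mean $\asymp\log N$. So with probability bounded below (depending on $\eps$) both crossings occur within $\eps\log N$ steps, and on that event the law lives on $N^{O(\eps)}$ vertices. What holds, and suffices, is the pointwise lower bound of \Cref{transition_probs_on_Vnice} on a constant fraction of the mass. It is obtained by discarding all blue crossings outside levels $[L^-,L^+]$, so that each blue half-edge carries mass at most $(N\log N)^{-1/2}$ (\Cref{bound_prob_specific_blue_edge}). Because this must hold for all $N^2$ pairs $(x,y)$ simultaneously, a second-moment argument is far too weak. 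The paper needs failure probability $o(N^{-2})$ per pair, which it gets from Azuma for the coupling success (\Cref{Succhat_prob}) and an exponential concentration bound for weighted sums over the uniform matching of blue half-edges.

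In Stage~1, a freshly reached vertex fails to be nice with probability of order $(\log N)^{-3/2}$. So ``after order $\log\log N$ red edges the walk lands at a nice vertex'' cannot by itself give the $o(1/N)$ per-start failure you correctly identify as necessary. That needs the exit mass spread over many edges of mass $O(1/\log N)$ each. The paper truncates the explored quasi-tree at edges with $\theta_T(e)\le\ctrunc/\log N$ and shows the walk exits through these. Each such exit leads, with conditional probability at least $c_2$, to a vertex that is almost nice once the arrival edge is discarded. Concentration of $\sum_k q_kI_k$ then gives failure probability $\exp(-c\log N/\ctrunc)$. The result is only a constant probability of hitting $\Vniceprime$ by time $C\log N$, which is then boosted by repeated trials.
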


\begin{lemma}\label{tmixlazy_upper_bound}
There exists a positive constant $C$ such that whp $G^*$ satisfies the following. The mixing time of a lazy random walk on $G^*$ can be upper bounded as $\tmixtext^{\mathrm{lazy}}\le C\log N$.
\end{lemma}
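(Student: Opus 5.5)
This follows from \Cref{hit_upper_bound} together with the equivalence between mixing times and hitting times of large sets for reversible lazy chains, established in \cite{characterisation_of_cutoff} (see also \cite[Theorem 24.21]{MTMC}). That equivalence says the following. For a lazy reversible chain and any $\alpha<\frac12$, there is a constant $C_\alpha$ such that
\[
\tmixtext^{\mathrm{lazy}}(1/4)\le C_\alpha\,\max_{x,\,A:\,\pi(A)\ge\alpha}\estart{\tau_A}{x}.
\]
An equivalent version replaces the expectation by $\hit{\alpha}{\theta}$, with constants depending on $\alpha$ and $\theta$. So the plan has three steps: bound hitting times of large sets for the lazy walk, feed this into the equivalence, and check the constants.

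\textbf{Step 1: pass from the simple walk to the lazy walk.} Realise the lazy walk as the simple walk $X$ run with independent $\mathrm{Geom}(1/2)$ holding times at each step. Since $\tau_A$ depends only on the trajectory, the lazy walk hits $A$ at a time equal to the sum of $\tau_A(X)$ geometric variables. On the event $\{\tau_A(X)\le C\log N\}$, a Chernoff bound gives that this sum is at most $3C\log N$ with probability $1-o(1)$, uniformly in $x$ and $A$. Also, the stationary measures of the two chains coincide, namely $\pi(v)\propto\deg(v)$. Therefore on the whp event of \Cref{hit_upper_bound} (with, say, $\alpha=\frac14$ and $\theta=\frac14$), the lazy chain satisfies $\hit{1/4}{1/2}^{\mathrm{lazy}}\le 3C\log N$ for $N$ large.

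\textbf{Step 2: apply the hitting-time characterisation.} Apply the result of \cite{characterisation_of_cutoff} (Theorem 1.1 there, or its proof), which states that for reversible lazy chains $\tmixtext(\eps)\le \hit{\alpha}{\theta}+C(\alpha,\theta,\eps)\,\trel$ for suitable $\alpha\le\frac12$. This introduces a relaxation-time term, which must itself be shown to be $O(\log N)$.

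\textbf{Step 3 (main obstacle): bound the relaxation time.} I would handle the $\trel$ term by the standard reverse inequality $\trel\lesssim\tmixtext$, which is circular, so that route does not work. Instead I would use the direct bound
\[
\trel\le \max_{x,\,A:\,\pi(A)\ge1/2}\estart{\tau_A}{x}\cdot O(1),
\]
which holds because for a lazy reversible chain the spectral gap is controlled by hitting times of sets of stationary measure at least $1/2$: in the extremal eigenfunction's sign decomposition, the Dirichlet eigenvalue of $\{f>0\}$ is at most the gap, and its inverse is at most the expected exit time. Expected hitting times are in turn bounded by iterating Step 1. Because the high-probability bound is uniform over all starting points $x$, the Markov property gives $\prstart{\tau_A>kC'\log N}{x}\le 2^{-k}$, so $\estart{\tau_A}{x}\lesssim\log N$. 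Hence $\trel\lesssim\log N$, and altogether $\tmixtext^{\mathrm{lazy}}\le C\log N$ whp.

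The delicate point is ensuring that the version of the equivalence being quoted applies to sets of measure $\alpha\le\frac12$, with constants independent of the graph. This is exactly what \cite{characterisation_of_cutoff} provides for lazy reversible chains.
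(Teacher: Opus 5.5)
Your proposal is correct and is essentially the paper's argument. The paper also combines a uniform bound $\max_{x,A:\pi(A)\ge\alpha}\estart{\tau_A}{x}\lesssim\log N$ with the equivalence $\tmixtext^{\mathrm{lazy}}\asymp\tH{\alpha}$ for lazy reversible chains and $\alpha<\frac12$. It cites that equivalence as \cite[Theorem 1.1]{mixing-hitting-large-sets} rather than \cite{characterisation_of_cutoff}, and it gets the hitting-time bound directly from \Cref{tauhat_transition_probs} and \Cref{total_degree}, which is the same input behind \Cref{hit_upper_bound}.

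Your Steps 2--3, the detour through a $\trel$ term and the Dirichlet-eigenvalue bound $\trel\le\tH{1/2}$, are valid but redundant. The bound $\estart{\tau_A}{x}\lesssim\log N$ that you obtain at the end of Step 3 plugs directly into the equivalence you already quoted in your first paragraph.
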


\begin{lemma}\label{trelabs_lower_bound}
There exists a positive constant $C$ such that whp the Cheeger constant of $G^*$ satisfies $\Phi^*\le\frac{C}{\log N}$. From this, it follows that there is a positive constant $c$ such that whp the absolute relaxation time of a simple random walk on $G^*$ satisfies $\trelabs\ge c\log N$.
\end{lemma}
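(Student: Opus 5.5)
The plan is to exhibit a single explicit set of small conductance. Take the slab $A:=\{(z_1,\ldots,z_d)\in\Z_n^d:\ -n/4<z_1\le n/4\}$ from the introduction. We show that whp the edge boundary of $A$ in $G^*$ has $O(N/\log N)$ edges while the volume of $A$ is $\asymp N$. We then pass to $\trelabs$ via Cheeger's inequality.

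\textbf{Step 1 (the boundary).} The black (torus) edges leaving $A$ number exactly $2n^{d-1}$, which is $\ll N/\log N$. For the added edges (blue, green and red together), the number of edges between $A$ and $A^c$ is a sum of independent Bernoulli variables. Its mean is at most $\sum_{x\in A,\,y\notin A}p_{x,y}$, since in $G^*$ the total probability of an edge between $x$ and $y$ is at most $\pblue+\pgreen_{x,y}+\pred_{x,y}=p_{x,y}$.

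Fix $x\in A$ whose first coordinate is at distance $k\ge1$ from the nearer boundary hyperplane of the slab. Every $y\notin A$ satisfies $|x-y|\ge k$. Since there are $\lesssim j^{d-1}$ vertices at distance $j$ from $x$ and $Z_n\asymp 1/\log N$, we get
\[
\sum_{y\notin A}p_{x,y}\ \lesssim\ Z_n\sum_{j=k}^{D}\frac{j^{d-1}}{j^{d}}\ \lesssim\ \frac{1+\log(D/k)}{\log N}.
\]
There are $\lesssim n^{d-1}$ vertices $x\in A$ at each such distance $k\in\{1,\ldots,n/4\}$. Summing over them and using $\sum_{k\le n}\log(n/k)\lesssim n$, the mean number of added crossing edges is $\lesssim n^{d-1}\cdot n/\log N=N/\log N$.

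Because this mean tends to infinity, a Chernoff bound shows that whp the number of crossing edges is at most twice its mean. Hence whp $|E(A,A^c)|\le C_1N/\log N$.

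\textbf{Step 2 (the volume).} Every vertex has degree at least $2d$, so $\sum_{x\in A}\deg(x)\ge 2d|A|\asymp N$. The same holds for $A^c$.

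The total number of added edges has mean $\asymp N$, since $\sum_{x,y}p_{x,y}=N/2$ up to the doubling from $\pblue$. By Chebyshev or Chernoff, whp the total volume is at most $C_2N$. Consequently $\pi(A)$ and $\pi(A^c)$ are both bounded below by a constant.

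Let $S\in\{A,A^c\}$ be the one with $\pi(S)\le1/2$. The boundaries of $A$ and $A^c$ are the same edge set. Therefore whp
\[
\Phi^*\ \le\ \frac{|E(S,S^c)|}{\sum_{x\in S}\deg(x)}\ \le\ \frac{C_1N/\log N}{2d|S|}\ \le\ \frac{C}{\log N},
\]
where we used $|S|\ge N/2-n^{d-1}$.

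\textbf{Step 3 (the relaxation time).} By the easy direction of Cheeger's inequality, $1-\lambda_2\le 2\Phi^*$, where $\lambda_2$ is the second largest eigenvalue of the simple random walk on $G^*$. Since $\lambda^*\ge\lambda_2$, this gives
\[
\trelabs\ \ge\ \frac{1}{1-\lambda_2}\ \ge\ \frac{1}{2\Phi^*}\ \ge\ \frac{\log N}{2C}.
\]
This proves the claim with $c=1/(2C)$.

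The only step needing care is the computation in Step 1. It shows that the logarithmic divergence $\sum_k\log(n/k)$ over the distance $k$ to the boundary hyperplane still gives only $O(n)$, rather than something like $n\log n$. This is what makes the boundary of order $N/\log N$ rather than $N$.
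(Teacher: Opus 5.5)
Your proof is correct and follows essentially the same route as the paper. The paper also takes a half-torus slab $S=\{1,\dots,n/2\}\times\Z_n^{d-1}$, shows that whp the crossing edges number $\asymp N/\log N$ and that both sides have volume $\asymp N$, and then uses $\trelabs\gtrsim 1/\Phi^*$. The only difference is bookkeeping: the paper counts the $\asymp n^{d-1}r^d$ crossing pairs at each distance $r$, whereas you sum over the distance to the boundary hyperplanes, which is the same computation. Your aside about a ``doubling from $\pblue$'' is slightly off (in $G^*$ the three colour probabilities sum exactly to $p_{x,y}$), but it affects nothing.
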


\begin{lemma}\label{trel_asymp_trelabs}
There exist positive constants $c$ and $C$ such that whp $G^*$ has the following property. The relaxation time and the absolute relaxation time of a simple random walk on $G^*$ satisfy $c\trel\le\trelabs\le C\trel$.
\end{lemma}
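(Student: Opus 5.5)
The plan is to reduce the statement to a lower bound on $1+\lambda_{\min}$, where $\lambda_{\min}$ is the smallest eigenvalue of $P$, and to obtain that bound from an estimate showing that $P^t(x,\cdot)$ and $P^{t+1}(x,\cdot)$ overlap. The inequality $\trel\le\trelabs$ holds for every chain, so the content of the lemma is the reverse bound $\trelabs\le C\trel$. By \Cref{trelabs_lower_bound}, whp $\Phi^*\le C/\log N$. The easy direction of Cheeger's inequality, $1-\lambda_2\le 2\Phi^*$, then gives $\trel\ge c\log N$ whp. It therefore suffices to prove that whp
\[
1+\lambda_{\min}\ \ge\ \frac{c}{\log N},
\]
since then $\trelabs=\max\{\trel,(1+\lambda_{\min})^{-1}\}\le \max\{1,C/c\}\,\trel$.

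To bound $1+\lambda_{\min}$, let $\lambda=\lambda_{\min}$, which we may assume is negative, and let $f$ be a corresponding eigenfunction, normalised so that $\|f\|_\infty=1=f(x_0)$ for some vertex $x_0$. For every $t$ we have $\mathbb{E}_{x_0}[f(X_t)]=\lambda^t$, and therefore
\[
|\lambda|^t(1+|\lambda|)=\bigl|\lambda^t-\lambda^{t+1}\bigr|=\bigl|\mathbb{E}_{x_0}[f(X_t)]-\mathbb{E}_{x_0}[f(X_{t+1})]\bigr|\le 2\,\dtv{P^t(x_0,\cdot)}{P^{t+1}(x_0,\cdot)} .
\]
Suppose we can show that whp there are constants $c_0>0$ and $C_0$ such that, with $t=C_0\log N$, every vertex $x$ satisfies $\dtv{P^t(x,\cdot)}{P^{t+1}(x,\cdot)}\le 1-c_0$. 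Then the display gives $|\lambda|^t\le 1-c_0$. Taking logarithms yields $1-|\lambda|\gtrsim 1/t\asymp 1/\log N$, which is exactly the required bound. In words, the problem reduces to showing that the non-lazy walk loses its parity by time of order $\log N$, uniformly over starting points.

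For this parity-loss estimate I would reuse the machinery behind \Cref{hit_upper_bound} and \Cref{tmixlazy_upper_bound}, and I would not look for short odd cycles. Odd cycles of bounded length have vanishing density in $G^*$, so a local argument would lose polylogarithmic factors; a canonical-path argument in the style of Diaconis and Stroock seems to give only $1+\lambda_{\min}\gtrsim(\log N)^{-2-2/d}$. The steps are as follows.
\begin{enumerate}
\item From an arbitrary $x$, use \Cref{hit_upper_bound} to show that within $O(\log N)$ steps the walk reaches a typical neighbourhood with probability at least $1/2$.
\item From there, wait until the second crossing of a blue edge. As described in \Cref{sec:overview}, at this random time $T$ the position is close to uniform on the endpoints of blue edges. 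Running $O(\log N)$ further steps makes it close to $\pi$.
\item Since $\pi P=\pi$, if $X_T$ is approximately distributed as $\pi$ \emph{conditionally on $T$} (or at least conditionally on the parity of $T$), then on the event $\{T\le t\}$ both $X_t$ and $X_{t+1}$ are approximately $\pi$-distributed. This yields overlap at least a constant, which gives $c_0$.
\end{enumerate}

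The main obstacle is step 3: the approximate stationarity of $X_T$ must hold conditionally on the parity of $T$, not merely in the averaged form needed for the hitting-time bounds. I would first try to obtain this from the meeting argument of \Cref{sec:Gstar_and_T}, running it separately for the walk restricted to even and to odd crossing times. The quasi-tree approximation does not see parity: the red and green edges connect points at essentially arbitrary graph distance, so the resulting cycles have both parities in comparable proportions. The transition-probability estimates between blue endpoints should therefore hold in the same form for both parities. If this conditional version proves too delicate, the fallback is to couple two copies of the walk started at $x$, one delayed by a single step, and to make them coalesce at the first blue crossing of each copy after reaching a typical region. This uses that a blue edge lands at a uniformly random vertex, independently of the parity history.
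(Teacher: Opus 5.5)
Your reduction is correct. $\trel\le\trelabs$ always holds, and the easy half of Cheeger's inequality together with \Cref{trelabs_lower_bound} gives $\trel\gtrsim\log N$ whp. The eigenfunction bound $|\lambda|^t(1+|\lambda|)\le 2\,\dtv{P^t(x_0,\cdot)}{P^{t+1}(x_0,\cdot)}$ then reduces the lemma to a uniform parity-loss estimate at some $t\asymp\log N$. The gap is that you never prove this estimate, and nothing in the paper provides it. The estimate is essentially the quantitative bound $1+\lambda_{\min}\gtrsim1/\log N$, which is more than the relative comparison the lemma asks for.

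\Cref{tmixlazy_upper_bound} does not help: it concerns the lazy chain, whose eigenvalues $\frac{1+\lambda}{2}$ are nonnegative and carry no information about the bottom of the spectrum of $P$. \Cref{tauhat_transition_probs} does not help either. Its lower bound is obtained only after averaging over the independent uniform delay $\tau_3$, and its proof lower-bounds $\sum_{k=0}^{C_3\log N}\mathbb{P}_z(X_k=y)$, a sum over times of both parities. So parity is smoothed out by construction.

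Your step 2 claim, that $O(\log N)$ further steps of the non-lazy walk bring it close to $\pi$, is exactly what is unknown at this stage. In the paper, the non-lazy mixing bound is deduced \emph{from} the present lemma via \cite[Proposition 1.8]{characterisation_of_cutoff}. A parity-resolved version of the meeting estimate of \Cref{sec:Gstar_and_T} would mean redoing the quasi-tree coupling and the concentration over the random blue matching while tracking parities. Your only support for it is the heuristic that cycles of both parities are present. The fallback coupling rests on an annealed fact. In the quenched graph the blue matching is fixed, so two copies crossing different blue edges land at two fixed vertices. Making them meet is the quenched problem that \Cref{sec:Gstar_and_T} solves, now with an extra parity constraint.

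The paper avoids any time-scale estimate here. It uses the criterion of \Cref{far_from_bipartite_asymp}, derived from \cite{near_bipartiteness}: if every partition $A\sqcup B$ of the vertices has $|E(A)|+|E(B)|\ge\eps|E|$, then $\trelabs\le C\trel$. The reason is as follows. If $1+\lambda_{\min}$ were much smaller than $1-\lambda_2$, the sign classes $F_\pm$ of the bottom eigenfunction would make the stationary walk avoid edges inside $F_+$ or inside $F_-$ for a long time. But its first edge is uniform, and at least an $\eps$ fraction of edges lie inside a class.

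\Cref{Gstar_far_from_bipartite} then verifies this combinatorial property whp, as follows.
\begin{enumerate}
\item Pair adjacent vertices along the first coordinate.
\item If many pairs are monochromatic, or the induced pattern has large boundary, torus edges already give $\gtrsim N$ edges inside the classes.
\item Otherwise the partition is close to the checkerboard. For a fixed such partition, the added edges inside one class have mean $\asymp N$, so failure has probability $e^{-cN}$.
\item There are only $e^{\delta(\eps)N}$ such partitions, with $\delta(\eps)\to0$, so a union bound finishes.
\end{enumerate}
So the relevant input is not short odd cycles, whose sparsity you correctly note. It is that a constant fraction of the added edges violates every near-checkerboard bipartition. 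That argument needs no information about $\trel$ at all.
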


Once we have these, the proof of \Cref{thm:tmix_trel_order_no_cutoff} can be concluded quickly as follows.

\begin{proof}[Proof of \Cref{thm:tmix_trel_order_no_cutoff}] First let us prove the results for the modified graph $G^*$.

From \Cref{tmixlazy_upper_bound} we know that $\trel\lesssim\tmixtext^{\mathrm{lazy}}\lesssim\log N$ whp, from \Cref{trelabs_lower_bound} we know that $\trelabs\gtrsim\log N$ whp and from \Cref{trel_asymp_trelabs} we know that $\trel\asymp\trelabs$ whp. Together, these give that whp $\trel\asymp\trelabs\asymp\log N$.

Using this, \Cref{hit_upper_bound} and~\cite[Proposition 1.8 and Remark 1.9]{characterisation_of_cutoff} we get that whp $\tmix{\theta}\asymp\tmixtext^{\mathrm{lazy}}\left(\theta\right)\asymp\log N$ for all $\theta\in(0,1)$.

Since $\tmixtext\asymp\trelabs$ and $\tmixtext^{\mathrm{lazy}}\asymp\trel$, neither a simple nor a lazy random walk exhibits cutoff.

From \Cref{couple_Gtil_Gstar} we know that $G^*$ and $\til{G}$ can be coupled to agree whp, hence the results also carry over to $\til{G}$.
\end{proof}

The most difficult part of proving the above lemmas is upper bounding the hit time, which then implies an upper bound on the lazy mixing time on $G^*$. To achieve this, we construct a random time $\tau$ such that it is of order at most $\log N$, and for any starting point, the distribution of $X_{\tau}$ is comparable to the invariant distribution from below. The exact statement we prove is as follows.

\begin{proposition}\label{tauhat_transition_probs}
There exist positive constants $\theta$ and $C$ such that whp $G^*$ satisfies the following. There exists a random time $\tau$ such that $\tau\le C\log N$, and for any $x,y\in V$, we have $\prscond{X_{\tau}=y}{G^*}{x}\ge\theta\frac{\deg(y)}{N}$.
\end{proposition}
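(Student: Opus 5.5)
We build $\tau$ in three stages, as in the overview, and then lower bound the transition probability stage by stage. Fix $\theta$ small and $C$ large. Let $\Vnice$ be a set of ``typical'' vertices: those whose neighbourhood of radius $\asymp\log\log N$ in $\Ggr$ can be coupled with a quasi-tree whose balls have no deserts or hubs. Put $\tau_1:=\min\{\tau_{\Vnice},C_1\log N\}$. Let $\tau_2$ be the time at which the walk after $\tau_1$ crosses its second blue edge, truncated at $\tau_1+C_2\log N$. Finally set $\tau:=\tau_2+\xi$, where $\xi$ is uniform on $\{0,1,\dots,C_3\log N\}$ and independent of everything else. By construction $\tau\le C\log N$ with $C=C_1+C_2+C_3$. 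On the event that a truncation is active we only need a lower bound, so we can discard that event provided its probability is at most $1/2$.

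\textbf{Step 1 (reaching a typical neighbourhood).} We show that whp, uniformly in $x$, $\prstart{\tau_{\Vnice}\le C_1\log N}{x}\ge 1/2$. Following the boxes argument sketched in the overview, we decompose the neighbourhood of the trajectory into independent boxes. We then show that a constant proportion of boxes are exited quickly via a red edge, while no box holds the walk for more than $O(\log N)$ total time. It follows that within $C_1\log N$ steps the walk crosses $\gtrsim\log\log N$ red edges. Each red crossing lands at an almost uniformly spread endpoint, which lies in $\Vnice$ with probability bounded below, so after $\log\log N$ crossings the walk is in $\Vnice$ with high probability. \textbf{This is the main obstacle.} It has to hold for every starting point simultaneously, so a union bound over $N$ vertices requires failure probabilities $o(1/N)$ from the graph randomness. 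I would first try to prove exponential tail bounds on box occupation times via a Chernoff argument over independent boxes.

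\textbf{Step 2 (blue edges).} Start from $v\in\Vnice$. Blue edges have density $\asymp 1/\log N$ per vertex, and the walk on the quasi-tree is transient with positive speed (\Cref{sec:T}). Hence two blue crossings occur within $C_2\log N$ steps with probability at least $3/4$. Let $W$ be the set of endpoints of blue edges. We aim to show that whp, for all $v\in\Vnice$ and all $w\in W$,
\[\prstart{X_{\tau_2}=w,\ \text{no truncation}}{v}\ge c\,\frac{1}{|W|}.\]
The method follows \cite{weighted_random_matching}: the first blue edge sends the walk to an essentially uniform point, and conditional on that, the second blue step is matched against the walk reversed from $w$. Concretely, we run independent walks from $v$ and from $w$ on independent quasi-tree approximations of their neighbourhoods, and estimate the probability that their blue-edge exits coincide. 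We control the second moment of this collision count over the random blue matching and take a union bound over pairs $(v,w)$.

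\textbf{Step 3 (spreading from $W$ to all vertices).} We show that whp, for every $y$,
\[\frac1{|W|}\sum_{w\in W}\frac{1}{C_3\log N}\sum_{s\le C_3\log N}P^s(w,y)\ge c\,\frac{\deg(y)}{N}.\]
By reversibility, $\pi(w)P^s(w,y)=\pi(y)P^s(y,w)$, so it suffices to show that the walk started at $y$ spends an expected $\gtrsim\log N\cdot|W|/N$ time in $W$ during $[0,C_3\log N]$. This follows because, by Step 1 applied to $y$, the walk reaches $\Vnice$ within $C_1\log N$ steps with probability at least $1/2$. From there it accumulates visits to $W$ at rate $\asymp 1/\log N$ over the remaining time. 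Here $|W|\asymp N/\log N$ and the $\deg(w)$ are typically bounded.

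Combining the three stages with the strong Markov property at $\tau_1$ and $\tau_2$ gives $\prscond{X_\tau=y}{G^*}{x}\ge\theta\,\deg(y)/N$.
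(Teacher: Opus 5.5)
Your three-stage construction (hit a typical neighbourhood, cross two blue edges, add an independent uniform time, then use reversibility) is the same as the paper's. The way you combine the stages has a genuine gap, and it sits in Steps 2 and 3.

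\textbf{The gap.} Step~2 asserts $\prstart{X_{\tau_2}=w,\ \text{no truncation}}{v}\ge c/|W|$ for \emph{every} $w\in W$. The collision method you describe cannot give this. It needs three things from the target side:
\begin{itemize}
\item the reversed walk must reach level $K$ of its quasi-tree without crossing a blue edge;
\item it must spread its first blue exit over many half-edges, so that all weights $w_{e,f}$ are $\lesssim 1/(N\log N)$ and the sum over the random matching concentrates;
\item its $K$-neighbourhood must be disjoint from that of $v$.
\end{itemize}
Some targets fail the second requirement: for instance a vertex of large blue degree, or one next to a hub or an atypical ball, where the reversed walk is likely to leave through one of a few specific blue half-edges. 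Then the weights from that side are not small, the matching sum is not concentrated, and the method gives no pointwise bound. This is why \Cref{transition_probs_on_Vnice} is stated only for $y\in\Vnice$ whose $K$-neighbourhood is disjoint from that of the starting almost nice vertex. So Step~2 only yields the bound on a subset $W_v\subset W$ that depends on $v=X_{\tau_1}$, hence on $x$.

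Your Step~3 then no longer closes. It only says the walk from $y$ spends expected time $\gtrsim 1$ in $W$. It says nothing about how much of that time falls in the $x$-dependent good set, whereas you need $\sum_{w\in W_v}\deg(w)^{-1}\sum_s P^s(y,w)\gtrsim 1$.

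\textbf{The missing idea.} The paper also applies the \emph{pointwise} two-blue-crossing bound from $y$. By reversibility and $\tau_1+\tau_2\le C_3\log N$,
\[\sum_{k\le C_3\log N}\prstart{X_k=z}{y}\ \ge\ \prstart{X_{\tau_1+\tau_2-1}=z}{y}\ \gtrsim\ \frac{\log N}{N}\qquad\text{for } z\in\A_y .\]
It then makes the good sets for $x$ and $y$ overlap in $\asymp N/\log N$ vertices:
\begin{itemize}
\item $\A_x$ is the set of $z\in\A$ such that, with probability at least $\frac12$, the almost nice vertex hit from $x$ by time $C_1\log N$ has $K$-neighbourhood disjoint from that of $z$.
\item Using $\mathbf{1}\{a\ge\frac12\}\ge 2a-1$, \Cref{hit_almost_nice_quickly} with $\frac78$, and \Cref{most_blue_nice}, one gets $|\A_x|\ge(\frac34-\frac{11}{4}\theta)\cb\frac{N}{\log N}$. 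This is more than half of $|\A|$, so $|\A_x\cap\A_y\cap\A_C|\asymp N/\log N$ for every pair $x,y$.
\item The paper works with $X_{\tau_1+\tau_2-1}=z$, the nice vertex before the second blue crossing, rather than with the blue endpoint. It forces an immediate backtrack across $z$'s unique blue edge; this is why $\A_C$ requires $\db(z)=1$ and bounded degrees at both ends. The walk from $x$ then sits at a vertex to which the reversed bound from $y$ applies.
\end{itemize}

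\textbf{Step 1.} Target $\Vniceprime$ rather than $\Vnice$, since a vertex entered through a red or blue edge cannot be asked to be nice with that edge included. Also, $\log\log N$ red crossings, each succeeding with constant probability, give only a $(\log N)^{-c}$ failure bound, not the $o(1/N)$ needed for a union bound over starting points. The paper gets $o(1/N)$ by spreading the exit distribution over truncated edges of mass at most $\ctrunc/\log N$ each and applying concentration to $\sum_k q_kI_k$.
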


The random time $\tau$ we use is of the form $\tauniceprime+\taublue^{(2)}+\tauunif$, where $\tauniceprime$ is the time needed for the walk to reach a `sufficiently nice neighbourhood' of $G^*$, $\taublue^{(2)}$ is the time needed to cross two blue edges after this {(including the case where the same blue edge is crossed twice)}, and $\tauunif$ is a further uniform random time with mean of order $\log N$ to ensure that $X_{\tau}$ is spread out on the set of all vertices, not only on the $\asymp\frac{1}{\log n}$ proportion of them that have a positive blue degree. 

A large proportion of the work goes into proving that (whp $G^*$ is such that) for any starting point that has a sufficiently nice neighbourhood in $G^*$, the walk at time $\taublue^{(2)}-1$ is spread out on the set of vertices that have a positive blue degree and a sufficiently nice neighbourhood in $G^*$. (See \Cref{transition_probs_on_Vnice} for the exact statement, and the preceding parts for the definitions it builds on.)
The proof of this relies on writing
\begin{align}\label{eq:taubluetwo_approx_sum}
\prscond{X_{\taublue^{(2)}-1}=y}{G^*}{x}\quad=\quad\db(y)\sum_{e,f}\prscond{X_{\taublue^X}=e}{G^*}{x}\prscond{Y_{\taublue^Y}=f}{G^*}{y}\1{e\simb f}\:,
\end{align}
where $X$ and $Y$ are independent random walks on $G^*$ from $x$ and $y$ respectively, $\taublue^X$ and $\taublue^Y$ are the times when they first cross a blue (half-)edge, the sum runs over pairs of blue half-edges, and $e\simb f$ means that $e$ and $f$ are the two halves of the same blue edge. Instead of trying to study the behaviour of a walk on $G^*$ directly, we define a random tree-like structure $T$, called a random quasi-tree, that locally approximates $G^*$ well and we study a walk $\til{X}$ on that instead. We explore a random neighbourhood of $x$ and $y$ and couple them to two independent quasi-trees to get a good estimate on the probabilities $\prscond{X_{\taublue^X}=e}{G^*}{x}$ and $\prscond{Y_{\taublue^Y}=f}{G^*}{y}$ for each $e$ and $f$, and we use the randomness of the matching between the blue half-edges to show that the sum \eqref{eq:taubluetwo_approx_sum} is concentrated around a value of order $\db(y)\frac{\log N}{N}$. \footnote{More precisely, we consider a lower bound on the sum where we include various desirable events in the probabilities and we prove that this sum is concentrated.}

\subsection{Further notational conventions}

In some estimates we write `$\const$' to denote a constant, and its value might change from expression to expression. (E.g.\ $\exp\left(-\const\log n+\log\log n\right)\lesssim\exp\left(-\const\log n\right)$.) We often give definitions that have a constant parameter that is assumed to be sufficiently small or sufficiently large. In these cases we mention this assumption in the definition, but we do not necessarily repeat it in the statements of later results. We usually denote constants that take large values by upper case letters and constants that take small values by lower case ones.

In what follows, we often consider quantities like $0.9n$ or $C\log n$ and treat them as integers. In these cases one can take $\lfloor\cdot\rfloor$ or $\lceil\cdot\rceil$, but we omit this from the notation.

When clear from context, we also often omit the subscript or superscript $n$ from the notation.

\subsection{Organisation}

In \Cref{sec:prelim_estimates} we prove some preliminary estimates on the degrees and the growth of balls in $G^*$. In \Cref{sec:T} we define a random quasi-tree $T$ and establish some important properties of a random walk on it, including a form of transience and a concentration of its speed and entropy. In \Cref{sec:Gstar_and_T} we explain the coupling between two neighbourhoods of $G^*$ and two quasi-trees, estimate the probability of its success, and bound the time $\taublue^{(2)}$ transition probabilities between two vertices with nice neighbourhoods. In \Cref{sec:hit_nice} we prove that from any starting point, a walk on $G^*$ hits a nice neighbourhood quickly. In \Cref{sec:conclusion} we conclude the proof of Lemmas~\ref{hit_upper_bound}, \ref{tmixlazy_upper_bound}, \ref{trelabs_lower_bound} and~\ref{trel_asymp_trelabs}.

\section{Some preliminary estimates}\label{sec:prelim_estimates}

In this section, we establish some estimates that will be used throughout the paper.

\subsection{Bounds on the degrees}

\begin{lemma}\label{max_deg}
There exists a constant $C$ such that with probability $1-o\left(\frac1N\right)$ all vertices $v$ in $G^*$ satisfy $\deg(v)\le C\frac{\log N}{\log\log N}=:\dmax$.
\end{lemma}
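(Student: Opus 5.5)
The plan is a Chernoff-type bound for sums of independent Bernoulli variables, combined with a union bound over the $N$ vertices. Fix a vertex $v$. Its degree in $G^*$ is $2d$ (from the torus) plus the number of blue, green and red edges at $v$. These added edges come from independent indicators, one per pair $\{v,y\}$ and colour, with success probabilities $\pblue_{v,y}$, $\pgreen_{v,y}$, $\pred_{v,y}$. Summed over the three colours and over $y$, these probabilities total $\sum_y p_{v,y}=1$. So $\deg(v)-2d$ is a sum of independent Bernoulli random variables with total mean $\mu=1$.

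The key step is the standard bound for such a sum $S$ with mean $\mu$. For any $k$,
\[
\pr{S\ge k}\quad\le\quad \frac{e^{k-\mu}\mu^k}{k^k}\quad\le\quad \left(\frac{e}{k}\right)^k .
\]
This follows from the exponential Markov inequality $\E{e^{\lambda S}}\le \exp\left(\mu(e^\lambda-1)\right)$ with $\lambda=\log k$. Alternatively, one can bound $\pr{S\ge k}\le\sum_{|I|=k}\prod_{i\in I}p_i\le \mu^k/k!$.

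Now take $k=C\frac{\log N}{\log\log N}-2d$. Then
\[
\log\left((e/k)^k\right)=-k(\log k-1)=-(1+o(1))\,C\frac{\log N}{\log\log N}\cdot\log\log N=-(1+o(1))\,C\log N .
\]
For $C\ge 3$, say, this gives $\pr{\deg(v)\ge \dmax}\le N^{-2}$ for large $N$. A union bound over the $N$ vertices then gives failure probability at most $N^{-1}\cdot N^{-1}\cdot N^{0}$-type terms, i.e. $O(N^{-2}\cdot N)=o(1/N)$, as required.

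There is no real obstacle here. The only points needing care are the bookkeeping and the subtraction of the constant $2d$, which is negligible. The bookkeeping is to check that multigraph edges are counted with multiplicity: a pair may carry both a blue edge and a red or green edge. Each coloured edge is a separate Bernoulli indicator, so multiplicities are automatically included in $S$.
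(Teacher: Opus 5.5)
Your argument is the paper's: apply the Chernoff bound of \Cref{sum_of_Berns} to the sum of independent Bernoulli indicators with total mean $\sum_y p_{v,y}=1$, then take a union bound over the $N$ vertices. There is one small slip at the end. A per-vertex bound of $N^{-2}$ only gives $O(1/N)$ after the union bound, not $o(1/N)$. However, your own computation actually gives $N^{-(1+o(1))C}$, so for $C\ge 3$ the per-vertex probability is $o(N^{-2})$, which is what the paper uses, and the conclusion follows.
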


\begin{proof}
For each $v$ we upper bound $\pr{\deg(v)\ge C\frac{\log N}{\log\log N}}$ using \Cref{sum_of_Berns} and get that for sufficiently large $C$ this is $\ll\frac{1}{N^2}$. Then we take a union bound over $v$.
\end{proof}

\begin{lemma}\label{total_degree}
Whp the sum of all green, red and blue degrees in $G^*$ satisfies $N-N^{\frac23}\le\sum_{v}\dgrb(v)\le N+N^{\frac23}$.
\end{lemma}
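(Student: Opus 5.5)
The sum $S:=\sum_v \dgrb(v)$ is twice the number of added (blue, green or red) edges in $G^*$. By \Cref{def:Gstar}, for each unordered pair $\{x,y\}$ of distinct vertices we have three independent Bernoulli indicators, with parameters $\pblue_{x,y}$, $\pgreen_{x,y}$ and $\pred_{x,y}$. Hence
\[
S=2\sum_{\{x,y\}}\left(B_{x,y}+G_{x,y}+R_{x,y}\right),
\]
a sum of independent Bernoulli variables multiplied by $2$. The plan is to compute the mean exactly and then apply Chebyshev's inequality.

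The mean is immediate. Since $\pblue_{x,y}+\pgreen_{x,y}+\pred_{x,y}=p_{x,y}$ and $Z_n$ is chosen so that $\sum_y p_{x,y}=1$ for every $x$, we get
\[
\E{S}=2\sum_{\{x,y\}}p_{x,y}=\sum_x\sum_{y\ne x}p_{x,y}=N.
\]
For the variance, independence gives
\[
\vr{S}=4\sum_{\{x,y\}}\sum_{\text{colours}}p^{\mathrm{col}}_{x,y}\left(1-p^{\mathrm{col}}_{x,y}\right)\le 4\sum_{\{x,y\}}p_{x,y}=2N.
\]
Chebyshev's inequality then yields
\[
\pr{|S-N|>N^{2/3}}\le \frac{2N}{N^{4/3}}=2N^{-1/3}=o(1),
\]
which is the claim. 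If a stronger probability bound were wanted, a Chernoff bound for sums of independent Bernoullis (e.g.\ \Cref{sum_of_Berns}, as used in \Cref{max_deg}) would give probability $\exp(-cN^{1/3})$, but this is not needed.

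There is no real obstacle here. The only points to check are that each added edge contributes $2$ to the sum of degrees (its two endpoints are distinct, since we only consider pairs of distinct vertices), and that the normalisation of $Z_n$ makes the mean exactly $N$ rather than merely $\asymp N$. The second point follows from \Cref{def:small-world_network}, which requires $\sum_y p_{x,y}=1$ for all $x$; this is consistent because the torus is transitive.
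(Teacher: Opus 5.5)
Your proof is correct and follows the paper's argument: the paper also writes $\frac12\sum_v\dgrb(v)$ as a sum of independent Bernoulli variables with mean $\frac12N$ and applies a concentration bound. The only difference is that the paper cites the Bernstein-type bound of \Cref{sum_indep_RVs}, which gives a much smaller failure probability, whereas your Chebyshev estimate with $\vr{S}\le 2N$ gives $2N^{-1/3}$, which already suffices for the whp claim.
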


\begin{proof}
Note that $\frac12\sum_{v}\dgrb(v)=\sum_{\{x,y\}:x\ne y}\left(\1{x\simg y}+\1{x\simr y}+\1{x\simb y}\right)$ is a sum of independent Bernoulli random variables and has mean $\frac12N$. Then use for example, \Cref{sum_indep_RVs}.
\end{proof}

\begin{lemma}\label{total_blue_degree}
Whp the sum of all blue degrees in $G^*$ satisfies $\cb\frac{N}{\log N}-\sqrt{N}\le\sum_{v}\db(v)\le \cb\frac{N}{\log N}+\sqrt{N}$ where $\cb\asymp1$ is such that $\pblue=\frac{\cb}{N\log N}$. Also, $\pr{\sum_{v}\db(v)\ge2\cb\frac{N}{\log N}}\ll\frac{1}{N^2}$.
\end{lemma}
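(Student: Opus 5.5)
Both claims follow the pattern of \Cref{total_degree}: write the total blue degree as twice a sum of independent Bernoulli variables and apply a concentration bound. For each unordered pair $\{x,y\}$ with $x\ne y$, the blue edge is present independently with probability $\pblue=\frac{\cb}{N\log N}$. Here $\cb\asymp1$ because $\min_{x,y}p_{x,y}=Z_n/D^d\asymp\frac{1}{\log N}\cdot\frac{1}{N}$. Hence
\[
\frac12\sum_v\db(v)=\sum_{\{x,y\}:x\ne y}\1{x\simb y}=:S,
\]
and $S$ is a sum of $\binom N2$ independent $\Bern{\pblue}$ variables. Its mean is $\binom N2\pblue=\frac{\cb(N-1)}{2\log N}$, so $\E{\sum_v\db(v)}=\cb\frac{N}{\log N}-O\left(\frac1{\log N}\right)$. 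The deterministic correction $\frac{\cb}{\log N}$ is negligible compared with $\sqrt N$, so it can be absorbed.

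For the first statement I will use Chebyshev. We have $\vr{S}\le\E{S}\asymp\frac{N}{\log N}$, so $\vr{\sum_v\db(v)}\le4\E{S}\lesssim\frac{N}{\log N}$. Since $\E{\sum_v\db(v)}$ differs from $\cb\frac{N}{\log N}$ by $o(1)$, it suffices to bound the probability that $\sum_v\db(v)$ deviates from its mean by more than $\frac12\sqrt N$. Chebyshev gives
\[
\pr{\left|\sum_v\db(v)-\E{\sum_v\db(v)}\right|\ge\tfrac12\sqrt N}\lesssim\frac{N/\log N}{N}=\frac1{\log N}=o(1),
\]
which is the whp claim. Alternatively, \Cref{sum_indep_RVs} (presumably a Bernstein/Chernoff-type bound) gives a much stronger tail.

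For the second statement I need a bound $\ll N^{-2}$, so Chebyshev is too weak and an exponential bound is required. The event $\sum_v\db(v)\ge2\cb\frac N{\log N}$ is the event $S\ge\cb\frac{N}{\log N}$, which is roughly $2\E{S}$. A multiplicative Chernoff bound for sums of independent Bernoullis gives
\[
\pr{S\ge(1+\delta)\E{S}}\le\exp\left(-\frac{\delta^2}{2+\delta}\E{S}\right).
\]
Here $\delta=1-o(1)$, so the bound is $\exp\left(-c\frac{N}{\log N}\right)$, which is far smaller than $N^{-2}$. This is presumably what \Cref{sum_of_Berns} or \Cref{sum_indep_RVs} provides, and I would invoke it directly. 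There is no real obstacle beyond checking constants: the only care needed is the mean/centering correction in the first part (the $-1$ in $N-1$) and confirming that $\cb\asymp1$, both of which are immediate.
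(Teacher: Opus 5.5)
Your proposal is correct and follows the paper's proof: the paper likewise writes $\frac12\sum_v\db(v)$ as a sum of independent $\Bern{\pblue}$ variables and applies a concentration bound, citing \Cref{sum_indep_RVs} for both parts, whereas you use Chebyshev for the whp statement, which is enough there. One harmless slip: the threshold is $\frac{2N}{N-1}\E{S}$, so $\delta=\frac{N+1}{N-1}=1+o(1)$ rather than $1-o(1)$, and this only makes the Chernoff bound stronger.
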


\begin{proof}
Note that $\frac12\sum_{v}\db(v)=\sum_{\{x,y\}:x\ne y}\1{x\simb y}$ is a sum of independent $\Bern{\pblue}$ random variables, and use e.g.\ \Cref{sum_indep_RVs}.
\end{proof}

\subsection{Bounds on the growth of balls}

In general, we can bound the volume of a ball in $G^*$ as follows.

\begin{lemma}\label{volume_tail_bound}
For any radius $r$, any $s>0$ and any given vertex $x$ of $G^*$, the volume of a ball $B_r$ of radius $r$ centred at $x$ can be bounded as
\[\pr{|B_r|\ge(2d+3)^rs}\quad\le\quad e^{\frac14}e^{-\frac14s}.\]
\end{lemma}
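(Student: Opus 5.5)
We will dominate $|B_r|$ by the total population up to generation $r$ of a Galton--Watson-type branching process, and then control that population with an exponential moment. Explore the ball breadth first from $x$. Each explored vertex $v$ contributes at most $2d$ torus neighbours. It also contributes its added (green, red and blue) edges, and the number of these is stochastically dominated by $\sum_y \mathrm{Bern}(p_{v,y})$ regardless of what has already been revealed. The reason is that edges already revealed only remove candidates, while the remaining pairs stay independent and have total mean at most $\sum_y(\pblue_{v,y}+\pgreen_{v,y}+\pred_{v,y})=\sum_y p_{v,y}=1$. So the number of new vertices found from $v$ is at most $2d+\xi_v$, where the $\xi_v$ can be coupled to be i.i.d.\ with $\xi_v\stle\mathrm{Poisson}(1)$ (a sum of independent Bernoullis with mean at most $1$ is dominated by a Poisson of mean $1$ after the usual coupling $\Bern{p}\stle\mathrm{Poisson}(-\log(1-p))$). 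Adjusting for the slight excess $-\log(1-p)\ge p$ is a minor point, since $\sum_y -\log(1-p_{v,y})\le 1+O(\max p)$.

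With $W_k$ the generation-$k$ size of the dominating process, $W_0=1$ and $W_{k+1}\le\sum_{i=1}^{W_k}(2d+\xi_i)$. This gives $|B_r|\le\sum_{k\le r}W_k$. Set $m=2d+3$, which is larger than the mean offspring $2d+1$. The goal is a uniform bound $\E{\exp(\tfrac14 S_r/m^r)}\le e^{1/4}$, where $S_r=\sum_{k\le r}W_k$, because Markov's inequality then immediately gives $\pr{S_r\ge m^rs}\le e^{1/4}e^{-s/4}$.

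To prove the exponential bound, induct on $r$ using the recursive tree structure. The key fact is that $S_r$ is dominated by $1+\sum_{i=1}^{2d+\xi}S_{r-1}^{(i)}$ with independent copies $S_{r-1}^{(i)}$. Put $\lambda_r=\tfrac14 m^{-r}$ and define $f_r(\lambda)=\log\E{e^{\lambda S_r}}$. We aim to show $f_r(\lambda_r)\le \tfrac14\cdot c_r$ for some $c_r\le 1$, and more usefully the stronger inductive hypothesis $f_j(\lambda)\le \lambda m^{j}$ for all $\lambda\le \tfrac14 m^{-j}$. Granting it at level $r-1$ with $\lambda\le\tfrac14 m^{-r}$, so that $\lambda m\le\tfrac14 m^{-(r-1)}$, we get
\[
f_r(\lambda)\le \lambda + 2d\,f_{r-1}(\lambda)+\log\E{e^{\xi f_{r-1}(\lambda)}}\le \lambda+2d\lambda m^{r-1}+\bigl(e^{\lambda m^{r-1}}-1\bigr).
\]
Since $\lambda m^{r-1}\le 1/(4m)$ is small, $e^u-1\le u(1+u)\le 2u$, and the right side is at most $\lambda(1+(2d+2)m^{r-1})\le\lambda m^r$. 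This is exactly where the slack $m=2d+3$ rather than $2d+1$ gets used. The base case is $f_0(\lambda)=\lambda$. Plugging in $\lambda=\tfrac14 m^{-r}$ gives $\E{e^{\lambda S_r}}\le e^{1/4}$, as required.

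The main obstacle is making the domination in the exploration rigorous. Revealed edges create dependencies, and the Poisson comparison needs care with $-\log(1-p)$ versus $p$. If that excess causes trouble, I would instead use the Bernoulli product directly, through $\E{e^{t\,\mathrm{Bern}(p)}}=1+p(e^t-1)\le e^{p(e^t-1)}$. This gives the same bound $\log\E{e^{t\xi}}\le e^t-1$ with no Poisson coupling needed, and I expect this is the cleaner route.
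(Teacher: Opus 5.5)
Your proof is correct and follows essentially the same route as the paper: dominate the ball by a Galton--Watson process with offspring $2d+\dgrb$, use $\E{e^{\theta\dgrb}}\le e^{e^{\theta}-1}\le e^{2\theta}$ for small $\theta$, propagate an exponential-moment bound by induction on $r$, and finish with a Chernoff bound at $\theta=\frac14(2d+3)^{-r}$. The only cosmetic difference is in how the cumulative count is handled. The paper inducts forward over single generations and gives each individual an extra ``copy of itself'' as a child (offspring $2d+1+\dgrb$), whereas you recurse on the cumulative total $S_r=1+\sum_i S_{r-1}^{(i)}$ over the first generation. Your MGF-based alternative is the right choice, so the Poisson coupling can be dropped entirely.
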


The proof of \Cref{volume_tail_bound} is deferred to \Cref{app:volume_tail_bound}. Once we have this, a union bound quickly implies the following statement, bounding the volumes of all sufficiently large balls in $G^*$ simultaneously.

\begin{corollary}\label{volume_high_prob_bound}
For any constant $C'>0$ there exists a constant $C$ such that for any $r\ge C'\log\log N$ whp every ball of radius $r$ in $G^*$ has volume $\le e^{Cr}$.
\end{corollary}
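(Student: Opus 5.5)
The plan is a union bound over the $N$ possible centres, built on the tail estimate of \Cref{volume_tail_bound}. Fix $C'>0$ and a radius $r\ge C'\log\log N$, and let $x$ range over the $N$ vertices of $G^*$. By \Cref{volume_tail_bound}, applied with $s=s(r)$ to be chosen,
\[
\pr{\exists x:\ |B_r(x)|\ge(2d+3)^r s}\quad\le\quad N e^{\frac14}e^{-\frac14 s}.
\]
We would like the right-hand side to be $o(1)$ while keeping $(2d+3)^r s\le e^{Cr}$.

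The naive choice $s=8\log N$ gives failure probability $e^{1/4}N^{-1}=o(1)$. It remains to check that $(2d+3)^r\cdot 8\log N\le e^{Cr}$. Since $r\ge C'\log\log N$, we have $\log N\le e^{r/C'}$, so
\[
(2d+3)^r\cdot 8\log N\quad\le\quad 8\exp\left(r\log(2d+3)+r/C'\right)\quad\le\quad e^{Cr}
\]
once $C>\log(2d+3)+1/C'$ and $r$ exceeds a constant, which holds for large $N$. This settles any single radius $r$.

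The one point needing care is whether ``whp'' is meant simultaneously over all radii $r\ge C'\log\log N$. For a fixed $r$ the above suffices. To get uniformity, first note that only $r\le N$ matter, since a ball has at most $N$ vertices and $e^{Cr}\ge N$ once $r\ge\log N/C$. Then replace $s$ by $s=8\log N+4r$. The failure probability for radius $r$ becomes $\lesssim N^{-1}e^{-r}$, and summing over $r\ge1$ gives $O(1/N)=o(1)$. The volume bound is unaffected: $(2d+3)^r(8\log N+4r)\le e^{Cr}$ still holds after enlarging $C$ by a constant. The main (mild) obstacle is therefore only this bookkeeping of constants; the probabilistic content is entirely contained in \Cref{volume_tail_bound}.
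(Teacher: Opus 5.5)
Your proof is correct and is exactly the argument the paper intends: a union bound over the $N$ centres applied to the tail estimate of \Cref{volume_tail_bound} with $s$ of order $\log N$, using $r\ge C'\log\log N$ to absorb the $\log N$ factor into $e^{Cr}$. Your choice $s=8\log N+4r$, which makes the bound hold simultaneously over all radii, is a harmless refinement; the restriction to $r\le N$ is not even needed, since $\sum_{r\ge1}e^{-r}$ converges.
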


\section{Limiting tree}\label{sec:T}

In this section we start by defining a random tree-like structure called a quasi-tree, which will be used to locally approximate the graph $G^*$. Then we study a random walk on it, we establish a form of transience, and we prove a concentration result about its speed and entropy.

In the next section, we will continue working with this quasi-tree and explain how to couple it closely with~$G^*$.

In \Cref{sec:hit_nice} we will work towards proving that for any starting point the walk on $G^*$ hits a nice neighbourhood quickly. For this, we will consider a quasi-tree which is different from the one considered here.

\subsection{Values of some parameters}\label{sec:parameters}

For ease of reference, we list all major parameters used in the section below. Their roles will become clear in due course.

We let
\begin{align*}
&R:=C_R\log\log N,\quad K:=C_K\log\log N,\quad M:=C_M\log\log N,\\
&L^-:=c_L\log N+C_L^-\sqrt{(\log N)(\log\log N)},\quad L^+:=c_L(1+c^+_L)\log N,
\end{align*}
where $c_L$ is a given constant, to be specified after \Cref{entropy_concentration}, while $C_R$, $C_K$, $C_M$ and $C_L^-$ are sufficiently large constants and $c_L^+$ is a sufficiently small constant, to be specified later.

\subsection{Definition of a quasi-tree}

We define a random quasi-tree as follows.

\begin{definition}\label{def:random_quasi_tree}
A \emph{random quasi-tree} is a random graph $T$ together with a map $\iota$ from its vertices to the vertex set of $G$, obtained as follows.

We start from a root vertex $\rho$ with some associated $\iota(\rho)$. Then we consider a ball of radius $R$ around $\iota(\rho)$ in a random copy of $\Gg$, and for each vertex in this ball we also add its neighbourhood of radius $R$ in $G$. We call the resulting neighbourhood of $\rho$ an \emph{$(R+R)$-ball}, and for each vertex $v$ in it we let $\iota(v)$ be the corresponding vertex in $G$.

{Then for each vertex $v$ in the $(R+R)$-ball, we sample $\dr(v)\eqdist\dr$ independently, and add this many new edges from it, called \emph{long-range edges}.}
For each $w$ that is the new endpoint of a newly added long-range edge from a vertex $v$, we sample $\iota(w)$ choosing a vertex $x$ with probability proportional to $\pred_{\iota(v),x}$, and attach a new $(R+R)$-ball around it, sampling from a new copy of $\Gg$. {Then we continue analogously from each new $(R+R)$-ball.}

The copies of $\Gg$ we use for the different $(R+R)$-balls, the long-range degrees $\dr(v)$ of the vertices, and the choices of $\iota$ for the new endpoints of the long-range edges are all independent.

For simplicity, we sometimes refer to an $(R+R)$-ball as a \emph{ball}.

We call the set of vertices in an $(R+R)$-ball that are of graph distance $\ge R$ from its centre the \emph{boundary} of the ball. We use the graph distance of the quasi-tree $T$. (Note that this is different from the usual definition of a boundary.)
\end{definition}

Sometimes it is helpful to think about the quasi-tree induced by one specific realisation of the graph~$G^*$, so we also make the following definition. (In this section and the next section, quasi-tree normally refers to a random quasi-tree as in \Cref{def:random_quasi_tree}. We occasionally write quasi-tree to refer to some other $T$ and $\iota$ that is a possible realisation of a random quasi-tree.)

\begin{definition}\label{def:quasi-tree_of_Gstar}
{Given the graph $G^*$ and a vertex $x$ of $G^*$, the \emph{quasi-tree from $x$ associated to $G^*$} is a graph $T_x(G^*)$ together with a map $\iota$ from its vertices to the vertex set of $G$, obtained as follows.}

{We start from a root vertex $\rho$ with $\iota(\rho)=x$.} Then we consider a ball of radius $R$ around $\iota(\rho)$ in $\Gg$ and for each vertex in this ball we also add its neighbourhood of radius $R$ in $G$. We call the resulting neighbourhood of $\rho$ an $(R+R)$-ball, and for each vertex $v$ in it we let $\iota(v)$ be the corresponding vertex in $G$.

Then for each vertex $v$ in the $(R+R)$-ball, including the centre, we add $\dr(\iota(v))$ new edges from it, called long-range edges, and we let $\iota$ map the new endpoints of those edges to the red neighbours of $\iota(v)$. For each new endpoint $w$ we attach an $(R+R)$-ball around it according to the neighbourhood of $\iota(w)$ in $\Gg$, and for each vertex $u$ in this $(R+R)$-ball we add a number of long-range edges from $u$ so that the total number of long-range edges from $u$ is $\dr(\iota(u))$. We continue similarly with the new endpoints of these long-range edges.

We call the set of vertices in an $(R+R)$-ball that are of graph distance $\ge R$ from its centre the boundary of the ball.
\end{definition}

\begin{figure}[h]
\centering
\includegraphics[width=60mm]{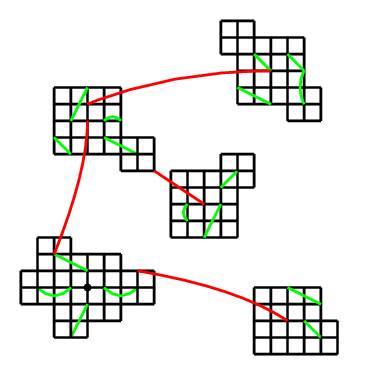}
\caption{Illustration of the first few balls of the quasi-tree associated to $G^*$}
\end{figure}

A few further pieces of notation regarding quasi-trees are listed below.

\begin{definition}\label{def:quasi-tree_notation}
For vertices $x$ and $y$ in a quasi-tree $T$, their \emph{long-range distance} $d_T(x,y)$ is the minimal number of long-range edges a path from $x$ to $y$ has to cross. The \emph{level} of a vertex $x$ is its long-range distance from the root, and it is denoted by $\ell(x)$. Note that all vertices within a ball have the same level, we also refer to this as the level of the ball. For a long-range edge $e=\{x,y\}$ we define its level $\ell(e)$ as $\ell(x)\vee\ell(y)$, and we denote its two endpoints by $e^+$ and $e^-$ such that $\ell(e^+)=\ell(e)$. For a vertex $x$ in $T$, we write $T(x)$ for the quasi-tree obtained by removing the long-range edge that leads to the ball of $x$ from a smaller level (if there is one) and keeping the connected component of~$x$. We say that a long-range edge $e$ is a \emph{descendant} of a long-range edge $f$ in $T$ if every path between $e^+$ and the root of $T$ crosses $f$. We say that $e$ is an \emph{ancestor} of $f$ if $f$ is a descendant of $e$. We use analogous terminology for balls in $T$.
\end{definition}

We note that since $\dr$ and $\dg$ are neither bounded nor necessarily positive, there could be some exceptional balls in $T$ featuring some very large degrees or very large volume, or having large regions with no long-range edges emanating from them. If a walk on $T$ enters one of these exceptional balls, it is difficult to make good predictions about its behaviour, so in most of what follows we make statements about a walk until it visits such a ball (and we use a bootstrapping argument to show that such a visit is unlikely to happen by the time of interest). We introduce a thorough notion of `typicality' in \Cref{def:atypical_ball}, but first we establish some results with a weaker notion defined as follows.

\begin{definition}\label{def:binary_RplusR_ball}
We say that an $(R+R)$-ball is \emph{binary} if it has the following property. For each of its vertices $v$, there is a copy of $\left[0,\frac1dR\right]^d$ in the $(R+R)$-ball that contains $v$ and has at least 3 long-range edges emanating from it (including any edges emanating from $v$). Here $\left[0,\frac1dR\right]^d$ denotes a $d$-dimensional box of side length $\frac1dR$, with edges induced by $\Z^d$.
\end{definition}

\subsection{Transience}

In this section we establish a transience-like property for a random walk on a quasi-tree stopped upon hitting a non-binary ball, and also for a walk stopped upon hitting a boundary vertex.

We start by making a statement about a walk on a quasi-tree where all balls are binary.

\begin{lemma}\label{transience_in_binary_tree}
There exists a constant $\pesc\in(0,1)$ ({not depending on $C_R$ or any parameter other than possibly $d$}) with the following property. Let $T$ be a realisation of a random quasi-tree where each $(R+R)$-ball is binary in the sense of \Cref{def:binary_RplusR_ball}, and let $v$ be a vertex in $T$. Then starting a simple random walk on $T$ from $v$, with probability at least $\frac{\pesc}{\deg_T(v)}$ the walk never returns to $v$.

Moreover, if $(v,w)$ is a long-range edge of $T$, then starting a simple random walk on $T$ from $v$, with probability at least $\frac{\pesc}{\deg_T(v)}$ the walk never returns to $v$ or hits $w$ {and with probability at least $\pesc$ the walk never hits $w$}.
\end{lemma}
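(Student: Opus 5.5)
We argue via electrical networks: the escape probability from $v$ equals $1/(\deg_T(v)\,\Reffin{v}{\infty}{})$, so it suffices to show that the effective resistance from $v$ to infinity in $T$ is at most a constant $1/\pesc$ depending only on $d$. We build an explicit unit flow from $v$ to infinity of bounded energy, exploiting the binary property. The flow mixes two ingredients: flows inside a single $(R+R)$-ball that carry mass from a vertex to the long-range edges of a nearby box $[0,\frac1dR]^d$, and a branching structure across long-range edges.

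\textbf{Step 1 (local flows in $\Z^d$).} Let $Q$ be a box of side $\frac1dR$ containing $u$, and let $u_1,u_2,u_3$ be vertices of $Q$ from which long-range edges emanate. Send $1/3$ of a unit from $u$ to each $u_i$ along monotone lattice paths inside $Q$, splitting these in the standard spread-out way used to prove transience of $\Z^d$. The aim is that, for $d\ge3$, the flow from a point to a set of targets in a box has energy bounded by a constant depending only on $d$, uniformly in the side length. A naive single path would cost energy of order $R$. I would instead use the usual transient-$\Z^d$ flow: spread the mass outward from $u$ through concentric shells and then converge onto $u_i$ by the reversed construction. Each half has energy $\sum_k k^{d-1}k^{-2(d-1)}<\infty$, and the box constraint is handled by working in the orthant cone, which still has $\asymp k^{d-1}$ vertices at distance $k$. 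The key point is that $Q$ lies in the $R$-neighbourhood in $G$ of the ball vertices, so such flows fit inside the $(R+R)$-ball. This step is also where a box in the boundary region needs care: since the definition requires the box to lie within the $(R+R)$-ball, that is fine.

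\textbf{Step 2 (branching).} From $v$, route the flow to 3 long-range edges out of a box containing $v$, with $1/3$ on each. At most one of them leads back toward the root (the edge of $v$'s ball to its parent), so at least two lead to child balls; send $1/2$ of the mass along each of two such edges. In each child ball, entering at $w$, repeat: pick a box containing $w$ with 3 long-range edges. One of these may be the entry edge itself, but at least two others lead to further children, and we split the mass equally between two of them. The mass at level $k$ is then $2^{-k}$ per branch, and there are $2^k$ branches. The energy at level $k$ is $2^k\cdot 2^{-2k}\cdot O(1)=O(2^{-k})$, which sums to a constant. Overlap between flows of different branches within one ball is the potential issue: two exiting edges from the same ball must share the local flow from the entry point, but within one ball we only do a single split into two, so the energy there is at most a constant multiple of the mass squared. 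Hence $\Reffin{v}{\infty}{}\le C_d$, giving escape probability at least $\pesc/\deg_T(v)$.

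\textbf{Step 3 (the moreover part).} For a long-range edge $(v,w)$, remove the edge from $T$ and consider the component containing $v$. The same flow, using only children not equal to $w$, still exists: at $v$ we take the three edges in the box and discard $w$'s edge and the parent edge if necessary. This requires slightly more care when both are among the three. In that case, first route within the ball to another box, or note that ball vertices other than $v$ can serve. Plausibly one adjusts the construction so that the first split uses a box whose two other edges avoid both. The walk that never returns to $v$ in $T\setminus\{(v,w)\}$ also never hits $w$, giving the bound $\pesc/\deg_T(v)$. For the bound $\pesc$ without the degree factor, start at $v$ and note that the walk avoids $w$ forever if it escapes in the component not containing $w$. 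The resistance from $v$ to infinity in that component is $O(1)$, while the resistance between $v$ and $w$ is exactly $1$ via the removed edge. By the series/parallel comparison, the probability of reaching infinity before $w$ is $\frac{1}{1+\Reffin{v}{\infty}{}}\ge\pesc$. I expect the main obstacle to be Step 1: verifying the $O(1)$-energy flow between a point and three targets in a box of side $R/d$, uniformly in $R$, and the edge cases in which the box's long-range edges coincide with the parent or forbidden edge.
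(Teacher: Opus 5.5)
Your proposal is correct and is essentially the paper's proof. The paper builds a bounded-energy unit flow on the component of $v$ in $T\setminus\{(v,w)\}$ by binary branching across long-range edges, joined inside each ball by the $O(1)$-energy box flows of \Cref{unit_flow_in_box}; it proves that box lemma with a chain of $d$ cubes and \cite[Exercise 9.1]{MTMC} rather than your cone sketch. It then converts effective resistance into escape probability, and its count of returns to $v$ is equivalent to your series computation $\frac{1}{1+\Reffin{v}{\infty}{}}$.

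The edge case you left open in Step 3 has a simple fix. Either let the flow use any two edges of the box other than the entry edge, including the edge towards the root, as the paper does; this works because the balls form a tree and non-backtracking branches never revisit a ball. Or send all the mass through the single remaining child edge at the first step, which costs only $O(1)$ extra energy. Your alternative of first routing within the ball to another box is not needed, and the binary property gives no uniform energy control for it.
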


By a natural coupling, this immediately implies the following statement about a walk that is stopped at the first time it hits a non-binary ball.

\begin{corollary}\label{transience_until_nonbinary}
Let $T$ be any realisation of a random quasi-tree and let $v$ be a vertex of $T$. Let $\pesc\in(0,1)$ be as in Lemma~\ref{transience_in_binary_tree}. Then
\begin{align*}
{\prscond{\tau_v^+>\tau_{\mathrm{non-bin}}}{T}{v}\quad\ge\quad\frac{\pesc}{\deg_T(v)}},
\end{align*}
where $\tau_{\mathrm{non-bin}}$ denotes the first time that $X$ hits a non-binary $(R+R)$-ball {and $\tau_v^+=\inf\{t>0: X_t=v\}$ is the first time the walk returns to $v$}.

Moreover, if $(v,w)$ is a long-range edge of $T$, then
\begin{align*}
{\prscond{\tau_v^+\wedge\tau_w>\tau_{\mathrm{non-bin}}}{T}{v}\quad\ge\quad\frac{\pesc}{\deg_T(v)} \quad \text{and}\quad \prscond{\tau_w>\tau_{\mathrm{non-bin}}}{T}{v}\quad\ge\quad\pesc}.
\end{align*}
\end{corollary}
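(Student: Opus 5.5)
The plan is to deduce the corollary from \Cref{transience_in_binary_tree} by a coupling with a modified quasi-tree in which every ball is binary. Let $T$ be given and let $v$ be a vertex of $T$. Build a quasi-tree $T'$ as follows. Keep every binary $(R+R)$-ball of $T$, together with the long-range edges and subtrees hanging off it, as long as the path to it from $v$ passes only through binary balls. Replace each non-binary ball reachable this way, together with everything hanging off it, by a binary ball of the same form. For instance, take a copy of the $(R+R)$-ball in which every vertex has one long-range edge, leading to further balls of the same kind, continued indefinitely. One has to check that such a ball is binary: every box $[0,\frac1dR]^d$ around a vertex contains at least $3$ vertices and hence at least $3$ long-range edges. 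It is also a legitimate realisation of a random quasi-tree, since $\dr\ge1$ for every vertex and the $\Gg$-ball being a torus ball are positive-probability outcomes. Then every ball of $T'$ is binary, and \Cref{transience_in_binary_tree} applies to $T'$ and $v$.

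The degree of $v$ is unchanged: if the ball of $v$ is binary, then $v$ lies in a kept ball and $\deg_{T'}(v)=\deg_T(v)$. If the ball of $v$ is itself non-binary, then $\tau_{\mathrm{non-bin}}=0$. In that case the events $\{\tau_v^+>\tau_{\mathrm{non-bin}}\}$, $\{\tau_v^+\wedge\tau_w>0\}$ and $\{\tau_w>0\}$ hold trivially, because $w\ne v$. So all three probabilities equal $1$ and there is nothing to prove.

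Now suppose the ball of $v$ is binary. Couple the walk $X$ on $T$ with the walk $X'$ on $T'$ started from $v$ so that they agree up to the first time they enter a replaced, i.e.\ non-binary, ball. This is possible because $T$ and $T'$ coincide as graphs on the union of the kept balls and the long-range edges between them. Degrees of vertices in kept balls are also unchanged, since a long-range edge into a replaced ball is still present in $T'$. Hence the transition probabilities agree before that time, and the time of first entry is the same for both walks; on $T$ it is $\tau_{\mathrm{non-bin}}$.

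On the event that $X'$ never returns to $v$, the walk $X$ does not return to $v$ before $\tau_{\mathrm{non-bin}}$. The same holds with ``never returns to $v$ or hits $w$'' and with ``never hits $w$''. For the last two statements I place $w$ correctly. If $(v,w)$ is a long-range edge and the ball of $w$ is non-binary, then $w$ lies in a replaced ball. In $T'$ the edge $(v,w')$ is still a long-range edge to the corresponding vertex $w'$, and hitting $w$ in $T$ can only happen at or after $\tau_{\mathrm{non-bin}}$, so the conclusion is immediate. If the ball of $w$ is binary, then $w$ is kept and the coupling transfers the events directly. Applying \Cref{transience_in_binary_tree} then gives the three lower bounds $\pesc/\deg_T(v)$, $\pesc/\deg_T(v)$ and $\pesc$.

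The only delicate point is making sure the replacement tree is a valid binary quasi-tree and that degrees along the coupled path are preserved. Both are routine.
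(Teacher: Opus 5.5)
Your proposal is correct and is exactly the ``natural coupling'' that the paper invokes without giving further detail. You replace the non-binary balls bordering the binary component of $v$, together with everything beyond them, by binary balls, apply \Cref{transience_in_binary_tree} on the resulting tree $T'$, and transfer the events through the coupling that agrees up to $\tau_{\mathrm{non-bin}}$. One small correction: when $w$ lies in a non-binary ball, the bound on $\{\tau_w>\tau_{\mathrm{non-bin}}\}$ is not immediate, because the walk may leave the binary component exactly through $(v,w)$, giving $\tau_w=\tau_{\mathrm{non-bin}}$; you still need the lemma for the edge $(v,w')$ in $T'$, together with the fact that under the coupling $X_{\tau_{\mathrm{non-bin}}}=w$ if and only if $X'_{\tau_{\mathrm{non-bin}}}=w'$.
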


\begin{proof}[Proof of \Cref{transience_in_binary_tree}]
It is sufficient to prove the second part of the claim, as it also implies the first one.

From~\cite[Theorem 2.11]{prob-on-trees-and-networks} we know that a SRW on a graph is transient if and only if there is a unit flow of finite energy from some vertex of the graph to $\infty$.

Let $T_{-w}$ be obtained from $T$ by removing the edge $(v,w)$ and only keeping the connected component of $v$. We will construct a unit flow {$\theta$ from $v$ to $\infty$ on $T_{-w}$ of energy bounded by constant}. This shows that a SRW on $T_{-w}$, starting from $v$, has at least {$\frac{\rm{const}}{\deg_T(v)-1}$ probability of never returning to $v$. Then the probability that a SRW on $T$ from $v$ does not pass through $(v,w)$ in the first step and never returns to $v$ is also at least $\frac{\rm{const}}{\deg_T(v)-1}$. Further, on every visit to $v$ the walk has probability $\frac{1}{\deg_T(v)}$ to pass through $(v,w)$, giving the constant lower bound on the probability to never pass through $w$. We set $\pesc$ to be the smallest of these constants.}

In $T_{-w}$ let $\left(e^0_1\right)^+=v$, let $e^{1}_1$ and $e^{1}_2$ be two long-range edges from a box of side length $\frac1dR$ containing $v$, and for $k=1,2,...$ and $j\in\{1,2,...,2^k\}$ let $e^{k+1}_{2j-1}$ and $e^{k+1}_{2j}$ be two long-range edges (other than $e^{k}_{j}$) from a box of side length $\frac1dR$ containing $\left(e^{k}_{j}\right)^+$. From \Cref{unit_flow_in_box} we know that there exist unit flows $\theta^{k+1}_{2j-1}$ and $\theta^{k+1}_{2j}$ in the $(R+R)$-ball of $\left(e^{k}_{j}\right)^+$, from $\left(e^{k}_{j}\right)^+$ to $\left(e^{k+1}_{2j-1}\right)^-$ and $\left(e^{k+1}_{2j}\right)^-$, respectively with energies bounded by a constant $C$.

Let $\theta$ be a unit flow from $v$ to $\infty$ constructed by considering $\sum_{k\ge0}\sum_{j=1}^{2^k}2^{-k}\theta^k_j$ and also adding a flow of strength $2^{-k}$ along each $e^k_j$. Then $\theta$ has energy $\le\sum_{k\ge0}2^k\cdot \left(2^{-2k}\cdot4C\right)+\sum_{k\ge0}2^k\cdot2^{-2k}\lesssim 1$.
\end{proof}

\begin{figure}[h]
\centering
\includegraphics[width=60mm]{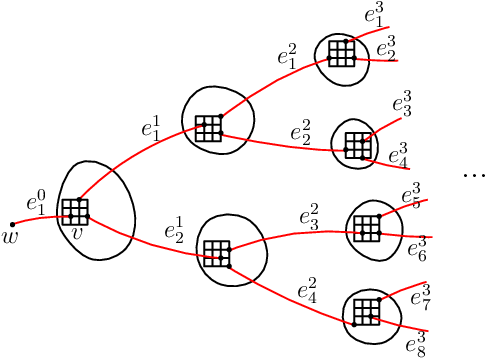}
\caption{Illustration of the edges $e^i_j$ in the proof of \Cref{transience_in_binary_tree}}
\end{figure}

We write $\tauboundary$ for the first time that a walk on a quasi-tree hits the boundary of an $(R+R)$-ball. (Later we will use $\taubound$ to denote a different but related random time.) We prove the following transience-like property of a walk stopped at $\tauboundary$.

\begin{lemma}\label{transience_until_boundary}
There exists a positive constant $c$ with the following property.

For any realisation $T$ of a random quasi-tree, and any vertex $x$ of $T$, we have
\[
\prstart{\tau_x^+>\tauboundary}{x}\quad\ge\quad\frac{c}{\deg_T(x)},
\]
while for any long-range edge $e$, we have
\[
\prstart{\tau_e>\tauboundary}{x}\quad\ge\quad c.
\]
\end{lemma}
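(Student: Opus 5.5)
Since the walk is stopped at $\tauboundary$, only the $(R+R)$-ball $B$ containing $x$ matters, plus whatever it can reach before hitting a boundary vertex. I will therefore use the same electrical-network argument as in the proof of \Cref{transience_in_binary_tree}, but now aiming for a flow to the boundary set instead of to infinity. Let $\partial$ denote the union of the boundaries of all balls, and view the walk as absorbed on $\partial$. By the standard commute-time/escape-probability identity (e.g.\ \cite[Chapter 2]{prob-on-trees-and-networks}), $\prstart{\tau_x^+>\tauboundary}{x}=\frac{1}{\deg_T(x)\,\Reffin{x}{\partial}{}}$. The first claim therefore reduces to showing that $\Reffin{x}{\partial}{}\le C$ for a constant $C$ not depending on $R$ or on the realisation. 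By Thomson's principle it suffices to build a unit flow from $x$ to $\partial$ of bounded energy. If $x$ itself lies on a boundary then $\tauboundary=0$ and there is nothing to prove.

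To build the flow, note that $x$ lies in some $(R+R)$-ball. By construction, every vertex of the ball within distance $<R$ of the centre has its full $G$-neighbourhood of radius $R$ included. If $x$ is at distance $r<R$ from the centre, I will follow a geodesic path in $\Gg$ from the centre to $x$, reached through a vertex $u$ at distance $<R$ that carries an entire copy of $\Z^d$-ball of radius $R$ in $G$ around it. The cleaner route is to work directly around $x$. Any vertex $x$ not on the boundary is at $T$-distance $<R$ from the centre, so $x$ itself is in the radius-$R$ $\Gg$-ball and therefore has its full radius-$R$ $G$-neighbourhood, a copy of $[-R,R]^d\cap\Z^d$ (a torus ball, but $R\ll n$ so it is a genuine lattice ball), attached. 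The vertices of this copy at $G$-distance exactly $R$ from $x$ are at $T$-distance $\ge R$ from the centre? Not necessarily, since green edges can shortcut. So I will instead aim for a set that is surely reached: any vertex at $T$-distance $\ge R$ from the centre is boundary.

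I will handle this by choosing the flow as the standard radial $\Z^d$ flow from $x$ in its radius-$R$ lattice neighbourhood, stopped on the first vertex it meets that is a boundary vertex. Unstopped, this flow sends mass outward to the sphere of radius $R$, and for $d\ge3$ its energy is bounded by $\sum_k k^{-2(d-1)}k^{d-1}<\infty$, uniformly in $R$ (this is the same estimate underlying \Cref{unit_flow_in_box}). Every sphere vertex at $G$-distance $R$ from $x$ either is a boundary vertex or, being a vertex of the ball added as part of $x$'s $G$-neighbourhood, lies at $T$-distance $\le R$ from the centre. I need every vertex of the outer layer to lie in $\partial$. Here I will use that vertices added only as $G$-neighbours of radius-$R$ $\Gg$-ball vertices, and not themselves in the $\Gg$-ball, are at $\Gg$-distance $\ge R$, hence boundary. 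A sphere vertex at distance $R$ from $x$ that is still inside the $\Gg$-ball of radius $R$ is the delicate case, and this is the main obstacle. To deal with it, I will truncate the flow there: a unit flow that is killed whenever it reaches any vertex of $\partial$ still has energy at most that of the full flow. If some sphere mass ends at a non-boundary vertex $y$, I will restart from $y$ with its own attached radius-$R$ lattice ball, with geometrically decreasing contributions. Alternatively, and preferably, I will simply stop the flow at the first boundary vertex along each radial ray and argue via the $T$-metric that every ray of $G$-length $R$ from $x$ that is entirely non-boundary forces its endpoint to lie at $\Gg$-distance $<R$ from the centre, which is allowed. Its $G$-neighbourhood then extends further, so the ray can be continued.

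For the second claim, let $e$ be a long-range edge. If $e$ is not incident to $x$, I will bound $\prstart{\tau_e<\tauboundary}{x}$ using transience in the $\Z^d$-neighbourhood: the lattice piece above contains no long-range edges, and the flow avoids $e$'s endpoint except with bounded probability. More concretely, I will use $\prstart{\tau_e>\tauboundary}{x}\ge \prstart{\tau_{e^-}>\tauboundary}{x}$ and the Green-function bound $G_\partial(x,e^-)/G_\partial(e^-,e^-)\le 1-c$. This follows because from $e^-$ the walk escapes to $\partial$ with probability $\ge c/\deg$ by the first part, while it reaches $x$ with probability bounded away from $1$ by the Harnack-type comparison of effective resistances: $\Reffin{x}{\partial}{}$ and $\Reffin{e^-}{\partial}{}$ are both $\le C$, and $\Reffin{x}{e^-}{}\ge$ one edge. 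Finally, if $e$ is incident to $x$, the first part applied on $T$ with $e$ removed, as in the proof of \Cref{transience_in_binary_tree}, gives escape without crossing $e$ with probability $\ge c$, using at most $\deg_T(x)$ returns.
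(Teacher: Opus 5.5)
\textbf{Gap in the second claim.} For a long-range edge $e$ not incident to $x$ you reduce to $\prstart{\tau_{e^-}<\tauboundary}{x}\le 1-c$, where $e^-$ is the endpoint the walk must reach first. This is false for general realisations, and the lemma is stated for every realisation $T$.

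Here is a counterexample. Let the ball be centred at $a$. Let every vertex within lattice distance $M$ of $a$ (with $M<D^{1-\beta}$ large) have a green edge to $a$ and no other green or long-range edges. Let $e$ be a long-range edge at $a$, and let $x$ be a lattice neighbour of $a$. All these vertices are at $\Gg$-distance $1$ from the centre. So before hitting $a$ the walk stays among them for at least its first $M-1$ steps, and at each such step it jumps to $a$ with probability $\frac{1}{2d+1}$. Hence $\prstart{\tau_a<\tauboundary}{x}\ge 1-\left(\frac{2d}{2d+1}\right)^{M-1}$.

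The lemma itself still holds in this example, because $a$ has huge degree, so from $a$ the walk rarely crosses $e$. Your resistance argument cannot rule such examples out. Upper bounds on $\Reffin{x}{\partial}{}$ and $\Reffin{e^-}{\partial}{}$, together with a one-edge lower bound on $\Reffin{x}{e^-}{}$, are all compatible with $e^-$ being an almost-cut vertex between $x$ and $\partial$.

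The fix, which is what the paper does, is never to bound the probability of reaching an endpoint. By the strong Markov property at the first hitting time of $\{e^+,e^-\}$,
\[
\prstart{\tau_e<\tauboundary}{x}\le\max_{u\in e}\prstart{\tau_e<\tauboundary}{u},
\]
so only the case $x\in e$ matters. Your treatment of that case (the first claim applied to $T\setminus\{e\}$, plus a geometric number of returns to $x$) is exactly the paper's.

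\textbf{First claim.} Your Thomson-principle route is the dual of the paper's Rayleigh argument and can be made to work. As written, though, it is muddled, and the ``restart with geometrically decreasing contributions'' alternative is unjustified.

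The clean version is as follows. Let $U$ be the set of non-boundary vertices of $x$'s ball. Each $u\in U$ carries its whole $G$-neighbourhood. Take a finite-energy unit flow from $x$ in a lattice ball $\Lambda$ containing $U$ and its neighbours, and restrict it to the edges having an endpoint in $U$. This gives a unit flow from $x$ to $\partial$ inside the ball, with no larger energy; no rays or restarts are needed.

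What you never state is why such a $\Lambda$ exists in the torus as a genuine $\Z^d$-ball. Your phrase ``the ray can be continued'' silently assumes this. It holds because green edges are shorter than $D^{1-\beta}$, so $U$ lies within torus distance $RD^{1-\beta}\ll n$ of the centre.

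The paper uses the same fact on the primal side. The ball walk agrees with a walk on all of $\Gg$ until $\tauboundary$, and $\tauboundary$ precedes the exit from the torus ball $B$ of radius $n/4$ around $x$. Rayleigh monotonicity ($\Gg\supseteq G$) and transience of $\Z^d$ then bound $\Reffin{x}{B^c}{\Gg}$, with no discussion of the boundary geometry at all.
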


\begin{proof} Firstly, we note that for any realisation of $T$ we have $\prstart{\tauboundary=\infty}{x}=0$, so in the following calculations we can use events like $\{\tau_x^+>\tauboundary\}$ and $\{\tau_x^+\ge\tauboundary\}$ interchangeably.

Once we establish the first inequality, the second one follows by noting that it is sufficient to consider the case when $x$ is one of the endpoints of $e$, using that in this case \[\prstart{\tau_e<\tauboundary}{x}\quad=\quad\frac{1}{\deg_T(x)}\sum_{k\ge0}\left(1-\frac{1}{\deg_T(x)}\right)^k\prstartup{\tau_x^+<\tauboundary}{T\setminus\{e\}}{x}^k,\] and using the first inequality for $T\setminus\{e\}$. We present the proof of the first inequality below.

Let $X$ be a simple random walk on $T$ started from $x$ and let $Y$ be a simple random walk on the $(R+R)$-ball of $x$, started from $x$, coupled to $X$ such that it agrees with the restriction of $X$ to the $(R+R)$-ball of $x$ as long as the latter one is defined. \footnote{By the restriction of $X$ to the $(R+R)$-ball we mean a (possibly finite) walk obtained by removing the steps of $X$ taken outside this ball, and then from the remaining walk removing the steps where it stays in place. Note that after $X$ exits the ball, it can only return to it via the same edge, hence the restriction of $X$ to the ball is a simple random walk on the ball, with a random killing time.} Note that if $X$ does not cross a long-range edge in the first step, and $Y$ does not return to $x$ without hitting a boundary first, then $X$ does not return to $x$ without hitting a boundary first either. Hence we can write
\begin{align*}
\prstart{\tau_x^+>\tauboundary}{x}\:\ge&\:\frac{\deg_T(x)-\dr(x)}{\deg_T(x)}\prscond{\tau_x^+>\tauboundary}{\text{first step within the }(R+R)\text{-ball of }x}{x}\\
\ge&\:\frac{\deg_T(x)-\dr(x)}{\deg_T(x)}\prstartup{\tau_x^+>\tauboundary}{Y}{x},
\end{align*}
where $\dr(x)$ denotes the number of long-range edges from $x$ in $T$. So it is sufficient to show that $\prstartup{\tau_x^+>\tauboundary}{Y}{x}\gtrsim\frac{1}{\deg_T(x)-\dr(x)}$.

Let us consider a realisation of $\Gg$ such that the $(R+R)$-ball of $x$ in $T$ is isomorphic to the corresponding neighbourhood of $x$ in $\Gg$ and let $Z$ be a simple random walk on $\Gg$, started from $x$. Note that $Z$ can be coupled to $Y$ so that they agree until $\tauboundary$. Let $B$ be a ball of radius $\frac14n$ around $x$ in $\Gg$, according to the torus graph distance. Since $R\cdot D^{1-\beta}+R<\frac14n$, we have
\begin{align*}
\prstartup{\tau_x^+>\tauboundary}{Y}{x}\quad=\quad\prstartup{\tau_x^+>\tauboundary}{Z}{x}\quad\ge\quad\prstartup{\tau_x^+>\tau_{B^c}}{Z}{x},
\end{align*}
where $\tau_{B^c}$ denotes the first time that $Z$ exits the ball $B$.

Viewing $\Gg$ as an electric network, we see that $\prstartup{\tau_x^+>\tau_{B^c}}{Z}{x}=\frac{1}{\deg_{\Gg}(x)\Reffin{x}{B^c}{\Gg}}$, so it is sufficient to show that $\Reffin{x}{B^c}{\Gg}\lesssim1$. By Rayleigh's monotonicity principle, it is sufficient to show that $\Reffin{x}{B^c}{G}\lesssim1$, which follows by noting that the restriction of the torus $G$ to the ball $B$ is isomorphic to a ball in $\Z^d$ and using the transience of $\Z^d$ for $d\geq 3$.
\end{proof}

\subsection{Atypical behaviour}

We define a notion of typicality for $(R+R)$-balls as follows.

\begin{definition}\label{def:atypical_ball}
We say that an $(R+R)$-ball in a quasi-tree $T$ is \emph{atypical} if any of the following hold.
\begin{enumerate}[(i)]
\item\label{item:nonbinary} It is not binary in the sense of \Cref{def:binary_RplusR_ball}.
\item\label{item:large_deg} It contains a vertex with degree larger than $\dmax$. (Here $\dmax$ is defined as in \Cref{max_deg}).
\item\label{item:hit_boundary}
{It satisfies that $p_\partial >\left(\frac{1}{\log N}\right)^5$ where $p_\partial$ is defined as follows. Let $Y$ be a walk starting from the centre of the $(R+R)$-ball, which evolves like a simple random walk except for the following. Every time it crosses a long-range edge from the ball, with probability $\pesc$ it gets terminated, otherwise, it crosses the long-range edge back and continues. Let $p_\partial$ be the probability of $Y$ visiting at least $R$ different vertices before getting terminated. (Here $\pesc$ is the constant from \Cref{transience_in_binary_tree}.)}
\item\label{item:large_vol} It contains more than $e^{\Cvol R}$ vertices. (Here $\Cvol$ is a sufficiently large constant, to be specified later.)
\end{enumerate}
Otherwise we say that the $(R+R)$-ball is \emph{typical}.
\end{definition}

\begin{remark}\label{rmk:atypical_ball}
Note that the event that a given $(R+R)$-ball is atypical only depends on the ball itself and the red degrees of its vertices, but not on the rest of the quasi-tree. Point \eqref{item:hit_boundary} is formulated as it is to ensure this property.

Note that by~\eqref{item:hit_boundary} and~\Cref{transience_until_nonbinary}, a walk starting from the centre of a typical $(R+R)$-ball has probability $\le\left(\frac{1}{\log N}\right)^5$ of visiting $\ge R$ different vertices in this ball without encountering a non-binary $(R+R)$-ball first.
\end{remark}

\begin{definition}\label{def:typical_events}
For a given quasi-tree $T$, a walk $X$ on it and (possibly random) times $k_1$ and $k_2$ we say that the event $\Omegatyp{k_1,k_2}$ holds if during the time interval $[k_1,k_2]$ the walk $X$ does not visit an atypical $(R+R)$-ball. We write $\Omegatyp{k}$ to mean $\Omegatyp{0,k}$.

We say that $\Omegabound{k_1,k_2}$ holds if there is no $(R+R)$-ball where the walk $X$ visits $\ge R= C_R\log\log N$ different vertices in time interval $[k_1,k_2]$. We also write $\Omegabound{k}$ to mean $\Omegabound{0,k}$.
\end{definition}

We bound the probability that a randomly sampled $(R+R)$-ball is atypical as follows.

\begin{proposition}\label{bound_atyp_prob}
For any positive constant $b$, there exists a positive constant $C$ such that for sufficiently large values of $C_R$ the following holds. For a randomly sampled $(R+R)$-ball, the probability that it is atypical in the sense of \Cref{def:atypical_ball} is $\le C\left(\frac{1}{\log N}\right)^b$.
\end{proposition}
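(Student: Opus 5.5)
We bound separately the probability of each of the four conditions in \Cref{def:atypical_ball} and then take a union bound. For the large-volume condition~\eqref{item:large_vol}, note that the $(R+R)$-ball is contained in the $2R$-neighbourhood of its centre in $\Gg$. So \Cref{volume_tail_bound}, applied with radius $2R$ and $s=e^{\Cvol R}(2d+3)^{-2R}$, gives a bound of order $\exp(-\tfrac14 e^{(\Cvol-2\log(2d+3))R})$. Once $\Cvol$ is large this is super-polynomially small in $\log N$. For the large-degree condition~\eqref{item:large_deg}, we work on the event that the volume is at most $e^{\Cvol R}=(\log N)^{\Cvol C_R}$. On that event we take a union bound over vertices, using the tail estimate from the proof of \Cref{max_deg}: each vertex has degree $>\dmax$ with probability $\ll N^{-2}$. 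This contributes at most $(\log N)^{O(1)}N^{-2}$.

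For the non-binary condition~\eqref{item:nonbinary}, we cover the $(R+R)$-ball by boxes of side $\frac1dR$. These are available because every vertex $v$ of the ball has its whole $G$-neighbourhood of radius $R$ included, and such a neighbourhood contains a box of side $\frac1dR$ around $v$. For a fixed box, the number of red half-edges from it is a sum of $(R/d)^d$ independent copies of $\dr$. Each has mean bounded below by a constant, since a constant proportion of the added edges are red. The probability of seeing at most $2$ red half-edges is therefore at most $\exp(-c R^d)$, by a Chernoff bound or simply by the probability that all but $2$ of the vertices have red degree zero. The ball has at most $e^{\Cvol R}$ vertices, so we need at most that many boxes. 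Because $R^d\gg R$ when $d\ge3$, we get the bound $e^{\Cvol R-cR^d}\le(\log N)^{-b}$ for large $N$. The boxes are chosen deterministically in terms of the ball before the red degrees are revealed, so independence is not an issue.

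The main obstacle is the escape condition~\eqref{item:hit_boundary}. By Markov's inequality, $\pr{p_\partial>(\log N)^{-5}}\le(\log N)^5\,\E{p_\partial}$, so it suffices to show $\E{p_\partial}\le(\log N)^{-b-5}$. Here $\E{p_\partial}$ is an annealed probability. It concerns the walk $Y$ in a random $\Gg$-ball with independent red degrees, which is terminated with probability $\pesc$ at each red crossing. I would explore the environment along the walk and argue as follows. Each time $Y$ visits a previously unvisited vertex, that vertex carries a fresh independent red degree, equal to at least $1$ with probability at least some $q>0$. Given such an edge, there is a constant chance that $Y$ crosses it and is killed before moving on. The difficulty is that the walk may return to the same vertex many times or wander through green edges, so one needs to show that newly visited vertices give repeated, nearly independent chances of killing. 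Concretely, I would decompose the trajectory at the successive times $\sigma_1<\sigma_2<\dots$ at which a new vertex is visited. At time $\sigma_i$ the red degree of $Y_{\sigma_i}$ has not yet been revealed. The degree of the vertex is at most $2d+\dg+\dr$, and $\dg$ is unbounded, but on the good event the degree is at most $\dmax$. To avoid the resulting loss I would instead use that, with probability bounded below, the vertex has green degree $0$, red degree $\ge1$, and the next step crosses the red edge. This gives a chance at least $c_0>0$ of killing at each new vertex, conditionally on the past. Hence the probability of visiting $R$ distinct vertices is at most $(1-c_0)^R=(\log N)^{-C_R\log(1/(1-c_0))}$, which is at most $(\log N)^{-b-5}$ once $C_R$ is large. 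Summing the four contributions proves \Cref{bound_atyp_prob}.
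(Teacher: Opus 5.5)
Your treatment of conditions (i), (ii) and (iv) follows the paper. For (ii) it is cleaner to do what the paper does: bound by the probability that \emph{some} vertex of the whole environment has degree $>\dmax$, which is $o(1/N)$ by \Cref{max_deg}. A union bound over the random vertex set of the ball is not directly justified, since the green degrees of ball vertices are correlated with the ball's shape.

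The gap is in (iii). Your key assertion is that, conditionally on the past of the exploration, each newly visited vertex $v_k$ has green degree $0$ and red degree $\ge1$ with probability bounded below by a constant. The red part is fine, because red degrees are attached independently. The green part is not justified, and as stated it is false: if $Y$ entered $v_k$ along a green edge, then $\dg(v_k)\ge1$ deterministically.

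More importantly, given the past, $\dg(v_k)$ is not a fresh copy of $\dg$. Each earlier visited vertex $v_j$ has a revealed green degree, and some of its green half-edges are not yet matched. These may land on $v_k$, with conditional probability of order $\dg(v_j)\pgreen_{v_j,v_k}$ by \Cref{cond_green_red_deg}. That quantity is only small once the degrees of the earlier vertices are controlled, and it must then be summed over all earlier $v_j$. Without this you have no lower bound on $\econd{\dr(v_k)/\deg(v_k)}{\text{past}}$. The ``nearly independent chances'' that you yourself name as the difficulty are exactly what your argument assumes rather than proves.

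This missing step is the content of the paper's \Cref{bound_range_in_ball_until_tauatyp}. It works on the event $\Omega_{\mathrm{deg}}$ that the visited vertices have degree at most $\dmax$. It then uses \Cref{cond_green_red_deg} to bound each earlier connection by $\dmax\cdot\max\pgreen\asymp1/\log\log N$, which yields $\dg(v_k)\stle1+\Bin{k-1}{\til{C}/\log\log N}+\dg$. This gives a killing probability $p_k\gtrsim1\wedge\frac{\log\log N}{k}$, not a constant. The product $\prod_{k<R}(1-p_k)\le\exp\left(-c_2\log\log N\sum_{k}\frac1k\right)$ is nevertheless below any fixed power of $\log N$ once $C_R$ is large, because the exponent grows like $\log C_R$. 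Combined with a Markov step as in your proposal, this controls (iii) on the complement of (ii).

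Your constant $c_0$ can in fact be recovered for $k\le R$. Use that the $v_j$ are distinct vertices of the torus, so $\sum_{j<k}\pgreen_{v_j,v_k}\lesssim\frac{1+\log k}{\log N}$. Then the expected number of extra green edges at $v_k$ is $\lesssim\frac{1+\log k}{\log\log N}=o(1)$. But this still needs the degree restriction and the conditional edge estimate, neither of which appears in your proposal. You would also have to replace ``green degree $0$'' by ``no green edges other than the one used to enter $v_k$''.
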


In the proof of \Cref{bound_atyp_prob} we make use of the following results, the first one of which is a variant of \Cref{volume_tail_bound}.

\begin{lemma}\label{T_volume_tail_bound}
For any radius $r$, any $s>0$, the volume of a ball $B_r$ of radius $r$ around the root of a random quasi-tree (in the graph distance of the quasi-tree) can be bounded as
\[\pr{|B_r|\ge(2d+3)^rs}\quad\le\quad e^{\frac14}e^{-\frac14s}.\]
\end{lemma}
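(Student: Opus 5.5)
We would prove this with an exponential moment bound. Specifically, we aim to show $\E{\exp\left(\frac14|B_r|(2d+3)^{-r}\right)}\le e^{\frac14}$, and then finish with Markov's inequality. The natural device is a branching-process domination of the breadth-first exploration of the ball. Explore $B_r$ layer by layer from the root. Each newly discovered vertex $v$ has at most $2d$ black (torus) neighbours, plus a random number of green edges and long-range (red) edges. In the random quasi-tree, the red degrees are i.i.d.\ copies of $\dr$, independent across vertices. The green edges come from an independent copy of $\Gg$ for each ball. Within a fixed copy of $\Gg$, the green degrees of distinct vertices are not quite independent, because an edge is shared by its two endpoints. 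However, exploring from $v$ only requires the green edges from $v$ to not-yet-explored vertices. These are independent Bernoulli variables with total mean at most $\sum_y p_{\iota(v),y}\le1$. So the number of new vertices found from $v$ is stochastically dominated by $2d+\xi_v$. Here the $\xi_v$ are i.i.d., each a sum of independent Bernoulli variables with total mean $\le1$ (green plus red; the blue part is absent in $T$, which only helps). The vertices $w$ at the far end of a long-range edge open a fresh ball, but the same bound applies to their onward exploration.

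Thus $|B_r|\stle S_r$, where $S_0=1$ and $S_{k+1}=S_k+\sum_{i=1}^{S_k-S_{k-1}}(2d+\xi_i)$, or more crudely $S_{k+1}\le S_k+\sum_{i\le S_k}(2d+\xi_i)$. Now use the bound $\E{e^{\lambda\xi}}\le\exp\left(e^{\lambda}-1\right)$, valid for a sum of independent Bernoullis with mean $\le1$. For $\lambda\le1$ this gives $\E{e^{\lambda(1+2d+\xi)}}\le e^{\lambda(2d+1)+(e-1)\lambda}\le e^{\lambda(2d+3)}$, using $e^\lambda-1\le(e-1)\lambda$. Condition on $S_k$ and apply this to each of the $S_k$ independent summands. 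This yields the induction step
\[
\E{e^{\lambda S_{k+1}}}\le\E{e^{\lambda(2d+3)S_k}},
\]
valid whenever $\lambda\le1$. Iterating with $\lambda_k=\lambda(2d+3)^{k}$ requires $\lambda(2d+3)^{r-1}\le1$. Taking $\lambda=\frac14(2d+3)^{-r}$ then gives $\E{e^{\lambda S_r}}\le e^{\lambda(2d+3)^r}=e^{1/4}$. Markov's inequality applied to $\{S_r\ge(2d+3)^rs\}$ gives the claimed $e^{\frac14}e^{-\frac14 s}$.

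The main obstacle is the rigorous stochastic domination step, i.e.\ justifying independence in the exploration. Green edges within a single copy of $\Gg$ create cycles, and revealing edges from one vertex conditions the edges of others. We would handle this by revealing, for each explored vertex, only the edges to unexplored vertices, which remain independent and unconditioned. Edges back to already-explored vertices add no new vertices and can be ignored. The torus $R$-neighbourhoods that are appended to each $\Gg$-ball add only black edges, covered by the $2d$ term. This is essentially the argument of \Cref{volume_tail_bound}, adapted to the quasi-tree.
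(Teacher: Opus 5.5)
Your proposal is correct and is essentially the paper's argument: the paper proves this exactly as in \Cref{volume_tail_bound}. It dominates $|B_r|$ by the $r$th generation of a Galton--Watson process with offspring $2d+1+\dgrb$ (your $1+2d+\xi$), proves $\E{e^{\theta Z_r}}\le e^{(2d+3)^r\theta}$ by induction on generations as in \Cref{volume_exp_moment}, and applies a Chernoff bound with $\theta=\frac14(2d+3)^{-r}$. Your explicit justification of the domination, namely revealing only edges from the current vertex to unexplored vertices and using a fresh $\Gg$-copy beyond each long-range edge, fills in a step the paper only asserts.
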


\begin{lemma}\label{small_vol_implies_likely_binary}
There exists $c>0$ with the following property. For a randomly sampled $(R+R)$-ball, the probability that it has volume $\le e^{\Cvol R}$, but is not binary, is $\lesssim e^{-cR^d}$.
\end{lemma}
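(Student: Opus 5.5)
The plan is to cover the $(R+R)$-ball by boxes of side length $\frac1dR$ and, on the event $|B|\le e^{\Cvol R}$, take a union bound over boxes that contain fewer than $3$ long-range edges. Every vertex $v$ of the $(R+R)$-ball has its full $G$-neighbourhood of radius $R$ inside the ball. This holds for the vertices of the $\Gg$-ball of radius $R$ by construction; for a vertex on the outer layer we take the box to lie towards the $\Gg$-ball vertex it hangs from. Since $\frac1dR\cdot d=R$, there is a copy $Q_v$ of $\left[0,\frac1dR\right]^d$ containing $v$ inside this $G$-neighbourhood of an inner vertex $u$. So it suffices to control, for each such candidate box, the probability that it has fewer than 3 long-range edges.

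Step 1. Fix the box before revealing the red degrees. For each vertex $u$ of the $\Gg$-ball, choose in a deterministic way (depending only on $\iota(u)$ and the position of $v$) the box $Q_v$ from the $G$-neighbourhood of $\iota(u)$. The number of such boxes is at most $|B|\cdot(R+1)^d\le e^{\Cvol R}R^{d}$ on the event in question.

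Step 2. By \Cref{def:random_quasi_tree}, the red degrees are sampled independently of the ball, with $\dr\eqdist\dr$ for each vertex. The box has $(R/d)^d\asymp R^d$ vertices. The quasi-tree vertices of the box may repeat torus images, but every vertex gets its own independent red degree, and there are at least $(R/d)^d$ of them. Now $\dr$ is a sum of independent Bernoullis with mean bounded below by a constant, since $\sum_y\pred_{x,y}\asymp1$: the sum over $|y|\ge D^{1-\beta}$ of $Z_n|y|^{-d}$ is $\asymp \beta\log n/\log N\asymp1$. Hence $\pr{\dr=0}\le e^{-c_0}<1$. The event that the box has at most 2 long-range edges is contained in the event that at least $(R/d)^d-2$ of its vertices have $\dr=0$. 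This has probability at most $\binom{(R/d)^d}{2}e^{-c_0((R/d)^d-2)}\le e^{-c_1R^d}$.

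Step 3. Take the union bound, with the count computed on the event $|B|\le e^{\Cvol R}$, which is measurable with respect to the $\Gg$-structure. Since the red degrees are independent of this structure, we can condition on the ball and bound the probability by $e^{\Cvol R}R^de^{-c_1R^d}\lesssim e^{-cR^d}$ for $d\ge3$, because $R^d\gg R$.

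The main obstacle is the geometric point in Step 1. The $(R+R)$-ball is built from $\Gg$-balls plus $G$-neighbourhoods, so we need every vertex, including those near the outer layer, to lie in an axis-aligned box of side $\frac1dR$ that sits wholly inside the structure. The resolution is to take the box inside the $G$-neighbourhood of radius $R$ of the inner vertex $u$ from which $v$ was added. That neighbourhood contains the $\ell^1$-ball of radius $R$ around $u$, and $v$ is within distance $R$ of $u$. I would first try to choose a box containing $v$ that is shifted towards $u$ along each coordinate. A naive choice might not fit, so if this fails I would instead build the $(R+R)$-ball with slightly adjusted constants, for instance with boxes of side $\frac{R}{2d}$, which changes nothing in the estimates.
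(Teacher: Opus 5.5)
Your probabilistic argument is the same as the paper's. You cover the ball by at most $e^{\Cvol R}\,\mathrm{poly}(R)$ boxes of side $\frac1dR$. Since the red degrees are i.i.d.\ with $\pr{\dr=0}$ bounded away from $1$, a fixed box has at most two long-range edges with probability $\lesssim R^{2d}e^{-aR^d}$, and a union bound finishes. Steps~2 and~3 are fine.

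The geometric step you flagged does not work as you state it, and your fallback does not rescue it.

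\textbf{Why your box choice fails.} The $G$-neighbourhood of radius $R$ of $\iota(u)$ is an $\ell^1$-ball. Any axis-aligned box containing $v=\iota(u)+Re_1$ has a corner at $\ell^1$-distance $\ge R+(d-1)$ from $\iota(u)$, whatever its side length. So passing to side $\frac{R}{2d}$ changes nothing.

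\textbf{Why other inner vertices don't help.} The ball lies within distance $\frac n4$ of its centre, so you can view it in $\Z^d$. Let $M$ be the largest first coordinate of a ball vertex. The ball vertices with first coordinate $M$ are exactly the points $u+Re_1$ with $u$ an inner vertex maximising $u_1$. A box through such a vertex therefore lies in the ball only if these maximisers contain a $(d-1)$-dimensional box of side $\frac1dR$, which essentially never happens. Hence \Cref{def:binary_RplusR_ball}, read literally, fails for almost every ball.

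\textbf{This is shared with the paper.} The paper's proof passes over the same point by simply positing a cover $(\Lambda_i)_{i\in I}$ of all vertices. So the defect lies in the definition, not in your estimates.

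\textbf{A possible repair.} Require the box only for vertices within $G$-distance $\frac R3$ of the inner $\Gg$-ball. Write $x=v-\iota(u)$ and $L=\frac1dR$. In coordinate $i$, take an interval of length $L$ centred at $0$ if $|x_i|\le L/2$, and ending at $x_i$ otherwise. The far corner then has norm $\le|x|_1+d\lceil L/2\rceil\le\frac R3+\frac R2+d\le R$ for large $R$. With that definition your Steps~1--3 go through verbatim. Whether this weaker notion still suffices for \Cref{transience_in_binary_tree} from starting points in the outer layer is a separate question.
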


For a sequence $(p_k)_{k\ge1}$ taking values in $[0,1]$ we say that a random variable $Z$ has \emph{generalised geometric distribution} with parameters $(p_k)$, and write $Z\sim\GGeompos{(p_k)}$, if $\pr{Z=z}=\left(\prod_{k=1}^{z-1}(1-p_k)\right)p_z$ for all $z\ge1$. We write $Z\sim\GGeomnonneg{(p_k)}$ to mean that $(Z+1)\sim\GGeompos{(p_k)}$.

\begin{lemma}\label{bound_range_in_ball_until_tauatyp}
There exist positive constants $c_1$, $c_2$ and $c_3$ (with $c_1$ and $c_2$ depending on $d$, and $c_3$ depending on $d$ and $\beta$), and a sequence $(p_k)_{k\ge1}$ in $(0,1)$ satisfying $p_k\ge c_1\wedge\frac{c_2\log\log N}{k}$ for all $k\le N^{c_3}$, such that the following holds.

Let us consider a copy of $\Gg$ and for each of its vertices $v$, let us attach $\dr(v)\eqdist\dr$  red edges to it, independently of each other and everything else. Then let us consider a walk $Y$ that takes steps like a simple random walk except for the following. Every time it crosses a red edge, with probability $\pesc$ it gets terminated, otherwise it crosses the red edge back and continues.

Then we have
\[|R^{Y}\cap \Vg|\1{\Omega_\mathrm{deg}}\quad\stle\quad \GGeompos{(p_k)},\]
where $R^{Y}$ denotes the range of $Y$ (up to the time it gets terminated), $\Vg$ denotes the set of vertices in the copy of $\Gg$ (i.e.\ the new endpoints of the added red edges are not included), and $\Omega_\mathrm{deg}$ denotes the event that $Y$ does not visit any vertex of degree $>\dmax$ until it gets terminated.
\end{lemma}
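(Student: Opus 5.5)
We reveal the environment of $Y$ only along its trajectory and bound, for each $k$, the conditional probability that $Y$ is terminated before its range in $\Vg$ reaches $k+1$ vertices. Precisely, let $\sigma_k$ be the time when $Y$ first has visited $k$ distinct vertices of $\Vg$. We will show that, conditionally on the history up to $\sigma_k$ and on $\Omega_{\mathrm{deg}}$ holding so far, the probability that $Y$ is terminated before $\sigma_{k+1}$ is at least $p_k$. By the standard sequential construction this gives the stochastic domination $|R^Y\cap\Vg|\1{\Omega_{\mathrm{deg}}}\stle\GGeompos{(p_k)}$. We set the indicator to zero if a vertex of degree $>\dmax$ is visited.

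We reveal the graph only at visited vertices. When $Y$ first visits a new vertex $v$, we sample its green and red degrees, using independence of edges of different pairs. The red degree $\dr(v)$ is independent of everything revealed before. The green edges from $v$ to not yet visited vertices are also still unrevealed Bernoullis. On $\Omega_{\mathrm{deg}}$, each visit to $v$ has probability at least $\dr(v)/\dmax$ of crossing a red edge and hence probability at least $\pesc\dr(v)/\dmax$ of termination.

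We use two mechanisms to obtain the two terms in $p_k$.

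\emph{Constant term $c_1$.} When $Y$ arrives at the $k$-th new vertex $v$, the fresh red degree $\dr(v)$ is at least $1$ with probability bounded below. Indeed $\pr{\dr\ge1}\gtrsim 1$, since the red edges carry a constant fraction of $\sum_y p_{x,y}$, as edges of length $\ge D^{1-\beta}$ have total mass $\asymp\beta$. This alone gives termination probability at least $\const/\dmax$, which is too weak. Instead we fix $p_k\ge c_1$ only for small $k$, i.e. $k\lesssim\log\log N$. For such $k$, the bound $c_2\log\log N/k\ge c_1$ needs a separate argument. Here we use that at the first visit the walk immediately steps along a red edge with probability $\dr(v)/\deg(v)$. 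A vertex has $\deg(v)\le 2d+O(1)$ with probability bounded below, since green and red degrees have bounded mean. This gives a constant chance of termination at each new vertex, yielding $c_1$.

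\emph{The $\log\log N/k$ term.} For large $k$, there are only about $k$ visited vertices, and we use the local time. Before the range grows from $k$ to $k+1$, the walk spends roughly time comparable to its return structure among $k$ visited points. The cleaner route is through expectations: among the first $k$ visited vertices, the number carrying a red edge is $\gtrsim k$ by independence (with a Chernoff bound, valid up to $k\le N^{c_3}$ so that the red-degree law is unaffected by conditioning on prior revealed edges). Each time the walk steps to an already visited vertex with red degree, it has chance $\ge\pesc/\dmax$ of termination. We then argue that discovering a new vertex requires, in expectation, at least order $k/\log\log N\cdot\dmax^{-1}$ steps on visited red vertices... Honestly, the main obstacle is this step: we need a lower bound on time spent on already-visited vertices before a new one is found, despite green edges allowing long jumps. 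The plan is to compare with $\Z^d$ range growth via the green degree's bounded mean, giving the $\dmax=\Theta(\log N/\log\log N)$-loss absorbed into $c_2\log\log N$.

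Finally, we combine the two cases by taking $p_k=\max$ of the two bounds, capped below $1$.
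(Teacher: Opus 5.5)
There is a genuine gap. The $\frac{c_2\log\log N}{k}$ part of $p_k$ is the real content of the lemma for $\log\log N\lesssim k\le N^{c_3}$, and it is not proved; the mechanism you propose for it points the wrong way. You look for termination on \emph{revisits} to already explored vertices, at per-step rate $\pesc\dr(v)/\dmax$, and hope that the time spent on explored vertices between two discoveries grows with $k$. Nothing supports this. In $d\ge3$ the walk is transient and its range grows linearly in time, so the number of steps between consecutive discoveries does not grow with $k$. With the worst-case rate $\pesc/\dmax\asymp\frac{\log\log N}{\log N}$, a bounded number of steps per discovery only yields $p_k\gtrsim\frac{\log\log N}{\log N}$, which is far too small when $k\ll\log N$.

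The missing idea is that both terms of $p_k$ come from the mechanism you already use for small $k$. This is termination in the step immediately after the \emph{first} arrival at the $k$-th new vertex $v_k$, which has conditional probability $\pesc\,\econd{\frac{\dr(v_k)}{\dr(v_k)+\dg(v_k)+2d}}{\F_k}$. The paper formalises this by comparing $Y$ with a walk $Z$ that can only be terminated at such first-visit steps, so $Y$ is terminated no later than $Z$.

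What must be controlled is the green degree of $v_k$ \emph{conditionally on the exploration}. The vertex $v_k$ may have green edges back to each of $v_0,\dots,v_{k-1}$. By the conditional edge estimate \eqref{eq:simg_cond_prob} of \Cref{cond_green_red_deg}, together with $\dg(v_j)\le\dmax$ on $\Omega_{\mathrm{deg}}$, each such edge is present with conditional probability $\lesssim\dmax\max_{u,w}\pgreen_{u,w}\asymp\frac{1}{\log\log N}$. Hence $\dg(v_k)\stle 1+\Bin{k-1}{\til{C}/\log\log N}+\dg$ with $\dg$ fresh. Meanwhile $\dr(v_k)\ge1$ with probability bounded below, independently of everything. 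This gives $\econd{\frac{1}{\dg(v_k)+2d+1}}{\F_k}\gtrsim 1\wedge\frac{\log\log N}{k}$, and thus $p_k\gtrsim c_1\wedge\frac{c_2\log\log N}{k}$ in one stroke.

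This is also where $k\le N^{c_3}$ enters: \Cref{cond_green_red_deg} requires the conditioning set to have size $\le N^{c}$. It has nothing to do with the red degrees, which in this lemma are attached independently and are never affected by the conditioning.

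Finally, your small-$k$ step asserts that $\deg(v_k)\le 2d+O(1)$ with probability bounded below ``since green and red degrees have bounded mean''. That ignores exactly this conditioning. The claim is true for $k\lesssim\log\log N$, but only by the estimate just described.
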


Once we have these, the proof of \Cref{bound_atyp_prob} can be concluded as follows.

\begin{proof}[Proof of \Cref{bound_atyp_prob}]
By \Cref{max_deg} the probability that \eqref{item:large_deg} holds is $\ll\frac1N$, and by\\ \Cref{T_volume_tail_bound} the probability that \eqref{item:large_vol} holds is also $\ll\frac1N$ for sufficiently large $\Cvol$.

By \Cref{small_vol_implies_likely_binary} the probability that \eqref{item:nonbinary} holds, but \eqref{item:large_vol} does not hold is $\lesssim e^{-cR^d}\ll\left(\frac{1}{\log N}\right)^b$ for any $b>0$.

By \Cref{bound_range_in_ball_until_tauatyp} the probability that \eqref{item:hit_boundary} holds, but \eqref{item:large_deg} does not can be bounded as
\begin{align*}
\le\quad\pr{\GGeompos{(p_k)}\ge R}\quad=\quad\prod_{k=1}^{R-1}(1-p_k)\quad&\le\quad \prod_{k=\frac{c_2}{c_1}\log\log N}^{C_R\log\log N-1}\left(1-\frac{c_2\log\log N}{k}\right)\\
&\le\quad\exp\left(-\sum_{k=\frac{c_2}{c_1}\log\log N}^{(C_R-1)\log\log N}\frac{c_2\log\log N}{k}\right),
\end{align*}
where for the second inequality we assumed that $C_R>\frac{c_2}{c_1}$. Note that for given positive constants $c_1$ and $c_2$, the sum $\sum_{k=\frac{c_2}{c_1}\log\log N}^{(C_R-1)\log\log N}\frac{c_2}{k}$ can be made arbitrarily large by choosing the constant $C_R$ sufficiently large. This finishes the proof.
\end{proof}

The proof of \Cref{T_volume_tail_bound} is the same as the proof of \Cref{volume_tail_bound}, hence it is omitted. The proofs of \Cref{small_vol_implies_likely_binary} and \Cref{bound_range_in_ball_until_tauatyp} are presented below.

\begin{proof}[Proof of \Cref{small_vol_implies_likely_binary}]
Assume that the volume of the $(R+R)$-ball is $\le e^{\Cvol R}$.
Let $(\Lambda_i)_{i\in I}$ be a collection of copies of $\left[0,\frac1dR\right]^d$ in the $(R+R)$-ball that cover all vertices of the ball, such that $|I|\le e^{\Cvol R}$. For each $\Lambda_i$, the probability that there are at most 2 red edges coming out of it is $\lesssim R^{2d}e^{-aR^d}$ where $a$ is a positive constant. The probability that at least one of the $\Lambda_i$ has this property is $\lesssim e^{\Cvol R}R^{2d}e^{-aR^d}\lesssim e^{-cR^d}$ for some positive constant $c$.
\end{proof}

\begin{proof}[Proof of \Cref{bound_range_in_ball_until_tauatyp}]
For the purposes of the proof we consider a walk $Z$ that takes steps like $Y$, except that it can only get terminated when it crosses a red edge from a vertex in $\Vg$ it visited for the first time. It is clear that $Y$ and $Z$ can be coupled such that $Y$ gets terminated no later than $Z$ and they agree up to this time. Hence for any $k$ we have $\pr{|R^Y\cap\Vg|\1{\Omega_\mathrm{deg}}\ge k}\le\pr{|R^Z\cap\Vg|\ge k,\:\Omega^Z_\mathrm{deg}(k)}$ where $R^Z$ denotes the range of $Z$ and $\Omega^Z_\mathrm{deg}(k)$ is the event that the first $k$ different vertices visited by $Z$ (or all different vertices visited by $Z$ if it gets terminated earlier) have degree $\le\dmax$.

Let us explore the random graph (call it $H$) as we proceed with the walk $Z$. Let $v_0=Z_0$. Whenever we encounter a new vertex $x$, we reveal $\dr(x)$ and $\dg(x)$. When $Z$ crosses a red or green edge for the first time, we also reveal the other endpoint of that edge. We will denote the $k$-th new vertex in $\Vg$ that $Z$ visits by $v_k$, and the $\sigma$-algebra generated by everything we know up to then by $\F_k$. Let $c_3$ be the constant from \Cref{cond_green_red_deg}. For $k>N^{c_3}$ let $p_k=0$. For $k=1,2,..., N^{c_3}$ we do the following in the $k$th round.

(A) Description of the exploration:

We start the $k$th round by revealing the green and red degree of $v_{k-1}$. (The distribution of the green degree depends on $\F_{k-1}$.)

Let $A_k$ be the event that the walk in the next step crosses a red edge from $v_{k-1}$ and gets terminated. Note that by definition $\prscond{A_k}{H}{}=\frac{\dr(v_{k-1})}{\dr(v_{k-1})+\dg(v_{k-1})+2d}\cdot\pesc$, hence $\prscond{A_k}{\F_{k-1}}{}=\econd{\frac{\dr(v_{k-1})}{\dr(v_{k-1})+\dg(v_{k-1})+2d}\cdot\pesc}{\F_{k-1}}$, and that on this event we have $|R^Z\cap\Vg|\le k+1$.

In case $A_k$ does not hold, we proceed with the exploration as follows. With probability $\frac{2d}{\dg(v_{k-1})+2d}$ we let the walk cross a uniformly chosen black edge from $v_{k-1}$. With probability $\frac{\dg(v_{k-1})}{\dg(v_{k-1})+2d}$ the walk crosses a uniformly chosen green edge from $v_{k-1}$. We know what the distribution of the other endpoint of a uniformly chosen green edge from $v_{k-1}$ is, so we sample a vertex according to that and let the walk step there. (Note that this distribution depends on what we have explored so far.)

If the step leads the walk to a yet unvisited vertex, then we call that vertex $v_k$. Otherwise we keep taking steps until the walk visits a new vertex, and we call that vertex $v_k$. We denote the $\sigma$-algebra generated by the exploration up to visiting the $k$th new vertex $v_k$ by $\F_k$.

(B) Estimates on the degrees:

We will define a sequence $(p_k)_{k=0}^{N^{c_3}}$ of constants such that for each $k$ we have $p_k\asymp 1\wedge\frac{\log\log N}{k}$ and $\econd{\frac{\dr(v_{k})}{\dr(v_{k})+\dg(v_{k})+2d}\cdot\pesc}{\F_{k}}\ge p_k$.

We know that $\pr{\dr\ge1}$ is bounded away from 0, say it is lower bounded by a constant $q\in(0,1)$. In what follows, let us work conditional on $\F_{k}$ and assume that none of $v_0,...,v_{k-1}$ has green degree larger than $\dmax$.

Let $\ell\in\{0,2,...,k-1\}$ be such that the walk first visited $v_k$ by crossing an edge from $v_{\ell}$ to $v_k$. Then we know that there is a green or black edge running between $v_{\ell}$ and $v_{k}$. Using \eqref{eq:simg_cond_prob} from \Cref{cond_green_red_deg}, we can bound the number of green connections between $v_k$ and $\{v_0,...,v_{k-1}\}\setminus\{v_{\ell}\}$ as 
\[
\left(\1{v_{j}\simg v_{k}}\right)_{j\in\{0,1,...,k-1\}\setminus\{\ell\}}\quad\stle\quad \left(\Bern{c\dg(v_j)\pgreen_{v_j,v_k}}\right)_{j\in\{0,1,...,k-1\}\setminus\{\ell\}},
\]
where $c$ is a positive constant, and the Bernoulli random variables are independent. \footnote{Consider the $\sigma$-algebra $\G_k$ generated by $\F_k$ and by revealing all green edges running between the vertices $v_0,...,v_{k-1}$. Then conditional on $\G_k$, for each $j\in\{0,1,...,k-1\}\setminus\{\ell\}$, the vertex $v_j$ has $\dg'(v_j)\le\dg(v_j)$ green half-edges that are not yet matched, and the only information we have about their other endpoints is that they do not lie in $\{v_0,...,v_{k-1}\}$. Conditional on $\G_k$, the events $\{v_j\simg v_k\}$ are independent for the different $j\in\{0,1,...,k-1\}\setminus\{\ell\}$ and $\prcond{v_j\simg v_k}{\G_k}=\prcond{v_j\simg v_k}{\dg(v_j)=\dg'(v_j),v_j\not\simg v_i\text{ for }i\in\{0,1,...,k-1\} }$. Using \eqref{eq:simg_cond_prob} and that $k\le N^{c_3}$ we see that this probability is $\lesssim\dg'(v_j)\pgreen_{v_j,v_k}\le\dg(v_j)\pgreen_{v_j,v_k}$.}

On the event we are working on, we have $\dg(v_j)\pgreen_{v_j,v_k}\le\dmax\cdot\pgreen_{\max}\asymp\frac{\log N}{\log\log N}\cdot\frac{1}{\log N}$, hence the above random variable is
\[
\stle\quad\left(\Bern{\frac{\til{C}}{\log\log N}}\right)_{j\in\{0,1,...,k-2\}}
\]
where $\til{C}$ is a positive constant, and the Bernoulli random variables are independent.

For any $v\not\in\{v_0,...,v_k\}$, we have $\1{v\simg v_k}\eqdist\Bern{\pgreen_{v,v_k}}$, independently of anything else. Hence
\[
\dg(v_{k})\quad\stle\quad 1+\Bin{k-1}{\frac{\til{C}}{\log\log N}}+\dg,
\]
where the binomial random variable is independent of $\dg$ (and we recall the definition of $\dg$ from \Cref{sec:further_notation_Gstar}). Also, we have $\dr(v_{k})\ge1$ with probability at least $q$, independently of everything else.

We have
\[
\econd{\frac{\dr(v_{k})}{\dr(v_{k})+\dg(v_{k})+2d}\cdot\pesc}{\F_{k}}\quad\ge \quad\pr{\dr(v_k)\ge1}\cdot\econd{\frac{1}{\dg(v_k)+2d+1}}{\F_{k}}\cdot\pesc.
\]

Here
\[
\econd{\frac{1}{\dg(v_k)+2d+1}}{\F_{k}}\quad\ge\quad\E{\frac{1}{\Bin{k}{\frac{\til{C}}{\log\log N}}+\dg+2d+2}},
\]
where $\Bin{k}{\frac{\til{C}}{\log\log N}}$ and $\dg$ are independent. We know that $\E{\dg}\lesssim 1$, hence for a sufficiently large constant $C'$ the event $\{\dg\le C'\}$ has positive probability. Also, for a sufficiently large constant~$C'$ the event $\left\{\Bin{k}{\frac{\til{C}}{\log\log N}}\le C'\frac{k}{\log\log N}\right\}$ has positive probability. Together these show that the expectation on the right-hand side is $\gtrsim1\wedge\frac{\log\log N}{k}$, therefore
\[
\econd{\frac{\dr(v_{k})}{\dr(v_{k})+\dg(v_{k})+2d}\cdot\pesc}{\F_{k}}\quad\gtrsim\quad1\wedge\frac{\log\log N}{k}\:.
\]
This shows that we can choose $(p_k)$ as required.

(C) Concluding the proof:

Now we are ready to upper bound the probability of the event $\left\{|R^Z\cap\Vg|\ge k,\:\Omega^Z_\mathrm{deg}(k)\right\}$.

If this event holds, then we have $\1{A_0}=....=\1{A_{k-1}}=0$ and up to visiting the $k$th new vertex $v_k$ the walk does not visit any vertex with green degree $>\dmax$. This has probability $\le\prod_{j=0}^{k-1}\left(1-p_j\right)$. Hence $|R^Y\cap\Vg|\1{\Omega_\mathrm{deg}}\stle\GGeompos{(p_k)}$ as required.
\end{proof}

\subsection{Decomposition of a quasi-tree}\label{sec:iid_decomp}

Let us consider a random quasi-tree $T$ and a random walk $X$ on it. Let $\tauatyp$ be the first time that $X$ enters an atypical $(R+R)$-ball. We define the loop-erasure and regenerations of $(X_i)_{i=0}^{\tauatyp}$ as follows.

Let $\xi$ be the \emph{loop-erasure} of $(X_i)_{i=0}^{\tauatyp}$ obtained by considering its path, removing the loops in chronological order, and only keeping the long-range edges in the remaining path.

Let us say that $t$ is a \emph{regeneration time} of $(X_i)_{i=0}^{\tauatyp}$ if $t\le\tauatyp$, $(X_{t-1},X_t)$ is a long-range edge, and this edge is crossed exactly once by $(X_i)_{i=0}^{\tauatyp}$ in either direction. In this case we also say that $(X_{t-1},X_t)$ is a \emph{regeneration edge}. We denote the $j$th regeneration time by $\sigma_j$ and we denote its level $\ell(X_{\sigma_j})$ by~$\varphi_j$. (Note that a.s.\ $\tauatyp$ is finite, and so only a finite number of regenerations are defined. When we write $\xi_j$, $\sigma_j$ or $\varphi_j$, it is always implicitly assumed that it is defined.) {In what follows loop-erasures, regeneration times and regeneration levels are always defined with respect to a walk stopped upon hitting an atypical ball, but we often drop this from the notation.}

Let $T^a$ denote a quasi-tree that is obtained from $T$ by adding a new long-range edge from $\rho$, whose other endpoint is a new vertex $\rho^a$, and consider a walk $X^a$ on $T^a$ started from $\rho$. Let us use notation $\sigma^a$, $\xi^a$, etc to denote the regeneration times, loop-erasure, etc associated to $(T^a,X^a)$.

From \Cref{transience_until_nonbinary} we know that $\prstart{\tauatyp^a<\tau_{\rho^a}}{\rho}\ge\pesc$. Let $(T',X')$ be distributed like $(T^a,X^a)$ conditioned on the event $\{\tauatyp^a<\tau_{\rho^a}\}$.

Then we have
\begin{align*}
\left(T\left(X_{\sigma_1}\right),(X_t)_{t\ge\sigma_1}\right)\:\big|\:\left\{\sigma_1<\tauatyp\right\}\quad\eqdist\quad\left(T',X'\right),
\end{align*}
and conditional on the event $\left\{\sigma_1<\tauatyp\right\}$, the parts $\left(T\setminus T\left(X_{\sigma_1}\right),(X_t)_{t\le\sigma_1}\right)$ and $\left(T\left(X_{\sigma_1}\right),(X_t)_{t\ge\sigma_1}\right)$ are independent.

Let $\til{p}_0:=\prstart{\tauatyp<\sigma_1}{\rho}$ for a walk on $T$ and let $\til{p}:=\prscond{\tauatyp^a<\sigma_1^a}{\tauatyp^a<\tau_{\rho^a}}{\rho}$ for a walk on $T^a$. (Note that these depend on $n$.)

\begin{lemma}\label{iid_decomp}
Let us consider a random walk $X$ on a random quasi-tree $T$, and let\\ $J:=\max\left\{j\ge0:\:\sigma_j<\tauatyp\right\}$. Then the following are satisfied.
\begin{itemize}
\item $J\sim\GGeomnonneg{\til{p}_0,\til{p},\til{p},...}$. \footnote{We recall the notation $\GGeomnonneg{\cdot}$ from before \Cref{bound_range_in_ball_until_tauatyp}.}
\item Conditional on $J=j\ge1$, the following $j+1$ parts are independent: $\left(T\setminus T(X_{\sigma_1}),(X_t)_{t\le\sigma_1}\right)$, $\left(T\left(X_{\sigma_i}\right)\setminus T\left(X_{\sigma_{i+1}}\right),(X_t)_{t\in\left[\sigma_i,\sigma_{i+1}\right]}\right)$ for $i=1,...,j-1$, and $\left(T\left(X_{\sigma_j}\right),\left(X_t\right)_{t\in\left[\sigma_j,\tauatyp\right]}\right)$.
\item Conditional on $J=j\ge1$, the following $j-1$ parts are identically distributed, and their distribution does not depend on $j$: $\left(T\left(X_{\sigma_i}\right)\setminus T\left(X_{\sigma_{i+1}}\right),(X_t)_{t\in\left[\sigma_i,\sigma_{i+1}\right]}\right)$ for $i=1,...,j-1$.
\item Conditional on $J\ge1$, the distribution of $\left(T\left(X_{\sigma_J}\right),\left(X_t\right)_{t\in\left[\sigma_J,\tauatyp\right]}\right)$ does not depend on $J$.
\end{itemize}
\end{lemma}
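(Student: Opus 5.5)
The plan is a renewal argument. It rests on the regeneration property stated just before the lemma, applied iteratively via the strong Markov property of the joint exploration of $(T,X)$. The basic observation is that the first regeneration edge $(X_{\sigma_1-1},X_{\sigma_1})$ cuts $T$ into two pieces. Given $\{\sigma_1<\tauatyp\}$, the pair $\left(T(X_{\sigma_1}),(X_t)_{t\ge\sigma_1}\right)$ is distributed as $(T',X')$, and it is independent of $\left(T\setminus T(X_{\sigma_1}),(X_t)_{t\le\sigma_1}\right)$. In $(T',X')$ the extra edge $(\rho,\rho^a)$ plays the role of the regeneration edge just crossed. The conditioning on $\{\tauatyp^a<\tau_{\rho^a}\}$ encodes exactly the requirement that this edge is never crossed again before $\tauatyp$. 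This works because, by \Cref{rmk:atypical_ball}, atypicality of a ball is a local property: the event that a ball is atypical depends only on that ball and its red degrees. Hence the subtree beyond a long-range edge is a fresh random quasi-tree, independent of everything behind it.

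Next I would show that the second regeneration of $(T,X)$ corresponds to the first regeneration of $(T',X')$. A long-range edge in $T(X_{\sigma_1})$ is crossed exactly once by $X$ up to $\tauatyp$ if and only if it is crossed exactly once by the post-$\sigma_1$ walk. This holds because before $\sigma_1$ the walk never enters $T(X_{\sigma_1})$: the edge leading into it is crossed only at time $\sigma_1$. Also, no edge outside $T(X_{\sigma_1})$ can be a later regeneration edge, since the walk never returns there. The same identification gives $\sigma_{i+1}-\sigma_1=\sigma_i'$. Consequently
\[
\pr{J\ge1}=1-\til p_0,\qquad \prcond{J\ge i+1}{J\ge i}=\prcond{\sigma_1^a<\tauatyp^a}{\tauatyp^a<\tau_{\rho^a}}=1-\til p\quad\text{for }i\ge1,
\]
which gives $J\sim\GGeomnonneg{\til p_0,\til p,\til p,\dots}$.

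The three structural statements then follow by induction. Applying the decomposition to $(T',X')$ in place of $(T,X)$ peels off the segment $\left(T(X_{\sigma_i})\setminus T(X_{\sigma_{i+1}}),(X_t)_{t\in[\sigma_i,\sigma_{i+1}]}\right)$. Each such peeled piece is distributed as the first segment of $(T',X')$ restricted to $\{\sigma'_1<\tauatyp'\}$. The remainder is again a fresh copy of $(T',X')$. Conditioning on $J=j$ means intersecting the events $\{\sigma'_1<\tauatyp'\}$ for the first $j-1$ pieces with $\{\tauatyp'<\sigma'_1\}$ for the last. Each of these events is measurable with respect to its own piece, so conditioning on their intersection preserves independence. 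It also makes the middle pieces identically distributed, with law independent of $j$. Likewise the final piece has law $(T',X')$ given $\{\tauatyp'<\sigma'_1\}$, which does not depend on $j$.

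The main obstacle is making the first regeneration decomposition fully rigorous, since regeneration is not a stopping time: it depends on the future of the walk. I would handle this by summing over the candidate edge $e$ and the time $s$ at which it is crossed. I would use the Markov property at time $s$ together with the fact that the part of $T$ beyond $e$ is unexplored at that moment and hence distributed as a fresh quasi-tree. The event $\{\sigma_1=s\}$ then factorises into two parts. The first is a past event: no earlier long-range edge remains singly crossed, given that the future never returns across $e$. The second is a future event: the walk started at $e^+$ hits an atypical ball before returning across $e$. This factorisation yields both the independence and the identification of the future with $(T^a,X^a)$ conditioned on $\{\tauatyp^a<\tau_{\rho^a}\}$.
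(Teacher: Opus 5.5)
Your renewal argument follows the paper's route: the paper omits this proof as analogous to \cite[Lemma 3.6]{random_matching}, and that argument is exactly this one. You sum over the first regeneration edge $e$ and its crossing time $s$, split $\{\sigma_1=s,\,(X_{s-1},X_s)=e\}$ into a past event and the future event $\{\tauatyp^a<\tau_{\rho^a}\}$ on the subtree beyond $e$, and iterate; that subtree is fresh as a rooted graph by translation invariance of the law of $\Gg$ and the locality in \Cref{rmk:atypical_ball}, even though here $\iota(e^+)$ depends on $\iota(e^-)$.

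One detail needs care: crossing a long-range edge directly into an atypical ball gives $\sigma_1=\tauatyp$ with positive probability. So the factorisation gives the law $(T',X')$ conditionally on $\{\sigma_1\le\tauatyp\}$ rather than $\{\sigma_1<\tauatyp\}$, and your identities $\pr{J\ge1}=1-\til p_0$ and $\prcond{\sigma^a_1<\tauatyp^a}{\tauatyp^a<\tau_{\rho^a}}=1-\til p$ ignore these ties. With $\til p_0,\til p$ as defined, the generalised geometric law is exact for the number of regenerations $\sigma_j\le\tauatyp$ rather than for $J$, though the independence and identical-distribution bullets are unaffected.
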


The proof of \Cref{iid_decomp} and the proofs of the statements above it are analogous to the proof of \cite[Lemma 3.6]{random_matching}, hence they are omitted.

\subsection{Speed and entropy}

{In this section we establish concentration results for the speed and entropy of a random walk on a random quasi-tree. The additional difficulty compared to the analogous results in~\cite{weighted_random_matching} comes from the fact that we only consider the walk $X$ until $\tauatyp$.}

\begin{proposition}\label{bound_entropy_T}
{Let $T$ be a random quasi-tree as in Definition~\ref{def:random_quasi_tree} and let $\xi$ be a loop-erasure of a simple random walk $X$ on $T$ and $\til{\xi}$ be an independent loop-erasure on $T$.\footnote{{Recall that in the definition of loop-erasures and regenerations we consider the walk only until it first hits an atypical ball.}} Then for any $b\in\Zpos$ we have
\[\E{\left(-\log\prscond{\xi_{\varphi_1}\in\til{\xi}}{T,X}{}\right)^b\1{\Omegatyp{\sigma_1}}\1{\Omegabound{\sigma_1}}}\quad\lesssim\quad1.\]
Furthermore, the same statement holds if we replace $(T,X,\xi,\til{\xi})$ with $(T',X',\xi',\til{\xi}')$, where $(T',X')$ is defined as in \Cref{sec:iid_decomp}, $\xi'$ is the loop-erasure of $X'$, and $\til{\xi}'$ is an independent copy of $\xi'$ conditional on $T'$.}
\end{proposition}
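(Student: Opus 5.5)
The plan is to lower bound $\prscond{\xi_{\varphi_1}\in\til{\xi}}{T,X}{}$ by the probability of a single explicit route for the independent walk $\til{X}$. On the event $\Omegatyp{\sigma_1}\cap\Omegabound{\sigma_1}$, $\xi_{\varphi_1}$ is the first regeneration edge $e=(X_{\sigma_1-1},X_{\sigma_1})$. Every ball crossed by the loop-erased path to $e$ is typical, and $X$ visits fewer than $R$ vertices in each. Write $e_1,\dots,e_{m}=e$ with $m=\varphi_1$ for the long-range edges of the loop-erased path.

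\textbf{Lower bound for a fixed path.} We bound from below the probability that $\til{X}$ follows this chain of edges, crosses $e$, and never returns, so that $e$ survives loop erasure and lies in $\til{\xi}$. For each $i$, $\til{X}$ must travel inside the ball of $e_{i-1}^+$ from $e_{i-1}^+$ to $e_i^-$ and then cross $e_i$. Since $X$ did exactly this within fewer than $R$ distinct vertices, the two endpoints are at graph distance $<R$ in a ball whose degrees are at most $\dmax$. Following a fixed shortest path therefore has probability at least $\dmax^{-R}$. A finer option is to use the effective resistance inside the ball, but the crude bound already suffices for moments in this setting. After crossing $e$, \Cref{transience_until_nonbinary} bounds by at least $\pesc$ the probability of never crossing $e$ back before reaching a non-binary ball. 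Since all balls after $e$ along $X$ are typical, the only issue is that $\til{X}$ might later stop at an atypical ball; this only helps, because the loop erasure is then fixed. Combining,
\[
-\log\prscond{\xi_{\varphi_1}\in\til{\xi}}{T,X}{}\quad\lesssim\quad \varphi_1\, R\log\dmax+1 .
\]
This is polylogarithmic, which is too crude. I therefore refine the per-ball cost: the hitting probability of $e_i^-$ from $e_{i-1}^+$ inside a typical ball is at least $(\deg\cdot(\text{number of vertices visited by } X\text{ there}))^{-1}$ times a constant, by a standard commute/resistance bound along $X$'s own excursion. Hence the cost per level is $O(\log(\text{local degree} + \text{local range}))$.

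\textbf{Moments.} It then suffices to show $\E{\varphi_1^{2b}}\lesssim1$ and that the per-ball range variables have uniformly bounded moments. By \Cref{iid_decomp}, each long-range crossing that is not undone becomes a regeneration with probability bounded below, uniformly on typical balls. This uses \Cref{transience_until_nonbinary}: after crossing a fresh long-range edge, with probability $\ge\pesc$ the walk never returns. So the number of levels reached before $\sigma_1$, and indeed the total number of balls visited before $\sigma_1$, has a geometric tail. The range of $X$ inside a single ball has a tail dominated by the generalised geometric from \Cref{bound_range_in_ball_until_tauatyp}, after the excursion/termination coupling. I then apply Hölder's inequality to the sum over levels of $\log$ costs, with a geometric number of terms. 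The statement for $(T',X')$ follows identically, since the conditioning event has probability $\ge\pesc$.

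\textbf{Main obstacle.} The hard part is the tail of the number of balls visited before $\sigma_1$ and of the within-ball ranges, uniformly. Balls may be revisited many times, and degrees inside a typical ball are only bounded by $\dmax$, not by a constant. I would handle this by bounding $\log\dmax$ contributions only by the local degrees actually encountered, whose conditional laws are dominated by $1+\Bin{k}{\til{C}/\log\log N}+\dg$ as in \Cref{bound_range_in_ball_until_tauatyp}.
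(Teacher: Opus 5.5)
The step that fails is your refinement of the per-level cost. The identity $\mathbb{P}_u(\tau_v<\tau_u^+)=1/(\deg(u)R_{\mathrm{eff}}(u,v))$, and the lower bound of order $(\deg\cdot\text{local range})^{-1}$ you derive from it, apply to a walk confined to the ball. But $\til{X}$ on $T$ is effectively killed inside the ball. Each time it steps along a long-range edge other than $e_i$, \Cref{transience_until_nonbinary} gives probability at least $\pesc$ that it never comes back before $\tauatyp$. So at every vertex $w$ carrying long-range edges it is lost with probability at least $\pesc\,\dr(w)/\deg(w)$ per visit. This strong killing is exactly what condition (iii) of \Cref{def:atypical_ball} quantifies.

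Here is a concrete failure. Suppose $e_i^-$ is at distance $k$ from $e_{i-1}^+$ and all vertices of the ball within that distance carry long-range edges; this is compatible with the ball being typical and with $X$ having passed through it. Then $\til{X}$ needs at least $k$ risky steps, and the probability is at most $(1-c)^k$, far below $1/k$. So the per-level cost is genuinely of order (local range)$\times$(log-degrees), not $O(\log(\text{local degree}+\text{local range}))$, and the estimate you feed into the moment step is not available.

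The moment step has a second unsupported claim. The fact that each fresh crossing is never undone with probability $\ge\pesc$ does not by itself give a geometric tail for the number of levels or balls visited before $\sigma_1$. The return events of nested and re-entered edges are not resolved one after another. The paper proves only the stretched-exponential tail of \Cref{bound_R_sigma1}, via the $\sqrt{k}$-backtracking argument. Likewise, \Cref{bound_range_in_ball_until_tauatyp} controls the killed walk $Y$ in one ball, not the range of $X$ accumulated over repeated returns to that ball.

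No refinement is actually needed, because a cost linear in the range is affordable. Take a shortest path inside $R_{\sigma_1}$ from $\rho$ to $X_{\sigma_1-1}$, followed by $e$. On $A=\Omegatyp{\sigma_1}\cap\Omegabound{\sigma_1}$ every ball it meets is typical, so
\[
-\log\mathbb{P}(\xi_{\varphi_1}\in\til{\xi}\mid T,X)\quad\le\quad\log(1/\pesc)+\sum_{w\in R_{\sigma_1}}\log\deg_T(w).
\]
To bound the $b$-th moment on $A$, split according to whether $|R_{\sigma_1}|\ge s/C$.
\begin{itemize}
\item The case $|R_{\sigma_1}|\ge s/C$ is handled by \Cref{bound_R_sigma1}, since $R_{\sigma_1}=R_{\sigma_1\wedge\tauatyp\wedge\taubound}$ on $A$.
\item The other case uses that the degrees of the first $m$ newly visited vertices are conditionally dominated by variables with uniform exponential tails, as in the proofs of \Cref{bound_range_in_ball_until_tauatyp} and \Cref{bound_sum_of_first_k_deg}.
\end{itemize}
The $(T',X')$ case then follows as you say.

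The paper reaches the same linear-in-$|R_{\sigma_1}|$ cost by an entropy argument instead. It bounds the expectation by $\entrc{b}{A}{\xi_{\varphi_1}}{T}$ and splits off $|R_{\sigma_1}|$ using \Cref{HbA_cond_properties}. It then uses that, given $T$ and $|R_{\sigma_1}|$, the vertex $X_{\sigma_1}$ lies in the ball of radius $|R_{\sigma_1}|$ around the root, whose log-volume is controlled by \Cref{T_volume_tail_bound}. That route trades control of degrees along the walk for volume growth, and never has to estimate how likely $\til{X}$ is to follow a particular route.
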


\begin{lemma}\label{speed_concentration}
Let $\nu:=\frac{\econd{\varphi'_1}{\Omegabound{\sigma'_1},\:\Omegatyp{\sigma'_1}}}{\econd{\sigma'_1}{\Omegabound{\sigma'_1},\:\Omegatyp{\sigma'_1}}}$, where $\sigma'_1$ and $\varphi'_1$ are the analogues of $\sigma_1$ and $\varphi_1$ for $(T',X')$. Then $\nu\asymp1$, and for any $\theta>0$ there exists $C>0$ such that for all $\ell$ and all sufficiently large $t$ we have
\[
\pr{\left|d_T(\rho,X_t)-\nu t\right|>C\sqrt{t},\:t\le\sigma_{\ell},\:\Omegabound{\sigma_{\ell}},\:\Omegatyp{\sigma_{\ell}}}\quad<\quad\theta,
\]
\[
\pr{\max_{s\in[0,t]}d_T(\rho,X_s)>\nu t+2C\sqrt{t}, \:t\le\sigma_{\ell},\:\Omegabound{\sigma_{\ell}},\:\Omegatyp{\sigma_{\ell}}}\quad<\quad\theta,
\]
where we recall $d_T$ denotes long-range distance in the random quasi-tree $T$.
\end{lemma}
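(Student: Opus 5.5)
We use the regeneration structure of \Cref{iid_decomp}. For $i\ge1$ the increments $(\sigma_{i+1}-\sigma_i,\varphi_{i+1}-\varphi_i)$ are i.i.d. (conditional on $J\ge i+1$), distributed as $(\sigma'_1,\varphi'_1)$ for $(T',X')$ conditioned on $\tauatyp'>\sigma'_1$. The initial piece $(\sigma_1,\varphi_1)$ has its own law. The events $\Omegabound{\sigma_\ell}$ and $\Omegatyp{\sigma_\ell}$ factorise over the pieces. So on $\{t\le\sigma_\ell\}\cap\Omegabound{\sigma_\ell}\cap\Omegatyp{\sigma_\ell}$ we can couple $(\sigma_i,\varphi_i)_{i\le\ell}$ with a renewal sequence whose increments are i.i.d. with law $(\sigma'_1,\varphi'_1)$ conditioned on $\Omegabound{\sigma'_1}\cap\Omegatyp{\sigma'_1}$. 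The cost is at most a bounded change in probabilities, since each piece is multiplied by an indicator and we only need an upper bound on the probability of a bad event intersected with the good events. The plan is therefore to prove a renewal CLT-type bound for this auxiliary sequence and then interpolate between regeneration times.

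\textbf{Step 1: moment bounds.} On $\Omegabound{\sigma'_1}\cap\Omegatyp{\sigma'_1}$ we show that $\sigma'_1$ and $\varphi'_1$ have exponential (or at least second) moments bounded uniformly in $N$. Transience (\Cref{transience_until_nonbinary}) gives, at each newly reached level, a probability at least $\pesc$ that the long-range edge just crossed is never recrossed. Hence the number of levels until a regeneration is dominated by a geometric variable. Within typical balls on $\Omegabound{}$ only $<R$ vertices per ball are visited. The time spent between successive level changes is controlled through the escape probability $\gtrsim 1/\dmax$ and \Cref{transience_until_boundary}. Here the truncation by $\Omegabound{}$ matters: it keeps the number of distinct visited vertices per ball below $R$, so the local time in a ball is bounded in distribution. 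This gives $\E{\sigma'_1\,|\,\cdot}\asymp1$ and $\E{\varphi'_1\,|\,\cdot}\asymp1$. Since $\varphi'_1\ge1$, $\nu\gtrsim1/\E{\sigma'_1|\cdot}$, and $\nu\lesssim1$ because $\varphi'_1\le\sigma'_1$. The same bounds hold for the first piece.

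\textbf{Step 2: renewal estimate.} For the auxiliary renewal process with $n(t)=\max\{i:\sigma_i\le t\}$, Chebyshev and Kolmogorov's maximal inequality applied to $\sigma_i-i\,\E{\sigma'_1|\cdot}$ and $\varphi_i-i\,\E{\varphi'_1|\cdot}$ give
\[
|\varphi_{n(t)}-\nu t|\le C\sqrt t
\]
with probability $\ge1-\theta/2$ for large $t$. The initial piece is absorbed because it is tight.

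\textbf{Step 3: between regenerations.} $d_T(\rho,X_s)\ge\varphi_{n(s)}$, because a regeneration edge is never recrossed. Also $d_T(\rho,X_s)\le\varphi_{n(s)+1}$ for $s<\sigma_{n(s)+1}$, since the walk cannot pass the next regeneration level before time $\sigma_{n(s)+1}$. Hence both the position at time $t$ and $\max_{s\le t}d_T(\rho,X_s)$ lie within $\max_{i\le n(t)+1}(\varphi_{i+1}-\varphi_i)$ of $\varphi_{n(t)}$. The condition $t\le\sigma_\ell$ guarantees that the relevant regenerations exist. The maximum of $O(t)$ i.i.d. increments with second moments is $o(\sqrt t)$ with high probability, which yields both displayed bounds with $2C$ in the second.

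The main obstacle is Step 1: obtaining moment bounds for $\sigma'_1$ uniform in $N$. The conditioning on typical balls and the non-recrossing event makes the excursions dependent, so I would dominate the time step by step, using the level-by-level geometric escape argument together with the ball-wise bounds of \Cref{bound_range_in_ball_until_tauatyp}.
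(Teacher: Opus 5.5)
Your architecture (i.i.d.\ regeneration pieces from \Cref{iid_decomp}, bounding the probability of a bad event intersected with $\Omegatyp{\sigma_\ell}\cap\Omegabound{\sigma_\ell}$ by its conditional probability given that event, renewal estimates from second moments, then interpolation between regeneration times) is the same as the paper's. However, Step~3 rests on a false inequality: $d_T(\rho,X_s)\le\varphi_{n(s)+1}$ for $s<\sigma_{n(s)+1}$. A regeneration edge only has to be crossed exactly once before $\tauatyp$; it need not be crossed at a new maximal level. For example, the walk can climb from the root ball to level $2$ in one branch and come back, so those edges are recrossed and are not regeneration edges. It can then enter a different level-$1$ ball and never return. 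This gives $\varphi_1=1$ even though level $2$ was visited before $\sigma_1$. So the oscillation between regenerations is not controlled by $\max_i(\varphi_{i+1}-\varphi_i)$.

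The repair is easy. On $[\sigma_i,\sigma_{i+1}]$ the walk stays in $T(X_{\sigma_i})$, and its level changes by at most one per step. Hence $\varphi_{n(s)}\le d_T(\rho,X_s)\le\varphi_{n(s)}+(\sigma_{n(s)+1}-\sigma_{n(s)})$. The maximum of at most $t$ i.i.d.\ piece lengths is $o(\sqrt t)$ with high probability, given uniformly bounded moments. The paper, following Lemma~2.19 of the weighted random matching paper, handles the second display differently: it uses concentration at time $t$ together with the fact that dropping $C\sqrt t$ levels below a previously visited level forces a backtrack of $C\sqrt t$ levels along an ancestral path.

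Step~1, which you call the main obstacle, is exactly \Cref{bound_R_sigma1} and \Cref{bound_sigma1}: $\sigma_1\wedge\tauatyp\wedge\taubound\stle C+(\Geom{c})^{15}$, and the same for $(T',X')$. The paper simply invokes these, together with the fact that the good events have probability bounded away from $0$. You need that fact too, since $\sigma'_1=\sigma'_1\wedge\tauatyp'\wedge\taubound'$ only on the good event.

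Your sketch of Step~1 would not give bounds uniform in $N$. An escape probability $\gtrsim1/\dmax$, with $\dmax\asymp\log N/\log\log N$, only bounds local times by order $\dmax$. Having fewer than $R=C_R\log\log N$ distinct vertices per ball is not an $N$-uniform bound either. Uniformity comes from two ingredients:
\begin{enumerate}[(a)]
\item an $N$-uniform exponential tail for the number of distinct vertices visited at each level: at each of the first $R$ new vertices of a ball the walk steps along a red edge with probability bounded below, and then escapes with probability at least $\pesc$;
\item a degree-free bound on the time needed to leave a set of $j$ already visited vertices, from the commute-time identity on the graph obtained by contracting the complement (at most $j^2$ edges, effective resistance at most $j$).
\end{enumerate}

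Also, ``the number of levels until a regeneration is dominated by a geometric'' does not follow directly from the per-level escape bound. Whether an edge is a regeneration edge depends on the whole future up to $\tauatyp$, and it can be recrossed after the walk has climbed much higher. You need an overshoot or blocking argument, such as the paper's $\sqrt{k}$-block argument, which gives a tail $e^{-c\sqrt k}$.
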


\begin{proposition}\label{entropy_concentration}
We use the notation from~\Cref{bound_entropy_T} and let
{\[
\h:=\quad\frac{1}{\E{\varphi'_1\1{\Omegatyp{\sigma'_1}}\1{\Omegabound{\sigma'_1}}}} \E{\left(-\log\prscond{\xi'_{\varphi'_1}\in\til{\xi'}}{T',X'}{}\right)\1{\Omegatyp{\sigma'_1}}\1{\Omegabound{\sigma'_1}}},\qquad\V:=\log\log N.
\]}

For all $\theta>0$ and $\widehat{C}>0$, there exists a positive constant $C$ so that for all $k\le\widehat{C}\log N=:\ell$ we have

{\[
\pr{\left|-\log\prscond{\xi_{k}=\til{\xi}_k}{T,X}{} -\h k\right|>C\sqrt{k\V},\:\Omegatyp{\sigma_{\ell}},\Omegabound{\sigma_{\ell}}}\quad\le\quad\theta.
\]}
\end{proposition}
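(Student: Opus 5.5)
We will follow the regeneration approach of \cite{weighted_random_matching} (going back to \cite{random_matching}), based on the decomposition of \Cref{iid_decomp}. The first step is to write the conditional probability along the loop-erasure as a product over regeneration blocks. Since $T$ is tree-like at the level of long-range edges, an independent loop-erasure $\til{\xi}$ agrees with $\xi$ up to level $k$ if and only if it passes through every regeneration edge of $\xi$ below level $k$. For $j$ with $\varphi_j\le k<\varphi_{j+1}$ we therefore expect the decomposition
\[
-\log\prscond{\xi_k=\til{\xi}_k}{T,X}{}\;=\;\sum_{i=1}^{j}Y_i\;+\;(\text{boundary terms}),
\]
where $Y_i=-\log\prscond{\xi_{\varphi_{i}}\in\til\xi\text{ within block }i}{T,X}{}$ is a functional of the $i$-th block alone. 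The boundary terms come from the first block, the partial last block, and the conditioning that $\til{\xi}$ must exit through the regeneration edge. They should be controlled by the Markov property of $\til{X}$ at the regeneration edge, together with the escape bounds in \Cref{transience_until_nonbinary}, which give factors $\asymp1$.

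By \Cref{iid_decomp}, on the event that no atypical ball is hit before $\sigma_\ell$, the blocks $i\ge2$ are i.i.d.\ copies of $(T',X')$ restricted to their first regeneration. \Cref{bound_entropy_T} bounds every moment of $Y_i\1{\Omegatyp{\cdot}}\1{\Omegabound{\cdot}}$. The block lengths $\varphi_{i+1}-\varphi_i$ should also have exponential tails, because every regeneration attempt succeeds with probability $\ge\pesc$. With this in hand we can run a renewal argument. Let $N_k=\max\{j:\varphi_j\le k\}$. By the CLT/Chebyshev for the pairs $(Y_i,\varphi_{i+1}-\varphi_i)$, we get $N_k=k/\E{\varphi'_1\cdots}+O(\sqrt k)$, and $\sum_{i\le N_k}Y_i=\h k+O(\sqrt{k})$ with probability $\ge1-\theta/2$. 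The normalisation of $\h$ is exactly the ratio of truncated means, so these two estimates combine into the centring $\h k$.

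The atypicality truncation needs separate care. The indicators $\1{\Omegatyp{\sigma_\ell}}\1{\Omegabound{\sigma_\ell}}$ are not products of per-block indicators of a fixed law, because the block conditioning involves $\tauatyp$. To handle this we couple with an idealised i.i.d.\ sequence drawn from the law of block $(T',X')$ conditioned on the typical/boundary events for that block, and show that on $\Omegatyp{\sigma_\ell}\cap\Omegabound{\sigma_\ell}$ the two agree for all blocks up to level $\ell$. The discrepancy between the conditioned and unconditioned means is at most $(\log N)^{-b}$ per block, by \Cref{bound_atyp_prob}. Summed over $\lesssim\log N$ blocks this is $o(1)$, and in particular well within $C\sqrt{k\V}$.

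The main obstacle is the boundary terms, in particular the partial last block containing level $k$ and the first block. The last block's contribution is $-\log$ of the probability that $\til\xi$ follows $\xi$ from $\varphi_{N_k}$ to level $k$. This is bounded by the full block entropy $Y_{N_k+1}$, but $N_k$ is a random index, so we cannot simply apply the moment bound to it. We would take a union bound over $i\le\ell$ with the $b$-th moment from \Cref{bound_entropy_T}: $\pr{\max_{i\le\ell}Y_i>x}\le\ell\,\E{Y^b}x^{-b}$. Choosing $x=C\sqrt{k\V}$ and $b$ large, this handles $k\gtrsim(\log\log N)^{c}$. For small $k$ the statement is trivial, since we can take $C$ large and use Markov's inequality directly on the at most $k$ blocks. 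This union bound is where the factor $\V=\log\log N$ in the error term should be used.
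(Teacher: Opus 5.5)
Your skeleton matches the paper: telescoping $-\log\mathbb{P}(\xi_k=\til{\xi}_k\mid T,X)$ over regeneration levels, centring by the ratio of truncated means that defines $\h$, and handling the atypicality truncation blockwise. But the step everything rests on is false: $Y_i$ is \emph{not} a functional of the $i$-th block. Because $T$ is a tree at the level of long-range edges, $\xi_{\varphi_i}\in\til{\xi}$ holds exactly when the independent walk $\til{X}$ is stopped, at its own $\tauatyp$, inside $T(X_{\sigma_i})$. So $Y_i$ is minus the log of the probability that the loop-erasure of an independent walk on the whole subtree $T(X_{\sigma_{i-1}})$ passes through $X_{\sigma_i}$. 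The edge $(X_{\sigma_i-1},X_{\sigma_i})$ is a regeneration edge for $X$, not for $\til{X}$. The walk $\til{X}$ can cross it, wander through later blocks (and parts of $T$ that $X$ never visited), and come back; where it is stopped depends on atypical balls arbitrarily deep in $T(X_{\sigma_i})$. Hence $(Y_i)_{i\ge2}$ is stationary but not independent, and \Cref{iid_decomp} does not give you a CLT or Chebyshev bound for the pairs $(Y_i,\varphi_i-\varphi_{i-1})$. Pushing the dependence into ``boundary terms'' bounded via \Cref{transience_until_nonbinary} does not repair this. The requirement that $\til{X}$ leave through the regeneration edge and never re-cross it arises at every regeneration, not only at the two ends, so bounding each such factor by a constant costs an error of order $k$, not $\sqrt{k\V}$.

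The missing idea is a decorrelation estimate, which is the core of the paper's proof. Set $A_i=\Omegatyp{\sigma_{i-1},\sigma_i}\cap\Omegabound{\sigma_{i-1},\sigma_i}$, $A=\bigcap_{i\le\ell}A_i$, $A_{\ge j}=\bigcap_{j\le i\le\ell}A_i$ and $\Delta_i=\varphi_i-\varphi_{i-1}$. The paper shows $\E{(\sum_{i\le k}(Y_i-\Delta_i\h))^2\1{A}}\lesssim k\log\log N$ and then applies Chebyshev. Every covariance is $O(1)$ by Cauchy--Schwarz and \Cref{bound_entropy_T}. For $j-i\ge U\log\log N$ it replaces $Y_i$ by $Y_{i,j}$, defined through the loop-erasure of the independent walk stopped at level $\varphi_{j-1}$; this is independent of $Y_j\1{A_{\ge j}}$. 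On the event that block $i$ has range at most $(j-i)/(U'\log\log N)$, one has $|Y_i-Y_{i,j}|\le e^{-c(j-i)}$. Indeed, a discrepancy needs a backtrack of $j-i$ levels, while both probabilities are $\gtrsim e^{-c'(j-i)/U'}$ because degrees are $\lesssim\log N$. By \Cref{bound_R_sigma1} the complementary event has probability $\lesssim e^{-c((j-i)/\log\log N)^{1/3}}$. Combined with $\E{(Y_j-\Delta_j\h)\1{A_{\ge j}}}=O((\log N)^{-2})$, which is where the definition of $\h$ enters, this gives covariances $\lesssim e^{-u((j-i)/\log\log N)^{1/3}}+\frac{1}{\log N}$. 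Summing over $i,j\le k\lesssim\log N$ yields the bound. So $\V$ is the correlation length of $(Y_i)$; it does not come from the boundary term. That term needs no $\V$ at all: since $\varphi_j\ge j$ you have $N_k\le k$, and a union bound over $i\le k+1$ with second moments suffices. Your union over $i\le\ell$ only works once $k\V\gtrsim(\log N)^{2/b}$, not for $k\gtrsim(\log\log N)^c$. That leaves a range of $k$ which Markov's inequality on $k$ blocks covers only for $k\lesssim\V$.
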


Let us choose $c_L$ in the definition of $L^-$ and $L^+$ in \Cref{sec:parameters} as $c_L=\frac{1}{2\h}$ (with $\h$ as in \Cref{entropy_concentration}).

{A major step in the proof of the above statements is showing that the regeneration times are of constant order, in the sense that they are stochastically dominated by a constant shift of a power of a geometric random variable. For this, we first stochastically bound the range of the walk until the first regeneration time, in~\Cref{bound_R_sigma1}. This and a bound on the time it takes for the walk to visit a given number of different vertices establishes~\Cref{bound_sigma1}.}

\begin{lemma}\label{bound_R_sigma1}
There exist constants $C\in(0,\infty)$ and $c\in(0,1)$ with the following property.
If $T$ is a random quasi-tree as in Definition~\ref{def:random_quasi_tree} and $X$ is a random walk on it, then we have
\[|R_{\sigma_1\wedge\tauatyp\wedge\taubound}|\quad\stle\quad C+(\Geom{c})^3,\]
where $R_k$ denotes the range of $X$ up to time $k$, $\taubound$ is the first time that the walk has visited $R$ distinct vertices in an $(R+R)$-ball, and we set $\sigma_1\wedge\tauatyp\wedge\taubound=\tauatyp\wedge\taubound$ in the case that $\sigma_1$ is not defined (i.e.\ there is no regeneration until $\tauatyp$).

Furthermore, the same statement holds for $(T',X')$ instead of $(T,X)$.
\end{lemma}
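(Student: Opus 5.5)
We follow the strategy of \cite[Lemma 3.6]{random_matching}: cut the trajectory into independent excursion blocks and show that each block has a uniformly positive chance of producing a regeneration. Before $\sigma_1\wedge\tauatyp\wedge\taubound$ the walk only sees typical balls, and in each of them it visits fewer than $R$ distinct vertices. Hence every ball the walk enters contributes at most $R$ vertices to the range, and the degrees it meets there are at most $\dmax$. The task therefore reduces to controlling the number of distinct balls visited, i.e.\ the number of distinct long-range edges crossed before the first regeneration.

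The first step is to show that each newly crossed long-range edge has a constant chance of becoming a regeneration edge. Suppose the walk crosses a long-range edge $e=(u,w)$ into a fresh ball, meaning the subtree $T(w)$ has not been explored. By \Cref{transience_until_nonbinary} (its second part, applied from $w$ with the edge back to $u$), with probability at least $\pesc$ the walk never returns through $e$ before hitting a non-binary ball. A non-binary ball is atypical, so on this event $e$ is crossed exactly once before $\tauatyp$, and hence $e$ is a regeneration edge. The event depends only on the fresh subtree $T(w)$ and the walk inside it, which makes it conditionally independent of the past. The stopping at $\taubound$ causes no harm: either it happens first, which is allowed, or the escape event still yields $\sigma_1$.

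The second step is a geometric-trials argument. Each time the walk enters a new subtree by a long-range edge, it has success probability at least $\pesc$, so the number of such attempts is dominated by a $\Geom{\pesc}$ variable. The difficulty is that a failed attempt is not a single step: the walk returns through $e$ only after an excursion in $T(w)$, and that excursion may itself visit many balls. This is the expected main obstacle, and it is the reason for the cube in the bound. To handle it, we would bound the range of a failed excursion by nesting the same argument. Inside $T(w)$, conditioned on returning, the walk crosses a number of further fresh long-range edges that is itself of geometric order, since each such crossing returns with probability at most $1-\pesc$. The depth of the nesting is also of geometric order. This gives a count of balls that is a product of geometric-type variables, at most the square of a geometric. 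Alternatively, one can count ``new edges crossed'' directly: from a given ball the number of long-range edges the walk crosses before leaving permanently is of geometric order. This uses \Cref{transience_until_boundary} and the bound of at most $R$ vertices per ball together with degrees at most $\dmax$.

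Finally, we convert the count of balls into a bound on the range. The in-ball range is capped at $R$, but $R$ is not a constant, so the cap alone is too weak. Instead we use \Cref{bound_range_in_ball_until_tauatyp}: in a typical ball, the number of vertices visited before the walk leaves via a long-range edge that it escapes along is generalized-geometric, with $p_k\ge c_1$ for $k\lesssim\log\log N$. Up to the $R$ cap, this range is therefore dominated by $C+\Geom{c}$, uniformly in the ball. Multiplying a geometric number of balls by a geometric count of nested excursions and a geometric range per ball gives $|R|\stle C+(\Geom{c})^3$, after the standard fact that a sum of $\Geom{}$-many independent geometric-tailed variables is dominated by a product of geometrics. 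For $(T',X')$ the same argument applies verbatim: conditioning on $\{\tauatyp^a<\tau_{\rho^a}\}$ has probability at least $\pesc$, so it inflates tail probabilities by at most a factor $\pesc^{-1}$, which is absorbed into the constants $C$ and $c$.
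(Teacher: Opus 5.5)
Your first step is sound: a fresh long-range crossing becomes a regeneration edge with conditional probability at least $\pesc$, because non-binary balls are atypical. The gap is in the second step, which is where the whole difficulty of the lemma lies. You never actually bound the number of balls visited before $\sigma_1$.

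The events ``$e_i$ is never crossed back'' are decided in the future and are nested: fresh edges inside $T(w_i)$ are resolved before $e_i$ is. So the factors $1-\pesc$ cannot be multiplied over all fresh crossings. These events are in general positively correlated. For the biased walk on $\Z_{\ge0}$ (reflected at $0$) stepping up with probability $q>\frac12$, with return probability $\rho=(1-q)/q$, the first two fresh edges are both backtracked with probability $2q\rho^2>\rho^2$.

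If you instead count only top-level attempts, each started after the previous failure is resolved, their number is indeed $\stle\Geom{\pesc}$. But then everything is pushed into the size of a failed excursion, and there the argument does not close:
\begin{itemize}
\item A failed excursion branches at every ball it visits. A geometric number of returning sub-excursions per ball together with a geometric depth only bounds the number of balls by something exponential in the depth, not by a product of geometrics.
\item The joint bound $(1-\pesc)^m$ for $m$ returning child excursions corresponds to an offspring mean up to $(1-\pesc)/\pesc$, which exceeds $1$ for small $\pesc$.
\item Applying ``the same argument inside $T(w)$'' is circular.
\item Conditioning on the return costs a factor up to $1/\pr{\text{return}}\le\deg(w)\le\dmax$, which is not a constant.
\end{itemize}
So ``at most the square of a geometric'' is asserted, not proved, and it is essentially the content of the lemma.

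The missing idea is to count vertices level by level, rather than ball by ball or excursion by excursion. Before $\taubound$, each newly visited vertex at level $\ell$ is among the first $R$ new vertices of its ball. So with probability bounded below by a constant, the walk next crosses one of its fresh red edges and never comes back, after which level $\ell$ is never visited again. (The constant is the chance $\gtrsim 1\wedge\frac{\log\log N}{j}$ of stepping across a red edge from the $j$-th new vertex, with $j<R$, times $\pesc$.) This is a genuinely sequential trial, insensitive to branching. It gives $\pr{|R_{\tauatyp\wedge\taubound}\cap V_\ell|\ge k}\lesssim e^{-ck}$, where $V_\ell$ is the set of level-$\ell$ vertices.

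Separately, the paper bounds the number of levels. Let $\tau_k$ be the hitting time of level $k$. If $\tau_k<\sigma_1\wedge\tauatyp$, then either for some $\ell\le k$ the walk returns to level $\ell-\sqrt k$ after hitting level $\ell$, or it hits levels $\sqrt k,\sqrt k-1,2\sqrt k,2\sqrt k-1,\dots,k,k-1$ in this order. Both events cost $(1-\pesc)^{\sqrt k}$ up to a factor $k$, via strong Markov arguments that really are sequential. Combining $k^{2/3}$ levels with $k^{1/3}$ vertices per level yields $e^{-ck^{1/3}}$, which is where the cube comes from.

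Your per-ball range step and your treatment of $(T',X')$ are fine, but they do not substitute for this level count.
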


\begin{lemma}\label{bound_sigma1}
There exist constants $C\in(0,\infty)$ and $c\in(0,1)$ with the following property. If $T$ is a random quasi-tree as in Definition~\ref{def:random_quasi_tree} and $X$ is a random walk on it, then we have
\[
\sigma_1\wedge\tauatyp\wedge\taubound\quad\stle\quad C+\left(\Geom{c}\right)^{15},
\]
where $\sigma_1\wedge\tauatyp\wedge\taubound$ is defined as in \Cref{bound_R_sigma1}.

Furthermore, the same statement holds for $(T',X')$ instead of $(T,X)$.
\end{lemma}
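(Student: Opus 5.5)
The plan is to convert the range bound of \Cref{bound_R_sigma1} into a time bound. Once we know how many distinct vertices the walk has visited before $\sigma_1\wedge\tauatyp\wedge\taubound$, we still need to control how long it can take to visit them. Write $\tau^*:=\sigma_1\wedge\tauatyp\wedge\taubound$ and $\mathcal{R}:=|R_{\tau^*}|$.

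The key deterministic input is a commute/cover-time estimate on the subgraph actually explored. Before $\tau^*$ the walk stays in typical balls, so it only meets vertices of degree at most $\dmax$. Moreover, in each ball it has visited fewer than $R$ distinct vertices. I would bound the time spent before $\tau^*$ by the time needed to visit $\mathcal{R}+1$ distinct vertices, and then use the following standard fact for SRW. Conditional on the explored set $S$ (with $|S|=k$), the expected time to exit $S$ is at most $|E(S)|\cdot\mathrm{diam}(S)\lesssim k\cdot\sum_{v\in S}\deg(v)$. This comes from the commute-time identity: effective resistance to the exterior is at most $k$ along a path, and the edge count involves the degrees.

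The degree factor is the real danger, since $\dmax\asymp\log N/\log\log N$ is not constant. To avoid this, I would not use $\dmax$ directly. Instead I would stochastically bound the degrees of the visited vertices: newly revealed vertices have degree dominated by $2d+1+\Bin{k}{\til C/\log\log N}+\dg+\dr$, exactly as in the proof of \Cref{bound_range_in_ball_until_tauatyp}. Since $k\lesssim$ a polynomial in a geometric variable, the total degree of the first $k$ vertices is $\lesssim k+\sum$ of i.i.d.\ light-tailed variables, hence $\stle C+\Geom{c}^{3+\epsilon}$-type bounds. Combining gives an expected time of order $k\cdot k\cdot(\text{max degree})$, which is at most a polynomial of degree about 5 in $\mathcal{R}$ times degree factors.

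Then I would pass from expectations to stochastic domination. I would split time into epochs: in each epoch of length $C k^{3}$ (with $k$ the current range bound times the degree sum), Markov's inequality gives probability at least $1/2$ of discovering a new vertex. The number of failed epochs is geometric. Taking $\mathcal{R}\stle C+\Geom{c}^3$ and raising to the fifth power (range squared, times degree, times epochs) gives the exponent $15$, after absorbing products of geometrics via $\Geom{c}^a\Geom{c}^b\stle C+\Geom{c'}^{a+b}$. The $(T',X')$ case is identical because \Cref{bound_R_sigma1} holds there too, and conditioning on $\{\tauatyp^a<\tau_{\rho^a}\}$ costs only a factor $1/\pesc$.

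The main obstacle is making the degree control rigorous under the conditioning induced by the walk's exploration. Degrees of visited vertices are correlated with the trajectory, because high-degree vertices are visited more. I would handle this by revealing degrees only at first visits, as in \Cref{bound_range_in_ball_until_tauatyp}, so that domination by independent variables holds along the sequence $v_0,v_1,\dots$.
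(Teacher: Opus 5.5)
Your proposal is correct and has the same skeleton as the paper's proof. You bound $\pr{\sigma_1\wedge\tauatyp\wedge\taubound\ge 2k}$ by $\pr{|R_{\sigma_1\wedge\tauatyp\wedge\taubound}|\ge k^{1/5}}$, using \Cref{bound_R_sigma1}, plus the probability that visiting $k^{1/5}$ distinct vertices takes $2k$ steps. You control each waiting time for a new vertex by the commute time identity, iterate Markov's inequality to get a geometric tail, and transfer to $(T',X')$ at a cost of $1/\pesc$.

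You differ at the step you single out as the main obstacle, and there the paper's route is much lighter. After contracting $\{v_1,\dots,v_j\}^c$ into one vertex $v_{j,\partial}$, the paper deletes all but one of the parallel edges from each $v_i$ to $v_{j,\partial}$. This can only delay the exit: in the new graph $H$ the exit probability from each $v_i$ is no larger than in $T$, while the law of the next step given no exit is unchanged. Then $e(H)\lesssim j^2$ and $\Reffin{u}{v_{j,\partial}}{H}\le j$, so $\estart{\tau_{v_{j,\partial}}}{u}\lesssim j^3$ for \emph{every} realisation of $T$. No information on degrees is needed---neither $\dmax$ nor any stochastic domination---so large degrees are harmless rather than the real danger.

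Your alternative, dominating the degrees of $v_0,v_1,\dots$ revealed at first visits as in \Cref{bound_range_in_ball_until_tauatyp}, does work. It costs an extra error term and a more careful treatment of the conditioning along the exploration. Also, the binomial contributions to the degree sum add up to order $k^2/\log\log N$, not $O(k)$. You should therefore use the crude bound that each such binomial term is at most $i$, so that $\sum_{i\le k}\deg(v_i)$ is at most of order $k^2$ plus a sum of i.i.d.\ light-tailed variables. This still gives the $k^3$ exit-time bound and hence the exponent $15$.
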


The proofs of \Cref{bound_R_sigma1} and \Cref{bound_sigma1} are presented below.

\begin{proof}[Proof of \Cref{bound_R_sigma1}]
Firstly, we prove that for each level $\ell$ the number of different vertices visited by the walk at level $\ell$ can be bounded as
\begin{align}\label{eq:bound_range_in_one_level}
\pr{|R_{\tauatyp\wedge\taubound}\cap V_\ell|\ge k}\quad\lesssim\quad e^{-(\const)k}.
\end{align}
Here $V_\ell$ denotes the set of vertices at level $\ell$.

Note that when the walk is at a vertex $v$ that is the $j$th new vertex seen in its $(R+R)$-ball, the probability of crossing a red edge in the next step is $\ge q_j$, where $q_j\asymp1\wedge\frac{\log\log N}{j}$ for $j\le N^{\const}$. (The proof of this is similar to the proof of \Cref{bound_range_in_ball_until_tauatyp} and is omitted.) Also note that $\min_{j\le R}q_j\asymp1$ and by \Cref{transience_until_nonbinary} if the walk crosses a red edge, there is probability at least $\pesc\asymp1$ that it does not backtrack it until $\tauatyp$. Together these give \eqref{eq:bound_range_in_one_level}.

Next, we bound the probability of the walk visiting more than $k$ levels until $\sigma_1\wedge\tauatyp\wedge\taubound$ as
\begin{align}\label{eq:bound_sigma1_ge_tauk_prob}
\pr{\tau_k<\sigma_1\wedge\tauatyp\wedge\taubound}\quad \leq \quad \pr{\tau_k<\sigma_1\wedge\tauatyp}\quad\lesssim\quad e^{-(\const)\sqrt{k}}.
\end{align}

{If $\tau_k<\sigma_1\wedge\tauatyp$ then none of the red edges crossed by time $\tau_k$ is a regeneration edge, i.e.\ each of them gets backtracked by time $\tauatyp$.}

{In this case at least one of the following events happens: for some $\sqrt{k}\le\ell\le k$ the walk revisits level $\ell-\sqrt{k}$ after hitting level $\ell$, or the walk hits levels $\sqrt{k}$, $\sqrt{k}-1$, $2\sqrt{k}$, $2\sqrt{k}-1$, ... $ k$, $k-1$ in this order.} We can bound the probabilities of these as follows,
\begin{align*}
\sum_{\ell=\sqrt{k}}^{k}\pr{\text{hit level }\ell\text{ and then hit level }\ell-\sqrt{k},\text{ before }\tauatyp}\quad
\lesssim\quad k e^{-(\const)\sqrt{k}}\quad\lesssim\quad e^{-(\const)\sqrt{k}},
\end{align*}
\begin{align*}
\pr{\text{hit levels }\sqrt{k},\:\sqrt{k}-1,\:2\sqrt{k},\:2\sqrt{k}-1,...\:k,\:k-1\text{ in this order, before }\tauatyp}\:\lesssim\: e^{-(\const)\sqrt{k}}.
\end{align*}

Combining these bounds, we get \eqref{eq:bound_sigma1_ge_tauk_prob}.

Using \eqref{eq:bound_range_in_one_level} to bound $\pr{|R_{\tauatyp\wedge\taubound}\cap V_\ell|\ge k^{1/3}}$ for each $\ell\le k^{2/3}$ and using \eqref{eq:bound_sigma1_ge_tauk_prob} to bound\\ $\pr{\tau_{k^{2/3}}<\sigma_1\wedge\tauatyp\wedge\taubound}$ we get that $\pr{|R_{\sigma_1\wedge\tauatyp\wedge\taubound}|\ge k}\lesssim e^{-(\const)k^{1/3}}$ as required.

This finishes the proof for $(T,X)$.

With an analogous proof, we can show an analogous result for $(T^a,X^a)$, and then we can use that $\tau_{\rho^a}>\tauatyp^a$ has probability bounded away from 0 to get the result for $(T',X')$.
\end{proof}

\begin{proof}[Proof of \Cref{bound_sigma1}]
Given a random walk $X$ on $T$, let $s_k:=\min\left\{t:\:|R_t|\ge k\right\}$. Then we have
\[
\pr{\sigma_1\wedge\tauatyp\wedge\taubound\ge2k} \quad\le\quad\pr{\left|R_{\sigma_1\wedge\tauatyp\wedge\taubound}\right|\ge k^{1/5}}+\pr{s_{k^{1/5}}\ge 2k,\:\tauatyp>2k}.
\]
From \Cref{bound_R_sigma1} we know that the first term on the RHS is $\lesssim e^{-(\const)k^{1/15}}$.

Now we bound the second term. Let $R_{k^{1/5}}=\{v_1,...,v_{k^{1/5}}\}$ where $v_i$ are in the order $X$ first visits them. Then
\begin{align*}
\pr{s_{k^{1/5}}\ge 2k,\:\tauatyp>2k}\quad& \le\quad\sum_{j=0}^{k^{1/5}-1}\pr{s_{j+1}-s_{j}\ge2k^{4/5}}\quad\\
&\le\quad\sum_{j=0}^{k^{1/5}-1}\E{\prscond{\tau_{\{v_1,...,v_{j}\}^c}>2k^{4/5}}{T,v_1,...,v_j}{v_j}}
\end{align*}

Given $T$ and $v_1,...,v_{j}$ we can bound $\prscond{\tau_{\{v_1,...,v_{j}\}^c}>2k^{4/5}}{T,v_1,...,v_j}{v_j}$ as follows. Let $H$ be a graph with vertex set $\{v_1,...,v_j,v_{j,\partial}\}$ obtained by contracting $\{v_1,...,v_{j}\}^c$ into a single vertex $v_{j,\partial}$ and for each $i$ in case there are multiple edges between $v_i$ and $v_{j,\partial}$, deleting all of them except for one. By the commute time identity~\cite[Corollary 2.21]{prob-on-trees-and-networks} we know that for any $u\in\{v_1,...,v_j\}$ a SRW on $H$ from $u$ satisfies $\estart{\tau_{v_{j,\partial}}}{u}\le e(H)\Reffin{u}{v_{j,\partial}}{H}$ where $e(H)\le j^2$ is the number of edges in $H$ and $\Reffin{u}{v_{j,\partial}}{H}\le j$ is the effective resistance between $u$ and $v_{j,\partial}$. Then $\prstart{\tau_{v_{j,\partial}}>2j^3}{u}\le\frac12$ for any $u$, hence
\[
\prscond{\tau_{\{v_1,...,v_{j}\}^c}>2k^{4/5}}{T,v_1,...,v_j}{v_j}\quad\le\quad\prstart{\tau_{v_{j,\partial}}>2k^{4/5}}{v_{j}}\quad\le\quad2^{-k^{1/5}}.
\]

Taking a union bound over $j$ we get that $\pr{s_{k^{1/5}}\ge 2k,\:\tauatyp>2k}\lesssim k^{1/5}e^{-(\const)k^{1/5}}\lesssim e^{-(\const)k^{1/5}}$. This finishes the proof for $(T,X)$.

With an analogous proof we can also show that for $(T^a,X^a)$ we have $\pr{\sigma^a_1\wedge\tauatyp^a\wedge\taubound^a\ge k}\lesssim e^{-(\const)k^{1/15}}$. Then using that $\tau_{\rho^a}>\tauatyp^a$ has probability bounded away from 0, we also get the result for $(T',X')$.
\end{proof}

We introduce some notation for quantities related to entropy that will be used in the following proofs.

\begin{definition}\label{def:Hb}
For $b\in\Zpos$ and for a sequence $(p_i)$ of real numbers taking values in $[0,1]$ let
\[\ent{b}{p_1,p_2,...}:=\quad\sum_{i}p_i(-\log p_i)^b,\]
and for a random variable $W$ taking values in a countable set $\mathcal{W}$ let
\[\ent{b}{W}:=\quad\sum_{w\in\mathcal{W}}\pr{W=w}\left(-\log\pr{W=w}\right)^b.\]
In both cases, $p(-\log p)^b$ is considered to be $0$ when $p=0$.
\end{definition}

\begin{definition}\label{def:Hb_conditional}
For $b\in\Zpos$ and for random variables $W$ and $Z$ taking values in countable sets $\mathcal{W}$ and $\mathcal{Z}$ respectively let
\begin{align*}\entc{b}{W}{Z}:&=\quad\sum_{z\in\mathcal{Z}}\pr{Z=z}\ent{b}{W|Z=z}\\
&=\quad\sum_{z\in\mathcal{Z}}\pr{Z=z}\sum_{w\in\mathcal{W}}\prcond{W=w}{Z=z}{}\left(-\log\prcond{W=w}{Z=z}{}\right)^b.
\end{align*}
Here $\pr{Z=z}\prcond{W=w}{Z=z}{}\left(-\log\prcond{W=w}{Z=z}{}\right)^b$ is considered to be $0$ when\\ $\pr{Z=z}=0$ or $\prcond{W=w}{Z=z}{}=0$.

We can also extend the definition to the case when $Z$ takes values in an uncountable set $\mathcal{Z}$, but for almost all $z\in\mathcal{Z}$, the random variable $\left(W\mid Z=z\right)$ takes values from a countable set, by considering $\ent{b}{W|Z=z}$ and taking an expectation over $Z$.
\end{definition}

\begin{definition}\label{def:HbA_and_HbA_conditional}
For $b\in\Zpos$, random variables $W$ and $Z$ taking values in countable sets $\mathcal{W}$ and $\mathcal{Z}$ respectively, and event $A$, let
\begin{align*}
\entr{b}{A}{W}:&=\quad\sum_{w\in\mathcal{W}}\pr{W=w,\:A}\left(-\log\pr{W=w,\:A}\right)^b\\
\entrc{b}{A}{W}{Z}:&=\quad\sum_{z\in\mathcal{Z}}\pr{Z=z}\entr{b}{A}{W|Z=z}.
\end{align*}
We interpret the cases when one of the probabilities is 0 as before. We extend the definition to the case when $\mathcal{Z}$ is uncountable but $\left(W\mid Z=z\right)$ takes values from a countable set for almost all $z\in\mathcal{Z}$ analogously to \Cref{def:Hb_conditional}.
\end{definition}

Some simple estimates regarding these quantities are listed in \Cref{app:entropy}, we make use of these in the following proofs.

\begin{proof}[Proof of \Cref{bound_entropy_T}]
First note that
{\[\E{\left(-\log\prscond{\xi_{\varphi_1}\in\til{\xi}}{T,X}{}\right)^b\1{\Omegatyp{\sigma_1}}\1{\Omegabound{\sigma_1}}}\quad\le\quad \entrc{b}{A}{\xi_{\varphi_1}}{T},\]}
where $A=\Omegatyp{\sigma_1}\cap\Omegabound{\sigma_1}$.

By \Cref{HbA_cond_properties}\eqref{item:HbA_cond_i} we have
\[
\entrc{b}{A}{\xi_{\varphi_1}}{T}\quad\lesssim\quad \entrc{b}{A}{X_{\sigma_1}}{T,\left|R_{\sigma_1}\right|}+\entrc{b}{A}{\left|R_{\sigma_1}\right|}{T}.
\]

From \Cref{HbA_cond_properties}\eqref{item:HbA_cond_i} and \Cref{bound_R_sigma1} we get that
\[
\entrc{b}{A}{\left|R_{\sigma_1}\right|}{T}\:\le\: \entr{b}{A}{\left|R_{\sigma_1}\right|}+O(1) \:=\:\entr{b}{A}{\left|R_{\sigma_1\wedge\tauatyp\wedge\taubound}\right|}+O(1)\:\lesssim\: \ent{b}{\left|R_{\sigma_1\wedge\tauatyp\wedge\taubound}\right|}+O(1)\:\lesssim\:1.
\]

Also by \Cref{HbA_cond_properties}\eqref{item:HbA_cond_iii}, we have
\[
\entrc{b}{A}{X_{\sigma_1}}{T,\left|R_{\sigma_1}\right|}\quad\lesssim\quad \E{\left(\log V_T\left(\left|R_{\sigma_1}\right|\right)\right)^b\1{A}}+\E{\prscond{A}{T,\left|R_{\sigma_1}\right|}{}\left(-\log\prscond{A}{T,\left|R_{\sigma_1}\right|}{}\right)^b}
\]
where $V_T(r)$ denotes the volume of the ball of radius $r$ around the root in $T$.

The second expectation is $\lesssim1$ since the function $p(-\log p)^b$ is bounded for $p\in[0,1]$. We can bound the first expectation as follows.

Let $\lambda\in\left(0,\frac{1}{\log(2d+3)}\right)$ be a constant. Then
\[
\E{\left(\log V_T\left(\left|R_{\sigma_1}\right|\right)\right)^b\1{A}}\quad\le\quad\sum_{s\ge1}\pr{\left(\log V_T\left(\left|R_{\sigma_1}\right|\right)\right)^b\1{A}\ge s}+1
\]
\[
\le\quad\sum_{s\ge1}\pr{\left|R_{\sigma_1}\right|\ge\lambda s^{1/b},A}+ \sum_{s\ge1}\pr{V_T\left(\lambda s^{1/b}\right)\ge \exp\left(s^{1/b}\right),A}+1.
\]

From \Cref{bound_R_sigma1} we get
\[
\sum_{s\ge1}\pr{\left|R_{\sigma_1}\right|\ge\lambda s^{1/b},A}\quad=\quad\E{\left(\frac{\left|R_{\sigma_1}\right|}{\lambda}\right)^b\1A}\quad\lesssim\quad1.
\]

Also, using \Cref{T_volume_tail_bound} and that $\lambda<\frac{1}{\log(2d+3)}$ we get that
\[
\sum_{s\ge1}\pr{V_T\left(\lambda s^{1/b}\right)\ge\exp\left(s^{1/b}\right),A}\quad\le\quad \sum_{s\ge1}\exp\left(\frac14-\frac14\exp\left(s^{1/b}\right)(2d+3)^{-\lambda s^{1/b}}\right)
\]
\[
\asymp\quad\sum_{s\ge1}\exp\left(-\frac14\exp\left(s^{1/b}\left(1-\lambda\log(2d+3)\right)\right)\right)\quad\lesssim\quad1.
\]
This finishes the proof for $(T,X,\til{\xi})$. The proof for $(T',X',\til{\xi}')$ is analogous.
\end{proof}

\begin{proof}[Proof of \Cref{speed_concentration}]
Using \Cref{bound_sigma1} and that the events $\Omegatyp{\sigma_1}\cap\Omegabound{\sigma_1}$ and $\Omegatyp{\sigma'_1}\cap\Omegabound{\sigma'_1}$ have probability bounded away from 0, we get that the variables $\left(\sigma'_1\mid\Omegatyp{\sigma'_1}\cap\Omegabound{\sigma'_1}\right)$, $\left(\varphi'_1\mid\Omegatyp{\sigma'_1}\cap\Omegabound{\sigma'_1}\right)$, $\left(\sigma_1\mid\Omegatyp{\sigma_1}\cap\Omegabound{\sigma_1}\right)$ and $\left(\varphi_1\mid\Omegatyp{\sigma_1}\cap\Omegabound{\sigma_1}\right)$ each have expectation $\asymp1$ and variance $\lesssim1$.

This shows $\nu\asymp1$.

From \Cref{iid_decomp} we know that conditional on $\Omegatyp{\sigma_\ell}\cap\Omegabound{\sigma_\ell}$, the variables
\[
\left((\sigma_k-\sigma_{k-1}),(\varphi_k-\varphi_{k-1})\right)_{k=2,...,\ell}
\]
are independent and have the same distribution as $\left(\left(\sigma'_1,\varphi'_1\right)|\Omegatyp{\sigma'_1}\cap\Omegabound{\sigma'_1}\right)$. They are also independent of the pair $(\sigma_1,\varphi_1)$, which is distributed as $\left((\sigma_1,\varphi_1)|\Omegatyp{\sigma_1}\cap\Omegabound{\sigma_1}\right)$. Using this and the bounds on the expectation and variance of these variables, we get that for any $\theta$, for sufficiently large $C$ and for each $i\le\ell$ we have
\begin{align*}
\pr{\left|d_T(\rho,X_{\sigma_i})-\nu\sigma_i\right|>C\sqrt{\sigma_i},\:\Omegatyp{\sigma_\ell},\:\Omegabound{\sigma_\ell}}\hspace{5cm}\\
\le\quad\prscond{\left|d_T(\rho,X_{\sigma_i})-\nu\sigma_i\right|>C\sqrt{\sigma_i}}{\Omegatyp{\sigma_\ell},\:\Omegabound{\sigma_\ell}}{}\quad<\quad\theta.
\end{align*}

We can extend the result for all times $t$, and we can prove the second bound similarly to the proof of \cite[Lemma 2.19]{weighted_random_matching}. For the parts of the proof where we want to use the concentration of the sum of iid random variables we condition on $\Omegatyp{\sigma_\ell}\cap\Omegabound{\sigma_\ell}$, while for parts where we want to bound the probability of backtracking many levels, we just include the event $\Omegatyp{\sigma_\ell}\cap\Omegabound{\sigma_\ell}$ in the probability.
\end{proof}

\begin{proof}[Proof of \Cref{entropy_concentration}]
Let us write $A_{i}:=\Omegatyp{\sigma_{i-1},\sigma_i}\cap\Omegabound{\sigma_{i-1},\sigma_i}$, $A_{\le j}:=\bigcap_{i\le j}A_i$, $A_{\ge j}:=\bigcap_{j\le i\le\ell}A_i$ and $A:=A_{\le\ell}$. Let $\Delta_i=\varphi_i-\varphi_{i-1}$.

{Also let $Y_i:=-\log\frac{\prscond{\xi_{\varphi_i}\in\til{\xi}}{T,X}{} }{\prscond{\xi_{\varphi_{i-1}}\in\til{\xi}}{T,X}{}}$ for $i\geq 2$ and let $Y_1:= -\log\prscond{\xi_{\varphi_1}\in\til{\xi}}{T,X}{}$.}
 Then 
{\[
 -\log\prscond{\xi_{\varphi_k}\in\til{\xi}}{T,X}{}\quad =\quad \sum_{i=1}^{k}Y_i.
 \]}

By Markov's inequality
\[
\pr{\left|\sum_{i=1}^{k}Y_i-\varphi_k\h\right|>C\sqrt{k\V},A}\quad\le\quad\frac{\E{\left(\sum_{i=1}^{k}(Y_i-\Delta_i\h)\right)^2\1{A}}}{C^2k\V}.
\]
In what follows, we will prove that
\begin{align}\label{eq:bound_sum_Yisq}
\E{\left(\sum_{i=1}^{k}(Y_i-\Delta_i\h)\right)^2\1{A}}\quad\lesssim\quad k\log\log N,
\end{align}
which implies the desired concentration bound along the regeneration times. Once we have this, we can extend the result for all times similarly to the proof of \cite[Proposition 2.20]{weighted_random_matching}. The details of this are omitted.

In what follows, we work towards the proof of \eqref{eq:bound_sum_Yisq} by establishing bounds for
\[
\E{\left(Y_i-\Delta_i\h\right)^2\1{A}}\qquad\text{and}\qquad\E{\left(Y_i-\Delta_i\h\right)\left(Y_j-\Delta_j\h\right)\1{A}}
\]
for each $i$ and each $i\ne j$

Note that $(Y_i)_{i\ge2}$ is stationary and each term is distributed as {$Y':=-\log\prscond{\xi'_{\varphi'_1}\in\til{\xi}'}{T',X'}{}$. }
Let us define $A'_1:=\Omegatyp{\sigma'_1}\cap\Omegabound{\sigma'_1}$ corresponding to the walk $X'$ in the definition of $Y'$.

Also note that $Y_i=-\log\prscond{X_{\sigma_i}\in\xi(i)}{(X_t)_{t\ge\sigma_{i-1}},T(X_{\sigma_{i-1}})}{}$,
where $\xi(i)$ is the loop-erasure of a random walk $X^i$ on $T(X_{\sigma_{i-1}})$ started from $X_{\sigma_{i-1}}$, {stopped upon hitting an atypical ball}\footnote{{Here we are using that for a given realisation of a quasi-tree $T^a$ (with parent of the root $\rho^a$) the distribution of the loop-erasure of the simple random walk (stopped at $\tauatyp$) from $\rho$ is the same as the distribution of the loop-erasure of the simple random walk (stopped at $\tauatyp$) from $\rho$ conditioned on not hitting $\rho^a$.}}, but otherwise independent of $(X_t)_{t\ge\sigma_{i-1}}$.

Firstly, we prove that $\E{\left(Y_i-\Delta_i\h\right)\left(Y_j-\Delta_j\h\right)\1{A}}\lesssim1$ for all $i,j\in\{1,2,...,k\}$.
\begin{itemize}
\item From \Cref{bound_entropy_T} we know that
\[\E{Y_1^2\1{A}}\quad\le\quad\E{Y_1^2\1{A_1}}\quad\lesssim\quad1,\]
and that for any $i\ge2$ we have
\[\E{Y_i^2\1{A}}\quad\le\quad\E{Y_i^2\1{A_i}}\quad=\quad\E{(Y')^2\1{A'_1}}\quad\lesssim\quad1.\]
Also, by \Cref{bound_entropy_T} we have $\h\lesssim1$ and by \Cref{bound_sigma1} we have $\E{\Delta_i^2\1{A}}\lesssim1$, hence for each $i\in\{1,2,...,\ell\}$ we have
\[\E{\left(Y_i-\Delta_i\h\right)^2\1{A}}\quad\lesssim\quad1.\]
\item Using Cauchy-Schwarz we get that $\E{\left(Y_i-\Delta_i\h\right)\left(Y_j-\Delta_j\h\right)\1{A}}\lesssim1$ for any $i,j\in\{1,2,...,k\}$.
\end{itemize}
Now we proceed to show that there exist positive constants $u$ and $U$ such that for all $2\le i<j\le\ell$ with $j-i\ge U\log\log N$ we have
\begin{align}\label{eq:bound_YiYj}
\E{\left(Y_i-\Delta_i\h\right)\left(Y_j-\Delta_j\h\right)\1{A}}\quad\lesssim\quad e^{-u\left(\frac{j-i}{\log\log N}\right)^{1/3}}+\frac{1}{\log N}\:.
\end{align}
Since $k\le\ell\asymp\log N$, this concludes the proof of \eqref{eq:bound_sum_Yisq}.
\begin{itemize}
\item We define
\begin{align*}
B_{i,j}:=&\quad\left\{\left|R_{[\sigma_{i-1},\sigma_{i}]}\right|\le \frac{j-i}{U'\log\log N}\right\},\\
Y_{i,j}:=&\quad-\log\prscond{X_{\sigma_i}\in\xi(i,j)}{X,T(X_{\sigma_{i-1}})}{},
\end{align*}
where $U'$ is a sufficiently large constant, $R_{[t_1,t_2]}$ denotes the range of $X$ in the time interval $[t_1,t_2]$, and $\xi(i,j)$ is the loop-erasure of $X^i$ up to the first time that it hits level $\varphi_{j-1}$ {or hits an atypical ball, whichever occurs first}. (Recall that $X^i$ is a random walk on $T(X_{\sigma_{i-1}})$ started from $X_{\sigma_{i-1}}$, but otherwise independent from $(X_t)_{t\ge\sigma_{i-1}}$.)
\item Then
\begin{enumerate}[(i)]
\item \label{item:indep} $B_{i,j}$ and $Y_{i,j}\1{B_{i,j}}$ are independent of $Y_j\1{A_{\ge j}}$,\\
${}$\hfill(since they depend on different layers between regenerations)
\item \label{item:bound_prob} $\pr{B_{i,j}^c\cap A_i}\lesssim e^{-\const\cdot\left(\frac{j-i}{\log\log N}\right)^{1/3}}$,\hfill(from \Cref{bound_R_sigma1})
\item \label{item:bound_diff} $|Y_i-Y_{i,j}|\1{B_{i,j}}\1{A_i}\le e^{-\const\cdot(j-i)}$.\hfill (see proof below)
\item \label{item:bound_Y} $Y_{i,j}\1{B_{i,j}}\1{A_i}\lesssim(j-i)$.\hfill (see proof below)
\end{enumerate}
\item We can prove~\eqref{item:bound_diff} as follows. Let
\[Z_i:=\prscond{X_{\sigma_i}\in\xi(i)}{X,T(X_{\sigma_{i-1}})}{}\quad\text{and}\quad Z_{i,j}:=\prscond{X_{\sigma_i}\in\xi(i,j)}{X,T(X_{\sigma_{i-1}})}{}.\]
Then $|Y_i-Y_{i,j}|\le\frac{|Z_i-Z_{i,j}|}{Z_i\wedge Z_{i,j}}$. Note that having $X_{\sigma_i}\in\xi(i)$, but $X_{\sigma_i}\not\in\xi(i,j)$ or the other way around would require backtracking $\ge(j-i)$ levels. The probability of backtracking $\ge(j-i)$ levels without visiting an atypical ball is $\le e^{-\const\cdot(j-i)}$. Also, by considering a path from $X_{\sigma_{i-1}}$ to $X_{\sigma_i}$ on $R_{[\sigma_{i-1},\sigma_{i}]}$
and noting that on event $A_i$ each vertex on the path has degree $\lesssim\log N$ we get that on event $B_{i,j}\cap A_i$ we have $(Z_{i}\wedge Z_{i,j})\gtrsim e^{-\const\cdot\log\log N\cdot\frac{j-i}{U'\log\log N}}$. Choosing $U'$ to be sufficiently large this shows that $|Y_i-Y_{i,j}|\1{B_{i,j}}\1{A_i}\le e^{-\const\cdot(j-i)}$.
\item Part~\eqref{item:bound_Y} also follows from noting that on the event $B_{i,j}\cap A_i$ we have $Z_{i,j}\gtrsim e^{-\const\cdot\log\log N\cdot\frac{j-i}{U'\log\log N}}$.
\item Using Cauchy-Schwarz, we get that 
\[
\E{\left(Y_i-\Delta_i\h\right)\left(Y_j-\Delta_j\h\right)\1{A}\1{B_{i,j}^c}}\:\le\:\left(\E{\left(Y_i-\Delta_i\h\right)^4\1{A}}\E{\left(Y_j-\Delta_j\h\right)^4\1{A}}\pr{B_{i,j}^c\cap A}^2\right)^{\frac14}.
\]
From \Cref{bound_entropy_T}, \Cref{bound_sigma1} and~\eqref{item:bound_prob} we get that this is $\lesssim e^{-\const\cdot\left(\frac{j-i}{\log\log N}\right)^{1/3}}$.
\item Also, we have
\begin{align*}
&\E{\left(Y_i-\Delta_i\h\right)\left(Y_j-\Delta_j\h\right)\1{A}\1{B_{i,j}}}\\
&=\quad\E{\left(Y_i-Y_{i,j}\right)\left(Y_j-\Delta_j\h\right)\1{A}\1{B_{i,j}}}+ \E{\left(Y_{i,j}-\Delta_i\h\right)\left(Y_j-\Delta_j\h\right)\1{A}\1{B_{i,j}}}\\
&\le\quad\left(\E{\left(Y_i-Y_{i,j}\right)^2\1{A_i}\1{B_{i,j}}}\E{\left(Y_j-\Delta_j\h\right)^2\1{A_j}}\right)^{\frac12}\\
&\hspace{3cm}+\quad\E{\left(Y_{i,j}-\Delta_i\h\right)\1{A_{\le j-1}}\1{B_{i,j}}} \E{\left(Y_j-\Delta_j\h\right)\1{A_{\ge j}}}.
\end{align*}
From \Cref{bound_entropy_T} and~\eqref{item:bound_diff} we get that the first term in the last line is $\lesssim e^{-\const\cdot(j-i)}$.
\item We can write
\[
\E{\left(Y_j-\Delta_j\h\right)\1{A_{\ge j}}}\quad=\quad\E{\left(Y_j-\Delta_j\h\right)\1{A_{j}}}-\E{\left(Y_j-\Delta_j\h\right)\1{A_j}\1{A_{\ge j+1}^c}}.
\]
Note that $\E{Y_j\1{A_j}}=\E{\Delta_j\1{A_j}}\h$, hence the first term is 0. Also, by Cauchy-Schwarz, the second term has absolute value
\[\le\quad\left(\E{\left(Y_j-\Delta_j\h\right)^2\1{A_j}}\pr{A_{\ge j+1}^c}\right)^{\frac12}.\]
We can show that this is $\lesssim\left(\frac{1}{\log N}\right)^2$ by using \Cref{bound_entropy_T} and that by \Cref{rmk:atypical_ball}, \Cref{bound_atyp_prob} and \Cref{bound_sigma1} we have $\pr{A_{\ge j+1}^c}\lesssim(\ell-j)\left(\frac{1}{\log N}\right)^5\lesssim\left(\frac{1}{\log N}\right)^{4}$.
\item Using~\eqref{item:bound_Y} and that $\h\lesssim1$ and $\E{\Delta_i\1{A_i}}\lesssim1$, we also get
\[\left|\E{\left(Y_{i,j}-\Delta_i\h\right)\1{A_{\le j-1}}\1{B_{i,j}}}\right|\quad\lesssim\quad j-i\quad\lesssim\quad\log N.\]
\item Putting these bounds together, we conclude the proof of \eqref{eq:bound_YiYj}.\qedhere
\end{itemize}
\end{proof}

\section{Relating $G^*$ and $T$}\label{sec:Gstar_and_T}

In this section, we explain how to use $T$ to approximate $G^*$, we define what we mean by sufficiently nice neighbourhoods of $G^*$, and we conclude the bound on the time $\taublue^{(2)}$ transition probabilities between vertices in such neighbourhoods.

We know from \Cref{max_deg} and \Cref{volume_high_prob_bound} that with high probability the maximal degree in $G^*$ is at most $\frac{\log N}{\log \log N}$ and that there are constants $C$ and $\Cvol$ such that with high probability for all $r>C\log\log  N$ the volume in $G^*$ of ball of radius $r$ (in black, green, red and blue edges) around any vertex is at most $e^{\Cvol r}$. For the rest of the section, we work on the event that $G^*$ satisfies both of these high probability events. The constant $\Cvol$ can be taken to be sufficiently large and to agree with the constant in the definition of atypical ball.

\subsection{Truncation and $K$-roots}

Later we will present a way of exploring a random neighbourhood in $G^*$ and coupling it to the first $L^+$ levels of a quasi-tree $T$. To increase the chances of this coupling succeeding, we would like to limit the volume of the region that we reveal in $G^*$. In particular, if a part of $T$ is unlikely to get visited by a random walk, we would not like to continue exploring $G^*$ in that direction. Below we define a `truncation criterion' that roughly speaking holds for a given long-range edge $e$ if it is unlikely that a walk on $T$ ever crosses $e$. The exact definition will be a bit more intricate, to make sure that the criterion only depends on parts of $T$ that we have already revealed by the time we arrive to $e$ in the exploration.

We define two quantities below, $\til{W}$ and $W$. We use $\til{W}$ in the definition of the truncation criterion, and to approximate $W$.

\begin{definition}\label{def:W_Wtil}
For a long-range edge $e$ in a quasi-tree $T$ let
\[
W_T(e):=\quad -\log\prscond{e\in\xi}{T}{\rho},\qquad
\til{W}_T(e):=\quad -\log\prscond{X_{\tau_\ell(e)}=e^+}{T}{\rho}.\qedhere
\]
\end{definition}

Recall that $\tau_\ell$ denotes the first hitting time of level $\ell$ and that we only consider the walk $X$ on $T$ up to time $\tauatyp$, and we define the loop-erasure $\xi$ based on this. In particular, it is possible that $X$ or $\xi$ does not reach level $\ell(e)$.

We bound the difference between $W$ and $\til{W}$ as follows.

\begin{lemma}\label{compare_W_Wtil}
There exists a positive constant $C$ so that for any realisation $T$ of a random quasi-tree and any long-range edge $e$ in $T$ we have
\[
W_T(e)\quad\ge\quad\til{W}_T(e)-CR^4.
\]
Also,
\[
W_T(e)\quad\le\quad\til{W}_T(e)+\log\left(\frac1\pesc\right),
\]
where $\pesc$ is the constant from \Cref{transience_until_nonbinary}.
\end{lemma}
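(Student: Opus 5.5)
Write $\ell=\ell(e)$. The plan is to compare the two events $\{e\in\xi\}$ and $\{X_{\tau_\ell}=e^+\}$ for the walk stopped at $\tauatyp$. The key structural fact is that $T$ is a tree of balls joined by long-range edges. Hence $e\in\xi$ forces the walk, before $\tauatyp$, to cross $e$ and never to return across it from $e^+$ to $e^-$. Both inequalities should follow from decomposing the path at $\tau_\ell$ or at the last crossing of $e$.

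\emph{Upper bound $W\le\til W+\log(1/\pesc)$.} This amounts to showing $\prscond{e\in\xi}{T}{\rho}\ge\pesc\,\prscond{X_{\tau_\ell}=e^+}{T}{\rho}$.

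1. On $\{X_{\tau_\ell}=e^+\}$ with $\tau_\ell\le\tauatyp$, the walk has just crossed $e$ at time $\tau_\ell$.
2. By the strong Markov property at $\tau_\ell$, I would apply the second statement of \Cref{transience_until_nonbinary} to the long-range edge $(e^+,e^-)$. With probability at least $\pesc$ the walk from $e^+$ never hits $e^-$ before reaching a non-binary ball. Since non-binary balls are atypical, this also holds before $\tauatyp$, provided we note that the walk stays in $T(e^+)$ and that $\tauatyp$ occurs no later than the first visit to a non-binary ball.
3. On that event, $e$ is crossed exactly once and never undone, so $e$ lies on the loop-erasure. (If the ball of $e^+$ is itself atypical, then $\tauatyp=\tau_\ell$, and the convention that $\xi$ includes the last crossed edge should still give $e\in\xi$; I would check this against the definition of $\xi$.)

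\emph{Lower bound $W\ge\til W-CR^4$.} This amounts to showing $\prscond{X_{\tau_\ell}=e^+}{T}{\rho}\ge e^{-CR^4}\prscond{e\in\xi}{T}{\rho}$, and this is the main obstacle.

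1. Since $e\in\xi$, the ancestor long-range edges $e_1,\dots,e_\ell=e$ on the path from $\rho$ are all crossed. However, the walk may first reach level $\ell$ through a different edge $f$ and only later come back down and cross $e$.
2. I would bound $\pr{e\in\xi}$ by the probability that the walk reaches $e_{\ell-1}^+$, i.e.\ the ball $B$ containing $e^-$, before $\tauatyp$. Then I would lower bound $\pr{X_{\tau_\ell}=e^+}$ by the probability of reaching $B$ via $e_{\ell-1}$ at time $\tau_{\ell-1}$, times the probability that, from the entry point of $B$, the walk travels inside $B$ along a geodesic to $e^-$ and steps across $e$ without using any other long-range edge.
3. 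A geodesic inside an $(R+R)$-ball has length at most $2R$. As long as $B$ is typical, each vertex has degree $\le\dmax$, so this path probability is at least $\dmax^{-2R}$. That is of the wrong form, $e^{-CR\log\log N}$ rather than $e^{-CR^4}$. Since $R\asymp\log\log N$, one has $R\log\dmax\lesssim R^2\le R^4$, so this bound is in fact fine. I would still aim instead for a degree-free argument, namely that the walk inside $B$ hits $e^-$ before leaving via another long-range edge with probability at least $e^{-CR^4}$, using effective-resistance bounds on a box of side $\lesssim R$ (volume $R^d$, commute time $\lesssim R^{2d}$); this is plausibly the source of $R^4$ for small $d$. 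An atypical ball $B$ is handled because $\xi$ stops there.
4. This does not yet resolve the issue in step 1. The walk must reach $B$ through the correct $e_{\ell-1}$, but the same comparison issue recurs at level $\ell-1$. To avoid it, I would instead condition on the first hitting of $B$, which necessarily happens through $e_{\ell-1}$ by the tree structure, and note that reaching $B$ before level $\ell$ is automatic, since any path to level $\ell$ passes through a ball at level $\ell-1$. The remaining subtlety is that the walk may hit level $\ell$ elsewhere before hitting $B$. In that case I would compare against the event of first hitting $B$, and only then level $\ell$, via a last-exit/first-entry argument. I expect this comparison to be the delicate step.
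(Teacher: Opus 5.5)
The upper bound $W_T(e)\le\til{W}_T(e)+\log(1/\pesc)$ in your proposal is correct and is exactly the paper's argument: strong Markov at $\tau_{\ell(e)}$ together with $\prstart{\tau_{e^-}>\tauatyp}{e^+}\ge\pesc$ from \Cref{transience_until_nonbinary}.

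The lower bound is not proved. The step you call ``delicate'' is the whole content, and the idea it needs is missing. Bounding $\pr{e\in\xi}$ by the probability of reaching the ball $B$ of $e^-$, and comparing with $\pr{X_{\tau_\ell}=e^+}$, only works on the event that $B$ is reached before $\tau_\ell$. Your claim that this is ``automatic'' is false: a path to level $\ell$ passes through \emph{some} level-$(\ell-1)$ ball, not necessarily $B$. Iterating step 2 along the ancestry $e_1,\dots,e_\ell$ instead costs $e^{-cR^2}$ per level, i.e.\ $e^{-cR^2\ell}$ in total. That is useless, since $\ell$ can be of order $\log N$. The $R^4$ also does not come from a commute-time bound in a box.

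What is missing is a treatment of the event that the walk first reaches level $\ell$ in a branch that leaves the ancestry of $e$ at some level $i<\ell$, and only crosses $e$ later.
\begin{itemize}
\item On this event the walk must, after $\tau_\ell$, backtrack $\ell-i$ levels before $\tauatyp$. Applying \Cref{transience_until_nonbinary} at each level, this has probability at most $(1-\pesc)^{\ell-i}$.
\item The walk has also passed $e_i^+$ before $\tau_\ell$. From $e_i^+$ it can go straight down $e_{i+1},\dots,e_\ell$ along paths of length $O(R)$ inside typical balls, with probability at least $\dmax^{-O(R)(\ell-i)}\ge e^{-cR^2(\ell-i)}$.
\item These two effects balance at $\ell-i\asymp R^2$. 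For $\ell-i\ge c_3R^2$ the backtracking decay wins. For the remaining $O(R^2)$ levels one pays at most $e^{-cR^2}$ per level, which is where $e^{-CR^4}$ comes from.
\end{itemize}
This is the split the paper uses when it adapts Lemma 4.3 of the random-matching paper with $\tauatyp$ in place of $\infty$.

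One clean way to organise it is as follows. Let $t_j$ be the probability, from $e_j^+$, of crossing $e$ before $\tauatyp$, and let $s_j$ be the probability, from $e_j^+$, of first hitting level $\ell$ at $e^+$. The argument above gives
$t_j/s_j\lesssim 1+e^{cR^2}\sum_{m>j}(1-\pesc)^{\ell-m+1}t_m/s_m$.
Its solution is bounded by $\prod_{k\ge0}\bigl(1+Ce^{cR^2}(1-\pesc)^{k+1}\bigr)=e^{O(R^4)}$. You need this recursion, or an equivalent case split, to close the argument.
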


\begin{proof} We drop the conditioning on $T$ from the notation.

The proof of the first bound follows the proof of~\cite[Lemma 4.3]{random_matching} with the following modifications.

Instead of $\infty$ we write $\tauatyp$.

Note that in a typical $(R+R)$-ball each vertex has degree $\le\dmax\asymp\frac{\log N}{\log\log N}$, hence for any two vertices of the ball there is a path from one to the other which has probability at least $\left(\frac{1}{\dmax}\right)^{4R}\ge\exp\left(-cR^2\right)$. Hence~\cite[(4.6)]{random_matching} holds for $i\le\ell(e)-c_3R^2$, and the right-hand side of~\cite[(4.7)]{random_matching} becomes $\exp\left(-cR^2\right)$.

The second bound is immediate, since \[\prstart{e\in\xi}{\rho}\quad\ge\quad\prstart{X_{\tau_{\ell(e)}}=e^+}{\rho}\prstart{\tau_{e^-}>\tauatyp}{e^+}\quad\ge\quad\prstart{X_{\tau_{\ell(e)}}=e^+}{\rho}\pesc,\]
using Corollary~\ref{transience_until_nonbinary} for the last inequality.
\end{proof}

We define the truncation criterion as follows.

\begin{definition}\label{def:trunc}
Let us consider any realisation $T$ of a random quasi-tree. For a positive constant $A$ and a long-range edge $e$ in $T$, we define the corresponding \emph{truncation event} as
\[
\trunc{e}{A}:=\quad\left\{\til{W}_T(e)>\frac12(1+c^+_L)\log N+A\sqrt{\frac{\V\log N}{\h}}\right\},
\]
where $\h$ and $\V$ are as in \Cref{entropy_concentration}.
\end{definition}

We use the comparison between $W$ and $\til{W}$ to bound the probability that a walk on $T$ crosses an edge satisfying the truncation event.

\begin{lemma}\label{bound_trunc_prob}
For any $\theta\in(0,1)$ there exists $A$ such that for any realisation of $T$
\[
\prscond{\bigcup_{t\le\tau_{L^+}}\trunc{\left(X_{t-1},X_t\right)}{A},\:\Omegatyp{\tau_{L^+}},\:\Omegabound{\tau_{L^+}}}{T}{}\quad<\quad\theta.
\]
\end{lemma}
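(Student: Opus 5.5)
The plan is to reduce ``the walk crosses a truncated edge before $\tau_{L^+}$'' to ``the loop-erasure $\xi$ contains an edge whose $W_T$-value is abnormally large''. I would then control the latter using the comparison \Cref{compare_W_Wtil} together with the entropy concentration of \Cref{entropy_concentration}. I am not confident that the last step gives a bound valid for every fixed $T$; this is discussed at the end.

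\emph{Step 1 (minimal truncated edges).} Fix a realisation $T$. Let $\mathcal{M}$ be the set of long-range edges $e$ with $\ell(e)\le L^+$ such that $\trunc{e}{A}$ holds and no ancestor of $e$ satisfies the truncation event. Since long-range edges are crossed one level at a time, if the walk crosses some truncated edge before $\tau_{L^+}$, then the first truncated edge it crosses lies in $\mathcal{M}$. The set $\mathcal{M}$ is an antichain in the ancestor order. The loop-erasure $\xi$ (of the walk stopped at $\tauatyp$) is a ray from the root, so it contains at most one edge of $\mathcal{M}$.

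\emph{Step 2 (from crossing to the loop-erasure).} Suppose the walk crosses $e\in\mathcal{M}$ before $\tauatyp$, i.e.\ it reaches $e^+$. By \Cref{transience_until_nonbinary} applied at $e^+$ and the long-range edge $(e^+,e^-)$, with conditional probability at least $\pesc$ it never returns to $e^-$ before $\tauatyp$, and then $e\in\xi$. Hence
\[
\prscond{\bigcup_{t\le\tau_{L^+}}\trunc{(X_{t-1},X_t)}{A},\ \Omegatyp{\tau_{L^+}},\ \Omegabound{\tau_{L^+}}}{T}{}\ \le\ \frac{1}{\pesc}\,\prscond{\xi\cap\mathcal{M}\neq\emptyset,\ \Omegatyp{\tau_{L^+}},\ \Omegabound{\tau_{L^+}}}{T}{}.
\]
On the event $\{e\in\xi\}$, \Cref{compare_W_Wtil} gives $W_T(e)\ge\til{W}_T(e)-CR^4$. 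Since $e$ is truncated, this yields
\[
W_T(e)\ \ge\ \tfrac12(1+c_L^+)\log N+A\sqrt{\V\log N/\h}-CR^4 .
\]
Because $R^4=O((\log\log N)^4)=o(\sqrt{\V\log N})$, the error $CR^4$ is absorbed by decreasing $A$ slightly.

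\emph{Step 3 (entropy comparison).} It remains to show that $\xi$ is unlikely to contain an edge $e$ with $\ell(e)\le L^+$ and $W_T(e)=-\log\prscond{e\in\xi}{T}{}$ exceeding $\h\,\ell(e)+A'\sqrt{\V\log N}$. Here I use $\h L^+=\tfrac12(1+c_L^+)\log N$, from the choice $c_L=\frac1{2\h}$. The quantity $-\log\prscond{\xi_k=\til\xi_k}{T,X}{}$ is exactly $W_T$ evaluated at the $k$-th edge of $\xi$, up to the difference between ``$\xi_k$ equals a given edge'' and ``the given edge lies in $\xi$''. That difference is $O(1)$ by the same $\pesc$-argument as in Step 2. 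Applying \Cref{entropy_concentration} with $\widehat C$ chosen so that $L^+\le\widehat C\log N$, the deviation at each fixed level $k$ has probability at most $\theta$ once $C$ is large. A union bound over all $k\le L^+$ is too lossy. Instead I would use the maximal form of the concentration: the proof of \Cref{entropy_concentration} controls the partial sums $\sum_{i\le j}(Y_i-\Delta_i\h)$ through their second moments, and Kolmogorov/Doob's inequality upgrades this to a bound on $\max_{j\le\ell}$ at the cost of a constant factor in $C$. Choosing $A$ larger than that constant gives probability below $\theta\pesc$, which finishes the argument after Step 2.

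\emph{Main obstacle.} \Cref{entropy_concentration} averages over the law of $T$, whereas the statement is quenched, ``for any realisation of $T$''. This conversion is the step I expect to be hardest, and I am not sure the argument above closes it. I would first try to exploit that $W_T$ and $\til{W}_T$ are themselves conditional on $T$, so the event in Step 3 compares the walk's own path probability against its $T$-typical value. The approach would be to find a quenched bound on the conditional probability of abnormally improbable edges, using only typicality of the visited balls, which holds for every $T$ on $\Omegatyp{\tau_{L^+}}\cap\Omegabound{\tau_{L^+}}$. If that fails, I would fall back on an annealed-to-quenched Markov inequality over $T$. That fallback yields the bound only for all $T$ outside a set of small probability, so it proves a weaker statement than the one as worded.
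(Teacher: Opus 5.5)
Your reduction is the route the paper intends. Its proof defers to \cite[Lemma 4.5]{random_matching}: transience puts a crossed edge into $\xi$, \Cref{compare_W_Wtil} turns $\til W_T$ into $W_T$ at cost $CR^4$, and \Cref{entropy_concentration} is applied at scale $\sqrt{L^+\V}$. The bookkeeping $R^4+\sqrt{L^+\V}\lesssim\sqrt{\V\log N/\h}$ and $\h L^+=\frac12(1+c_L^+)\log N$ is exactly what you do. The step that fails as you justify it is the maximal inequality in Step 3. Kolmogorov's and Doob's inequalities need independent or martingale increments, but the proof of \Cref{entropy_concentration} only shows that the $Y_i-\Delta_i\h$ have decaying correlations, with a residual $1/\log N$. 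Maximal bounds obtained from second moments alone (Rademacher--Menshov type) lose a factor of order $\log L^+\asymp\log\log N$ in the deviation, and your slack is only a constant multiple of $\sqrt{L^+\V}$. The missing observation is that no maximal inequality is needed. Since $\xi$ has exactly one long-range edge per level and $\xi_{k+1}$ determines its level-$k$ ancestor $\xi_k$, we have $\{\til\xi_{k+1}=\xi_{k+1}\}\subseteq\{\til\xi_k=\xi_k\}$. Hence $k\mapsto-\log\mathbb{P}(\xi_k=\til\xi_k\mid T,X)=W_T(\xi_k)$ is nondecreasing; this identification is exact, so your $O(1)$ correction is unnecessary. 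So if a truncated $e$ with $\ell(e)\le L^+$ lies in $\xi$ and $\xi$ reaches level $L^+$, then $W_T(\xi_{L^+})\ge W_T(e)$. One application of \Cref{entropy_concentration} at $k=L^+$ (with $\widehat C\log N\ge L^+$) then suffices. On $\Omegatyp{\sigma_\ell}$ the loop-erasure does reach level $L^+$, because regeneration edges lie in $\xi$ and $\varphi_\ell\ge\ell\ge L^+$.

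Your ``main obstacle'' cannot be overcome, because the quenched reading is false. Take a realisation whose balls up to level $L^+$ have no green edges and in which every vertex carries $\dmax-2d$ long-range edges. Every ball is then typical: binary, degrees $\le\dmax$, volume $\asymp R^d$, and $p_\partial\ll(\log N)^{-5}$. The walk leaves each ball after $O(1)$ steps, so with probability $1-o(1)$ it reaches level $L^+$ without visiting $R$ vertices of any ball. Moreover, the expected number of crossings of the level-$j$ ancestor of an edge $e$ drops by a factor $(1+o(1))/\dmax$ per level. This gives $\til W_T(e)\ge\ell(e)(\log\dmax-o(1))$, so every edge at level $\asymp\log N/\log\log N\ll L^+$ is truncated, and the conditional probability in the lemma tends to $1$.

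The lemma must therefore be read as annealed, and there are three reasons to do so. First, that is how it is used: in \Cref{Succ_i_prob_on_Tzi} the subtree $T(z_i)$ is still random given $\what{\F}_{i-1}$, and the passage to a quenched statement is done by the Azuma argument in \Cref{Succhat_prob}. Second, the companion \Cref{bound_truncprime_prob} is stated that way. Third, the paper's proof can only give an annealed bound, since \Cref{entropy_concentration} is annealed. With that reading, your argument plus the monotonicity observation is complete, and no Markov-inequality fallback is needed.

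One small repair remains in Step 2. Apply the $\pesc$ bound by the strong Markov property at the crossing time $s$, with only the past event $\Omegatyp{s}\cap\Omegabound{s}$ in force; conditioning on the future event $\Omegatyp{\tau_{L^+}}$ can bias the return probability. Then absorb the difference between $\Omegatyp{\tau_{L^+}}\cap\Omegabound{\tau_{L^+}}$ and $\Omegatyp{\sigma_\ell}\cap\Omegabound{\sigma_\ell}$ into an $o(1)$ term via \Cref{bound_atyp_prob} and \Cref{bound_sigma1}.
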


\begin{proof}
The proof is analogous to the proof of~\cite[Lemma 4.5]{random_matching}. We need that
\[
\sqrt{\frac{\V\log N}{\h}}\quad\gtrsim\quad R^4+\sqrt{L^+\V},
\]
which indeed holds since $L^+\asymp\frac{\log N}{\h}\asymp\log N$ and $R\asymp\log\log N$.
\end{proof}

In our arguments later, we will also need that for most level $L^-$ edges in $T$, the probability that a walk on $T$ reaches level $L^-$ at that particular edge is small. We define the following event that indicates that this probability is too large for a given edge. (The definition concerns level $\left(L^--K\right)$ edges as we will use this criterion with respect to a subtree of $T$ that starts at level $K$ of $T$.)

\begin{definition}\label{def:truncprime}
Let us consider any realisation $T$ of a random quasi-tree. For a long-range edge $e$ in $T$, we define the corresponding \emph{level $\left(L^--K\right)$ truncation event} as
\[
\truncprime{e}:=\quad\left\{\til{W}_T(e)<\frac12\log N+\sqrt{\log N}\right\} \cap\left\{\ell(e)=L^--K\right\}.\qedhere
\]
\end{definition}

We show that a walk on the quasi-tree is unlikely to reach level $\left(L^--K\right)$ at a long-range edge satisfying this truncation event.

\begin{lemma}\label{bound_truncprime_prob}
For any $\theta\in(0,1)$ for sufficiently large values of $C_L^-$ we have
\[
\pr{\truncprime{\left(X_{\tau_{(L^--K)}-1},X_{\tau_{(L^--K)}}\right)},\:\Omegatyp{\tau_{(L^--K)}},\:\Omegabound{\tau_{(L^--K)}}}\quad<\quad\theta.
\]
\end{lemma}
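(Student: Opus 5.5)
Write $\ell_0:=L^--K$ and let $e_0:=\left(X_{\tau_{\ell_0}-1},X_{\tau_{\ell_0}}\right)$ be the edge through which the walk first reaches level $\ell_0$. The plan is to turn the event $\truncprime{e_0}$ into a lower deviation of the entropy, and then to apply \Cref{entropy_concentration}.

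First, I compare $\til{W}_T(e_0)$ with the loop-erasure entropy. On $\Omegatyp{\tau_{\ell_0}}\cap\Omegabound{\tau_{\ell_0}}$, \Cref{compare_W_Wtil} gives $W_T(e_0)\le\til{W}_T(e_0)+\log(1/\pesc)$. So on $\truncprime{e_0}$ we get $W_T(e_0)<\frac12\log N+\sqrt{\log N}+O(1)$, that is, $\prscond{e_0\in\xi}{T}{\rho}$ is at least $e^{-\frac12\log N-\sqrt{\log N}-O(1)}$. The edge $e_0$ is the first crossing of level $\ell_0$, and with probability close to one it equals $\xi_{\ell_0}$. The bad event is that the walk later backtracks below $\ell_0$ and leaves at a different edge. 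By transience (\Cref{transience_until_nonbinary}) and the regeneration structure of \Cref{iid_decomp}, backtracking $m$ levels costs $e^{-\const\, m}$. I would bound the probability that $e_0\neq\xi_{\ell_0}$, on the typical events, by a small constant times $\theta$. For this I would use \Cref{speed_concentration}, so that the first hitting time of level $\ell_0$ is followed by a regeneration within $O(1)$ levels with high probability. On the complement of this bad event we have $-\log\prscond{\xi_{\ell_0}=\til{\xi}_{\ell_0}}{T,X}{}=W_T(\xi_{\ell_0})$, where $\til\xi$ is an independent loop-erasure. This identity holds because $\prscond{\xi_{\ell_0}=\til\xi_{\ell_0}}{T,X}{}=\prscond{\xi_{\ell_0}\in\til\xi}{T,X}{}$.

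Second, I apply concentration. With $c_L=\frac1{2\h}$ we have $\h\ell_0=\h L^- -\h K=\frac12\log N+\h C_L^-\sqrt{(\log N)(\log\log N)}-O(\log\log N)$. \Cref{entropy_concentration} is applied with $k=\ell_0$, with $\widehat C$ chosen so that $\ell:=\widehat C\log N\ge L^+$, and with $\theta/3$ in place of $\theta$. It gives a constant $C$ such that, outside an event of probability $\le\theta/3$ intersected with $\Omegatyp{\sigma_\ell}\cap\Omegabound{\sigma_\ell}$, the entropy satisfies $-\log\prscond{\xi_{\ell_0}=\til\xi_{\ell_0}}{T,X}{}\ge\h\ell_0-C\sqrt{\ell_0\log\log N}$. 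I then take $C_L^-$ large enough that $\h C_L^-\sqrt{\log N\log\log N}-C\sqrt{c_L\log N\log\log N}$ exceeds $\sqrt{\log N}+O(\log\log N)$. This holds for all large $N$, since $\sqrt{\log\log N}\to\infty$. Under this choice the lower bound from the first step contradicts the concentration bound. Hence $\truncprime{e_0}$ can only occur on the exceptional events.

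The main obstacle is a mismatch of events. The statement includes only $\Omegatyp{\tau_{\ell_0}}\cap\Omegabound{\tau_{\ell_0}}$, whereas \Cref{entropy_concentration} is phrased with the events up to $\sigma_\ell$. A related subtlety is that the loop-erasure $\xi_{\ell_0}$ depends on the walk after $\tau_{\ell_0}$. To handle this, I would first use \Cref{speed_concentration} to choose a regeneration index $\ell'$ with $\ell'\asymp\log N$ such that $\sigma_{\ell'}$ lies only $O(1)$ levels beyond $\ell_0$, with probability $\ge1-\theta/3$. I would then run the argument with $\Omega(\cdot)$ evaluated at $\sigma_{\ell'}$. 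Passing from $\tau_{\ell_0}$ to $\sigma_{\ell'}$ requires the extra event that no atypical ball and no boundary event occurs between $\tau_{\ell_0}$ and $\sigma_{\ell'}$. By \Cref{bound_atyp_prob}, \Cref{rmk:atypical_ball} and \Cref{bound_sigma1}, the probability of failing this is $O\left((\log N)^{-4}\right)$. If $\ell'$ is not automatically at most $\widehat C\log N$, I enlarge $\widehat C$ so that it is. Combining the three error terms of size $\theta/3$ finishes the proof.
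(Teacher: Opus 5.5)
You follow the same route as the paper's one-line proof: use \Cref{compare_W_Wtil} to turn $\truncprime{e_0}$ into an upper bound on a loop-erasure entropy, then contradict \Cref{entropy_concentration} using the slack $C_L^-\sqrt{(\log N)(\log\log N)}$ in $L^-$. Your arithmetic for choosing $C_L^-$ is fine. The gap is the step identifying $e_0$ with $\xi_{\ell_0}$. You claim $\pr{e_0\neq\xi_{\ell_0}}$ can be made a small multiple of $\theta$, but this is false. The event $e_0\neq\xi_{\ell_0}$ already occurs if the walk, right after $\tau_{\ell_0}$, steps back across $e_0$, then leaves the level-$(\ell_0-1)$ ball through a different long-range edge and never returns. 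Each of these has probability bounded below by a positive constant independent of $\theta$; for instance, the step back has probability $1/\deg(e_0^+)$, and $\deg(e_0^+)$ has bounded mean. Transience only makes backtracking \emph{many} levels unlikely. A regeneration a few levels above $\ell_0$ does not stop the walk from swapping its level-$\ell_0$ edge before that regeneration. On $\{e_0\neq\xi_{\ell_0}\}$ you have no control on $W_T(\xi_{\ell_0})$, so the contradiction with \Cref{entropy_concentration} is unavailable on an event of non-negligible probability.

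To repair this, go down a constant number $m=m(\theta)$ of levels. Let $f$ be the level-$(\ell_0-m)$ ancestor of $e_0$. Reaching level $\ell_0$ at $e_0^+$ forces the walk to cross $f$ before $\tauatyp$. From $f^+$ it never recrosses $f$ with probability at least $\pesc$, by \Cref{transience_until_nonbinary}. So, exactly as in the second bound of \Cref{compare_W_Wtil}, $W_T(f)\le\til{W}_T(e_0)+\log(1/\pesc)$. Moreover, $f=\xi_{\ell_0-m}$ unless the walk reaches level $\ell_0-m-1$ after $\tau_{\ell_0}$ and before $\tauatyp$, which has probability at most $(1-\pesc)^m$. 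Now apply \Cref{entropy_concentration} with $k=\ell_0-m$; the loss $\h m=O(1)$ is absorbed just like the $\h K$ term.

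Alternatively, size-bias. For every level-$\ell_0$ edge $e$, $\prscond{X_{\tau_{\ell_0}}=e^+}{T}{}\le\pesc^{-1}\prscond{\xi_{\ell_0}=e}{T}{}$, which transfers the question to the law of $\xi_{\ell_0}$ at the cost of a factor $1/\pesc$.

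Two smaller points. You do not need a random regeneration index $\ell'$: take the deterministic $\ell=\widehat C\log N$ with $\widehat C$ independent of $C_L^-$. Then bound the probability of an atypical ball or a boundary event between $\tau_{\ell_0}$ and $\sigma_\ell$ as in the proof of \Cref{Succ_i_prob_on_Tzi}. Also, regeneration gaps are controlled by \Cref{iid_decomp} and \Cref{bound_sigma1}, not by \Cref{speed_concentration}.
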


\begin{proof}
We use \Cref{compare_W_Wtil} and \Cref{entropy_concentration}.
\end{proof}

In what follows, we would like to focus on vertices of $G^*$ where we already know that a small neighbourhood around them has the structure of a quasi-tree. We give the following definition.

\begin{definition}\label{def:Kroot}
We say that a vertex $x$ in $G^*$ is a \emph{$K$-root} if in the quasi-tree $T_x(G^*)$ corresponding to $G^*$ (see \Cref{def:quasi-tree_of_Gstar}) the vertices in the first $K$ levels correspond to different vertices of $G^*$.
\end{definition}

\subsection{Nice vertices}\label{sec:nice}

In this section we define the notion of a sufficiently nice neighbourhood around a vertex in $G^*$.

\begin{definition}\label{def:That}
Given a quasi-tree $T$, a corresponding \emph{quasi-tree with blue half-edges}, denoted by $\what{T}$, is obtained by adding $\db(v)\eqdist\db$ blue half-edges to each vertex $v$ of $T$, independently of each other and everything else.

{Given the graph $G^*$ and a vertex $x$, the corresponding quasi-tree with blue half-edges $\what{T}_x(G^*)$ is obtained from $T_x(G^*)$ by adding $\db(\iota(v))$ blue half-edges to each of its vertices $v$.}
\end{definition}

\begin{definition}\label{def:nice}
We say that a vertex $x$ in $G^*$ is \emph{nice} if it satisfies both of the following properties.
\begin{enumerate}[(i)]
\item $x$ is a $K$-root.
\item $\prscond{\Omegatyp{\tau_K},\:\Omegabound{\tau_K},\:\taublue>\tau_K}{{\what{T}_{x}(G^*)}}{x}\ge\cnice$, where $\taublue$ denotes the first time that the walk crosses a blue (half-)edge, and  $\cnice\in(0,1)$ is a sufficiently small constant to be specified later.
\end{enumerate}

We denote the set of nice vertices by $\Vnice$.
\end{definition}

{Note that the event that $x$ is nice only depends on the first $K$ levels of $\what{T}_{x}(G^*)$, and in case $x$ is a $K$-root this can be identified with a neighbourhood of $x$ in $G^*$ up to $K$ levels.}

We also introduce the following modified notion of niceness, where we are allowed to ignore one red or blue edge from the given vertex. (This will be useful in \Cref{sec:hit_nice}, where we will show that a walk hits a nice neighbourhood quickly by showing that the walk crosses a blue or red edge that leads to a still unexplored region of $G^*$ that is likely to be nice, and we would like to be able to disregard that edge.)

\begin{definition}\label{def:almost_nice}
We say that a vertex $x$ in $G^*$ is \emph{almost nice}, if there exists a red or a blue edge $e$ emanating from $x$ such that the vertex $x$ satisfies the conditions of \Cref{def:nice} in the graph~$G^*\setminus\{e\}$, and the other endpoint of $e$ is not contained in the neighbourhood of $x$ up to $K$ levels in~$G^*\setminus\{e\}$. We will call such an edge $e$ a \emph{distinguished edge} corresponding to $x$.

We denote the set of almost nice vertices by $\Vniceprime$.
\end{definition}

\begin{lemma}\label{niceprime_hit_level_K}
{Let $x\in\Vniceprime$ and let $e=\{x,x^a\}$ be a corresponding distinguished edge. Let $\what{T}_{x}^a(G^*)$ be obtained by considering the quasi-tree with blue half-edges $\what{T}_{x}(G^*\setminus\{e\})$ corresponding to $G^*\setminus\{e\}$ around $x$, and adding an additional edge $\{x,x^a\}$ to a new vertex $x^a$. Then a simple random walk on $\what{T}_{x}^a(G^*)$ satisfies
\[\prscond{\Omegatyp{\tau_K},\:\Omegabound{\tau_K},\:\taublue>\tau_K,\:\tau_{x^a}>\tau_K}{\what{T}_{x}^a(G^*)}{x}\quad\ge\quad\cnice',\]
where $\cnice'$ is a positive constant depending on $\cnice$.}
\end{lemma}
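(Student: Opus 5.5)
The plan is to compare the walk on $\what{T}_{x}^a(G^*)$ with the walk on $\what{T}_{x}(G^*\setminus\{e\})$, for which niceness of $x$ in $G^*\setminus\{e\}$ gives the event $E:=\{\Omegatyp{\tau_K},\:\Omegabound{\tau_K},\:\taublue>\tau_K\}$ with probability at least $\cnice$. The only difference between the two quasi-trees is the extra edge $\{x,x^a\}$ at the root, which is a dead end. The walk on $\what{T}_{x}^a(G^*)$ behaves exactly like the walk on $\what{T}_{x}(G^*\setminus\{e\})$ as long as it does not cross this edge. So it suffices to show that, with probability bounded below, the walk achieves $E$ without ever stepping onto $x^a$.

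The cleanest route is to decompose the path at visits to $x$. Write $\deg'(x)$ for the degree of $x$ in $\what{T}_{x}(G^*\setminus\{e\})$; in $\what{T}_{x}^a(G^*)$ it is $\deg'(x)+1$. Consider the restriction of the walk on $\what{T}_{x}^a(G^*)$ with excursions to $x^a$ removed. Since $x^a$ is a leaf, an excursion there is just $x\to x^a\to x$, and removing these gives a simple random walk on $\what{T}_{x}(G^*\setminus\{e\})$ (the same watching-the-walk-on-a-subgraph argument used in the proof of \Cref{transience_until_boundary}). On $E$ for this restricted walk, the original walk satisfies $\tau_{x^a}>\tau_K$ exactly when no step to $x^a$ is taken at any of the visits to $x$ before $\tau_K$. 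Given the restricted path, each visit to $x$ independently avoids $x^a$ with probability $\frac{\deg'(x)}{\deg'(x)+1}$. Hence the desired probability is
\[\E{\1{E}\left(\frac{\deg'(x)}{\deg'(x)+1}\right)^{V}},\]
where $V$ is the number of visits to $x$ before $\tau_K$. Note that $\deg'(x)\ge 2d$ because of the black edges.

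The main obstacle is controlling $V$. I would bound it using transience. By \Cref{transience_until_boundary} applied in $\what{T}_{x}(G^*\setminus\{e\})$, each return to $x$ is followed by escape to a ball boundary with probability at least $c/\deg'(x)$. That alone only gives $V\lesssim\deg'(x)$ on average. Combined with $\left(1-\frac{1}{\deg'(x)+1}\right)^{V}$, this still yields a constant: by Jensen/Markov, with probability at least $1-\cnice/2$ one has $V\le C\deg'(x)/c$, at cost $e^{-C/c}$. I would make this precise as follows. Choose $C$ so that $\pr{V>C\deg'(x)}\le\cnice/2$; the number of returns is dominated by a geometric variable with success parameter $c/\deg'(x)$, provided escapes through the boundary are not counted as returns. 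Here I must check that after exiting via a boundary the walk can come back to $x$ only a bounded expected number of further times. For that I would use that on $\Omegatyp{\tau_K}$ each subsequent return requires backtracking a long-range edge, which fails with probability at least $\pesc$ by \Cref{transience_until_nonbinary}.

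Then $\pr{E,\:V\le C\deg'(x)}\ge\cnice/2$, and on this event the avoidance factor is at least $\left(1-\frac{1}{\deg'(x)+1}\right)^{C\deg'(x)}\ge e^{-C}$. This gives $\cnice'=\frac{\cnice}{2}e^{-C}$. Alternatively, one could skip the counting by constructing directly, as in \Cref{transience_in_binary_tree}, a flow showing that the walk from $x$ in $\what{T}_{x}^a(G^*)$ avoids $x^a$ with probability at least $\pesc$. However, conditioning on $E$ while doing so is awkward, so I would prefer the decomposition above.
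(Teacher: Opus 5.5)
Your route is correct but differs from the paper's.

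The paper sums over the number of returns to $x$ as a geometric series. It obtains $\frac{\alpha a}{1-\alpha b}$ with $\alpha=\frac{\deg(x)}{\deg(x)+1}$, $a=\mathbb{P}^{\what{T}}_x(\tau_x^+>\tau_K,\Omega(\tau_K))$ and $b=\mathbb{P}^{\what{T}}_x(\tau_x^+<\tau_K,\Omega(\tau_x^+))$. It then compares this with $\frac{a}{1-b}$ using only $1-b\ge\mathbb{P}^{\what{T}}_x(\tau_x^+>\tauatyp)\gtrsim 1/\deg(x)$ from \Cref{transience_until_nonbinary}. No tail bound on the number of visits is needed, and $\cnice'$ comes out linear in $\cnice$.

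You instead use the exact excursion-removal identity $\mathbb{E}[\1{E}\alpha^{V}]$ plus a geometric tail bound on $V$. This makes $\cnice'$ only polynomial in $\cnice$, but that is still a constant depending on $\cnice$, as required. In exchange, your identity holds for any event of the restricted path. The paper's product formula presumes the event renews at each return to $x$. That is exact for the typicality, blue and $x^a$ constraints, but not for $\Omegabound{\cdot}$: its count of distinct vertices per ball accumulates across excursions, so there the product is only an upper bound. Your argument does not rely on that renewal structure.

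One step of yours is misjustified, though it is dispensable. Here the boundary of an $(R+R)$-ball is the set of vertices at distance $\ge R$ from its centre, so hitting it does not take the walk out of the ball. Later returns to $x$ therefore need not cross any long-range edge, and your proposed control of returns ``after exiting via a boundary'' fails as stated. An unconditional bound on $\mathbb{P}(V>C\deg'(x))$ is also not what you need; it suffices to bound $\mathbb{P}(E,\,V>C\deg'(x))$.

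On $E$ the walk hits no boundary before $\tau_K$. Each ball is entered only through its centre, so reaching its boundary needs $R+1$ distinct vertices there, which $\Omegabound{\tau_K}$ excludes. The walk also hits no non-binary ball before $\tau_K$, which $\Omegatyp{\tau_K}$ excludes. So on $E$, $V$ is at most the number of visits to $x$ before $\tauboundary$ (or before $\tau_{\mathrm{non-bin}}$). That count is dominated by a geometric variable with parameter $c/\deg_T(x)$ by \Cref{transience_until_boundary}, or $\pesc/\deg_T(x)$ by \Cref{transience_until_nonbinary}, and $\deg_T(x)\le\deg'(x)$. The blue half-edges only add killing, which is irrelevant on $\{\taublue>\tau_K\}$. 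With this in place, your choice $\cnice'=\frac{\cnice}{2}e^{-C}$ goes through.
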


{Note that the probability in \Cref{niceprime_hit_level_K} only depends on the first $K$ levels of $\what{T}_{x}^a(G^*)$.}

\begin{proof}
Let us drop the subscript $x$ and argument $G^*$ of the trees and drop the conditioning on the trees from the notation. Let us also write $\Omega(s)=\Omegatyp{s}\cap\Omegabound{s}\cap\{\taublue>s\}\cap\{\tau_{x^a}>s\}$ when considering a walk on $\what{T}^a$, and $\Omega(s)=\Omegatyp{s}\cap\Omegabound{s}\cap\{\taublue>s\}$ when considering a walk on $\what{T}$.

By considering the number of times that the walk returns to $x$, we see that
\begin{align}
\nonumber
&\prstartup{\Omega(\tau_K)}{\what{T}^a}{x}\quad=\quad\sum_{k\ge0}\prstartup{\Omega(\tau_K),\text{ exactly }k\text{ returns to }x}{\what{T}^a}{x}\\
\nonumber
&=\quad\sum_{k\ge0}\prstartup{\tau_x^+<\tau_K,\:\Omega(\tau_x^+)}{\what{T}^a}{x}^k \prstartup{\tau_x^+>\tau_K,\:\Omega(\tau_K)}{\what{T}^a}{x}\quad=\quad \frac{\prstartup{\tau_x^+>\tau_K,\:\Omega(\tau_K)}{\what{T}^a}{x}}{1-\prstartup{\tau_x^+<\tau_K,\:\Omega(\tau_x^+)}{\what{T}^a}{x}}\\
\label{eq:That_vs_T_prob}
&=\quad\frac{\frac{\deg(x)}{\deg(x)+1}\prstartup{\tau_x^+>\tau_K,\:\Omega(\tau_K)}{\what{T}}{x}}{1-\frac{\deg(x)}{\deg(x)+1}\prstartup{\tau_x^+<\tau_K,\:\Omega(\tau_x^+)}{\what{T}}{x}}\quad=\quad \frac{\prstartup{\tau_x^+>\tau_K,\:\Omega(\tau_K)}{\what{T}}{x}}{\frac{1}{\deg(x)}+1-\prstartup{\tau_x^+<\tau_K,\:\Omega(\tau_x^+)}{\what{T}}{x}}\:,
\end{align}
where $\deg(x)$ denotes the degree of $x$ in $\what{T}$.

Note that
\begin{align*}
1-\prstartup{\tau_x^+<\tau_K,\:\Omega(\tau_x^+)}{\what{T}}{x}\quad\ge\quad1-\prstartup{\tau_x^+<\tauatyp}{\what{T}}{x}\quad=\quad\prstartup{\tau_x^+>\tauatyp}{\what{T}}{x}\quad\gtrsim\quad\frac{1}{\deg(x)},
\end{align*}
where in the $\gtrsim$ we used \Cref{transience_until_nonbinary}. Then
\begin{align*}
\eqref{eq:That_vs_T_prob}\quad\asymp\quad\frac{\prstartup{\tau_x^+>\tau_K,\:\Omega(\tau_K)}{\what{T}}{x}}{1-\prstartup{\tau_x^+<\tau_K,\:\Omega(\tau_x^+)}{\what{T}}{x}}\quad=\quad\prstartup{\Omega(\tau_K)}{\what{T}}{x},
\end{align*}
which finishes the proof.
\end{proof}

\subsection{Crossing the second blue edge}

Now we are ready to state the precise result we would like to prove regarding the time $\taublue^{(2)}$ transition probabilities.

\begin{proposition}\label{transition_probs_on_Vnice}
There exist positive constants $c$ and $C$ such that with high probability $G^*$ has the following property. Let $X$ be a simple random walk on $G^*$ and let $\taublue^{(2)}$ be the second time that it crosses a blue edge. Then for all $x\in\Vniceprime$ and $y\in\Vnice$ such that the neighbourhoods of $x$ and $y$ up to the first $K$ levels are disjoint and do not contain the other endpoint of the distinguished edge $e_x$ associated to $x$, we have
\[\prscond{X_{\taublue^{(2)}-1}=y,\:\taublue^{(2)}\le C\log N}{G^*}{x}\quad\ge\quad c\frac{\log N}{N}\db(y)\:.\]
\end{proposition}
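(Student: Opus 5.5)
We start from the decomposition \eqref{eq:taubluetwo_approx_sum}: reversing the path from the last blue crossing, the probability that $X_{\taublue^{(2)}-1}=y$ equals $\db(y)$ times a sum over pairs of blue half-edges $(e,f)$ matched to each other. Here $e$ is where a walk $X$ from $x$ first crosses blue, and $f$ is where an independent walk $Y$ from $y$ first crosses blue. The factor $\db(y)$ accounts for the last step being into $y$ along any of its blue edges; by reversibility the weight is $\deg(y)$ times the $Y$-path probability divided by $\deg(y)$. To also control $\taublue^{(2)}\le C\log N$, we restrict both walks to the events $\taublue^X\le C\log N/2$ and $\taublue^Y\le C\log N/2$.

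We then keep only a lower-bound version of the sum. In it, $X$ first reaches level $L^-$ of its quasi-tree $T_x$ without crossing blue, staying in typical balls, avoiding truncated edges and avoiding $x^a$. After that it wanders into the region beyond level $L^-$ and crosses a blue half-edge there. Similarly for $Y$ from $y$, with no distinguished edge.

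The core steps follow the scheme of \cite{weighted_random_matching}. First, explore $G^*$ from $x$ and from $y$ simultaneously, coupling the explored neighbourhoods, up to level $L^+$ and with truncation via $\trunc{\cdot}{A}$, to two independent quasi-trees with blue half-edges. We keep blue half-edges unmatched. The revealed volume is $N^{1/2+o(1)}$ by the truncation rule and \Cref{volume_high_prob_bound}, so the coupling fails with probability $o(1)$. It must hold simultaneously for all pairs $x,y$, so we need failure probability $o(N^{-2})$ per pair. This forces bounding collision probabilities by high-moment or large-deviation estimates rather than Markov's inequality, and we then union bound.

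On the coupled event, the nice condition (respectively \Cref{niceprime_hit_level_K} for $x$) gives probability $\ge\cnice$ of reaching level $K$ well. From there, \Cref{entropy_concentration}, \Cref{speed_concentration}, \Cref{bound_trunc_prob} and \Cref{bound_truncprime_prob} give, with constant probability, that:
\begin{itemize}
\item the walk reaches level $L^-$ in $O(\log N)$ steps;
\item its exit edge has hitting probability at most $N^{-1/2}e^{-\sqrt{\log N}}$.
\end{itemize}
Since the blue degree has mean $\asymp1/\log N$, after reaching level $L^-$ the walk crosses a blue edge within $O(\log N)$ further steps with constant probability. The relevant half-edges $e$ then carry weights $a_e$ with total mass $\gtrsim1$ and maximum $\le N^{-1/2-o(1)}$, and similarly weights $b_f$ on the $y$ side.

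Next, we use the randomness of the unexplored blue matching. Conditional on everything revealed, the blue half-edges beyond level $L^-$ are matched essentially uniformly among the $\asymp N/\log N$ blue half-edges (\Cref{total_blue_degree}). Hence
\[\E{\sum_{e,f}a_eb_f\1{e\simb f}}\asymp\frac{\log N}{N}\Big(\sum_e a_e\Big)\Big(\sum_f b_f\Big)\gtrsim\frac{\log N}{N}.\]
For concentration, the sum is a function of a nearly uniform matching that changes by at most $\max_e a_e\max_f b_f$ under a swap. A bounded-differences (or Chebyshev plus higher-moment) inequality then gives relative deviations exponentially small in a power of $N$. This is also small enough to union bound over all $N^2$ pairs $(x,y)$.

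The main obstacle is making the whole pipeline hold with failure probability $o(N^{-2})$ uniformly over pairs. Both the coupling with quasi-trees and the concentration of the matching sum need this, while atypical balls and truncation only control probabilities up to constants $\theta$. I would handle it by separating the two sources of randomness. The walk-probability statements are quenched facts about the explored tree, holding on the coupling event with conditional probability $\ge c$. Only the coupling and matching steps need high-probability bounds, and there I would use the sparsity of blue edges and the $N^{1/2+o(1)}$ exploration size to obtain polynomially small failure rates.
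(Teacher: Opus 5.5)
Your outline is the paper's: the reversal identity, coupling with two independent quasi-trees with blue half-edges up to level $L^+$, weights of size about $N^{-1/2}$ from the level-$L^-$ truncation, the randomness of the blue matching, and a union bound over pairs. The gap is in how you get failure probability $o(N^{-2})$ per pair, which breaks down in two places.

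\emph{The coupling.} It cannot succeed with that probability, or even with probability $1-o(1)$. Truncation by $\trunc{\cdot}{A}$ caps the explored volume at $N^{\frac12(1+c_L^+)+o(1)}$, not $N^{1/2+o(1)}$. This polynomial excess over $\sqrt N$ is forced, because $L^+-L^-\asymp\log N$ is needed to meet a blue edge, so one expects polynomially many overlaps among the revealed balls. The paper does not try to avoid them. It truncates at every overlap or coupling failure and shows the walk rarely crosses such edges (\Cref{Succ_i_prob_on_Tzi}). It then needs an $o(N^{-2})$ bound only on the number of bad level-$K$ roots (\Cref{bound_num_explored_num_bad}), which is feasible because the first $K$ levels contain only $e^{O((\log\log N)^2)}$ vertices.

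\emph{The walk statements, which is the missing idea.} These are not quenched facts. \Cref{speed_concentration}, \Cref{entropy_concentration}, \Cref{bound_truncprime_prob} and \Cref{bound_atyp_prob} all average over the random quasi-tree beyond level $K$, while niceness controls only the first $K$ levels. By themselves they give $\prscond{\Succhat}{\what{\F}}{}\ge\theta$ only with probability $1-\theta'$ over the environment. The paper's \Cref{Succhat_prob} converts annealed to quenched. The subtrees $T(z_i)$ at the level-$K$ roots are explored one at a time, and each good one succeeds with conditional probability $\ge\theta_3$ given $\what{\F}_{i-1}$ (\Cref{Succhat_i_prob}). Each carries hitting weight $h(z_i)\le(1-\pesc)^K\le(\log N)^{-2}$ for $C_K$ large. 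Azuma--Hoeffding then bounds the $h$-mass of badly behaved subtrees except with probability $\exp(-c(\log N)^2)$, while \Cref{niceprime_hit_level_K} gives $\sum_i h(z_i)\ge\cnice'$. This is precisely why $K=C_K\log\log N$ and why niceness is defined at level $K$.

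\emph{The matching step} also fails as written. The level-$L^-$ hitting distribution has entropy about $\frac12\log N$, so $\max_e a_e\max_f b_f\ge N^{-1-o(1)}$. With $\asymp N/\log N$ blue half-edges, the bounded-differences variance proxy is then at least $N^{-1-o(1)}$, larger than $\mu^2\asymp(\log N/N)^2$ by a factor $N^{1-o(1)}$, so that inequality yields nothing. You need a variance-sensitive bound that exploits the fact that almost all pairs carry zero weight. The paper uses \cite[Lemma 5.1]{cutoff_NBRW_on_sparse_random_graphs}, whose failure probability is of order $\exp(-c\mu/\max_{e,f}w_{e,f})$. With $w_{e,f}\le\frac{1}{N\log N}$ from \Cref{bound_prob_specific_blue_edge}, this is $\exp(-c(\log N)^2)$.

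That weight bound in turn requires that the walk not backtrack from level $L^-$ to level $L^--M$. Only then can just $e^{O((\log\log N)^2)}$ non-truncated level-$L^-$ edges lead to a given blue half-edge. Your description of the weights omits this, and a hitting-probability bound at the level-$L^-$ exit edge alone does not control $a_e$. Finally, the near-uniformity of the matching given $\what{\F}_b$ comes from \Cref{blue_almost_unif}.
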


\subsection{Coupling}\label{sec:coupling}

In this section we present an exploration process revealing a random neighbourhood around two vertices in $G^*$ and coupling them to two independent random quasi-trees. Then we also present a coupling between two independent walks on these neighbourhoods of $G^*$ and two independent walks on the two independent quasi-trees.

{Consider two vertices $x$ and $y$ of $G^*$, and a red or blue neighbour $x^a$ of $x$ such that the following properties hold. The vertex $x$ is a $K$-root in the graph $G^*\setminus\left\{\{x,x^a\}\right\}$, while $y$ is a $K$-root in $G^*$, their neighbourhoods up to the first $K$ levels in $G^*\setminus \{x,x^a\}$ and $G^*$ respectively are disjoint from each other, do not contain $x^a$, and take values $\what{T}_{x,K}(G^*\setminus\{e\})=\what{T}_{x,K}$ and $\what{T}_{y,K}(G^*)=\what{T}_{y,K}$ respectively. Let $\Ggr^a=\Ggr\cup\left\{\{x,x^a\}\right\}$.}

Let $\what{T}_x$ and $\what{T}_y$ be independent random quasi-trees with blue half-edges, conditioned to have their first $K$ levels equal to $\what{T}_{x,K}$ and $\what{T}_{y,K}$ respectively. Let $\what{T}_x^a$ be obtained by adding a long-range edge from the root $x$ of $\what{T}_x$ to a new vertex $x^a$. Say that $x^a$ is at level -1. Note that in $\what{T}_x^a$ we view $\{x,x^a\}$ as a long-range edge and we do not treat it as a blue edge even if it corresponds to a blue edge in~$G^*$.

\subsubsection*{Exploration in $G^*$, coupling with $\what{T}_x^a$ and $\what{T}_y$}

Firstly, we define an exploration process that reveals some random neighbourhoods of $x$ and $y$ in $G^*$, and couples them with $\what{T}_x^a$ and $\what{T}_y$, respectively.

We denote the centres of the level $K$ balls of $\what{T}^a_x$ by $z_1,...,z_{L_x}$ and the centres of the level $K$ balls of $\what{T}_y$ by $z_{L_x+1},...,z_{L_x+L_y}$. We start by exploring $\Ggr$ from the ball of $z_1$, level by level up to level $L^+$ as follows.

We know (by definition) that the $(R+R)$-ball around $z_1$ in $T^a_x$ agrees with the one in $\Ggr$. We consider the long-range half-edges from this ball one by one, and for each long-range half-edge $e$ we do the following. If the truncation event $\trunc{e}{A}$ holds with respect to the tree $T(z_1)$ rooted at $z_1$, then we truncate $e$, and we do not explore further in this direction. Otherwise, we choose ($\iota$ of) its other endpoint in $T^a_x$, and we choose its other endpoint in $\Ggr$ with the correct distributions (the distribution in $\Ggr$ depends on what we have revealed so far), and we couple them via their optimal coupling. If this coupling fails, we truncate $e$, and we do not explore further in this direction. If the coupling of the edge is successful, then we reveal the $(R+R)$-ball around its new endpoint in $T^a_x$ and in $\Ggr$ (including the red degrees from its vertices) using the optimal coupling, revealing the ball edge by edge, stopping if the optimal coupling fails or if we find that the ball is atypical in $T^a_x$ (e.g.\ by reaching a too large volume) or if it intersects with the already explored region (we call this an `overlap'). If the optimal coupling fails or the ball is atypical or there is an overlap, then we truncate $e$, and we do not explore further in this direction. After we have considered all long-range half-edges from the ball of $z_1$, we continue analogously by considering the long-range half-edges in the level $(K+1)$ balls (that are connected to the ball of $z_1$ via a non-truncated edge), and then we continue analogously level by level. At later levels we also truncate half-edges $e$ that satisfy $\truncprime{e}$ with respect to the tree $T(z_1)$.

So to summarise, we reveal and couple part of $G^*$ and $T(z_1)$, proceeding level by level up to level $L^+$, and \emph{truncating} a long-range (half-)edge $e$ if any of the following hold:
\begin{enumerate}[(i)]
\item it satisfies $\trunc{\cdot}{A}$ or $\truncprime{\cdot}$ with respect to the tree $T(z_1)$ rooted at $z_1$ \footnote{Here $A$ is as in \Cref{bound_trunc_prob} for a choice of $\theta$ to be specified later.};
\item the optimal coupling of the other endpoint fails;
\item the $(R+R)$-ball around the other endpoint is atypical according to \Cref{def:atypical_ball};
\item the optimal coupling of the $(R+R)$-ball around the other endpoint fails (this coupling involves the green edges in the $(R+R)$-balls, as well as the red degree of each vertex in the ball); or
\item\label{item:overlap} the $(R+R)$-ball around the other endpoint contains any of the previously revealed vertices.
\end{enumerate}

We write $\F_1$ for the $\sigma$-algebra generated by this exploration and the $\sigma$-algebras of $\widehat{T}_{x,K}^a(G^*\setminus\{x,x^a\})$ and~$\widehat{T}_{y,K}(G^*)$.

Now we go through the vertices we explored above, level by level, and couple the blue degree of each vertex in $G^*$ with the blue degree in $\what{T}_x$, using the optimal coupling. If in an $(R+R)$-ball at least one of these optimal couplings fails, then we say that the long-range edge leading to that $(R+R)$-ball is \emph{blue-truncated}.

We write $\what{\F}_1$ for the $\sigma$-algebra generated by $\F_1$ and this additional exploration of blue degrees.

Then we run a similar exploration and coupling from each of $z_2$, $z_3$, ..., $z_{L_x+L_y}$. (When exploring from $z_i$, we consider $\trunc{\cdot}{A}$ and $\truncprime{\cdot}$ with respect to $T(z_i)$. In \eqref{item:overlap} we also consider previously revealed vertices from the previous trees $T(z_j)$.) We write $\F_i$ for the $\sigma$-algebra generated by $\what{\F}_{i-1}$ and the exploration of $\Ggr$ from $z_i$, and write $\what{\F}_i$ for the $\sigma$-algebra generated by $\F_i$ and the exploration of blue degrees from $z_i$.

We say $z_i$ is \emph{good} if its $(R+R)$-ball does not intersect any of the $(R+R)$-balls revealed during the explorations from $z_1$, ..., $z_{i-1}$. Otherwise we call $z_i$ \emph{bad}.

\subsubsection*{Coupling of walks on $\Ggr^a$ with walks on $T_x^a$ and $T_y$:}

Now we construct a coupling between random walks on $\Ggr^a$ from $x$ and $y$, and random walks on $T_x^a$ and $T_y$ from $x$ and $y$, respectively. ($T_x^a$, $T_y$, etc are the versions of $\what{T}_x^a$, $\what{T}_y$, etc without the blue half-edges.)

Let $Z^{(1)}$ and $Z^{(2)}$ be independent random walks on $T_x^a$ and $T_y$ respectively (starting from $x$ and $y$ respectively) terminated at their corresponding $\tauatyp$. Let $X^{(1)}$ and $X^{(2)}$ be independent random walks on $\Ggr\cup\{\{x,x^a\}\}$ from $x$ and $y$, respectively so that $X^{(i)}$ agrees with $Z^{(i)}$ up to the first time that $Z^{(i)}$ crosses a truncated edge, hits the boundary of an $(R+R)$-ball, reaches level $L^+$, gets terminated because of visiting an atypical $(R+R)$-ball, or in case of $Z^{(1)}$ hits the vertex $x^a$. (It is not difficult to see that such a coupling is possible.)

We say that the above coupling is \emph{successful} if the following are all satisfied for $i=1,2$.

\begin{enumerate}[(i)]
\item \label{item:hit_K}
$Z^{(i)}$ reaches level $K$, in case of $Z^{(1)}$ without hitting $x^a$.
\item \label{item:not_backtrack_from_K}
After reaching level $K$, the walk $Z^{(i)}$ does not revisit level $(K-1)$.
\item \label{item:Lplus_without_crossing_truncated}
$Z^{(i)}$ reaches level $L^+$ without crossing a truncated edge.
\item \label{item:not_backtrack_from_Lminus}
After reaching level $L^-$, the walk $Z^{(i)}$ does not revisit level $L^--M$.
\item \label{item:Omegabound_holds}
$\Omegabound{\tau_{L^+}}$ holds, i.e.\ until $Z^{(i)}$ first hits level $L^+$, it does not visit $R$ or more different vertices in any one $(R+R)$-ball.
\item \label{item:tauLplus_bound}
We have $\tau_{L^+}\le \frac{2}{\nu}L^+$.
\item \label{item:enough_small_degrees}
At least $\frac12\left(L^+-L^-\right)$ vertices visited by $Z^{(i)}$ between levels $L^-$ and $L^+$ before hitting level $L^+$ have $\dgr(\cdot)\le\Cdeg$. Here $\Cdeg$ is a sufficiently large constant, to be specified later.
\end{enumerate}

Let $\Succ$ denote the event that the coupling is successful.

\subsubsection*{Coupling of walks on $G^*$ with walks on $\what{T}_x^a$ and $\what{T}_y$:}

Finally, we construct a coupling between random walks on $G^*$ from $x$ and $y$, and random walks on $\what{T}_x^a$ and $\what{T}_y$ from $x$ and $y$, respectively.

Let $\what{Z}^{(1)}$ and $\what{Z}^{(2)}$ be independent walks on $\what{T}_x^a$ and $\what{T}_y$ respectively, defined up to the first time that they cross a blue half-edge or reach their corresponding $\tauatyp$, and agreeing with $Z^{(1)}$ and $Z^{(2)}$ up to this time. Let $\what{X}^{(1)}$ and $\what{X}^{(2)}$ be independent random walks on $G^*$ from $x$ and $y$ respectively so that $\what{X}^{(i)}$ agrees with $\what{Z}^{(i)}$ up to the first time that $\what{Z}^{(i)}$ crosses a blue half-edge, crosses a truncated or blue-truncated edge, hits the boundary of an $(R+R)$-ball, reaches level $L^+$, gets terminated because of visiting an atypical $(R+R)$-ball, or in case of $\what{Z}^{(1)}$ hits the vertex $x^a$.

We say that this coupling is \emph{successful} if the following are all satisfied for $i=1,2$.
\begin{enumerate}[(i)]
\item\label{item:Succ_holds}
$\Succ$ holds.
\item\label{item:no_blue_before_K}
$\what{Z}^{(i)}$ does not cross a blue half-edge before reaching level $K$.
\item\label{item:no_bluetrunc_before_Lplus}
${Z}^{(i)}$ does not cross a blue-truncated edge before reaching level $L^+$.
\item\label{item:no_blue_between_K_Lminus}
$\what{Z}^{(i)}$ does not cross a blue half-edge between levels $K$ and $L^-$.
\item\label{item:blue_between_Lminus_Lplus}
Before $\tau_{L^+}$\footnote{Here $\tau_{L^+} $ is the hitting time of level $L^{+}$ by the walk $\widehat{Z}^{(i)}$.}, the walk $\what{Z}^{(i)}$ crosses a blue half-edge between levels $L^-$ and $L^+$.
\end{enumerate}

Let $\Succhat$ denote the event that this coupling is successful.

\subsection{Estimates regarding the success probability}

In this section we estimate the probability that the above couplings succeed.

Let us consider an almost nice vertex $x$ with distinguished edge $\{x,x^a\}$, and a nice vertex $y$ such that the neighbourhood of $x$ in $G^*\setminus\{\{x,x^a\}\}$ up to $K$ levels, and the neighbourhood of $y$ in $G^*$ up to $K$ levels are disjoint, the latter one does not contain $x^a$, and the two neighbourhoods {take values $\what{T}_{x,K}(G^*\setminus\{e\})\cup\{\{x,x^a\}\}=\what{T}^a_{x,K}$ and $\what{T}_{y,K}(G^*)=\what{T}_{y,K}$ respectively}. (In what follows we will abbreviate this event as $\left\{\what{T}^a_{x,K},\what{T}_{y,K}\right\}$ and the analogous event on $\Ggr^a$ as $\left\{T^a_{x,K},T_{y,K}\right\}$.)

For the rest of this section, the estimates on the probabilities are uniform over all vertices $x$, $y$ and possible realisations of $\what{T}^a_{x,K},\what{T}_{y,K}$ satisfying the above niceness and disjointness conditions. 

The main results of this section are the following.

\begin{proposition}\label{Succhat_prob}
There exists $\theta\in(0,1)$ such that we have
\[
\prscond{\prscond{\Succhat}{\what{\F}_{L_x+L_y}}{x,y}>\theta}{\left\{\what{T}^a_{x,K},\what{T}_{y,K}\right\}}{}\quad\ge\quad1-o\left(\frac{1}{N^2}\right).
\]
\end{proposition}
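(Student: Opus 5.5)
We will lower bound the quenched probability $\prscond{\Succhat}{\what{\F}_{L_x+L_y}}{x,y}$ by splitting $\Succhat$ into its conditions \eqref{item:Succ_holds}--\eqref{item:blue_between_Lminus_Lplus}. For each condition we work at two levels. At the annealed level we prove that it holds with probability bounded below, or fails with small probability, for walks on the independent quasi-trees $\what{T}^a_x,\what{T}_y$. At the quenched level we show that the $\what{\F}_{L_x+L_y}$-measurable quantities controlling it are concentrated, except on an event of probability $o(N^{-2})$. Because $Z^{(1)}$ and $Z^{(2)}$ are independent given the trees, it suffices to treat a single walk, say $Z^{(1)}$ on $\what{T}^a_x$, and then multiply the two bounds.

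\textbf{Step 1 (first $K$ levels).} Conditions \eqref{item:hit_K} and \eqref{item:no_blue_before_K} are deterministic given $\what{T}^a_{x,K}$. Since $x$ is almost nice, \Cref{niceprime_hit_level_K} gives probability at least $\cnice'$ for both, jointly with $\Omegatyp{\tau_K}$ and $\Omegabound{\tau_K}$. For $y$ the same bound follows from \Cref{def:nice}.

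\textbf{Step 2 (after level $K$).} By transience (\Cref{transience_until_nonbinary}), the probability of not backtracking to level $K-1$ is at least $\pesc$. We then control the remaining failure events, each with probability at most a small $\theta'$, so that together they remain below the product of $\cnice'$ and $\pesc$. These events are: backtracking $M$ levels after $L^-$, which happens with probability $e^{-cM}$; violation of $\Omegabound{\tau_{L^+}}$ or $\Omegatyp{\tau_{L^+}}$, which by \Cref{rmk:atypical_ball}, \Cref{bound_atyp_prob} and a union bound over $O(\log N)$ balls has probability $O((\log N)^{-4})$; $\tau_{L^+}>\frac2\nu L^+$, controlled by \Cref{speed_concentration}; crossing an edge satisfying $\trunc{\cdot}{A}$, controlled by \Cref{bound_trunc_prob}; and crossing an edge satisfying $\truncprime{\cdot}$, controlled by \Cref{bound_truncprime_prob}. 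Condition \eqref{item:enough_small_degrees} follows from a law of large numbers along regenerations, using $\pr{\dgr\le\Cdeg}\to1$ as $\Cdeg\to\infty$. For the blue conditions, the number of vertices visited before level $L^-$ is $O(\log N)$ and $\pblue\cdot N\asymp1/\log N$, so we choose $c_L$ to make the probability of no blue crossing before $L^-$ bounded below. Between $L^-$ and $L^+$ there are $\asymp c_L^+\log N$ steps at vertices of bounded degree, which by \eqref{item:enough_small_degrees} gives a constant probability of a blue crossing.

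\textbf{Step 3 (quenched bound; main obstacle).} The remaining failure events come from the coupling itself: optimal couplings of endpoints or balls failing, overlaps, bad $z_i$, and blue-truncations. We must show that, given $\what{\F}_{L_x+L_y}$, the walk crosses such an edge with small probability, except on an event of probability $o(N^{-2})$. Truncation by $\trunc{\cdot}{A}$ keeps only edges $e$ with $\til W(e)\le\frac12(1+c_L^+)\log N+O(\sqrt{\log N\log\log N})$. Summing $e^{-\til W}$ over the levels then shows that the explored region has at most $N^{1/2+c_L^+ +o(1)}$ balls, each of volume $e^{O(R)}$. Each new edge or ball therefore fails to couple, or overlaps, with probability $N^{-1/2+O(c_L^+)}$.

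The expected walk mass on failed edges is thus polynomially small. The difficulty is upgrading this expectation to a bound holding with probability $1-o(N^{-2})$. We plan to do this by an Azuma- or Freedman-type martingale argument over the exploration: each edge's contribution to the walk's hitting mass is bounded by $e^{-\til W}\lesssim N^{-1/2+}$ on non-$\truncprime{}$ edges, and the number of exploration steps is polynomial, so concentration is available. The lower truncation $\truncprime{}$ at level $L^--K$ is exactly what caps individual contributions. Combining the conditional bounds over the $L_x+L_y$ subtrees (whose count is controlled by the $K$-level data) with Steps 1--2 yields a constant $\theta>0$.
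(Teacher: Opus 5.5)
There is a genuine gap in how you obtain the quenched statement, which must hold with probability $1-o(N^{-2})$. Your Steps 1--2 only give annealed bounds: averaged over the random subtrees beyond level $K$, the walk satisfies the conditions of $\Succhat$ with probability roughly at least $\cnice'\pesc$. To turn this into $\prscond{\Succhat}{\what{\F}_{L_x+L_y}}{x,y}>\theta$ outside an event of probability $o(N^{-2})$ you need a concentration mechanism. Markov's inequality applied to an annealed failure probability only yields an exceptional event of constant probability.

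Your Step 3 provides concentration only for coupling failures, overlaps and blue-truncations. The conditions on speed, backtracking, $\Omegabound{\cdot}$, typicality, $\trunc{\cdot}{A}$, $\truncprime{\cdot}$, small degrees and blue crossings are never upgraded from annealed to quenched. Moreover, the per-edge cap you plan to use is not valid. $\truncprime{\cdot}$ only constrains edges at level $L^-$ (level $L^--K$ of $T(z_i)$), whereas an edge just beyond level $K$ can carry walk mass comparable to $\max_i h(z_i)$ (notation below). That is far larger than $N^{-1/2}$.

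The missing idea is that concentration comes from the walk spreading over the level-$K$ centres $z_i$. This is exactly why $K=C_K\log\log N$ and the (almost) niceness hypotheses are there.

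Let $h(z_i)$ be the probability that the walk from $x$ first reaches level $K$ at $z_i$, with $\Omegatyp{\cdot}$ and $\Omegabound{\cdot}$ holding, no blue crossing, and no visit to $x^a$. This quantity is determined by the first $K$ levels. You have $\sum_i h(z_i)\ge\cnice'$ by \Cref{niceprime_hit_level_K}, and $h(z_i)\le(1-\pesc)^K\le(\log N)^{-2}$ for $C_K$ large by \Cref{transience_until_nonbinary}.

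The subtrees are explored sequentially. For good $z_i$, \Cref{Succhat_i_prob} gives annealed success at least $\theta_3$ conditionally on $\what{\F}_{i-1}$. By Markov's inequality, $z_i$ falls into the set $\what{V}$ of starting points with quenched success at most $\theta_1<\theta_3$ with conditional probability at most $\frac{1-\theta_3}{1-\theta_1}<1$.

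Now apply Azuma--Hoeffding to the martingale built from $h(z_i)\1{z_i\in\what{V}}\1{z_i\text{ good}}$, whose increments are at most $h(z_i)$. Deviations of size $\theta_4$ have probability at most $\exp\bigl(-c\theta_4^2/\sum_i h(z_i)^2\bigr)\le\exp\bigl(-c\theta_4^2(\log N)^2\bigr)=o(N^{-2})$. Combine this with the fact that there are at most $C$ bad $z_i$ (\Cref{bound_num_explored_num_bad}), each of weight $o(1)$. This shows that the quenched success probability exceeds a constant for all conditions of $\Succhat$ simultaneously, coupling failures included, with no per-edge cap needed.

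Two smaller points. First, $c_L=\frac{1}{2\h}$ is fixed by the entropy and is not a free parameter. You do not need to tune it, since the expected number of blue half-edges met before level $L^-$ is $O(1)$ for any fixed $c_L$. Second, the overlap or coupling-failure probability per newly revealed edge is $N^{\frac12(1+c_L^+)-(1-\beta)+o(1)}$, not $N^{-1/2+O(c_L^+)}$. The factor $N^{\beta}$ comes from $\max_{x,y}\pred_{x,y}\lesssim\frac{1}{N^{1-\beta}\log N}$, and this is where $\beta<\frac12$ is needed.
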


\begin{proposition}\label{bound_prob_specific_blue_edge}
For each blue half-edge $e$ in $\what{T}_x$ we have
\[
\prscond{\what{Z}^{(1)}_{\taublue}=e,\:\what{Z}^{(1)}\text{ satisfies the conditions of }\Succhat}{\what{\F}_{L_x+L_y}}{x}\quad\le\quad\frac{1}{\sqrt{N\log N}}.
\]
We also have an analogous bound for $\what{Z}^{(2)}$ and $\what{T}_y$.
\end{proposition}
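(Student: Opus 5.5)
The bound should follow from the truncation criteria built into the exploration, namely $\trunc{\cdot}{A}$ and $\truncprime{\cdot}$, together with the requirement in $\Succhat$ that the first blue crossing happens between levels $L^-$ and $L^+$. Fix a blue half-edge $e$ attached to a vertex $v$ of $\what{T}_x$. On the event in the statement, conditions \eqref{item:no_blue_before_K}, \eqref{item:no_blue_between_K_Lminus} and \eqref{item:blue_between_Lminus_Lplus} of $\Succhat$ force $v$ to lie at some level $\ell(v)\in[L^-,L^+]$. Moreover, $\what{Z}^{(1)}$ must reach level $L^-$ without crossing a truncated edge, and after that must not revisit level $L^--M$.

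Let $z_i$ be the centre of the level-$K$ ball that is the ancestor of $v$. Let $f$ be the ancestor long-range edge of $v$ at level $L^-$ in $T(z_i)$ (that is, at level $L^--K$ relative to $z_i$), and let $g$ be the ancestor at level $L^--K$ relative to $z_i$. The event in the statement is contained in the event that the walk enters $T(z_i)$ through $z_i$, first reaches the relevant level of $T(z_i)$ through $g$, and afterwards crosses $e$ before $\tau_{L^+}$.

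I would bound the probability of reaching that level at $g$ by using that $g$ was not truncated. Since the exploration truncates every edge satisfying $\truncprime{\cdot}$ with respect to $T(z_i)$, and a successful coupling requires that no truncated edge is crossed, we get $\til{W}_{T(z_i)}(g)\ge\frac12\log N+\sqrt{\log N}$. In other words, a walk on $T(z_i)$ from $z_i$ hits level $L^--K$ at $g$ with probability at most $N^{-1/2}e^{-\sqrt{\log N}}$.

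I would then transfer this to $\what{Z}^{(1)}$ started from $x$. Condition \eqref{item:not_backtrack_from_K} says that after reaching level $K$ the walk stays inside the subtree of the first level-$K$ ball it hits. Decomposing at $\tau_K$ and using the strong Markov property, the walk from the hit point is a walk on $T(z_i)$. The conditioning on staying in the subtree can be dropped as an upper bound, since the successful-coupling event only shrinks the probability. This step needs care: the walk enters the ball of $z_i$ at $z_i$, because $z_i$ is the endpoint of the long-range edge into the ball, so $\til{W}$ with respect to $T(z_i)$ is exactly the relevant quantity.

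Finally, given that the walk hits level $L^-$ at the required edge, the probability of eventually crossing the particular blue half-edge $e$ is at most $1$. This gives the bound $N^{-1/2}e^{-\sqrt{\log N}}\le (N\log N)^{-1/2}$ for large $N$, since $e^{\sqrt{\log N}}\gg\sqrt{\log N}$.

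The main obstacle is that hitting level $L^-$ at $f$ and hitting level $L^--K$ of $T(z_i)$ at $g$ are not literally the same event, and the identification has to be justified using the exact indexing in $\truncprime{\cdot}$. Level-offset bookkeeping shows that the level-$(L^--K)$ edges of $T(z_i)$ are exactly the level-$L^-$ edges of $\what{T}_x$, so $f=g$, which resolves it. One more point needs checking: the truncation criterion is evaluated on the random quasi-tree $T(z_i)$, while the walk runs on $\what{T}_x^a$, where blue half-edges only add killing. Here I would observe that $\til{W}$ is defined for the walk stopped at $\tauatyp$, and that adding blue half-edges or the edge to $x^a$ outside $T(z_i)$ can only decrease hitting probabilities on the event considered. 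The argument for $\what{Z}^{(2)}$ and $\what{T}_y$ is identical.
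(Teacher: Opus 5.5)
\textbf{There is a genuine gap in your containment step.} You claim that on the event in the statement, the walk inside $T(z_i)$ first reaches level $L^-$ through $g$, the level-$L^-$ ancestor of $e$. Nothing in $\Succhat$ forces this.

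Condition (iv) of $\Succ$ only forbids returning to level $L^--M$ after $\tau_{L^-}$. So the walk may first reach level $L^-$ through some other non-truncated edge $g'\neq g$ whose branch separates from that of $e$ strictly below level $L^--M$. It can then backtrack fewer than $M$ levels, descend into the branch of $e$, and cross $e$ before $\tau_{L^+}$. On that event, the non-truncation of $g$ gives you nothing. Indeed, $\til{W}_{T(z_i)}(g)$ controls only the probability of \emph{first} hitting level $L^--K$ of $T(z_i)$ at $g^+$, not the probability of crossing $g$ at a later time. The obstacle you flag ($f$ versus $g$) is only level bookkeeping and is not the real issue. You also list condition (iv) at the start but never use it.

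\textbf{The missing idea} is to use (iv) to localise the first entry into level $L^-$, rather than to pin it to a single edge.
\begin{itemize}
\item Let $f$ be the level-$(L^--M)$ ancestor of $e$. If the first level-$L^-$ edge crossed lay outside $T(f^+)$, the walk would have to cross $f$ afterwards to reach $e$. That means revisiting level $L^--M$, which violates (iv). So the first level-$L^-$ edge crossed must be a non-truncated descendant of $f$.
\item Non-truncated edges lead to typical balls. Each such ball has at most $e^{\Cvol R}$ vertices, each of degree at most $\dmax$. Hence there are at most $(e^{\Cvol R}\dmax)^{M+1}\le e^{C(\log\log N)^2}$ candidate entry edges.
\item Apply your per-edge bound $N^{-1/2}e^{-\sqrt{\log N}}$, coming from $\truncprime{\cdot}$, to each candidate and take a union bound. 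This gives $e^{C(\log\log N)^2}N^{-1/2}e^{-\sqrt{\log N}}\le (N\log N)^{-1/2}$, since $\sqrt{\log N}\gg(\log\log N)^2$.
\end{itemize}
This is exactly the paper's argument. Without the union bound over these $e^{O((\log\log N)^2)}$ possible entry edges, your bound covers only the sub-event in which the walk enters level $L^-$ through $e$'s own ancestor.
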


The proof of \Cref{bound_prob_specific_blue_edge} is short and presented below.

\begin{proof}[Proof of \Cref{bound_prob_specific_blue_edge}]
Let $f$ be the level $L^--M$ ancestor of $e$. Since a walk satisfying the conditions of $\Succhat$ hits level $L^-$ at a non-truncated edge, and it does not backtrack from level $L^-$ to level $L^--M$, we can bound the required probability by the probability of $Z^{(1)}$ hitting level $L^-$ at a non-truncated descendant of $f$. There are $\le (e^{\Cvol R}\dmax)^{M+1}\le e^{C(\log\log N)^2}$ non-truncated descendants of $f$ at level $L^-$ (where $C$ is a large constant), and by the definition of $\truncprime{\cdot}$ the probability of hitting level $L^-$ at a specific one of them is $\le\frac{1}{\sqrt N}e^{-\sqrt{\log N}}$. This gives the required bound.
\end{proof}

The proof of \Cref{Succhat_prob} is more involved. In what follows, we establish a series of auxiliary results, and we conclude the proof at the end of this section.

Firstly, we show that most vertices $z_i$ at level $K$ of $T_{x,K}^a$ and $T_{y,K}$ are good.

\begin{lemma}\label{bound_num_explored_num_bad}
For all sufficiently large values of $n$, the total number of vertices explored during the explorations from $z_1$, ..., $z_{L_x+L_y}$ is
\[
\le\quad N^{\frac12\left(1+c_L^+\right)}\exp\left(2A\sqrt{\frac{\V\log N}{\h}}\right).
\]
Also, there exists a positive constant~$C$ (not depending on $\what{T}_{x,K}$ and $\what{T}_{y,K}$) so that the number $\rm{Bad}$ of bad vertices $z_i$ satisfies
\[
\prscond{\rm{Bad}\ge C}{\left\{\what{T}^a_{x,K},\what{T}_{y,K}\right\}}{}\quad=\quad o\left(\frac{1}{N^2}\right).
\]
\end{lemma}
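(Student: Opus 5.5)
The proof splits into a deterministic count of explored vertices, followed by a probabilistic bound on bad vertices that uses this count.

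\textbf{Counting explored vertices.} Fix $z_i$ and consider the balls revealed in the exploration from $z_i$. A long-range edge $e$ is explored only if $\trunc{e}{A}$ fails. Hence
\[
\prscond{X_{\tau_{\ell(e)}}=e^+}{T(z_i)}{z_i}\ \ge\ N^{-\frac12(1+c_L^+)}\exp\Bigl(-A\sqrt{\tfrac{\V\log N}{\h}}\Bigr).
\]
For a fixed level $\ell$, the events $\{X_{\tau_\ell}=e^+\}$ over level-$\ell$ edges $e$ are disjoint. So the number of non-truncated edges at each level is at most the reciprocal of the bound above. There are at most $L^+$ levels, each explored ball has at most $e^{\Cvol R}$ vertices (atypical balls are truncated), and the number of roots is $L_x+L_y\le (e^{\Cvol R}\dmax)^{K+1}$ (the first $K$ levels are typical). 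Multiplying these three factors gives a $\log\log$-type correction $\exp(O((\log\log N)^2))$. This correction is absorbed by the extra factor $\exp(A\sqrt{\V\log N/\h})$, because $(\log\log N)^2\ll\sqrt{\log N\log\log N}$. This yields the first claim with the exponent $2A$.

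\textbf{Bounding bad vertices.} Reveal the explorations from $z_1,z_2,\dots$ in order. Suppose the explorations from $z_1,\dots,z_{i-1}$ have already been revealed, forming a set $S$ with $|S|\le N^{\frac12(1+c_L^+)+o(1)}$. The $(R+R)$-ball of $z_i$ is already fixed by $\what{T}_{x,K}$ or $\what{T}_{y,K}$. Since $x,y$ are $K$-roots with disjoint neighbourhoods, this ball does not meet the first $K$ levels.

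The step to argue carefully is whether the ball of $z_i$ can meet $S$. The plan treats this as a condition on the part of $G^*$ explored from $z_j$, $j<i$: some explored ball must hit one of the at most $e^{\Cvol R}$ fixed vertices of the ball of $z_i$. When we reveal the endpoint of a red edge, its position is spread over distance at least $D^{1-\beta}$, so it lands in a given set of size $s$ with probability $\lesssim s\cdot\frac{1}{N^{1-\beta}}\cdot\frac{1}{\log N}$ roughly. More precisely, $\pred_{u,\cdot}\lesssim Z_n/D^{d(1-\beta)}\asymp N^{-(1-\beta)}/\log N$. The ball attached at that endpoint has size $e^{\Cvol R}$ and radius at most $2R\cdot D^{1-\beta}$ in torus distance via green edges, so I would use that the green edges inside the ball only enlarge the target set to a $D^{1-\beta}R$-neighbourhood. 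I expect this to be the main obstacle: for a green-reachable ball, the "target" is all points within torus distance $2RD^{1-\beta}$ of the fixed vertices. That target has size $\lesssim e^{\Cvol R}(RD^{1-\beta})^d=N^{1-\beta}\mathrm{polylog}$, and this does not beat $N^{-(1-\beta)}$ by itself.

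So instead I would condition differently. The overlap requires a specific green or red edge endpoint choice to fall on a specific vertex. Each endpoint choice (red or green) in $\Ggr$ hits a given vertex with probability $\le \max_{u,v}(\pgreen_{u,v}+\pred_{u,v})\cdot O(\dmax)\lesssim \frac{1}{\log\log N}$. That is too weak for green edges, so we split by type. For red edges use the $N^{-(1-\beta)}$ bound. For green edges, note that the explored ball's vertices come from $\le e^{\Cvol R}$ green steps from a red endpoint, and a green step of length $\ge r$ has probability $\lesssim \log(D/r)/\log N$. Combining, the probability that a given explored ball intersects a given fixed ball of $z_i$ is $\lesssim N^{-(1-\beta)}\mathrm{polylog}(N)$.

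Union over the $|S|/1$ explored balls times the $L_x+L_y$ roots shows that each $z_i$ is bad with conditional probability at most
\[
p:=N^{\frac12(1+c_L^+)-(1-\beta)+o(1)}\le N^{-\epsilon},
\]
since $\beta<\tfrac12$ and $c_L^+$ is small. These conditional bounds hold regardless of the past, so $\mathrm{Bad}$ is stochastically dominated by $\Bin{L_x+L_y}{p}$ with $L_x+L_y=N^{o(1)}$. Therefore $\pr{\mathrm{Bad}\ge C}\le (N^{o(1)}p)^C=o(N^{-2})$ once $C>2/\epsilon$.
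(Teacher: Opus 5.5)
Your count of explored vertices matches the paper's argument. The ingredients of your second part also match: $\max_u\pred_{\cdot,u}\lesssim\frac{1}{N^{1-\beta}\log N}$, the size bound from the first part, and $\frac12(1+c_L^+)<1-\beta$. The gap is the step ``these conditional bounds hold regardless of the past, so $\mathrm{Bad}\stle\Bin{L_x+L_y}{p}$''.

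The event $\{z_i\text{ bad}\}$ is not a fresh trial at stage $i$. Once the explorations from $z_1,\dots,z_{i-1}$ are revealed, it is already determined. Moreover, the events for different $i$ are driven by the same explorations; the exploration from $z_1$ alone can make many later $z_i$ bad. Nothing you wrote gives a filtration in which each $z_i$ is bad with conditional probability at most $p$. Without a product structure you only get $\pr{\mathrm{Bad}\ge1}\le(L_x+L_y)p=N^{-\epsilon+o(1)}$, which is far from $o(N^{-2})$.

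The product has to be built over ball revelations instead, as the paper does. By your first part there are at most $n:=N^{\frac12(1+c_L^+)}\exp(2A\sqrt{\V\log N/\h})$ revelations of a new $(R+R)$-ball. Let $F$ be the union of the level-$K$ balls. Each revelation, conditionally on everything revealed before it, meets $F$ with probability at most $q:=e^{\const(\log\log N)^2}/(N^{1-\beta}\log N)$. So the number of such hits is dominated by $\Bin{n}{q}$, and $\pr{\text{at least }C\text{ hits}}\le(nq)^C=o(N^{-2})$ for $C$ large.

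You also need each bad $z_i$ to be charged to its own hit. This holds because a ball is revealed edge by edge and its revelation stops at the first overlap, so a hit touches only one level-$K$ ball. Without this, a single revealed ball could touch several torus-adjacent level-$K$ balls at once, and a constant number of hits would not bound $\mathrm{Bad}$ by a constant.

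Your derivation of the per-ball bound via green step lengths also does not go through as written. The quantity $\log(D/r)/\log N$ is the probability of having some long green edge, not of hitting a prescribed vertex, and the ``combining'' step is not an argument. The bound actually holds by translation invariance. In the quasi-tree, the ball attached to a new endpoint $z$ is $z+B$, where the shape $B$ is independent of $z$. So, given $B$, the set of endpoints $z$ with $(z+B)\cap F\neq\emptyset$ has at most $|F|\,|B|$ elements, each of probability $\lesssim\frac{1}{N^{1-\beta}\log N}$.

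With $|F|\le e^{\const(\log\log N)^2}$ and $|B|\le e^{O(R)}$ (the revelation stops once the volume exceeds $e^{\Cvol R}$), this gives $q$ directly. The torus diameter of $B$, which you identified as the main obstacle, never enters. This is the same device used in the proof of \Cref{nice:coupling_success_and_nbhd_size}.
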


\begin{proof}
By the definition of $\trunc{\cdot}{A}$, the number of vertices at each level of a given tree $T(z_i)$ that are not truncated due to the truncation criterion $\trunc{\cdot}{A}$ is $\le N^{\frac12(1+c_L^+)}\exp\left(A\sqrt{\frac{\V\log N}{\h}}\right)e^{\Cvol 2R}$. The number of levels is $\asymp\log N$ and the number of $z_i$ is $\le e^{\Cvol 2R{(K+1)}}{\dmax^{(K+1)}}\le e^{\const\cdot(\log\log N)^2}$. (Here $\Cvol$ is the constant from \Cref{def:atypical_ball}\eqref{item:large_vol}.) Using that $\sqrt{\frac{\V\log N}{\h}}\gg(\log\log N)^2$ we get the bound on the number of explored vertices.

In a random quasi-tree, the probability of the new endpoint of a newly revealed long-range edge taking any specific value is $\le\max_{x,y}\pred_{x,y}\lesssim\frac{1}{N^{1-\beta}\log N}$. Hence each time we reveal a new $(R+R)$-ball, the probability that it intersects the $(R+R)$-ball of some  $z_i$ is $\le\frac{e^{\Cvol2R(K+1)}\dmax^{K+1}e^{\Cvol 4R}}{N^{1-\beta}\log N}\le \frac{e^{\const( \log\log N)^2}}{N^{1-\beta}\log N}$. The probability that it happens at least $C$ times is at most
\[
\exp\left(C\left(\frac12\left(1+c_L^+\right)\log N+2A\sqrt{\frac{\V\log N}{\h}}+\const (\log \log N)^2-(1-\beta)\log N-\log\log N\right)\right).
\]
For sufficiently small $c_L^+$ such that $\frac12\left(1+c_L^+\right)<(1-\beta)$ and for sufficiently large $C$ this is $o\left(\frac{1}{N^2}\right)$. The inequality $\frac12\left(1+c_L^+\right)<(1-\beta)$ is true for sufficiently small  $c_L^+$, as long as $\beta<\frac12$. We note that this is the only place where a nontrivial constraint on the value of $\beta$ appears.
\end{proof}

Next, we show that for a good vertex $z_i$ at level $L$ of $T_x$, a walk $Z$ on $T_x(z_i)$ with positive probability satisfies (iii)-(vii) from the definition of $\Succ$ in \Cref{sec:coupling}. Then we show that a walk $\what{Z}$ on $\what{T}_x$ from $z_i$ with positive probability satisfies (ii)-(vii) from the definition of $\Succ$ and (iii)-(v) from the definition of~$\Succhat$.

\begin{lemma}\label{Succ_i_prob_on_Tzi}
For any constant $\theta\in(0,1)$ the following holds. Let $i\in\{1,2,...,L_x\}$ and let $Z$ be a simple random walk on $T_x(z_i)$ starting from $z_i$. Let $\Succ^{T(z_i)}$ denote the event that $Z$ satisfies (iii)-(vii) from the definition of $\Succ$ in \Cref{sec:coupling}. Then we have
\[
\prscond{\Succ^{T(z_i)}}{\what{\F}_{i-1}}{z_i}\quad\ge\quad(1-\theta)\1{\{z_i\text{ good}\}}.
\]
An analogous bound holds for $i\in\{L_x+1,...,L_x+L_y\}$ and a walk on $T_y(z_i)$.
\end{lemma}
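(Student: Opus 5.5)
We prove the bound by showing that each of the conditions (iii)--(vii) fails with probability at most $\theta/5$, uniformly over $\what{\F}_{i-1}$, on the event that $z_i$ is good. The key observation is that, conditional on $\what{\F}_{i-1}$ and on $z_i$ being good, the tree $T_x(z_i)$ (viewed from $z_i$) is a random quasi-tree. Its root ball is given; note that it lies at level $K$ and is not atypical, since the exploration reached it. Everything below the root ball is sampled independently of $\what{\F}_{i-1}$. The truncation decisions made in the exploration from $z_i$ are a function of this tree together with the coupling randomness. So, up to the first time the walk $Z$ crosses a truncated edge, we can apply the results of \Cref{sec:T} to $T_x(z_i)$ with $z_i$ as the root. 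We also need the conditioning on the given root ball to be harmless; the estimates of \Cref{sec:T} hold for any realisation of a typical root ball, so we use them in that form.

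\textbf{Conditions (v), (vi) and the typicality events.} By \Cref{rmk:atypical_ball}, \Cref{bound_atyp_prob} and \Cref{bound_sigma1}, the walk sees only $O(\log N)$ balls before $\tau_{L^+}$. Hence $\Omegatyp{\tau_{L^+}}\cap\Omegabound{\tau_{L^+}}$ fails with probability $O((\log N)^{-4})$. Then \Cref{speed_concentration}, applied with $t=\frac{2}{\nu}L^+$ and with $\ell$ of order $\log N$, gives $\tau_{L^+}\le\frac2\nu L^+$ with probability $1-\theta/10$. On this event $\sigma_\ell\ge\tau_{L^+}$ with high probability, because the regeneration increments have exponential-type tails by \Cref{bound_sigma1}.

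\textbf{Condition (iv).} Backtracking $M=C_M\log\log N$ levels before $\tauatyp$ has probability $e^{-\const\cdot M}$ at each regeneration, as in the proof of \Cref{bound_R_sigma1}. A union bound over the $O(\log N)$ levels between $L^-$ and $L^+$ makes this small once $C_M$ is large.

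\textbf{Condition (iii).} We must rule out crossing an edge truncated for each of the five reasons. Crossing an edge satisfying $\trunc{\cdot}{A}$ is handled by \Cref{bound_trunc_prob}, choosing $A$ large. Crossing an edge satisfying $\truncprime{\cdot}$ is handled by \Cref{bound_truncprime_prob}, choosing $C_L^-$ large. Here we use that, because of (iv) and the speed bound, only the level-$(L^--K)$ edge of first hitting matters, up to the event of backtracking. Atypical balls are excluded by the first step. The remaining reasons are failures of the optimal couplings of endpoints and balls, and overlaps; these are the genuine obstacle. Each coupling of an endpoint or ball fails with probability at most roughly the probability of hitting an already revealed vertex. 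By \Cref{bound_num_explored_num_bad}, at most $N^{\frac12(1+c_L^+)+o(1)}$ vertices are revealed in total. Each red endpoint has probability at most $\max\pred\lesssim N^{-(1-\beta)}$ of landing on a given vertex, and $\beta<\frac12$. So a given explored edge is truncated for these reasons with probability $N^{-\delta}$ for some $\delta>0$. Since the walk crosses only $O(\log N)$ long-range edges before $\tau_{L^+}$, we would like to sum this bound along the walk's path. A union bound over the path is justified by revealing edges along the walk: averaging over the tree, the probability that the walk's path contains a truncated edge is at most $O(\log N)\cdot N^{-\delta}\cdot e^{\const R}$. Handling the conditioning on $\what{\F}_{i-1}$ carefully here is the main technical point. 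We would argue via the annealed measure, with the walk and exploration revealed jointly, and then pass to the stated quenched form, $\prscond{\cdot}{\what{\F}_{i-1}}{}$ with the randomness below $z_i$ integrated out.

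\textbf{Condition (vii).} The walk visits order $L^+-L^-\asymp\log N$ distinct new vertices between levels $L^-$ and $L^+$, at least one per level crossed. At each newly entered ball, the vertex entered via a long-range edge has $\dgr$ distributed (up to the conditioning) like a fresh sample, independent of the past. Thus $\dgr\le\Cdeg$ holds with probability at least $3/4$ for large $\Cdeg$, independently across the distinct regeneration levels. A Chernoff bound over the roughly $\nu(L^+-L^-)$ regenerations then gives at least $\frac12(L^+-L^-)$ such vertices with high probability. Choosing all constants so that the individual failure probabilities sum to at most $\theta$ completes the plan; the argument for $T_y(z_i)$ is identical.
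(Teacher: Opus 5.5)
\textbf{Overall.} Your plan follows the paper's proof. It bounds the failure of each of (iii)--(vii) separately, using the same ingredients in the same roles:
\begin{itemize}
\item \Cref{speed_concentration} with \Cref{bound_sigma1}, \Cref{bound_atyp_prob} and \Cref{rmk:atypical_ball} for (v), (vi) and the typicality events;
\item \Cref{bound_trunc_prob}, \Cref{bound_truncprime_prob} and the overlap/coupling estimate from \Cref{bound_num_explored_num_bad} with $\beta<\frac12$ for (iii);
\item transience for (iv), and fresh degree samples for (vii).
\end{itemize}

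\textbf{The step that fails: the count in (vii).} You apply the Chernoff bound to regeneration vertices, of which you count roughly $\nu(L^+-L^-)$. With success probability $\frac34$, this gives about $\frac34\nu(L^+-L^-)$ vertices with $\dgr\le\Cdeg$. That is below the required $\frac12(L^+-L^-)$ unless $\nu\ge\frac23$, and only $\nu\asymp1$ is available. The true number of regenerations with level in $[L^-,L^+]$ is about $(L^+-L^-)/\E{\varphi'_1\mid\cdot}$, and this likewise need not reach $\frac12(L^+-L^-)$. Enlarging $\Cdeg$ does not help, because the number of trials is the bottleneck.

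The fix is the paper's argument, and your first sentence already points to it: drop regenerations. For each level $\ell\in[L^-,L^+)$, take the first vertex the walk visits at level $\ell$. It is the centre of a ball entered for the first time through a long-range edge, so the ball is freshly sampled and its centre's $\dgr$ is independent of everything revealed earlier. This gives $L^+-L^-$ i.i.d.\ trials. With $\pr{\dgr\le\Cdeg}\ge\frac34$, a Chernoff bound yields at least $\frac12(L^+-L^-)$ successes with probability $1-o(1)$.

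\textbf{Smaller points.}
\begin{itemize}
\item \emph{Root-ball typicality.} The root ball of $T_x(z_i)$ belongs to the conditioned $\what{T}_{x,K}$; it is not produced by the exploration. So its typicality does not follow from the truncation rules, contrary to your claim. This is harmless: only $z_i$ with $h(z_i)>0$ matter in the proof of \Cref{Succhat_prob}, and for those, $\tau_K<\tauatyp$ forces the ball of $z_i$ to be typical.
\item \emph{The $\truncprime{\cdot}$ part of (iii).} Condition (iv) does not reduce this to the first-hit level-$L^-$ edge. The walk may recross that edge and enter a sibling at level $L^-$ without revisiting level $L^--M$, and this has non-negligible probability. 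Such edges need a separate argument; the paper's proof does not address this point either.
\item \emph{Union bound in (iv).} The union bound over levels is unnecessary but harmless. Condition (iv) is a single event, of probability at most $(1-\pesc)^M$.
\end{itemize}
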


\begin{proof}
The proof is similar to the proof of~\cite[Lemma 5.6]{random_matching}. Let us work on the event that $z_i$ is good.

Firstly, we show that the probability of $\left\{\Omegatyp{\tau_{L^+}},\Omegabound{\tau_{L^+}},\tau_{L^+}\le\frac{2}{\nu}L^+\right\}=:\A$ can be lower bounded by a constant arbitrarily close to 1.
\begin{itemize}
\item By \Cref{speed_concentration} we can bound $\pr{\tau_{L^+}>\frac{2}{\nu}L^+,\:\Omegatyp{\sigma_{L^+}},\:\Omegabound{\sigma_{L^+}}}$ by an arbitrarily small constant.
\item Using \Cref{bound_sigma1} and that by \Cref{iid_decomp} we have $\sigma_{k}\wedge\tauatyp\wedge\taubound\stle\sum_{j=1}^{k}s_j$ where $s_j$ are independent, $s_1\eqdist\sigma_{1}\wedge\tauatyp\wedge\taubound$, and $s_j\eqdist\sigma'_{1}\wedge\tauatyp'\wedge\taubound'$ for $j\ge2$, we get that\\
$\pr{\sigma_{L^+}>(\log N)^{3/2},\:\Omegatyp{(\log N)^{3/2}},\:\Omegabound{(\log N)^{3/2}}}\ll1$.
\item From \Cref{bound_atyp_prob} we know that for each newly sampled $(R+R)$-ball, the probability that it is atypical is $\lesssim\left(\frac{1}{\log N}\right)^2$. Hence the probability of any of the first $(\log N)^{3/2}$ different balls visited by $Z$ being atypical is $o(1)$, therefore $\pr{\Omegatyp{(\log N)^{3/2}}^c}\ll1$.
\item By \eqref{item:hit_boundary} in the definition of atypical balls (\Cref{def:atypical_ball}) we know that starting a walk in a typical ball, the probability that it visits at least $R$ vertices in a ball before $\tauatyp$ is $\le\left(\frac{1}{\log N}\right)^2$. Hence $\pr{\Omegabound{(\log N)^{3/2}}^c,\:\Omegatyp{(\log N)^{3/2}}}\ll1$.
\end{itemize}
Combining these, we can upper bound $\pr{\A^c}$ by an arbitrarily small constant.

Now we bound the probability that the event $\A$ holds, but $Z$ crosses a truncated edge by time~$\tau_{L^+}$.
\begin{itemize}
\item By \Cref{bound_trunc_prob} and \Cref{bound_truncprime_prob} we can bound the probability that $\A$ holds but $Z$ crosses an edge satisfying $\trunc{\cdot}{A}$ or $\truncprime{\cdot}$ by level $L^+$ by an arbitrarily small constant.
\item Let us consider a vertex $v$ in $T_x(z_i)$ that is the newly revealed endpoint of a long-range edge and whose $(R+R)$-ball we are just about to reveal. Let $\D$ be the set of already explored vertices. Then we can explore the ball of $v$ as follows. Consider $v$ and for each $w$ in $G$ let us sample whether $v\simg w$ holds in $T_x(z_i)$ and in $G^*$, using the appropriate distribution in each graph, coupled via the optimal coupling. If $v\not\simg w$ for all $w\in\D$ then the optimal coupling is successful. Then consider the green and black neighbours of $v$ one by one, and for each of these vertices and each $w$ in $G$ sample whether there is a green edge between them. Continue similarly, until the whole of the $(R+R)$-ball is revealed or until one of the couplings fails. Note that on the event that the $(R+R)$-ball in $G$ does not intersect $\D$, the coupling is successful.
\item Similarly, we can see that given an $(R+R)$-ball in $T_x(z_i)$, where we are just about to reveal the red degrees, the probability that the optimal coupling of these red degrees in $T_x(z_i)$ and $G^*$ fails is bounded by the probability that there is a red edge in $G^*$ leading from the ball to the set $\D$ of already explored vertices.
\item Now consider a vertex $v$ in $T_x(z_i)$ and a long-range half-edge from it whose other endpoint we are just about to reveal. Let $\D$ be the set of already revealed vertices. By the above, the probability that the long-range edge gets truncated due to the optimal coupling of the endpoint failing, due to the optimal coupling of the ball around the other endpoint failing, or due to the ball around the other endpoint containing an already revealed vertex can be bounded by the probability that a randomly sampled red edge from $v$ leads to a vertex that is in $\D$ or within distance $2R$ from $\D$ in $G^*$.
\item The probability of a red edge leading to any specific vertex is $\lesssim\frac{1}{N^{1-\beta}\log N}$. From \Cref{bound_num_explored_num_bad} we know that $|\D|\le N^{\frac12\left(1+c_L^+\right)}\exp\left(2A\sqrt{\frac{\V\log N}{\h}}\right)$. From \Cref{volume_high_prob_bound} we know that whp any ball of radius $2R$ in $G^*$ has volume $\lesssim e^{CR}$ where $C$ is a positive constant. So the probability of a newly revealed long-range edge getting truncated due to an overlap or due to the optimal coupling failing is $\lesssim N^{\frac12\left(1+c_L^+\right)}\exp\left(2A\sqrt{\frac{\V\log N}{\h}}\right)\exp\left(CC_R\log\log N\right)\frac{1}{N^{1-\beta}\log N}\log N$, where the final $\log N$ term comes from \Cref{cond_green_red_deg} and from the fact that whp all vertices in our random graph have degree $\le \log N$. Hence, the probability that the walk $Z$ up to time $\frac{2}{\nu}L^+$ crosses a long-range edge that is truncated, and the first such edge is truncated due to an overlap or due to the optimal coupling failing, is $o(1)$. 
\end{itemize}
This bounds the probability that \eqref{item:Lplus_without_crossing_truncated} fails while \eqref{item:Omegabound_holds} and \eqref{item:tauLplus_bound} hold.

Finally, we can bound the probability of \eqref{item:not_backtrack_from_Lminus} or \eqref{item:enough_small_degrees} failing as follows.
\begin{itemize}
\item The probability of $Z$ backtracking $M$ levels from level $L^-$ is $\le(1-\pesc)^M\ll1$.
\item Every time a new $(R+R)$-ball is revealed, its centre has $\dgr(\cdot)$ distributed as $\dgr$, independently of everything else revealed before. From \Cref{degree_exp_moment} we know that this has a bounded expectation and variance. For each level between $L^-$ and $L^+$, consider the first vertex at that level visited by the walk. Their $\dgr(\cdot)$ degrees are independent with the above distribution, so for a sufficiently large constant $\Cdeg$, with probability $1-o(1)$, at least half of them have $\dgr(\cdot)\le\Cdeg$. \qedhere
\end{itemize}
\end{proof}

We can deduce a similar statement for a walk $Z$ on $T_x$ rather than just $T_x(z_i)$.

\begin{corollary}\label{Succ_i_prob}
There exists $\theta\in(0,1)$ such that the following holds. Let $i\in\{1,2,...,L_x\}$ and let $Z$ be a simple random walk on $T_x$ starting from $z_i$. Let $\Succ^{i}$ denote the event that $Z$ satisfies (ii)-(vii) from the definition of $\Succ$ in \Cref{sec:coupling}. Then we have
\[
\prscond{\Succ^i}{\what{\F}_{i-1}}{z_i}\quad\ge\quad\theta\1{\{z_i\text{ good}\}}.
\]
An analogous bound holds for $i\in\{L_x+1,...,L_x+L_y\}$ and a walk on $T_y$.
\end{corollary}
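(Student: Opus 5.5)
The plan is to reduce the statement to \Cref{Succ_i_prob_on_Tzi}. The only new requirement is condition \eqref{item:not_backtrack_from_K} of $\Succ$: after reaching level $K$, the walk never revisits level $K-1$. The walk starts at $z_i$, and in $T_x$ (unlike $T_x(z_i)$) the ball of $z_i$ is attached to level $K-1$ through a single long-range edge $f=(f^-,f^+)$, with $f^+$ in the ball of $z_i$. The walk $Z$ on $T_x$ started from $z_i$ agrees with a walk on $T_x(z_i)$ as long as it does not cross $f$. Hence on the event that $Z$ never crosses $f$ (before $\tau_{L^+}$, or before $\tauatyp$), conditions (iii)--(vii) for $Z$ on $T_x$ coincide with the event $\Succ^{T(z_i)}$ for the coupled walk on $T_x(z_i)$. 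Condition (ii) then holds automatically, because the only way back to level $K-1$ is through $f$.

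First I lower bound the probability of never crossing $f$, uniformly in the conditioning, on the event that $z_i$ is good. I will apply \Cref{transience_until_boundary} to the walk on $T_x$ from $z_i$, with $e=f$. It gives probability at least $c$ that the walk hits the boundary of an $(R+R)$-ball before crossing $f$. The walk has not yet left the ball of $z_i$ through a long-range edge other than $f$, so this boundary is in the ball of $z_i$.

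This is not yet enough, since from the boundary the walk could still return and cross $f$. The better tool is \Cref{transience_until_nonbinary}, the second statement applied at $f^+$ with $w=f^-$. It gives probability at least $\pesc$ that the walk from $f^+$ never hits $f^-$ before reaching a non-binary ball. From $z_i$, I first get the walk to $f^+$, or argue directly. The difficulty is that the walk starts at $z_i$, which need not equal $f^+$. In the quasi-tree construction, $z_i$ is the centre of its $(R+R)$-ball and is the new endpoint of the long-range edge leading to it, so $z_i=f^+$. So \Cref{transience_until_nonbinary} applies directly: with probability at least $\pesc$, $Z$ never hits $f^-$ before $\tau_{\mathrm{non-bin}}\ge\tauatyp$.

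Second, I combine the two events. I pick $\theta_0$ in \Cref{Succ_i_prob_on_Tzi} with $\theta_0<\pesc/2$, so that $\prscond{\Succ^{T(z_i)}}{\what{\F}_{i-1}}{z_i}\ge1-\theta_0$ on $\{z_i\text{ good}\}$. Under the coupling, the event $\{\text{never cross }f\text{ before }\tauatyp\}\cap\Succ^{T(z_i)}$ is contained in $\Succ^i$. A union bound then gives $\pr{\Succ^i}\ge\pesc-\theta_0\ge\pesc/2=:\theta$.

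The main care point is the coupling of the walk on $T_x$ with the walk on $T_x(z_i)$. The two walks must agree until $f$ is crossed, and $\Succ^{T(z_i)}$ only depends on the walk up to $\tau_{L^+}$, which comes before $\tauatyp$ on the relevant events. The coupling is the same one used in \Cref{transience_until_boundary}: restrict to the subtree. I also need the non-crossing event to be measured with respect to the same conditioning on $\what{\F}_{i-1}$. This is fine because the bound of \Cref{transience_until_nonbinary} holds for every realisation of $T$.
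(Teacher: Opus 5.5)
Your proposal is correct and follows the paper's proof: couple $Z$ with the walk on $T_x(z_i)$ until it crosses the edge back to level $K-1$, use the second part of \Cref{transience_until_nonbinary} at $z_i=f^+$ to get probability at least $\pesc$ of not crossing it before $\tauatyp$, and combine with \Cref{Succ_i_prob_on_Tzi} (applied with a constant $\theta_0<\pesc/2$) through a union bound. The initial detour through \Cref{transience_until_boundary} is not needed and can be dropped.
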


\begin{proof}We can couple $Z$ to a walk on $T_x(z_i)$ started from $z_i$ such that on the event that $Z$ does not visit level $(K-1)$ up to time $\tauatyp\wedge\tau_{L^+}$, the two walks agree up to $\tauatyp\wedge\tau_{L^+}$.

By \Cref{transience_until_nonbinary} this event has probability at least $\pesc$. Then use \Cref{Succ_i_prob_on_Tzi} for the walk on $T_x(z_i)$.
\end{proof}

\begin{lemma}\label{Succhat_i_prob}
There exists $\theta\in(0,1)$ such that the following holds. Let $i\in\{1,2,...,L_x\}$ and let $Z$ and $\Succ^i$ be defined as in \Cref{Succ_i_prob}. Let $\what{Z}$ be a simple random walk on $\what{T}_x$ starting from $z_i$, defined up to the first time it hits level $L^+$ or crosses a blue half-edge, coupled to $Z$ such that they agree up to this time. Let $\Succhat^{i}$ denote the event that $\Succ^i$ holds and $Z$ and $\what{Z}$ satisfy (iii)-(v) from the definition of $\Succhat$ in \Cref{sec:coupling}. Then we have
\[
\prscond{\Succhat^i}{\what{\F}_{i-1}}{z_i}\quad\ge\quad\theta\1{\{z_i\text{ good}\}}.
\]
An analogous bound holds for $i\in\{L_x+1,...,L_x+L_y\}$ and a walk on $\what{T}_y$.
\end{lemma}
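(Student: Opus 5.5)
We start from \Cref{Succ_i_prob}, which gives $\prscond{\Succ^i}{\what{\F}_{i-1}}{z_i}\ge\theta_0\1{\{z_i\text{ good}\}}$ for some constant $\theta_0$. The additional requirements (iii)--(v) of $\Succhat$ concern only the blue half-edges and the blue-truncation, so we work on the trajectory of $Z$ up to $\tau_{L^+}$. We condition on $\Succ^i$ and on the whole path of $Z$ up to $\tau_{L^+}$. The blue degrees in $\what{T}_x$ are then still independent $\db$ variables, independent of $Z$ and of the tree without blue edges. The plan is to show that, conditionally, (iii) and (iv) fail with small probability and (v) holds with probability bounded below. Multiplying gives the claim with $\theta=\theta_0\cdot c$.

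\textbf{Condition (iii).} By \Cref{Succ_i_prob}(vi), on $\Succ^i$ we have $\tau_{L^+}\le\frac2\nu L^+\lesssim\log N$, so $Z$ visits $O(\log N)$ balls. For a given ball the blue-degree optimal coupling can fail only if some vertex of the ball has a blue edge in $G^*$ into the already explored set $\D$. The probability of this is $\lesssim e^{\Cvol 2R}\,|\D|\,\pblue$. By \Cref{bound_num_explored_num_bad}, $|\D|\le N^{1/2+o(1)}$, so this is $\le N^{-1/2+o(1)}$. A union bound over the $O(\log N)$ visited balls shows that (iii) fails with probability $o(1)$.

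\textbf{Conditions (iv) and (v).} Each distinct vertex visited carries $\db$ blue half-edges, with $\pr{\db\ge1}\asymp\frac1{\log N}$.
\begin{itemize}
\item For (iv), the walk between levels $K$ and $L^-$ makes at most $\frac2\nu L^+$ steps. At each step the probability of crossing a blue half-edge is at most $\db(v)/\deg(v)$. So the expected number of blue crossings is at most $\frac2\nu L^+\cdot\E{\db}\lesssim \log N\cdot\frac1{\log N}$. This is only a constant, not small, so we need the constant $c_L^+$ to do the work.
\item For the upper bound in (iv) we therefore restrict to the $L^--K\approx c_L\log N$ levels. The count there is a constant $a\,c_L$, and the probability of avoiding blue there is at least $e^{-O(1)}$ by independence. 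This lower bound suffices, since (iv) only needs positive probability, not probability close to $1$.
\item For (v), use \Cref{Succ_i_prob}(vii): at least $\frac12(L^+-L^-)\asymp c_Lc_L^+\log N$ vertices visited between $L^-$ and $L^+$ have $\dgr\le\Cdeg$. Each such vertex independently has $\db\ge1$ with probability $\gtrsim\frac1{\log N}$. Given that, the walk crosses a blue half-edge on that visit with probability $\ge\frac1{\Cdeg+2d+1}$, and the blue degree is independent of the path.
\item Hence the probability that no blue edge is crossed between $L^-$ and $L^+$ is at most $(1-\frac{c}{\log N})^{c'\log N}\le e^{-cc'}<1$.
\end{itemize}

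\textbf{Main obstacle: the joint bound on (iv) and (v).} We need (iv) and (v) to hold simultaneously, and the blue half-edges change the path, since $\what Z$ stops at $\taublue$. We handle this by revealing blue degrees sequentially along the path of $Z$. Staying blue-free up to $L^-$ and then crossing blue in $[L^-,L^+]$ are events on disjoint sets of distinct vertices, up to the at most $M$ levels of backtracking excluded by (iv) of $\Succ$. They are therefore conditionally independent given the path, and each has probability bounded below by a constant. Multiplying these constants with the $1-o(1)$ from (iii) and with $\theta_0$ gives the claimed $\theta$. The argument for $\what T_y$ is identical.
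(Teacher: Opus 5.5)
Your proposal is correct and follows essentially the paper's proof. Like the paper, you condition on a path of $Z$ satisfying $\Succ^i$ and show that crossing a blue-truncated edge has probability $o(1)$, using the bound on $|\mathcal{D}|$. You then obtain (iv) and (v) with probability bounded below from $\pr{\db\ge1}\asymp 1/\log N$ and property (vii).

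There are two differences, both harmless:
\begin{itemize}
\item For (iv), you ask that none of the $O(\log N)$ distinct vertices below level $L^-$ carries a blue half-edge, which has probability $e^{-O(1)}$. The paper instead uses its $\Scal_{\mathrm{many}}$ argument; your version is simpler.
\item You make the joint (iv)--(v) bound explicit, which the paper leaves implicit. It is cleanest to note that the vertices from (vii) lie at levels $\ge L^-$, so they are disjoint from every vertex below $L^-$ visited before $\tau_{L^+}$. The two events then depend on disjoint sets of blue degrees, and your ``up to $M$ levels of backtracking'' caveat is unnecessary.
\end{itemize}
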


\begin{proof}
Let us work on the event that $z_i$ is good. From \Cref{Succ_i_prob} we know that $\Succ^i$ holds with positive probability. In what follows let us condition on $(Z_t)_{t\le\tau_{L^+}}$ and work on the event that it satisfies $\Succ^i$.

Firstly, we bound the probability that \eqref{item:no_bluetrunc_before_Lplus} in the definition of $\Succhat$ fails.

Let us consider a vertex $v$ whose blue degree we are just about to reveal. If the set of vertices with already revealed blue degrees is denoted by $\D$, then the probability that the optimal coupling for the blue degree of $v$ fails can be bounded as $\pr{\Bin{\log N\sum_{v\in D}\db(v)}{\pblue}>0}\le\frac{|\D|}{N\log N}(\log N)^2$ using \Cref{cond_green_red_deg} and that with high probability the graph does not have any vertex with degree greater than $\log N$. 

Using that by \Cref{bound_num_explored_num_bad} the overall number of explored vertices is $\le N^{\frac12(1+c_L^++o(1))}$ and that by definition a typical ball has volume $\le e^{\Cvol C_R\log\log N}$, we get that the probability of a given long-range edge leading to a typical ball, but being blue-truncated is $\ll\frac{1}{N^{\const}}$. If we condition on $(Z_t)_{t\le\tau_{L^+}}$ and work on the event that it satisfies $\Succ^i$, and we reveal the blue degrees in the balls as they are first visited by $Z$, we get that the probability of $(Z_t)_{t\le\tau_{L^+}}$ crossing a blue-truncated long-range edge is $\ll\frac{1}{\nu}L^+\frac{1}{\log N}\asymp1$.

Now we turn to lower bounding the probability that \eqref{item:no_blue_between_K_Lminus} in the definition of $\Succhat$ holds.

By assumption the walk $(Z_t)_{t\le\tau_{L^+}}$ visits $\le\frac{2}{\nu}L^+\asymp\log N$ vertices before level $L^-$, counted with multiplicity. Denote the set of these vertices by $\Scal$. Let $\Scal_{\mathrm{many}}$ be the set of vertices before  level $L^-$ that $(Z_t)_{t\le\tau_{L^+}}$ visits $\ge C$ times. Then $|\Scal_{\mathrm{many}}|\le\frac{\frac{2}{\nu}L^+}{C}$. The sum of the blue degrees of these vertices in $\what{T}$ is $\stle\Bin{N|\Scal_{\mathrm{many}}|}{\pblue}$, where $\pblue\asymp\frac{1}{N\log N}$. Hence the probability of any vertex in $\Scal_{\mathrm{many}}$ having at least one blue half-edge tends to 0 as $C\to\infty$.

The sum over vertices in $\Scal$ of the blue degrees in $\what{T}$ is $\stle\Bin{N\frac{2}{\nu}L^+}{\pblue}$, hence the probability that this is $\ge C$ tends to 0 as $C\to\infty$.

So with positive probability there is no blue half-edge from the set $\Scal_{\mathrm{many}}$ and the total number of blue half-edges from $\Scal$ is $\le C$. On this event, the probability that $(\what{Z}_t)_{t\le\tau_{L^+}}$ does not cross a blue half-edge below level $L^-$ is bounded away from 0.

Finally, we lower bound the probability that \eqref{item:blue_between_Lminus_Lplus} in the definition of $\Succhat$ holds.

By the definition of $\Succ^i$ we know that at least $\frac12(L^+-L^-)\asymp\log N$ vertices visited by $Z$ between levels $L^-$ and $L^+$ have $\dgr(\cdot)\le\Cdeg$. For a given vertex $v$ we have $\pr{\db(v)>0}\asymp\frac{1}{\log N}$, so with positive probability at least one of the above vertices has at least one blue half-edge. On this event the probability of $(\what{Z}_t)_{t\le\tau_{L^+}}$ crossing a blue half-edge between levels $L^-$ and $L^+$ is bounded away from 0.
\end{proof}

Now we are ready to conclude the proof of \Cref{Succhat_prob}.

\begin{proof}[Proof of \Cref{Succhat_prob}]
The proof uses a martingale argument similar to the one in the proof of \cite[Proposition 5.7]{random_matching}.

Conditional on $\what{\F}_{L_x+L_y}$, the walks $\what{Z}^{(1)}$ and $\what{Z}^{(2)}$ are independent, so it is sufficient to prove that for some positive constant $\theta$ we have
\begin{align}\label{eq:Succhat_prob_one_walk}
\prscond{\prscond{\what{Z}^{(j)}\text{ satisfies the conditions of } \Succhat}{\what{\F}_{L_x+L_y}}{}>\theta}{\left\{\what{T}^a_{x,K},\what{T}_{y,K}\right\}}{}\quad\ge\quad1-o\left(\frac{1}{N^2}\right)
\end{align}
for $j=1,2$. We present the proof for $\what{Z}^{(1)}$ below, the proof for $\what{Z}^{(2)}$ follows analogously.

For each $i\in\{1,2,...,L_x\}$ let $h(z_i)=\prscond{\what{Z}^{(1)}_{\tau_K}=z_i,\:\tau_K<\tauatyp\wedge\taubound\wedge\taublue\wedge\tau_{\rho^a}}{\left\{\what{T}^a_{x,K},\what{T}_{y,K}\right\}}{x}$. \footnote{Recall that $\taubound$ denotes the first time that the walk visits the $R$th different vertex in an $(R+R)$-ball, i.e.\ the first time that $\Omegabound{\cdot}$ fails.} Let
\begin{align*}
\what{V}:=\quad\left\{z_i:\:i\in\{1,...,L_x\},\:\prscond{\left(\Succhat^i\right)^c}{\what{\F}_{L_x+L_y}}{z_i}\ge1-\theta_1\right\},
\end{align*}
where $\theta_1\in(0,1)$ is a constant that is smaller than the constant in \Cref{Succhat_i_prob}.

We will prove that
\begin{align}\label{eq:bound_hVhat}
\prscond{h\left(\what{V}\right)\ge\theta_4+(1-\theta_5)\sum_{i=1}^{L_x}h(z_i)}{\left\{\what{T}^a_{x,K},\what{T}_{y,K}\right\}}{}\quad\ll\quad\frac{1}{N^2},
\end{align}
for some given constant $\theta_5\in(0,1)$ and any constant $\theta_4\in(0,1)$.

Once we have this, we can conclude the proof of \eqref{eq:Succhat_prob_one_walk} as follows. Note that
\begin{align*}
&\prscond{\what{Z}^{(1)}\text{ fails the conditions of }\Succhat}{\what{\F}_{L_x+L_y}}{}\\
&\le\quad\left(1-\sum_{i}h(z_i)\right)+ \sum_{z_i\in\what{V}}h(z_i)\prscond{\left(\Succhat^i\right)^c}{\what{\F}_{L_x+L_y}}{z_i} +\sum_{z_i\not\in\what{V}}h(z_i)\prscond{\left(\Succhat^i\right)^c}{\what{\F}_{L_x+L_y}}{z_i}\\
&\le\quad \left(1-\sum_{i}h(z_i)\right)+h\left(\what{V}\right)+h\left(\what{V}^c\right)(1-\theta_1)\quad =\quad1-h\left(\what{V}^c\right)\theta_1.
\end{align*}

By \eqref{eq:bound_hVhat}, with probability $1-o\left(\frac{1}{N^2}\right)$ we have
\begin{align*}
1-h\left(\what{V}^c\right)\quad=\quad\left(1-\sum_{i}h(z_i)\right)+h\left(\what{V}\right)\quad\le\quad \left(1-\sum_{i}h(z_i)\right)+\theta_4+(1-\theta_5)\sum_{i}h(z_i)&\\
\le\quad1+\theta_4-\theta_5\sum_{i}h(z_i)\quad\le\quad1+\theta_4-\theta_5\cnice'\:,&
\end{align*}
where in the last $\le$ we used \Cref{niceprime_hit_level_K}. Choosing $\theta_4$ sufficiently small we can make $1+\theta_4-\theta_5\cnice'$ bounded away from 1, hence making $1-h\left(\what{V}^c\right)\theta_1$ bounded away from 1. This finishes the proof of \eqref{eq:Succhat_prob_one_walk}.

Now we present the proof of \eqref{eq:bound_hVhat}.

By \Cref{bound_num_explored_num_bad} with probability $1-o\left(\frac{1}{N^2}\right)$ we have $\#\left\{i:\:z_i\text{ bad}\right\}\lesssim1$. Also, by \Cref{transience_until_nonbinary} we have $h(z_i)\le(1-\pesc)^K\ll1$ for each $i$. So it remains to bound $\sum_{i}h(z_i)\1{z_i\in\what{V}}\1{\left\{z_i\text{ good}\right\}}$.

Using \Cref{Succhat_i_prob} and Markov's inequality, we get that
\begin{align*}
\econd{h(z_i)\1{\{z_i\in\what{V}\}}\1{\{z_i\text{ good}\}}}{\what{\F}_{i-1}}\quad=\quad h(z_i)\1{\{z_i\text{ good}\}}\prcond{\prscond{\left(\Succhat^i\right)^c}{\what{\F}_{L_x+L_y}}{z_i}\ge1-\theta_1}{\what{\F}_{i-1}}\\
\le\quad h(z_i)\1{\{z_i\text{ good}\}}\frac{1-\theta_3}{1-\theta_1},
\end{align*}
where $\theta_3\in(0,1)$ is the constant from \Cref{Succhat_i_prob}. We chose $\theta_1<\theta_3$, hence $\frac{1-\theta_3}{1-\theta_1}<1$.

Let
\[
R_i:=\quad h(z_i)\1{\{z_i\in\what{V}\}}\1{\{z_i\text{ good}\}},\qquad M_k:=\quad\sum_{i=1}^{k}\left(R_i-\econd{R_i}{\what{\F}_{i-1}}\right).
\]
Then $(M_k)$ is a martingale with respect to $\left(\what{\F}_{k}\right)$, and $|M_k-M_{k-1}|\le h(z_k)$. Also \[
M_{L_x}\quad\ge\quad\sum_{i}R_i-\frac{1-\theta_3}{1-\theta_1}\sum_ih(z_i).
\]

Then using the Azuma-Hoeffding inequality and that $h(z_i)\le(1-\pesc)^K$ for all $i$, we get that for sufficiently large values of $C_K$, for any $\theta_4>0$ we have
\begin{align*}
\prscond{\sum_{i}h(z_i)\1{\{z_i\in\what{V}\}}\1{\{z_i\text{ good}\}}>\theta_4+\frac{1-\theta_3}{1-\theta_1}\sum_ih(z_i)}{\left\{\what{T}^a_{x,K},\what{T}_{y,K}\right\}}{}\quad\le\quad\prscond{M_{L_x}>\theta_4}{\left\{\what{T}^a_{x,K},\what{T}_{y,K}\right\}}{}&\\
\le\quad\exp\left(-\frac{c\theta_4^2}{\sum_{i}h(z_i)^2}\right)\quad\le\quad\exp\left(-c\theta_4^2(\log N)^2\right)\quad\ll\quad\frac{1}{N^2}.&
\end{align*}

This finishes the proof of \eqref{eq:bound_hVhat} with $\theta_5=1-\frac{1-\theta_3}{1-\theta_1}$, hence finishing the proof of \Cref{Succhat_prob}.
\end{proof}

\subsection{Concluding the proof of \Cref{transition_probs_on_Vnice}} \label{sec:transition_probs_on_Vnice}

Now we are ready to conclude the proof of \Cref{transition_probs_on_Vnice}.

\begin{proof}[Proof of \Cref{transition_probs_on_Vnice}]
We can write
\begin{align*}
&\prscond{X_{\taublue^{(2)}-1}=y,\:\taublue^{(2)}\le C\log N}{G^*}{x}\quad\\
&\ge\quad\sum_{z}\prscond{X_{\taublue}=z,\:\taublue\le  \frac12C\log N}{G^*}{x} \prscond{X_{\taublue-1}=y,\:\taublue\le  \frac12C\log N}{G^*}{z}\\
&=\quad\sum_{z}\prscond{X_{\taublue}=z,\:\taublue\le  \frac12C\log N}{G^*}{x} \prscond{X_{\taublue-1}=z,\:\taublue\le  \frac12C\log N}{G^*}{y}\frac{\db(y)}{\db(z)}\\
&=\quad\db(y)\sum_{w,z}\prscond{X_{\taublue-1}=w,\:\taublue\le  \frac12C\log N}{G^*}{x} \frac{1}{\db(w)}\\
&\hspace{6cm}\cdot\prscond{X_{\taublue-1}=z,\:\taublue\le  \frac12C\log N}{G^*}{y} \frac{1}{\db(z)}\1{z\simb w}\\
&=\quad\db(y)\sum_{e,f}\prscond{X_{\taublue}=e,\taublue\le  \frac12C\log N}{G^*}{x} \prscond{X_{\taublue}=f,\taublue\le  \frac12C\log N}{G^*}{y}\1{e\simb f}
\end{align*}
where the last sum is taken over blue half-edges, $X_{\taublue}=e$ means that the first blue half-edge crossed by $X$ is $e$, and $e\simb f$ means that $e$ and $f$ are the two ends of the same blue edge.

Let $x$ and $y$ be two vertices of $G$ and let $\left\{\what{T}_{x,K}^a,\what{T}_{y,K}\right\}$ denote the event that $x$ is almost nice, with distinguished edge $\{x,x^a\}$, $y$ is nice, the level $K$ neighbourhood of $x$ in $G^*\setminus\left\{\{x,x^a\}\right\}$ and the level $K$ neighbourhood of $y$ in $G^*$ are disjoint and do not contain $x^a$, and these neighbourhoods look like $\what{T}_{x,K}$ and $\what{T}_{y,K}$ respectively.

Let us consider the explorations and couplings defined in \Cref{sec:coupling}, and let us abbreviate $\what{\F}_{L_x+L_y}$ as $\what{\F}$. Let $\what{\F}_b$ denote the $\sigma$-algebra generated by $\what{\F}$ and all the blue degrees in $G^*$. Let $A$ denote the high probability event of \Cref{blue_almost_unif} for the blue degrees $(\db(v))$ in $G^*$.

Then we can write the following.
\begin{align*}
&\prscond{\prscond{X_{\taublue^{(2)}-1}=y,\:\taublue^{(2)}\le C\log N}{G^*}{x}\:<\:c\frac{\log{N}}{N}\db(y),\: A}{\left\{\what{T}_{x,K}^a,\what{T}_{y,K}\right\}}{}\\
&\le \quad\mathbb{P}\Bigg(\sum_{e,f}\prscond{X_{\taublue}=e,\:\taublue\le  \frac12C\log N}{G^*}{x}\\
&\hspace{3cm}\cdot\prscond{X_{\taublue}=f,\:\taublue\le  \frac12C\log N}{G^*}{y}\1{e\simb f}\:<\:c\frac{\log{N}}{N},\: A\:\Bigg|\:\left\{\what{T}_{x,K}^a,\what{T}_{y,K}\right\}\Bigg)\\
&\le\quad\mathbb{P}\Bigg(\sum_{e,f}\prscond{X_{\taublue}=e,\:\taublue\le  \frac12C\log N,\:\taublue<\tau_{x^a}}{\what{\F}_b}{x}\\
&\hspace{3cm}\cdot\prscond{X_{\taublue}=f,\:\taublue\le  \frac12C\log N}{\what{\F}_b}{y}\1{e\simb f}\:<\:c\frac{\log{N}}{N},\: A\:\Bigg|\:\left\{\what{T}_{x,K}^a,\what{T}_{y,K}\right\}\Bigg)\\
&\le\quad\mathbb{P}\Bigg(\sum_{e,f}\prscond{X_{\taublue}=e,\:\taublue\le  \frac12C\log N,\:\taublue<\tau_{x^a}}{\what{\F}_b}{x}\\
&\hspace{3cm}\prscond{X_{\taublue}=f,\:\taublue\le  \frac12C\log N}{\what{\F}_b}{y}\1{e\simb f} \:<\:c\frac{\log{N}}{N},\\ &\hspace{4cm}\:\prscond{\Succhat}{\what{\F}_b}{}>\theta,\:\sum_{v}\db(v)<\Cblue\frac{N}{\log N},\: A \:\Bigg|\:\left\{\what{T}_{x,K}^a,\what{T}_{y,K}\right\}\Bigg) +o\left(\frac{1}{N^2}\right).
\end{align*}

In the last line, $\Cblue$ is a sufficiently large positive constant, $\theta$ is a sufficiently small positive constant, and we used \Cref{Succhat_prob} and \Cref{total_blue_degree}.
Now let us fix $\what{\F}_b$ such that $\prscond{\Succhat}{\what{\F}_b}{}>\theta$, $\sum_{v}\db(v)<\Cblue\frac{N}{\log N}$, and $A$ holds. Then assuming that $C>\frac{8}{\nu}c_L$ and using the notation from \Cref{sec:coupling} we can write
\begin{align*}
&\sum_{e,f}\prscond{X_{\taublue}=e,\taublue\le  \frac12C\log N,\taublue<\tau_{x^a}}{\what{\F}_b}{x} \prscond{X_{\taublue}=f,\taublue\le  \frac12C\log N}{\what{\F}_b}{y}\1{e\simb f}\quad\\
&\quad\ge\quad\sum_{e,f}\prscond{\what{X}^{(1)}_{\taublue}=e,\:\taublue<\tau_{x^a},\:\what{X}^{(2)}_{\taublue}=f,\:\Succhat}{\what{\F}_b}{x,y}\1{e\simb f}\\
&\quad=\quad\sum_{e,f}\prscond{\what{Z}^{(1)}_{\taublue}=e,\:\what{Z}^{(2)}_{\taublue}=f,\:\Succhat}{\what{\F}_b}{x,y}\1{e\simb f}\:.
\end{align*}
Let $\I$ denote the set of blue half-edges. By assumption we know that $|\I|\le\Cblue\frac{N}{\log N}$ and $\sum_{e,f}\prscond{\what{Z}^{(1)}_{\taublue}=e,\:\what{Z}^{(2)}_{\taublue}=f,\:\Succhat}{\what{\F}_b}{x,y}=\prscond{\Succhat}{\what{\F}_b}{}>\theta$, and from \Cref{bound_prob_specific_blue_edge} we also know that $\prscond{\what{Z}^{(1)}_{\taublue}=e,\:\what{Z}^{(2)}_{\taublue}=f,\:\Succhat}{\what{\F}_b}{x,y}\le\frac{1}{N\log N}$ for each pair $(e,f)$.

Using \cite[Lemma 5.1]{cutoff_NBRW_on_sparse_random_graphs} with weights $w_{e,f}=\prscond{\what{Z}^{(1)}_{\taublue}=e,\:\what{Z}^{(2)}_{\taublue}=f,\:\Succhat}{\what{\F}_b}{x,y}$, and using that by \Cref{blue_almost_unif} on event $A$ the matching of the blue half-edges is distributed as a uniform random matching of $\I$ conditioned on a high probability event, we get that for sufficiently small values of $c$ we have
\begin{align*}
\prscond{\sum_{e,f}\prscond{\what{Z}^{(1)}_{\taublue}=e,\:\what{Z}^{(2)}_{\taublue}=f,\:\Succhat}{\what{\F}_b}{x,y}\1{e\simb f}\:<\:c\frac{\log N}{N}}{\what{\F}_b}{}\quad\ll\quad\frac{1}{N^2}.
\end{align*}
This finishes the proof using the union bound.
\end{proof}

\section{Hitting an almost nice vertex}\label{sec:hit_nice}

The goal of this section is to prove the following result.

\begin{proposition}\label{hit_almost_nice_quickly}
For any $\theta\in(0,1)$ there exists $C>0$ such that whp $G^*$ satisfies the following. For any vertex $x$, with probability at least $\theta$, a simple random walk on $G^*$ starting from $x$ hits $\Vniceprime$ by time $C\log N$.
\end{proposition}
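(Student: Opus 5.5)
The plan follows the strategy sketched in the overview: show that within $C\log N$ steps the walk crosses on the order of $\log\log N$ red or blue edges leading into previously unexplored regions, and that each such crossing gives an independent, bounded-below chance of landing at an almost nice vertex.

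\textbf{Step 1: the landing probability.} First I show that a fresh endpoint is almost nice with probability bounded below. Consider a vertex $v$ and a red (or blue) edge $e=\{v,w\}$ whose endpoint $w$ is revealed only when $e$ is first crossed. I will show that, conditional on the exploration so far, $w$ is almost nice with distinguished edge $e$ with probability at least a constant $q>0$.

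The argument has two parts. The $K$-level neighbourhood of $w$ in $G^*\setminus\{e\}$ involves at most $e^{C(\log\log N)^2}$ vertices. The revealed region has size $(\log N)^{O(1)}$, and red endpoints are spread with maximal point probability $\lesssim N^{\beta-1}/\log N$. Hence this neighbourhood can be coupled to the first $K$ levels of a random quasi-tree with blue half-edges, with failure probability $o(1)$; this also makes $w$ a $K$-root. For the quasi-tree, the event in \Cref{def:nice}(ii) has probability bounded below:
\begin{itemize}
\item by \Cref{transience_until_nonbinary} and \Cref{speed_concentration}, the walk reaches level $K$ before backtracking with positive probability;
\item by \Cref{bound_atyp_prob}, atypical balls are avoided, since only $O(K\cdot\mathrm{polylog})$ balls are visited;
\item blue edges are avoided, since only $O(\log\log N)$ vertices are visited and each has blue degree $0$ with probability $1-O(1/\log N)$.
\end{itemize}
By Markov's inequality, the probability of the annealed event exceeds $c_{\mathrm{nice}}$ with positive probability, provided $c_{\mathrm{nice}}$ is small. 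Since $w$ is then visited at the crossing time, $X$ hits $\Vniceprime$.

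\textbf{Step 2: many fresh crossings (main obstacle).} Next I show that in time $C\log N$ the walk makes at least $k=C'\log\log N$ crossings of red edges to unexplored territory, from an arbitrary start. The start may lie in a desert or a hub, so I cannot use a quasi-tree around $x$. Instead I explore $G^*$ only along the trajectory: I decompose the $1$-neighbourhood of the path into boxes of side $\asymp(\log N)^{1/d}$. Two facts drive the argument:
\begin{itemize}
\item each box, independently, has a constant probability of being \emph{good}, meaning it has enough red half-edges that the walk exits via a fresh red edge within $O(\log N/\log\log N)$ steps;
\item whp every box in $G^*$, including bad ones, is exited in $O(\log N/\log\log N)$ expected time. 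This uses \Cref{max_deg} and the transience of $\Z^d$ for $d\ge3$, as in \Cref{transience_until_boundary}, together with a box-exit estimate that holds uniformly over all boxes of $G^*$ via a union bound.
\end{itemize}
A crossed fresh red edge leads to a distance $\ge D^{1-\beta}$. By \Cref{bound_range_in_ball_until_tauatyp}-type arguments with \Cref{transience_until_nonbinary}, with probability $\ge p_{\mathrm{esc}}$ it is not backtracked and the new region is unexplored, so the box decomposition regenerates there with fresh randomness.

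\textbf{Step 3: combining.} Because landing points are essentially unexplored and independent, a supermartingale (or Chernoff) argument over the sequence of fresh crossings gives the following bound: the probability of no hit in $\Vniceprime$ among $k$ crossings is at most $(1-q)^{k}\le(\log N)^{-C''}$, except for the failure of Step 2. That failure is controlled by Markov's inequality on total time, $\asymp k\cdot\log N/\log\log N\asymp\log N$, taking $C$ large relative to $\theta$.

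The delicate point is that the statement must hold simultaneously for all $x$ whp over $G^*$. I would therefore prove an annealed bound $\le N^{-2}$ for each fixed $x$ on the unlikely events, namely ball volumes via \Cref{volume_high_prob_bound} and degrees via \Cref{max_deg}, and then take a union bound over $x$. The quenched probability-$\theta$ statement then follows by Markov's inequality.
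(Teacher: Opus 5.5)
Your Step 1 is essentially the paper's lemma that a freshly revealed endpoint is almost nice with probability at least $c_2$. The combination step, however, has a genuine gap: nothing in your argument gives a bound that survives a union bound over the $N$ starting points.

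For fixed $x$, the quenched failure probability $\mathbb{P}^{G^*}_x(\text{no visit to }\Vniceprime\text{ by time }C\log N)$ is a random function of $G^*$. To get the statement for all $x$ at once, you need it to be at most $1-\theta$ outside a graph event of probability $o(1/N)$. You only control its mean. Whether a landing point is almost nice is a property of $G^*$, so $(1-q)^{C'\log\log N}=(\log N)^{-C''}$ is an annealed bound, and Step 2 adds a further constant coming from the walk's own randomness. Markov's inequality therefore bounds $\mathbb{P}_{G^*}(\text{quenched failure}>1-\theta)$ by a constant. Even if the walk's randomness were handled separately, you would get at best $(\log N)^{-C''}$, never $o(1/N)$.

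Union-bounding over degrees and volumes does not help, because those are not the bottleneck. Following $O(\log\log N)$ nearly independent trials along a single trajectory cannot by itself produce a bound of order $1/N$. Step 2 has the same defect: your time accounting yields only $C'\log\log N$ box exits, so a Chernoff bound on the number of good boxes gives failure probability $(\log N)^{-O(1)}$. Separately, the claimed $O(\log N/\log\log N)$ exit time for an arbitrary box is not supported by the tools you cite. Transience plus the commute-time identity give only $O(\log N)$, even with a whp bound on box degree sums, and that would break your time budget.

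The missing idea is to spread the walk's exit measure over $\gtrsim\log N$ independent landing points, so that the quenched probability itself concentrates. The paper explores around $x$ a neighbourhood coupled to a quasi-tree whose balls have radius $\Rnice=c_R\log N$ and whose long-range edges are the red and blue edges. It truncates an edge $e$ as soon as $\theta_T(e)\le\ctrunc/\log N$ (\Cref{def:trunc_nice}). This does three things:
\begin{enumerate}[(i)]
\item It keeps the explored region of size at most $N^{c}$ with $c$ small, so the exploration succeeds (allowing one failure $\til{e}$) with probability $1-o(1/N)$.
\item It makes each frontier edge $e_k\in A$ the exit edge with quenched probability $q_k\le\ctrunc/\log N$.
\item It still gives $\sum_k q_k\ge c_4$.
\end{enumerate}
The frontier endpoints are revealed one at a time, and each is almost nice with conditional probability at least $c_2$. \Cref{sum_indep_RVs} then gives $\sum_k q_kI_k\ge c_2c_4/2$ except with probability $\exp(-c\log N/\ctrunc)=o(1/N)$ for small $\ctrunc$, which is exactly what allows the union bound over $x$.

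The same principle governs the bound on the range. To show the walk reaches the frontier after visiting $O(\log N)$ distinct vertices, the paper uses cool boxes of \emph{constant} side $r$. Before reaching distance $\Rnice$ from $x$, the walk visits $\asymp\log N$ of them, so the annealed probability of seeing too few cool boxes is $\ll N^{-2}$. That bound does survive Markov's inequality and the union bound.
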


In the proof, we will consider an exploration of a random neighbourhood of $x$ and a coupling of this neighbourhood to a certain random quasi-tree. Note that the quasi-tree, exploration and coupling used in this section are different from the ones used in Sections \ref{sec:T} and \ref{sec:Gstar_and_T}.

In this section we consider \emph{random quasi-trees} with balls of radius
\[
\Rnice:=\quad c_R\log N,
\]
where $c_R$ is a sufficiently small constant, with the green and black edges being considered for determining the balls, and the red and blue edges acting as long-range edges. \footnote{The precise definition is as follows. A random quasi-tree is a random graph $T$ together with a map $\iota$ from its vertices to the vertex set of $G$ obtained as follows. We start from a root vertex $\rho$ with some given $\iota(\rho)$. Then we consider a ball of radius $\Rnice$ around $\iota(\rho)$ in a random copy of $\Gg$ and attach it around $\rho$. We call it an $\Rnice$-ball, and we let $\iota$ map its vertices to the corresponding vertices in $G$. Then for each vertex $v$ in the ball we sample $\dr(v)\eqdist\dr$ and $\db(v)\eqdist\db$ independently and attach $\dr(v)$ red and $\db(v)$ blue long-range edges to it. For a red long-range edge from $v$ we sample the $\iota$ of its new endpoint so that it takes value $u$ with probability proportional to $\pred_{v,u}$, and for a blue long-range edge we sample according to $\pblue_{v,\cdot}$ (all of these independently). If $w$ is the new endpoint of a long-range edge, we attach a ball around it corresponding to a ball of radius $\Rnice$ around $\iota(w)$ in an independent copy of $\Gg$, and we let $\iota$ map its vertices to the corresponding vertices of $G$. We proceed similarly.}

The \emph{boundary} of a given ball consists of vertices that are at graph distance $\Rnice$ from the centre of the ball, where the graph distance is with respect to the quasi-tree. We sometimes write $\partial$ for the set of boundary vertices. Given a walk $X$ on $T$ or $\what{T}$, let $\taubound$ be the first time that $X$ hits the boundary of a ball. (Note that in the previous sections $\taubound$ referred to the first time that the walk hits the $R$th different vertex within a ball.)

We define $T(G^*)$, the quasi-tree induced by $G^*$ analogously to \Cref{def:quasi-tree_of_Gstar}, and we also use notation and terminology analogous to the one in \Cref{def:quasi-tree_notation}.

In what follows, we often write $R$ for $\Rnice$.

Note that a transience-like result analogous to \Cref{transience_until_boundary} holds for $T$.

\begin{lemma}\label{nice:stopped_walk_transience}
There exists a positive constant $\qesc$ (not depending on any of the parameters, nor on the dimension $d$) with the following property.

For any realisation $T$ of a random quasi-tree, and any vertex $x$ of $T$, the probability that a walk from $x$ does not return to $x$ without first hitting the boundary of a ball can be lower bounded as
\[
\prstart{\tau_x^+>\taubound}{x}\quad\ge\quad\frac{\qesc}{\deg_T(x)},
\]
while for any long-range edge $e$, the probability that a walk from $x$ does not cross $e$ without hitting a boundary first can be lower bounded as
\[
\prstart{\tau_e>\taubound}{x}\quad\ge\quad\qesc.
\]
\end{lemma}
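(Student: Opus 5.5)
The plan is to follow the proof of \Cref{transience_until_boundary} almost verbatim, now with balls of radius $\Rnice=c_R\log N$ built from green and black edges and with both red and blue edges playing the role of long-range edges. The only geometric input needed is that a ball in the quasi-tree, viewed as a subgraph of a copy of $\Gg$, lies inside a torus ball of radius much smaller than $n$. Then the effective resistance to its complement is bounded by the transience of $\Z^d$ for $d\ge3$.

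I would first prove the second inequality from the first, exactly as before. It suffices to take $x$ an endpoint of $e$, and then
\[
\prstart{\tau_e<\taubound}{x}\;=\;\frac{1}{\deg_T(x)}\sum_{k\ge0}\Bigl(1-\frac{1}{\deg_T(x)}\Bigr)^k\prstartup{\tau_x^+<\taubound}{T\setminus\{e\}}{x}^k .
\]
Applying the first inequality to $T\setminus\{e\}$ gives the escape probability $1-\prstartup{\tau_x^+<\taubound}{T\setminus\{e\}}{x}\ge\qesc/(\deg_T(x)-1)$, and summing the geometric series bounds the right-hand side by $1-\const$.

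For the first inequality, with probability $(\deg_T(x)-\dr(x)-\db(x))/\deg_T(x)$ the first step stays inside the ball of $x$. After that, I couple the walk with a walk $Y$ on the ball (the restriction of $X$ to the ball, which is a simple random walk on the ball since excursions through long-range edges return along the same edge). I then couple $Y$ with a simple random walk $Z$ on $\Gg$ until the boundary is hit. So it suffices to show $\prstartup{\tau_x^+>\tau_{B^c}}{Z}{x}\gtrsim 1/\deg_{\Gg}(x)$, where $B$ is the torus ball of radius $n/4$ around $x$. For this I need that every vertex of the quasi-tree ball other than the boundary lies within torus distance $<n/4$ of $x$. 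A path of $\Rnice$ green or black steps moves at most $\Rnice\cdot D^{1-\beta}=c_R\log N\cdot D^{1-\beta}\ll n$, since green edges have length $<D^{1-\beta}$. Then $\prstartup{\tau_x^+>\tau_{B^c}}{Z}{x}=1/(\deg_{\Gg}(x)\Reffin{x}{B^c}{\Gg})$. By Rayleigh monotonicity, $\Reffin{x}{B^c}{\Gg}\le\Reffin{x}{B^c}{G}$, which is bounded by the resistance to infinity in $\Z^d$. That resistance is at most the constant for $\Z^3$ by monotonicity in $d$, since $\Z^3$ embeds in $\Z^d$, and this gives the dimension-free constant.

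The main subtlety is the reduction from the walk on $T$ to $Z$ on $\Gg$. I must check that the degree of $x$ in the ball is at most $\deg_T(x)$ and is at least a constant fraction of the relevant quantity, which mirrors the earlier argument. There is also a degenerate case where $x$ is itself on the boundary: the statement is then trivial, since $\taubound=0$. Finally, if $x$ is not the centre of its ball, the torus ball around $x$ still avoids the rest of the quasi-tree ball's boundary issues, because the walk reaching distance $\ge \Rnice$ in the ball forces it to hit the boundary first. So the inclusion $\{\tau_{B^c}<\tau_x^+\}\subseteq\{\taubound<\tau_x^+\}$ holds for the coupled walks. Tracking the constants, $\qesc$ ends up depending only on the $\Z^3$ escape probability.
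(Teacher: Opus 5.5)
Your proposal is correct and is essentially the paper's proof: the paper only says the argument is the same as for \Cref{transience_until_boundary}, and you follow it step by step. You derive the second bound from the first via the geometric series over returns to $x$ in $T\setminus\{e\}$, and you prove the first by restricting to the ball, coupling with a walk on $\Gg$, and bounding $\Reffin{x}{B^c}{\Gg}$ using Rayleigh monotonicity and the transience of $\Z^d$. Your added remark that $\Reffin{0}{\infty}{\Z^d}\le\Reffin{0}{\infty}{\Z^3}$, by Rayleigh monotonicity, usefully justifies the claimed independence of $\qesc$ from $d$, which the paper asserts without comment.
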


The proof is the same as the proof of \Cref{transience_until_boundary} and hence omitted.

\subsection{Exploration and coupling}

In what follows, we describe an exploration of a random neighbourhood of a given vertex in $G^*$ and a coupling of this neighbourhood to a random quasi-tree $T$ rooted at $x$.

In the following sections, we prove that a walk on $G^*$ starting from $x$ likely exits this neighbourhood in $\lesssim\log N$ steps, and likely exits via a red or blue edge leading to an almost nice vertex. This will form a major step in the proof of \Cref{hit_almost_nice_quickly}.

We start by giving some auxiliary definitions. Firstly, we introduce a quantity that roughly speaking corresponds to the probability of ever hitting a given long-range edge, but it is defined in a way that it only depends on the balls that are ancestors of the given edge.

\begin{definition}\label{def:thetaT}
Consider a quasi-tree $T$ and let $e$ be a long-range edge. Let $\ell$ be the level of $e$ and for $i\in\{1,2,...,\ell\}$ let $e_i$ be the long-range edge that is the level $i$ ancestor of $e$. Then let us consider a graph that consists of the balls containing $e_i^-$ for each $i$, the long-range half-edges from these balls, and the long-range edges $(e_i)_i$. Let us consider a walk on this graph started from $\rho$ that takes steps like a simple random walk, except for the following. It is not allowed to cross from $e_i^+$ to~$e_i^-$ for all $i\leq \ell$. If it crosses a long-range half-edge\footnote{Note that $(e_i)$ are not considered as half-edges.}, with probability $\qesc$ it gets terminated, otherwise, it crosses back and continues walking. It is also terminated if it hits the boundary of a ball or it hits $e^+$. Let $\theta_T(e)$ denote the probability that this walk reaches $e^+$ without getting terminated for some other reason first. Here $\qesc$ denotes the constant from \Cref{nice:stopped_walk_transience}. 
\end{definition}

We use $\theta_T$ to define a notion of truncation as follows.

\begin{definition}\label{def:trunc_nice}
Given a quasi-tree $T$ with root $\rho$, and a long-range edge $e$ in $T$, let us say that $e$ is \emph{truncated} if $\theta_{T}(e)\le\frac{\ctrunc}{\log N}$, where $\ctrunc$ is a positive constant to be chosen later.
\end{definition}

Now we present the exploration and coupling process.

\begin{definition}\label{def:nbhd_for_hitting_nice}
Consider a random quasi-tree $T$ rooted at a given vertex $x$. Let us reveal the ball around $x$, together with the long-range half-edges from it, and see which of the long-range half-edges (if any) are truncated according to \Cref{def:trunc_nice}. Then in $G^*$ reveal the ball of radius $R$ around $x$ according to the green and black edges, and couple it exactly to the ball around the root of $T$. Also reveal the red and blue degrees of each vertex in this ball in $G^*$ and couple them exactly to the corresponding red and blue degrees in $T$. (The two balls and the red and blue degrees of their vertices have the exact same distribution, so they can be coupled so that they always agree.)

One by one, consider the long-range half-edges from the revealed ball in $T$. If a long-range half-edge is truncated, then we also truncate it in $G^*$ and do not explore further in that direction. Otherwise, we reveal the ball in $T$ around its other endpoint, and we also reveal the red and blue degrees from this ball. We also reveal the other endpoint of the corresponding long-range edge in $G^*$, the ball of radius $R$ around it according to the green and black degrees, and the red and blue degrees of each vertex in the ball, coupling all these to the ball in $T$ using the optimal coupling. If the new ball in $T$ contains any of the previously revealed vertices or the optimal coupling fails, we do the following. In case this is the first time it happens, we denote the long-range edge leading to the new ball by $\til{e}$ and truncate it, not exploring further in this direction. If it is not the first time this happens, then we declare the exploration unsuccessful and terminate the process.

Once we considered all the long-range half-edges from the ball around $x$, we continue similarly with the long-range half-edges from the level 1 balls, and then continue similarly level by level (until we need to terminate the exploration and declare it unsuccessful, or until we successfully couple a certain subgraph of $G^*$ with all the balls of $T$ that can be reached from the root without crossing a truncated edge or $\til{e}$.)

If the exploration was successful, let $A$ denote the set of truncated long-range half-edges in $T$, except for~$\til{e}$.
\end{definition}

The main results about this exploration are the following.

\begin{proposition}\label{nice:coupling_success_and_nbhd_size}
Fix a vertex $x$ in $G$. With probability $1-o\left(\frac1N\right)$ the exploration described in \Cref{def:nbhd_for_hitting_nice} succeeds, and the explored random neighbourhood has size $\le N^c$ where $c>0$ is a constant that can be made arbitrarily small by choosing $c_R$ sufficiently small.
\end{proposition}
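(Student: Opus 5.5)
The plan has two parts: first bound the size of the explored neighbourhood deterministically (up to high-probability volume bounds), and then use that bound to show that at most one coupling failure or overlap occurs.

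\textbf{Size bound.} The key observation is that truncation forces the number of non-truncated long-range edges to be small. For a fixed level $\ell$, consider the walk in \Cref{def:thetaT} with the extra rule that it is killed upon hitting level $\ell$ through any long-range edge. Its hitting events at distinct level-$\ell$ edges are disjoint. Moreover, $\theta_T(e)$ for each such edge $e$ equals the probability of first reaching level $\ell$ through $e$ for one common walk on the union of the ancestor balls. So
\[
\sum_{e:\,\ell(e)=\ell}\theta_T(e)\le 1,
\]
and hence at most $\ctrunc^{-1}\log N$ non-truncated edges sit at each level.

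Next I bound the number of levels. A walk crossing a long-range half-edge is killed with probability $\qesc$. Using \Cref{nice:stopped_walk_transience}, I aim to show that $\theta_T(e)\le (1-c)^{\ell(e)}$ for some $c>0$, at least when the balls have bounded degree. That would make every edge truncated beyond level $C\log\log N$. This is not quite right, since a ball could contain no long-range half-edges at all. However, then the walk must still exit via the edge $e_{i+1}$, and each step through an ancestor ball costs a factor. I will instead bound crudely: $\theta_T(e)\le \prod_i(\text{prob.\ of choosing }e_{i+1}\text{ among exits})$, and the number of exits is at least one other half-edge or the boundary. The boundary escape probability from \Cref{nice:stopped_walk_transience} gives a constant loss per level. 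So at most $C\log\log N$ levels remain.

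With the number of non-truncated edges per level and the number of levels both controlled, the explored set consists of at most $(\log N)^{C\log\log N}$ balls, each of radius $\Rnice=c_R\log N$. By \Cref{volume_tail_bound} each ball has volume at most $e^{C c_R\log N}=N^{Cc_R}$ with probability $1-o(N^{-2})$. Taking a union bound gives total size $N^{c}$ with $c$ as small as desired. If the level count fails, I will instead use the per-level bound together with a bound of $O(\log N)$ levels. This is weaker and gives size $(\log N)^{O(\log N)}$, which is too large, so getting the level bound right is the main obstacle. I would try first to exploit that the killing at the boundary occurs with constant probability per ball visited.

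\textbf{Success.} Once the size is at most $N^{c}$, each newly revealed long-range endpoint lands, in both $T$ and $G^*$, with probability $\lesssim \max(\pred,\pblue)\lesssim N^{\beta-1}/\log N$ per vertex. The optimal couplings of the ball and the degrees fail only if the new ball hits the explored set $\D$, via the same estimates as in the proof of \Cref{Succ_i_prob_on_Tzi} (using \Cref{cond_green_red_deg} and the maximum degree bound). So each step fails with probability at most $N^{2c+\beta-1+o(1)}$.

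The exploration is declared unsuccessful only if at least two failures occur among at most $N^{c}$ steps. That probability is at most
\[
\left(N^{c}\cdot N^{2c+\beta-1}\right)^2=N^{6c+2\beta-2}=o(N^{-1}),
\]
since $\beta<\tfrac12$ and $c$ is small. Here I use a sequential, martingale-style domination of the failure count by a binomial, conditioning on the past at each step.
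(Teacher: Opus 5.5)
Your architecture matches the paper's: a deterministic bound on the number of balls reachable without crossing a truncated edge, a per-ball volume bound from \Cref{volume_tail_bound} with $c_R$ small, and then the fact that two overlap/coupling failures occur with probability $o(1/N)$. That last estimate holds because there are $N^{o(1)}$ steps, each failing with conditional probability $\lesssim N^{O(c)}N^{\beta-1}/\log N$, and $\beta<\frac12$; this part is fine. The step that fails is the per-level count. The inequality $\sum_{e:\,\ell(e)=\ell}\theta_T(e)\le 1$ is false, and the ``one common walk'' does not exist. In the walk defining $\theta_T(e)$ the ancestral edges $e_i$ are crossed for free. Every other long-range half-edge, including the ancestral edges of a different $e'$ at the same level, kills with probability $\qesc$ and otherwise bounces back. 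So the walks for different $e$ are different processes, and a single walk that commits to the first long-range edge it crosses only gives a lower bound on each $\theta_T(e)$.

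Here is a concrete failure: let the root carry $k$ long-range half-edges and no other vertex of its ball carry any. Per excursion from $\rho$, the walk crosses a given $e$ with probability $1/\deg(\rho)$. It is killed through another half-edge with probability $(k-1)\qesc/\deg(\rho)$, and escapes to the boundary with probability at most $\deg_{\Gg}(\rho)/\deg(\rho)$. Hence $\theta_T(e)\ge\bigl(1+(k-1)\qesc+\deg_{\Gg}(\rho)\bigr)^{-1}$. The level-one sum is then at least $k/\bigl(1+(k-1)\qesc+\deg_{\Gg}(\rho)\bigr)$, which exceeds $1$ for moderate $k$ and tends to $1/\qesc$.

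The repair uses the ingredients of \Cref{nice:thetaT_properties} and \Cref{nice:bound_nontrunc_size}. The level-one sum equals the expected number of distinct half-edges crossed by the killed walk, so it is at most $1/\qesc$. Multiplicativity of $\theta_T$ then gives $\sum_{\ell(e)=\ell}\theta_T(e)\le\qesc^{-\ell}$. With $O(\log\log N)$ levels this still yields $(\log N)^{O(1)}$ non-truncated edges, so your final count of $(\log N)^{C\log\log N}$ balls survives, but only through this argument.

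Conversely, the level bound you flag as the main obstacle is easy and needs no bounded-degree assumption. At each visit to $e_i^-$ the walk crosses $e_i$ with probability $1/\deg(e_i^-)$. By the electrical argument in the proof of \Cref{transience_until_boundary}, it reaches the boundary of its ball without returning with probability $\gtrsim 1/\deg(e_i^-)$, and killing at half-edges only helps. Hence each factor $\theta_{T(e_{i-1}^+)}(e_i)$ is at most a constant smaller than $1$. This is \Cref{nice:thetaT_properties}(ii), and it places all non-truncated edges at level $O(\log\log N)$.
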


\begin{proposition}\label{nice:nbhd_properties}
For any constant $\theta\in(0,1)$ there exist positive constants $C$ and $C'$ with the following property.
For any fixed vertex $x$ in $G$, with probability $1-o\left(\frac1N\right)$ the exploration described in \Cref{def:nbhd_for_hitting_nice} succeeds and a walk $X$ on $T$ from $x$ satisfies the following.
\begin{enumerate}[(i)]
\item\label{item:nice:bound_num_visited} With high probability the walk $X$ visits $\le C'\log N$ different vertices by the time it hits $A\cup\{\til{e}\}$.
\item\label{item:nice:bound_time} With probability at least $1-\theta$ the number of steps it takes for the walk $X$ to visit $C'\log N$ different vertices or hit $A\cup\{\til{e}\}\cup\partial$ is $\le C\log N$.
\item\label{item:nice:not_hit_etil} With probability at least $\qesc$ the walk $X$ hits $A\cup\partial$ before hitting $\til{e}$. (Here $\qesc$ is the constant from \Cref{nice:stopped_walk_transience}, and it does not depend on any other parameter.)
\item\label{item:nice:bound_prob_given_e} For each $e\in A$ the probability of $X$ hitting $A\cup\{\til{e}\}\cup\partial$ at $e$ is $\le\frac{\ctrunc}{\log N}$.
\item\label{item:nice:not_hit_boundary} {With high probability the walk $X$ hits $A\cup\{\til{e}\}$ before hitting $\partial$.}
\end{enumerate}
\end{proposition}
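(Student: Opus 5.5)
Since \Cref{nice:coupling_success_and_nbhd_size} already gives success of the exploration with probability $1-o(1/N)$, the plan is to prove each of \eqref{item:nice:bound_num_visited}--\eqref{item:nice:not_hit_boundary} for the walk $X$ on the explored part of $T$. Every item is a statement about $T$ alone, stopped at the first hit of $A\cup\{\til e\}\cup\partial$. The probabilities in \eqref{item:nice:bound_num_visited}, \eqref{item:nice:bound_time} and \eqref{item:nice:not_hit_boundary} are quenched, i.e.\ conditional on $T$. I therefore aim to show that each of them fails for $T$ with probability $o(1/N)$ over the randomness of $T$.

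\textbf{Items (iii) and (iv).} These are the easy items. For \eqref{item:nice:not_hit_etil}, apply \Cref{nice:stopped_walk_transience} with the long-range edge $\til e$. With probability at least $\qesc$ the walk does not cross $\til e$ before hitting the boundary, and hence hits $A\cup\partial$ first.

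For \eqref{item:nice:bound_prob_given_e}, fix $e\in A$ and compare the probability of hitting the stopping set at $e$ with $\theta_T(e)$. To reach $e$ before $\partial$, the walk must pass through the ancestor edges $e_1,\dots,e_\ell$ in order. Each excursion through a non-ancestor long-range half-edge returns with probability at most $1-\qesc$ by \Cref{nice:stopped_walk_transience}, applied to the subtree behind that half-edge. A standard coupling, which kills the walk with probability $\qesc$ at each such excursion and forbids backtracking across the $e_i$ (backtracking can only lower the probability, by the loop-erasure structure), then shows the hitting probability is at most $\theta_T(e)$. This is at most $\ctrunc/\log N$ by \Cref{def:trunc_nice}.

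\textbf{Items (i), (ii) and (v): the main obstacle.} Here a typicality argument is needed, because balls now have radius $\Rnice=c_R\log N$. Item \eqref{item:nice:bound_num_visited} is the hardest step. My plan is to decompose the range into balls. In each ball the walk leaves through a red or blue half-edge at a rate given by the density of such half-edges near its trajectory. An exploration argument like that of \Cref{bound_range_in_ball_until_tauatyp} shows that, after $j$ new vertices in a ball, the chance of stepping onto a long-range edge and never returning is $\gtrsim 1\wedge \frac{\log\log N}{j}$, with uniform bounds coming from $\dmax$ and $\E{\dg}<\infty$.

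Truncation controls the depth. Along any untruncated path, $\theta_T$ is multiplied at each level by a factor bounded away from $1$ unless the walk has a short route, yet $\theta_T$ must stay above $\ctrunc/\log N$. This should limit the number of levels and balls effectively visited before hitting $A$ to $O(\log\log N)$ or so. Combining this with the per-ball range bound, and using the independence of the balls (they are i.i.d.\ subtrees), gives a total range $\le C'\log N$. I would obtain the tail probability $o(1/N)$ over $T$ by Chernoff-type bounds on sums of independent generalised geometric variables.

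For \eqref{item:nice:bound_time}, I would use the commute-time argument from the proof of \Cref{bound_sigma1}. The time to find each new vertex is controlled, and the waiting time between new vertices is geometric, since the range is confined and the resistance to the complement is bounded via transience of $\Z^d$ (\Cref{transience_until_boundary}). Summing over $C'\log N$ new vertices gives $O(\log N)$ steps with probability $1-\theta$.

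Finally, \eqref{item:nice:not_hit_boundary} follows from \eqref{item:nice:bound_num_visited}. The walk needs at least $\Rnice=c_R\log N$ distinct vertices to reach the boundary, and within $C'\log N$ visited vertices the per-ball exit rate makes it overwhelmingly likely to leave through long-range edges first. I expect this to require $c_R$ small relative to $C'$ only in a mild way, and the ordering of constants to be the most delicate point.
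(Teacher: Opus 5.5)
\textbf{There is a gap in (i) and (v), and a second one in (ii).} Items (iii) and (iv) are fine; your argument for (iv) is essentially \Cref{nice:thetaT_properties}(iii).

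\textbf{Items (i) and (v).} The problem is the probability budget. The statement asks that the quenched ``whp'' in (i) and (v) holds outside an environment event of probability $o(1/N)$, which is what later allows the union bound over $x$. By Markov, you therefore need the annealed probability of the bad walk event to be $\ll 1/N$.

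The per-ball rate you propose, $1\wedge\frac{\log\log N}{j}$ from the proof of \Cref{bound_range_in_ball_until_tauatyp}, gives for a single ball $\prod_{k<m}(1-p_k)\approx\exp(-c(\log\log N)^2)$ at $m\asymp\log N$. No Chernoff bound over sums of balls can repair a tail that one ball already misses.

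Even a constant per-new-vertex exit rate would not rescue (v). The event that one ball accumulates $\Rnice=c_R\log N$ distinct vertices would then have annealed probability about $N^{-c\,c_R}$, and $c_R$ must be small for \Cref{nice:coupling_success_and_nbhd_size}. Deriving (v) from (i) by counting also fails. A range bound $C'\log N$ excludes reaching $\partial$ only if $C'<c_R$, which is the opposite of the regime ``$c_R$ small relative to $C'$'' that you describe.

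The missing idea is the cool-box mechanism (\Cref{def:cool_box}):
\begin{enumerate}[(a)]
\item Tile the torus by boxes of a large constant side $r$. Each freshly visited box is cool (all degrees $\le\dcool$, at least two red edges) with probability $\ge 1-\puncool$, where $\puncool\lesssim e^{-cr^d}+r^d/\dcool$, uniformly over the past exploration.
\item Reaching distance $\Rnice$ forces the walk through $\ge\Rnice/(dr)$ distinct boxes. So seeing too few cool boxes has annealed probability roughly $N^{-c\,c_R r^{d-1}}\ll N^{-2}$, once $r$ is chosen large in terms of $c_R$.
\item This places the $o(1/N)$ burden on the environment only. The walk part can then be quenched: in a cool box the walk leaves through a new red edge with a small but fixed probability. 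This gives $\ge\log N/(\log\log N)^2$ distinct red crossings.
\item By \Cref{nice:reach_level_CloglogN}, for every realisation of $T$, these crossings force the walk to level $C\log\log N$ or to $\partial$. By \Cref{nice:bound_nontrunc_size}, level $C\log\log N$ lies beyond $A\cup\{\til{e}\}$.
\end{enumerate}
This gives (v), and (i) with $C'=r^{d-1}c_R/d$.

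\textbf{Item (ii).} Your plan does not give $O(\log N)$ steps. The commute-time bound from \Cref{bound_sigma1} controls the time to leave the $j$ visited vertices by $e(H)\,R_{\mathrm{eff}}$. Since $e(H)\ge j$, even with bounded resistance this gives only $\lesssim j$ per new vertex, hence order $(\log N)^2$ in total; the version in that proof gives $2j^3$. Transience controls escape probabilities, not the waiting time until a new vertex is found, so ``geometric waiting times'' is not justified.

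The paper argues differently (\Cref{nice:reach_range_clogN}):
\begin{enumerate}[(a)]
\item By \Cref{bound_sum_of_first_k_deg}, with probability $1-o(1/N)$ the degrees of the first $c\log N$ distinct visited vertices sum to $O(\log N)$. So the walk uses only $O(\log N)$ distinct directed edges.
\item By \Cref{edge_expected_num_crossings}, each of these edges is crossed $O(1)$ times in expectation before $\taubound$.
\item Markov's inequality then gives (ii).
\end{enumerate}
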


\subsection{Some auxiliary estimates}

In this section we establish some auxiliary results that we will use in the next sections.

Firstly, note the following simple properties of $\theta_T$.

\begin{lemma}\label{nice:thetaT_properties} For any realisation of $T$, the function $\theta_T$ defined in \Cref{def:thetaT} satisfies the following properties.
\begin{enumerate}[(i)]
\item\label{item:thetaT_multiplicative} For $(e_i)_{i=1}^{\ell}$ as in \Cref{def:thetaT} and $e_0^+=\rho$, we have $\theta_T(e)=\prod_{i=1}^{\ell}\theta_{T(e_{i-1}^+)}(e_i)$.
\item\label{item:thetaT_exp_decay} $\theta_T(e)\le(1-\qesc)^{\ell(e)}$.
\item\label{item:thetaT_hitting} $\theta_T(e)\ge\prstart{\tau_e<\taubound}{\rho}$.
\item\label{item:thetaT_sum} $\sum_{e:\:\ell(e)=1}\theta_T(e)\le \frac{1}{\qesc}$.
\end{enumerate}
\end{lemma}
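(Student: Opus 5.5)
The plan is to prove the four properties in order, each directly from \Cref{def:thetaT}. Throughout, I write $Y^{(e)}$ for the modified walk used to define $\theta_T(e)$, on the graph $H_e$ formed by the ancestor balls of $e$, their long-range half-edges and the edges $e_1,\dots,e_\ell$.

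For \eqref{item:thetaT_multiplicative}, I use that $Y^{(e)}$ may never cross $e_i$ backwards, from $e_i^+$ to $e_i^-$. Hence, to reach $e^+=e_\ell^+$, the walk must first reach $e_1^+$ while staying inside the ball of $\rho$. After it crosses $e_1$ it can never return to that ball, and the same holds at every later level. By the strong Markov property at the successive hitting times of $e_1^+,\dots,e_{\ell-1}^+$, the probability factorises. The $i$th factor is the probability that a walk started from $e_{i-1}^+$, moving in the ball of $e_{i-1}^+$ with the same termination rules, reaches $e_i^+$. This is exactly $\theta_{T(e_{i-1}^+)}(e_i)$, since $e_i$ is a level-1 edge of $T(e_{i-1}^+)$. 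The one point to check is that, viewed from inside $T(e_{i-1}^+)$, the edge $e_{i-1}$ leading back up to the parent ball is not available as a half-edge. This holds because $T(\cdot)$ removes that edge, and in $H_e$ the walk cannot cross it backwards.

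For \eqref{item:thetaT_exp_decay}, by \eqref{item:thetaT_multiplicative} it suffices to show that each factor is at most $1-\qesc$, that is, that $\theta_{T'}(e')\le 1-\qesc$ for every level-1 edge $e'$ of a quasi-tree $T'$.

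For \eqref{item:thetaT_hitting}, I couple $Y^{(e)}$ with the simple random walk $X$ on $T$ started at $\rho$.
1. Whenever $X$ crosses a long-range edge other than the next $e_i$, it leaves $H_e$ into a subtree.
2. By the tree structure, $X$ can only come back through the same edge.
3. By \Cref{nice:stopped_walk_transience} (second statement, applied at the far endpoint), the probability that $X$ returns across that edge before hitting a boundary is at most $1-\qesc$.
4. So each excursion into a subtree is dominated by $Y$'s rule "terminate with probability $\qesc$, otherwise cross back". To make this exact, I dominate the return probability by $1-\qesc$ through an auxiliary coin, and declare $Y$ terminated whenever $X$ fails to come back before $\taubound$ or the coin says so.
5. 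Excursions of $X$ backwards across some $e_i$ are handled the same way: after crossing $e_i$ backwards, $X$ must return across $e_i$ before reaching $e$ again. I would treat this by restricting to the walk of $X$ inside the current ball after its last crossing of $e_i$, i.e. by looking at the loop-erased path of the $e_i$ crossings.
6. On $\{\tau_e<\taubound\}$, the coupled $Y$ reaches $e^+$ without termination, which gives the bound.

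This backward-crossing step is the one I expect to be the main obstacle. If the restriction argument becomes messy, I would first compare $X$ with the walk on $T$ in which the edges $e_i$ are made one-way, using a last-exit decomposition at $e_i^+$.

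For \eqref{item:thetaT_sum}, I sum over the level-1 edges $e$ from the root ball. In that ball, $Y$ is the walk that, at each long-range half-edge crossing, terminates with probability $\qesc$. When computing $\theta_T(e)$, the walk is stopped on crossing $e$; in the unified walk (one walk for all $e$ at once) this corresponds to the event that the first terminating crossing happens at $e$. Coupling all these walks as one walk $Y$ shows that $\sum_e\theta_T(e)$ is at most the expected number of long-range crossings before termination. Each crossing terminates independently with probability $\qesc$, so this count is dominated by a $\Geom{\qesc}$ random variable, whose mean is $1/\qesc$. This gives the bound, and the same argument gives the uniform bound $\le 1-\qesc$ for a single level-1 edge that was needed in \eqref{item:thetaT_exp_decay}: reaching $e^+$ requires $e$ to be chosen at a crossing, and with probability at least $\qesc$ termination occurs first or the walk is stopped elsewhere. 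If that last bound is not immediate, I would use the weaker statement that $\theta\le 1-\qesc'$ for some constant $\qesc'$, which suffices for \eqref{item:thetaT_exp_decay} after renaming the constant.
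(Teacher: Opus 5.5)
\textbf{The gap is in \eqref{item:thetaT_exp_decay}.} Your reduction via \eqref{item:thetaT_multiplicative} to a single level-1 edge $f$ of $T'=T(e_{i-1}^+)$ is fine. But the bound $\theta_{T'}(f)\le 1-\qesc$ is never proved, and the reason you offer does not work. The edge $f$ is not a half-edge, so crossing it never triggers a $\qesc$-coin. Nothing forces the walk to use another half-edge before it first picks $f$: if $f$ is the only long-range edge of the root ball, no coin is ever tossed and the only way to fail is to hit the boundary. So the geometric count behind \eqref{item:thetaT_sum} says nothing about a single term being below $1$, and your fallback constant is equally unsupported.

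The missing ingredient is transience of the ball, i.e.\ \Cref{nice:stopped_walk_transience}. For example, let $T''$ be $T'$ with every long-range half-edge of its root ball other than $f$ deleted. Erasing the bounce steps from the walk that defines $\theta_{T'}(f)$ leaves a simple random walk on $T''$ with extra killing, since after a surviving bounce the next step is again uniform over the remaining edges. Hence $\theta_{T'}(f)\le\prstartup{\tau_{f}<\taubound}{T''}{e_{i-1}^+}\le 1-\qesc$ by the second statement of \Cref{nice:stopped_walk_transience}. Alternatively, decompose at visits to $f^-$. Each visit takes $f$ with probability $1/\deg_T(f^-)$, while by the $\Z^d$, $d\ge3$, escape estimate in the proof of \Cref{transience_until_boundary}, each excursion ends at the boundary with probability $\gtrsim 1/\deg_T(f^-)$. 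This gives $\theta_{T'}(f)\le 1-c$ for some constant $c>0$.

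\textbf{Smaller points.} Item \eqref{item:thetaT_multiplicative} is fine. Your conclusion in \eqref{item:thetaT_sum} is exactly the paper's argument (the only item the paper writes out), but fix one sentence. $\theta_T(e)$ is the probability that the unified walk crosses $e$ \emph{at all} before termination, not that the terminating crossing occurs at $e$. The latter reading would force $\sum_e\theta_T(e)\le 1$, which fails when a vertex carries many half-edges.

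In \eqref{item:thetaT_hitting} your level-1 coupling is right, with one correction: $Y$ should be terminated when $X$ fails to return \emph{and} the auxiliary coin says so, not ``or''. Otherwise $Y$ does not have the correct law.

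For the backward crossings of the $e_i$, a ``last crossing'' is not a stopping time, so use the same erasure device as above instead. Apply the strong Markov property at the first crossing of $e_i$. An excursion from $e_i^+$ back across $e_i$ either hits a boundary or returns to $e_i^+$, so erasing the returning excursions leaves a simple random walk on $T(e_i^+)$ with extra killing. Iterating gives $\prstart{\tau_e<\taubound}{\rho}\le\prod_{i=1}^{\ell}\prstartup{\tau_{e_i}<\taubound}{T(e_{i-1}^+)}{e_{i-1}^+}$. Your level-1 coupling applied to each factor, together with \eqref{item:thetaT_multiplicative}, then finishes the proof.
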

\begin{proof}
 We present the proof of \eqref{item:thetaT_sum}.
The proofs of other items are simple using  \Cref{def:thetaT} and \Cref{nice:stopped_walk_transience}, and are omitted. Notice that for the edges in the first level, $\theta_T(e)$ only depends on the first $R$-ball. Consider a walk $Y$ from $\rho$ on the first $R$-ball and all long-range half-edges from it, which, every time it crosses a half-edge, gets terminated with probability $\qesc$ and otherwise returns to the same vertex and continues. Then $\theta_T(e)=\pr{Y\text{ hits }e}$. Therefore, if we set $L(Y)$ to be the number of different long-range half-edges $Y$ visits before it is terminated  \[\sum_{e:\:\ell(e)=1}\theta_T(e)= \sum_{e:\:\ell(e)=1}\pr{Y \text{ hits }e}= \E{L(Y)}\le \frac{1}{\qesc}\] as every time $Y$ visits a long-range half-edge it has probaility $\qesc$ of being terminated.  \end{proof}

Using these properties, we can bound the number of balls in $T$ that can be reached from the root without crossing a truncated edge as follows.

\begin{lemma}\label{nice:bound_nontrunc_size}
There exists a positive constant $C$ such that for any realisation $T$ of a random quasi-tree, the number of balls in $T$ that can be reached from the root $\rho$ without crossing a truncated edge is $\le Ce^{C\log\log N}$, and all of these balls are at levels $\le C\log\log N$.
\end{lemma}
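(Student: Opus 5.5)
We prove the two assertions separately, starting with the level bound, which is immediate from the exponential decay in \Cref{nice:thetaT_properties}\eqref{item:thetaT_exp_decay}. A ball at level $\ell\ge1$ can be reached from $\rho$ without crossing a truncated edge only if the long-range edge $e$ leading into it is not truncated, i.e.\ $\theta_T(e)>\frac{\ctrunc}{\log N}$. Combined with $\theta_T(e)\le(1-\qesc)^{\ell}$, this gives
\[(1-\qesc)^{\ell}>\frac{\ctrunc}{\log N},\]
so $\ell<\frac{\log\log N-\log\ctrunc}{-\log(1-\qesc)}\le C\log\log N$ for a suitable constant $C$. Since $\qesc$ is a fixed constant, $C$ does not depend on the realisation of $T$.

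For the count, it suffices to bound the number of non-truncated long-range edges $e$, since each reachable ball other than the root ball is entered through exactly one such edge, and all of its ancestors must also be non-truncated. We do this with a mass argument. Define $\Theta_\ell:=\sum_{e:\,\ell(e)=\ell}\theta_T(e)$, where the sum runs over all long-range edges at level $\ell$ (or, if preferred, only over those whose ancestors are all non-truncated, which makes the bound smaller). By the multiplicativity in \Cref{nice:thetaT_properties}\eqref{item:thetaT_multiplicative},
\[\Theta_{\ell+1}=\sum_{f:\,\ell(f)=\ell}\theta_T(f)\sum_{e\text{ child of }f}\theta_{T(f^+)}(e).\]
The inner sum is a first-level sum for the quasi-tree $T(f^+)$ rooted at $f^+$, so \Cref{nice:thetaT_properties}\eqref{item:thetaT_sum} applied to that subtree bounds it by $1/\qesc$. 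Hence $\Theta_{\ell+1}\le\Theta_\ell/\qesc$, and by induction $\Theta_\ell\le\qesc^{-\ell}$.

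Every non-truncated edge at level $\ell$ has $\theta_T(e)>\ctrunc/\log N$, so there are at most $\qesc^{-\ell}\log N/\ctrunc$ of them. Summing over $\ell\le C\log\log N$ gives a total of at most
\[\frac{\log N}{\ctrunc}\,C\log\log N\,\qesc^{-C\log\log N}\le C'e^{C'\log\log N}\]
reachable balls, as required.

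The only point needing care is applying item \eqref{item:thetaT_sum} to the subtree $T(f^+)$. Here $\theta$ for the children of $f$ is defined through a walk started at $f^+$ in the ball of $f^+$. That walk is forbidden from crossing back to $f^-$; in the proof of item \eqref{item:thetaT_sum} this is just another edge the walk does not use, and removing it can only decrease the number of half-edges hit. The bound $\E{L(Y)}\le1/\qesc$ therefore still holds, because every visit to a half-edge carries termination probability $\qesc$ regardless of which edges are forbidden.
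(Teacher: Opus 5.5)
Your proof is correct and is essentially the paper's argument. The level bound comes from \Cref{nice:thetaT_properties}(ii), and the count is a $\theta_T$-mass argument combining multiplicativity (i), the first-level bound (iv) and the threshold $\ctrunc/\log N$. The only difference is bookkeeping: the paper peels the non-truncated tree into $\lesssim\log\log N$ leaf-layers and bounds the mass of each layer, whereas you group edges by level via $\Theta_\ell\le\qesc^{-\ell}$, which is slightly cleaner. Your closing caveat is unnecessary, because $f$ is simply absent from $T(f^+)$, so (iv) applies to it verbatim.
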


\begin{proof}
Let us consider the finite tree $t_0$ obtained by removing the truncated edges of $T$, keeping only the connected component of $\rho$, and contracting each ball into a single vertex. By \Cref{nice:thetaT_properties}\eqref{item:thetaT_exp_decay} we know that $t_0$ has $\lesssim\log\log N$ levels.

Let $A_1$ denote the set of edges of $t_0$ leading to leaves. From \Cref{nice:thetaT_properties} \eqref{item:thetaT_multiplicative} and \eqref{item:thetaT_sum} and using that $t_0$ has $\lesssim\log\log N$ levels, we get that $\sum_{e\in A_1}\theta_{T}(e)\le \qesc^{-C_1\log\log N}$ for some constant $C_1>0$. Then $|A_1|\le\frac{\log N}{\ctrunc}\qesc^{-C_1\log\log N}\lesssim e^{C_2\log\log N}$ for some constant $C_2>0$.

Let $t_1$ be obtained by removing the edges in $A_1$ from $t_0$ (and the vertices that get disconnected from the root this way). Let $A_2$ be the set of edges leading to leaves in $t_1$. Similarly to the above, we see that $|A_2|\lesssim e^{C_2\log\log N}$. Continuing similarly, we see that the edges of $t_0$ can be partitioned into $\lesssim\log\log N$ `layers', each of which has $\lesssim e^{C_2\log\log N}$ edges. Hence $t_0$ has $\lesssim e^{C_2\log\log N}(\log\log N)\lesssim e^{C_3\log\log N}$ vertices where $C_3>0$ is a constant.
\end{proof}

\begin{lemma}\label{prob_nice}
{For any given vertex $x$ in $G$ we have $\pr{x\not\in\Vnice}\lesssim\frac{1}{(\log N)^{3/2}}$ and for any given $x$ and $y$ in $G$ we have $\pr{x\text{ not nice in }G^*\setminus\{\{x,y\}\}}\lesssim\frac{1}{(\log N)^{3/2}}$.}  Furthermore, we have that whp $|\Vnice^c|\le \frac{N}{(\log N)^{4/3}}$. \end{lemma}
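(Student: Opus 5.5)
Fix $x$. Being nice depends only on the first $K$ levels of $\what{T}_x(G^*)$. We split the event $\{x\notin\Vnice\}$ into two parts: $x$ fails to be a $K$-root, or $x$ is a $K$-root but condition (ii) of \Cref{def:nice} fails. The case of $G^*\setminus\{\{x,y\}\}$ is identical, since removing one edge only changes a single red or blue degree by one and all bounds below are insensitive to this.

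\emph{$K$-root failure.} Explore the first $K$ levels of $T_x(G^*)$ ball by ball, as in the proof of \Cref{bound_num_explored_num_bad}. By \Cref{max_deg} and \Cref{volume_high_prob_bound} we may work on the event that the explored region has at most $(e^{\Cvol 2R}\dmax)^{K+1}\le e^{C(\log\log N)^2}$ vertices (up to $o(1/N)$ probability). Each newly revealed red endpoint hits any fixed vertex with probability $\lesssim \frac{1}{N^{1-\beta}\log N}$, and new green neighbours hit already explored vertices with the similarly small probabilities of \Cref{cond_green_red_deg}. A union bound then gives $\pr{x\text{ not a }K\text{-root}}\le e^{C(\log\log N)^2}N^{-(1-\beta)}\ll(\log N)^{-2}$.

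\emph{Failure of (ii).} On the $K$-root event, the first $K$ levels of $\what{T}_x(G^*)$ can be coupled exactly with those of a random quasi-tree with blue half-edges $\what{T}$; the coupling fails only on the small event above. The quantity $P:=\prscond{\Omegatyp{\tau_K},\Omegabound{\tau_K},\taublue>\tau_K}{\what{T}}{\rho}$ is then a random variable, and we need $\pr{P<\cnice}\lesssim(\log N)^{-3/2}$.

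\begin{itemize}
\item \emph{Annealed bound.} We have $\E{1-P}\le \pr{\Omegatyp{\tau_K}^c\cup\Omegabound{\tau_K}^c}+\pr{\taublue\le\tau_K}$. For the first term, bound $\tau_K$ by a sum of $K$ regeneration-type increments via \Cref{bound_sigma1} and \Cref{iid_decomp}, so the walk visits $\lesssim(\log\log N)^{C}$ balls. Each ball is atypical with probability $\le C(\log N)^{-b}$ by \Cref{bound_atyp_prob}, and \Cref{rmk:atypical_ball} controls $\Omegabound{}$. For the second term, each visited vertex has a blue edge with probability $\asymp 1/\log N$, and the walk visits polylog-$\log$ many vertices. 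Hence $\E{1-P}\lesssim (\log\log N)^{C}/\log N$.
\item \emph{Upgrading via Markov.} Markov's inequality alone gives only $\pr{P<\cnice}\lesssim(\log\log N)^C/\log N$, which is not enough. We therefore restrict to the first ball. Let $B_0$ be the event that the root ball is typical and that its vertices within distance $R$ of the centre carry no blue half-edge. The root ball has at most $e^{\Cvol 2R}$ vertices, but the walk plausibly stays within polylog$\log$ of them, so we control $B_0$ through the finer bound on the root ball. Then $\pr{B_0^c}\lesssim(\log N)^{-b}+(\log\log N)^C/\log N$. This is still not good enough, so instead we require the event $E_0$ that some box near the root contains three red edges leading to balls that are typical and blue-free. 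Under $E_0$, \Cref{transience_until_nonbinary} shows that with probability $\gtrsim 1$ the walk escapes down one of these edges. From that point onward we need $P$ to be bounded below in a tree with independent subtrees.
\item \emph{Branching recursion.} Define $q_k$ as the probability that $P_k<\cnice$ for depth-$k$ trees. Failure at the root requires either a rare root-ball event, of probability $\lesssim(\log N)^{-2}$ from \Cref{bound_atyp_prob}, or that many of the independent child subtrees fail. Iterating this recursion should give $q_K\lesssim(\log N)^{-3/2}$.
\end{itemize}

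\textbf{Main obstacle.} The difficulty is that a single blue edge near $x$ has probability of order $1/\log N$ and can cause $P$ to be small. So the bound $(\log N)^{-3/2}$ must use the walk's escape away from that blue edge, and $\cnice$ must be chosen small enough to absorb this. I would handle this by showing that $P\ge \cnice$ holds whenever at most one blue edge lies within the root ball's first $R$ steps, since the walk then avoids it with constant probability by \Cref{transience_until_nonbinary}. Two or more such blue edges occur with probability $\lesssim(\log\log N)^{C}/(\log N)^2$.

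\emph{Global statement.} Finally, $\E{|\Vnice^c|}\lesssim N(\log N)^{-3/2}$. The events $\{x\notin\Vnice\}$ and $\{y\notin\Vnice\}$ are independent up to small error when $|x-y|$ exceeds the exploration range, so a second-moment (Chebyshev) bound gives $|\Vnice^c|\le N(\log N)^{-4/3}$ whp; Markov's inequality alone also suffices, since $(\log N)^{-3/2}\ll(\log N)^{-4/3}$.
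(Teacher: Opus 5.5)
Your $K$-root estimate and the final Markov step for $|\Vnice^c|$ are fine and essentially match the paper; the Chebyshev variant is unnecessary, as you note.

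\textbf{The gap: the failure of condition (ii) of \Cref{def:nice}.} You correctly see that Markov applied to $1-P$ only gives $(\log\log N)^C/\log N$, but none of your subsequent attempts closes this.
\begin{enumerate}
\item The events $B_0$ and $E_0$ are abandoned.
\item The branching recursion is only asserted (``should give''). It is unclear how it would avoid losing a constant factor per level over $K\asymp\log\log N$ levels. It also does not handle a single blue half-edge in the current ball (probability $\asymp 1/\log N$, not rare enough) combined with a weak child subtree.
\item Your ``Main obstacle'' paragraph names the right dichotomy: one blue half-edge can be avoided with constant probability, while two cost about $(\log N)^{-2}$. But you state it as a property of a fixed region of the graph (``within the root ball's first $R$ steps''). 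That cannot work. The walk must cross $K$ levels, not just the root ball. Moreover, the first $K$ levels of $\what{T}_x(G^*)$ typically contain $e^{\Theta((\log\log N)^2)}$ vertices. So any deterministic region containing where the walk may go before $\tau_K$ carries many blue half-edges with high probability, not merely with probability $O((\log N)^{-3/2})$.
\end{enumerate}

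\textbf{The missing idea.} Attach the ``at most one blue half-edge'' condition to the walk's trajectory, and apply Markov's inequality to a second-order quenched probability rather than to $1-P$. Let $Q$ be the conditional probability, given the graph, that before $\tau_K$ the walk does any of the following: visits more than $C(\log\log N)^{16}$ distinct vertices, enters an atypical ball, violates $\Omegabound{\cdot}$, or sees more than one blue half-edge among the vertices it visits.

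Reveal balls and blue degrees only as the walk first reaches them. Then \Cref{bound_sigma1}, \Cref{bound_atyp_prob} and the definition of atypical balls give $\E{Q}\lesssim(\log\log N)^{32}/(\log N)^2$. The key point is that two blue half-edges among polyloglog-many visited vertices cost $(\log N)^{-2}$, not $(\log N)^{-1}$. Markov with threshold $(\log N)^{-1/3}$ (even a small constant threshold works) then gives $\pr{Q>(\log N)^{-1/3}}\lesssim(\log\log N)^{32}(\log N)^{-5/3}\ll(\log N)^{-3/2}$.

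On the complement, with quenched probability $1-o(1)$ the walk stays typical, respects $\Omegabound{\tau_K}$, and meets at most one blue half-edge. That half-edge, at a vertex $v$, is avoided with constant probability. Indeed, by \Cref{transience_until_nonbinary} the number of visits to $v$ before $\tauatyp$ is at most $C'\deg(v)$ with probability close to $1$, and each visit crosses the blue half-edge with probability $1/\deg(v)$. This yields $P\ge\cnice$ for $\cnice$ small, and it is exactly the route the paper takes.
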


\begin{proof}

Let us explore the level $K$ neighbourhood of $x$ level by level.
From \Cref{volume_tail_bound} we know that with probability $1-o\left(\frac1N\right)$ the whole neighbourhood has volume $\lesssim e^{C(\log\log N)^2}$ where $C$ is a large constant. Every time the new endpoint of a red edge is revealed, the probability of it being any specific vertex is $\lesssim\frac{1}{N^{1-\beta}\log N}$. Hence the probability of the neighbourhood having an overlap is $\lesssim\frac{1}{N^{1-\beta}\log N}e^{2C(\log\log N)^2}\ll\frac{1}{(\log N)^2}$.

Now consider exploring (part of) the level $K$ neighbourhood of $x$ in $\Ggr$ by revealing balls as the walk $(X_t)_{t\le\tau_K}$ on $\Ggr$ started from $x$ first visits them. From \Cref{bound_atyp_prob} we know that for each newly revealed ball the probability that it is atypical is $\lesssim\left(\frac{1}{\log N}\right)^2$.

From \Cref{bound_sigma1} we know that for a sufficiently large $C$ the probability of $\tau_K\wedge\taubound\wedge\tauatyp\ge C(\log\log N)^{16}$ is $\lesssim\frac{1}{(\log N)^2}$. From \Cref{bound_atyp_prob} we know that the probability of any of the first $C(\log\log N)^{16}$ balls visited being atypical is $\lesssim\frac{(\log\log N)^{16}}{(\log N)^2}$. From the definition of typical balls, we know that the probability that the first $C(\log\log N)^{16}$ balls visited are all typical, but the walk hits the boundary in one of them, is $\lesssim\frac{(\log\log N)^{16}}{(\log N)^2}$.

Together these show that with probability $1-O\left(\frac{(\log\log N)^{16}}{(\log N)^2}\right)$ the walk visits $\le C(\log\log N)^{16}$ different vertices by $\tau_K$, and the events $\Omegatyp{\tau_K}$ and $\Omegabound{\tau_K}$ hold.

Now let us reveal the blue degree of each vertex $X$ visits by time $\tau_K\wedge\taubound\wedge\tauatyp$ and let us consider a walk $\what{X}$ on $G^*$ that agrees with $X$ until $\taublue\wedge\tau_K\wedge\taubound\wedge\tauatyp$. For each of the first $C(\log\log N)^{16}$ different vertices visited by $X$, if we condition on the blue degrees of the previous vertices and work on the event that they sum to at most one, the probability of it having a blue edge is $\lesssim\frac{1}{\log N}$ and the probability of it having at least two blue edges is $\lesssim\frac{1}{(\log N)^2}$. Hence the probability of having more than one blue edge emanating from the first $C(\log\log N)^{16}$ different vertices visited by $X$ is $\lesssim\frac{(\log\log N)^{32}}{(\log N)^2}$.  Using Markov's inequality, this gives that with probability at least $ 1- \frac{1}{(\log N)^{3/2}}$ the graph $G^*$ and $x$ are such that with probability at least $1-\frac{1}{(\log N)^{1/3}}$ the walk starting from $x$ visits at most one vertex with exactly one blue edge and no vertices with more than one blue edges by  time $\tau_K$. 
By \Cref{transience_until_nonbinary} for any realisation of the graph, the probability that $X$ revisits a given vertex $v$ more than $C'\deg(v)$ times by $\taublue\wedge\tau_K\wedge\taubound\wedge\tauatyp$ is $\le\eps$, where $\eps$ can be made arbitrarily small by choosing $C'$ sufficiently large. Applying this to the first vertex visited by $X$ that has a blue half-edge, we see that with positive probability $\what{X}$ does not cross the blue half-edge from this vertex by time $\taublue\wedge\tau_K\wedge\taubound\wedge\tauatyp$.

Combining the above estimates finishes the proof of the first part.

The proof of the second part is analogous to the first one. The third part follows from the first one and Markov's inequality, as the expected number of vertices which are not nice is at most $\frac{N}{(\log N)^{3/2}}$.
\end{proof}

\subsection{Success probability, size of the explored region (proof of \Cref{nice:coupling_success_and_nbhd_size})}

\begin{proof}[Proof of \Cref{nice:coupling_success_and_nbhd_size}]
From \Cref{volume_tail_bound}, we know that each time a new ball is revealed in $T$, with probability $1-o\left(\frac{1}{N^2}\right)$, its volume plus the total number of half-edges emanating from it is $\le e^{C_1R}$ where $C_1$ is a positive constant. For any $c_2>0$ we can get $e^{C_1R}\le e^{c_2\log N}$ by choosing $c_R$ sufficiently small. By \Cref{nice:bound_nontrunc_size} the number of balls in the explored region of $T$ is $\lesssim e^{C_3\log\log N}$ for some $C_3>0$, so the overall volume of this region plus the number of half-edges from it is $\lesssim e^{c_2\log N}e^{C_3\log\log N}$, which can be made $\le e^{c_4\log N}$ for any $c_4>0$.

We will show that each time a new ball in $T$ is revealed, on the event that the total number of already revealed vertices plus half-edges is $\lesssim N^{c_4}$, the probability that the new ball has an overlap or its optimal coupling with the ball in $G^*$ fails is
\begin{align}\label{eq:nice:bound_overlap_prob}
\lesssim N^{c_4+c_2}\frac{1}{N^{1-\beta}\log N}+\pr{|B_{R+1}|\ge N^{c_2}}.
\end{align}

Using this we see that the probability that there is more than one instance when there is an overlap or the optimal coupling fails is $\le\pr{\Bin{C_5N^{c_4}}{C_6\frac{N^{c_4+c_2}}{N^{1-\beta}\log N}}>1}+C_5N^{c_4}\pr{|B_{R+1}|\ge N^{c_2}}$, which is $o\left(\frac{1}{N}\right)$ for sufficiently small values of $c_2$ and $c_4$, and sufficiently small values of $c_R$ in terms of $c_2$.

We can prove \eqref{eq:nice:bound_overlap_prob} as follows.  For a set of vertices $S$ let $\Gamma(S)$ denote the set of points that are in $S$ or adjacent to a vertex in $S$ in $G^*$.

Let $\mathcal{A}$ denote the set of already explored vertices and let $e$ be a given half-edge from $\mathcal{A}$. Let $Z=e^{+}$ be the other endpoint of $e$, let $\mathcal{B}(Z)$ be the $R$-ball in green and black edges around $Z$ and let $\mathcal{B}$ be an independent copy of an  $R$-ball in green and black edges in $G^*$ centred at the origin. Then for any values of $A$, $\Gamma_{A}$, $e'_{-}$, $z$, $B$ and $\Gamma_B$ with $(z+\Gamma_B)\cap\Gamma_A=\emptyset$ and $(e_{-}',z)\notin z+\Gamma_B$ we have
\begin{align*}
\prcond{Z=z,\mathcal{B}(Z)=z+B,\Gamma(\mathcal{B}(Z))={(z+\Gamma_B)\cup (e_{-}',z)}}{\mathcal{A}=A,\Gamma(\mathcal{A})=\Gamma_A,e_{-}=e'_{-}}\qquad\qquad&\\
\quad=\quad\prcond{Z=z,\mathcal{B}=B',\Gamma(\mathcal{B})=\Gamma_B}{e_{-}=e'_{-}}.&
\end{align*}

This shows that the probability that the optimal coupling succeeds and that there is no overlap can be lower bounded by the probability that $Z$, $\mathcal{B}$ and $\Gamma(\mathcal{B})$ satisfy $(Z+\Gamma(\mathcal{B}))\cap\Gamma_A=\emptyset$ and $(e_{-}',Z)\notin Z+\Gamma_B$ for a given realisation of $A',\Gamma'_A, e'_{-}$.

For any realisation of $\Gamma_B$ and $\Gamma_A$ we have $\#\left\{z:\:(z+\Gamma_B)\cap\Gamma_A\ne\emptyset\right\}\le|\Gamma_A|\cdot|\Gamma_B|$, for any specific value of $z$ we have $\pr{Z=z}\lesssim\frac{1}{N^{1-\beta}\log N}$ and the probability that any specific red or blue edge $(e_{-}'-z,0)$ is in $
\Gamma(\mathcal{B})$ is at most $\lesssim\frac{1}{N^{1-\beta}\log N}$. Therefore, the probability of overlap or coupling failing given the realisation of $e_{-}, \Gamma(\mathcal{A}), \Gamma(\mathcal{B})$ is bounded by
\begin{align*}& \prcond{Z\in \left\{z:\:(z+\Gamma_B)\cap\Gamma_A\ne\emptyset\right\}}{\Gamma(\mathcal{A})=\Gamma_A,e_{-}=e'_{-}, \Gamma(\mathcal{B})=\Gamma_B}
\\&+ \prcond{(e'_{-},Z)\in Z+\Gamma_B}{\Gamma(\mathcal{A})=\Gamma_A,e_{-}=e'_{-}, \Gamma(\mathcal{B})=\Gamma_B}\lesssim \frac{|\Gamma_A||\Gamma_B|+1}{N^{1-\beta}\log N}
\end{align*}
and using the assumption $|\Gamma(\mathcal{A})|\le N^{c_4}$ the desired inequality follows. 
\end{proof}

\subsection{Cool boxes, bounding the range}

A major idea in the proof of \Cref{nice:nbhd_properties} \eqref{item:nice:bound_num_visited} {and \eqref{item:nice:not_hit_boundary}} is considering so-called cool boxes. In this section, we give the definition and prove some important properties that together {with \Cref{nice:bound_nontrunc_size}} imply a bound corresponding to \Cref{nice:nbhd_properties} \eqref{item:nice:bound_num_visited} {and \eqref{item:nice:not_hit_boundary}}.

\begin{definition}\label{def:cool_box}
Let $r$ be a large constant to be chosen later. Let us partition the torus $G$ into boxes of side length $r$. Let us call a box \emph{cool} if in $G^*$ there are at least two red edges emanating from it, and each of its vertices has degree $\le\dcool$, where $\dcool$ is a constant to be chosen later.
\end{definition}

\begin{remark}
The following arguments would also work if we considered blue and red edges instead of just red ones.
\end{remark}

The main results of this section are the following.

\begin{lemma}\label{nice:visit_many_cool_boxes}
Let $\Ccool$ be any constant. Then for sufficiently large values of $r$ and $\dcool$ the following holds. For any vertex $x$ of $G$, with probability $1-o\left(\frac1N\right)$, the graph $G^*$ is such that {with high probability} a walk $X$ on it from $x$ visits $\ge\frac{R}{dr}$ different boxes before reaching graph distance $R$ from $x$, and out of the first $\frac{R}{dr}$ different boxes visited $\ge\Ccool\frac{\log N}{\log\log N}=:\tcool$ are cool.
\end{lemma}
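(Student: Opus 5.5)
Two things have to be shown: the walk visits at least $\frac{R}{dr}$ distinct boxes before reaching distance $R$, and many of the first $\frac{R}{dr}$ boxes visited are cool.

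\textbf{Many boxes are visited.} This part is deterministic. Whenever the walk is at graph distance $R$ (in the torus metric) from $x$, its path has moved through boxes of side $r$. Green edges have length at most $D^{1-\beta}$, but we stop the walk at the first time it reaches distance $R$, measured along the trajectory in $G$. Since $R=c_R\log N\ll D^{1-\beta}$, a green edge can jump far away. I would handle this by interpreting "reaching graph distance $R$" in the quasi-tree or ball sense used in this section, i.e.\ $\Gg$-distance. A path of $\Gg$-length $R$ from $x$ passes through at least $R/(dr)$ distinct boxes, provided each box is entered from a box not yet visited along the loop-erased path. More carefully: the loop-erased path to a vertex at distance $R$ has $R$ distinct vertices, and each box contains at most $r^d$ of them. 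Any path realising distance $R$ is geodesic-like, so within one box it uses at most $dr$ vertices. Hence at least $R/(dr)$ distinct boxes are visited.

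\textbf{Many visited boxes are cool.} Coolness of a box depends on the red edges emanating from it and on the degrees of its vertices, so the coolness indicators of disjoint boxes are nearly independent. Each box is cool with probability at least $1-\eps(r,\dcool)$, where $\eps\to0$ as $r,\dcool\to\infty$. This uses that a box of volume $r^d$ has $\mathrm{Poisson}$-like $\asymp r^d$ red half-edges, together with Markov's inequality on degrees.

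The difficulty is that the walk chooses which boxes to visit, adaptively, and it may steer toward uncool regions. I would avoid this with a union bound over box-paths. Consider sequences of $m=\frac{R}{dr}$ distinct boxes that are adjacent in the order of first visit. Such a sequence forms a lattice animal, so by exploration order it is a connected set of boxes containing the box of $x$. Green edges break connectivity, so instead I would reveal the sequence via the walk and use the exploration martingale. Reveal coolness of each box only when the walk first enters it. Conditional on the past, the new box's red edges are nearly fresh: the dependence on earlier boxes arises only through edges between them, of probability $\lesssim m^2 r^{2d}/(N^{1-\beta}\log N)$. Its degree information is affected only through green edges to previously visited boxes, which are controlled by the same Bernoulli domination as in \Cref{bound_range_in_ball_until_tauatyp}, giving $\mathrm{Bin}(m r^d, \til C/\log\log N)$ extra degree. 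Hence each newly entered box is cool with conditional probability at least $1-2\eps$, up to an additive term of order $m r^d/\log\log N$ in the degree count. The degree cap $\dcool$ is violated only if this binomial is large, which costs a constant fraction at most.

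Azuma's inequality then gives at least $(1-3\eps)m$ cool boxes among the first $m$, with failure probability $e^{-cm}=N^{-c'}$. Because $m\asymp\log N\gg \tcool=\Ccool\frac{\log N}{\log\log N}$, the threshold is easily met, and in fact a positive fraction suffices. The conditioning is over both the graph and the walk. Converting "annealed probability $o(1/N)$" into "with probability $1-o(1/N)$ over $G^*$, whp over the walk" is done by Markov's inequality, which requires the annealed failure probability to be $o(1/N)\cdot o(1)$. I therefore expect the main obstacle to be making the Azuma bound strong enough, i.e.\ polynomially small with exponent larger than $1$. I would first try to get this by choosing $c_R$ relative to $r$ so that $cm>2\log N$. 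If $c_R$ must be small, I would instead weaken the requirement to $\tcool$ cool boxes, which allows a large-deviation bound $\exp(-m\log(1/\eps))$ with $\eps$ tiny, making the exponent as large as desired.
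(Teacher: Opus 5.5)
Your skeleton, once you take the fallback rather than Azuma, is the paper's. Reveal the degrees of a box when the walk first enters it. Show that every newly entered box is cool with conditional probability at least $1-\puncool$, whatever was revealed before. Then bound $\pr{\Bin{m}{1-\puncool}<\tcool}\le m^{\tcool}\puncool^{m-\tcool}$ to get an annealed failure probability $\ll N^{-2}$, and finish with Markov. (Azuma cannot work here: it gives an exponent of order $\eps^2 m\asymp\eps^2 c_R\log N/(dr)$, and $c_R$ is forced to be small elsewhere.)

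The genuine gap is the degree part of the per-box estimate. You dominate the extra degree coming from edges to previously explored vertices by $\Bin{mr^d}{\til C/\log\log N}$, as in \Cref{bound_range_in_ball_until_tauatyp}. But $mr^d=\frac{c_R r^{d-1}}{d}\log N$, so this binomial has mean of order $\frac{\log N}{\log\log N}\to\infty$. It therefore exceeds the constant $\dcool$ with probability tending to $1$, so the bound gives nothing, and your claim that it ``costs a constant fraction at most'' is unsupported. The crude per-vertex bound $\dmax\cdot\max_y \pgreen_{0,y}\asymp 1/\log\log N$ sufficed in \Cref{bound_range_in_ball_until_tauatyp} only because there it is used for $k\lesssim\log\log N$ revealed vertices.

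The missing idea is to use the decay $p_{0,y}\asymp\frac{1}{\log N\,|y|^d}$. At most $mr^d\lesssim\log N$ vertices have been revealed, each of degree $\le\dmax$. By \Cref{cond_green_red_deg}, the expected number of edges from a vertex of the new box to them is largest when they are packed around it, and then it is $\lesssim\dmax\sum_{|y|\le(C\log N)^{1/d}}\frac{1}{\log N\,|y|^d}\asymp\frac{\log N}{\log\log N}\cdot\frac{\log\log N}{\log N}\asymp 1$. Add the expected number of edges to unrevealed vertices, which is at most $1$, and apply Markov: some vertex of the box has degree $>\dcool$ with probability $\lesssim r^d/\dcool$, uniformly in the past.

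You also need this quantitatively, which ``$\eps$ tiny'' hides. Here $m=\frac{c_R\log N}{dr}$ shrinks as $r$ grows, and $m^{\tcool}$ is as large as $N^{\Ccool(1-o(1))}$. So you need $\frac{c_R}{2dr}\log(1/\puncool)>\Ccool+2$. This holds because the red-edge condition fails with probability $\lesssim e^{-cr^d}$ and $r^{d-1}\to\infty$, after which you take $\dcool\ge r^d e^{cr^d}$.

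Finally, your box count conflates the walk with a geodesic: the walk need not visit the boxes of a geodesic, and its loop-erasure is not geodesic-like. The correct one-line argument works for the distance of any graph containing the black edges. Erasing loops in the sequence of boxes the walk visits yields a path from $x$ to the walk's position of length at most $(dr+1)$ times the number of distinct boxes visited.
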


\begin{lemma}\label{nice:cross_many_red_edges}
Fix any vertex $x$ of $G$, and consider any realisation of $G^*$ with at least $\frac{\log N}{\log\log N}$ cool boxes. Then with high probability a walk $X$ on $G^*$ from $x$ crosses $\ge\frac{\log N}{(\log\log N)^2}$ different red edges by the time it visits $\frac{\log N}{\log\log N}$ different cool boxes.
\end{lemma}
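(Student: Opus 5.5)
We prove \Cref{nice:cross_many_red_edges}, fixing a realisation of $G^*$ with at least $\frac{\log N}{\log\log N}$ cool boxes. The plan is to give the walk, on each visit to a fresh cool box, a probability bounded below by a constant (depending on $r$ and $\dcool$, but not on $N$) of crossing a red edge that it has never crossed before. We then compare with a sum of Bernoulli variables, but only along a sparse subsequence of cool boxes, so that the ``fresh'' requirement costs little. Everything is conditional on the fixed graph; the only randomness is the walk.

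\textbf{Step 1 (local crossing).} Let $Q$ be a cool box, $v\in Q$ any vertex, and $e$ any red edge emanating from $Q$. Every vertex of $Q$ has degree $\le\dcool$, and $Q$ has side length $r$. So the walk started from $v$ can follow a path of black edges inside $Q$, of length at most $dr$, to the endpoint of $e$, and then cross $e$. This happens with probability at least $\dcool^{-(dr+1)}=:p_0>0$.

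Since $Q$ has at least two red edges emanating from it, for any single red edge $f$ there is another red edge from $Q$ different from $f$. Hence, whatever the walk has done before, for any given red edge $f$ the walk immediately crosses a red edge from $Q$ other than $f$ with probability at least $p_0$.

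\textbf{Step 2 (avoiding repetitions).} The difficulty is that the crossed red edge may be one already crossed earlier, for example by bouncing back and forth across one red edge between two cool boxes. To handle this, we use that a cool box has only at most $r^d\dcool$ edges emanating from it.

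Let $Q_1,Q_2,\dots$ be the distinct cool boxes in order of first visit. The set $S_k$ of red edges crossed before the first visit to $Q_k$ has size at most the number of red crossings so far. Among the red edges of $Q_k$ (at least $2$, at most $r^d\dcool$), we would like one not in $S_k$. This fails only if all red edges of $Q_k$ were already crossed, which requires the walk to have previously visited the other endpoints of those edges.

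I expect this step to be the main obstacle. The first thing to try is the following. Each crossed red edge has two endpoint boxes, so at most $2|S_k|$ boxes can have all their red edges already in $S_k$. Therefore, if fewer than $\frac{\log N}{(\log\log N)^2}$ distinct red edges have been crossed, then at most $\frac{2\log N}{(\log\log N)^2}$ of the first $\frac{\log N}{\log\log N}$ cool boxes are ``spent''. That leaves at least $\frac12\frac{\log N}{\log\log N}$ cool boxes which, on first visit, have a red edge not in $S_k$.

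\textbf{Step 3 (martingale/Bernoulli comparison).} At the first-visit time of each non-spent cool box, Step 1 gives probability at least $p_0$ (by the Markov property) of immediately crossing a new red edge. By the strong Markov property at these stopping times, the number of successes stochastically dominates $\Bin{m}{p_0}$ on the event that at least $m=\frac12\frac{\log N}{\log\log N}$ non-spent boxes occur.

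Now suppose that by the time the walk visits $\frac{\log N}{\log\log N}$ cool boxes it has crossed fewer than $\frac{\log N}{(\log\log N)^2}$ new red edges. Then it has had $m$ trials but fewer than $\frac{\log N}{(\log\log N)^2}\ll p_0 m$ successes. By Chernoff bounds this has probability $\exp(-c\frac{\log N}{\log\log N})=o(1)$, which proves the lemma with high probability.
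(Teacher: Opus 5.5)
Your proof is correct and is essentially the paper's argument. At the first visit to each new cool box, uniformly over the past, the walk has a constant probability (depending only on $r$ and $\dcool$) of leaving through a fresh red edge, and a binomial comparison finishes.

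Your Step 2 counting is harmless but unnecessary. At the first visit to $Q_k$, the only red edge of $Q_k$ that can already have been crossed is the one used to enter: crossing any other would have required being in $Q_k$ earlier, since red edges are much longer than $r$. This is why the paper only asks for a red edge different from the entering one.
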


\begin{lemma}\label{nice:reach_level_CloglogN}
For any positive constant $C$, the following holds. For any realisation of the quasi-tree $T$, with high probability a walk $X$ on it from its root $x$ reaches level $C\log\log N$ of $T$ or hits the boundary of a ball before crossing $\frac{\log N}{(\log\log N)^2}$ different red edges.
\end{lemma}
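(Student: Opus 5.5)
Fix the realisation $T$ and the constant $C$, and write $\ell_0:=C\log\log N$ and $m:=\frac{\log N}{(\log\log N)^2}$. I would show that, up to the first time the walk hits the boundary of a ball or reaches level $\ell_0$, it can cross at most $o(m)$ different red edges with high probability. Everything then rests on the tree structure of $T$: the red long-range edges are the only connections between distinct balls.

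\textbf{Step 1: count levels and backtracks.} Before time $\taubound$, every crossing of a long-range edge changes the level by exactly one. By \Cref{nice:stopped_walk_transience}, each time the walk crosses a long-range edge $e$ from $e^-$ to $e^+$, it avoids crossing back before $\taubound$ with probability at least $\qesc$. Call a crossing from $e^-$ to $e^+$ a \emph{forward excursion}; it is \emph{failed} if the walk later returns across $e$ before $\taubound$. Until the walk reaches level $\ell_0$ or $\partial$, the tree structure shows that the number of successful (never undone) forward crossings along the current path is at most $\ell_0$.

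\textbf{Step 2: bound the failed excursions.} Each new forward crossing succeeds with conditional probability at least $\qesc$, given the past, by the strong Markov property at the crossing time. So the number of forward crossings made before $\ell_0$ successes or before $\taubound$ is dominated by a negative binomial variable with parameters $\ell_0$ and $\qesc$. With probability $1-o(1)$ this is at most $C'\log\log N$, with room to spare. I would set this up as a sequence of Bernoulli trials: after a failure, the walk is back at the lower endpoint and the next new forward crossing is a fresh trial. The mild issue here is that ``success'' is defined in terms of the future; I would handle it by noting that a success at a deeper level nested inside a later failure is also undone. The cleaner device is to look at the loop-erased path of levels. Its length increases by one at a success and returns at a failure, and every failure costs a fresh trial. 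The total number of forward crossings up to the stopping time is then at most the number of trials needed to obtain $\ell_0$ successes, because each trial either adds a permanent step to the path or is undone; permanent steps never exceed $\ell_0$ before the stopping time.

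\textbf{Step 3: conclude.} Every different red edge crossed is crossed forward at least once, so the number of different red edges crossed before reaching level $\ell_0$ or $\partial$ is at most the number of forward crossings. By Step 2 this is $O(\log\log N)$ with high probability, which is $\ll m$. Hence with high probability the walk reaches level $\ell_0$ or hits $\partial$ before crossing $m$ different red edges. I would also check that blue long-range edges do not break the argument: \Cref{nice:stopped_walk_transience} covers any long-range edge, and the count only needs to be restricted to red edges.

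The main obstacle is Step 2, namely making the success/failure bookkeeping rigorous, since ``never returning before $\taubound$'' is not a stopping-time event. If the bookkeeping gets messy, the fallback is a cruder bound: the probability that the walk crosses $k$ distinct forward edges without exceeding level $\ell_0$ decays like $\binom{k}{\ell_0}(1-\qesc)^{k-\ell_0}$, obtained by a union bound over excursion patterns. That bound is still $o(1)$ at $k=m$.
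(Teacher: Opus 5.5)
Your Steps 1 and 3 are fine: distinct red edges crossed are bounded by forward crossings, and the never-undone crossings form a single root path, so at most $\ell_0$ of them occur before $\tau_{\ell_0}\wedge\taubound$. The gap is in the probabilistic core of Step 2, which does not go through.

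\Cref{nice:stopped_walk_transience} gives each forward crossing conditional success probability at least $\qesc$ given the past \emph{at the time of that crossing}. When crossings are indexed globally, however, the outcome of a crossing is usually still undecided when the next one happens. Nested outcomes are also strongly dependent: if a crossing of $e$ is eventually undone, every crossing made inside $T(e^+)$ in the meantime is undone too. So the outcomes are not sequential Bernoulli trials. Your loop-erased-path device only restates the deterministic inequality (forward crossings $\le$ trials needed for $\ell_0+1$ successes); it does not give the distributional domination.

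The domination is in fact false in general. Take a birth--death chain on levels: up with probability $p>\frac12$, down with probability $1-p$, forced up at $0$. Each up-crossing is permanent with conditional probability exactly $q=\frac{2p-1}{p}$. Yet the first two up-crossings (starting from level $0$) both fail with probability $(1-p)h+ph^2>h^2$, where $h=1-q$. Moreover, the number of down-steps made while passing from one level to the next decays like $(4p(1-p))^k$ up to polynomial factors. This is heavier than $h^k$, so for large $\ell_0$ the total is not dominated by the negative binomial. The same example breaks your fallback, because the per-pattern bound $(1-\qesc)^{\#\mathrm{failures}}$ is exactly what fails.

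The repair is to do the bookkeeping level by level, as the paper does. Fix $\ell\le\ell_0$. Two successive crossings from level $\ell-1$ to level $\ell$ are separated by a return to level $\ell-1$, so the fate of each is decided before the next begins. The strong Markov property together with \Cref{nice:stopped_walk_transience} then dominates the number of such crossings before $\taubound$ by $\Geom{\qesc}$.

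The paper then bounds each level's count by $\frac{\log N}{C(\log\log N)^3}$, with failure probability $\ll\frac{1}{\log\log N}$, and takes a union bound over the $C\log\log N$ levels. Alternatively, summing the expectations gives at most $\ell_0/\qesc$ forward crossings on average, and Markov's inequality at $m$ finishes. You lose the $O(\log\log N)$ concentration you claimed, but you do not need it since $m\gg\log\log N$.
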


We present their proofs below.

\begin{proof}[Proof of \Cref{nice:visit_many_cool_boxes}]
Let us explore parts of $G^*$ as the walk visits them. First, let us reveal the green, red and blue degrees of each vertex in the box containing $x$. This will in particular tell us whether the box is cool. Then always proceed with the exploration as follows. Take steps with the walk. If the walk steps along a green, red or blue half-edge whose other endpoint is not revealed yet, then sample the other endpoint of the half-edge with the correct distribution and let the walk step there. If the walk enters a box that has not been visited before, then reveal the green, red and blue degrees of the box, hence also revealing whether the box is cool.

Each box has diameter $\le dr$, hence the walk visits at least $\frac{R}{dr}$ different boxes before reaching graph distance $R$ from $x$.

 We will now show that there is a small constant $c$ and a large constant $C$ such that the first $\frac{R}{dr}$ times that a box is visited for the first time, that box has probability at least $1-\puncool$ of being cool, regardless of what was explored so far, where $\puncool\le C\left( e^{-cr^d}+\frac{r^d}{\dcool}\right)$. Indeed, assume that the above process has so far revealed $<\frac{R}{dr}$ different boxes and that the walk now steps into a new box and let the box have vertices labelled by $v_1$, ..., $v_{r^d}$. As $\frac{R}{dr}r^d= N^{o(1)}$ for each of these $v_i$s the probability of having a red edge leading to one of the yet unrevealed vertices is lower bounded by a positive constant, meaning that the probability that the box has fewer than $2$ red edges can be upper bounded by $e^{-\text{const}(r^d-1)}(r^d+1)\le Ce^{-cr^d}$. To bound the probability of any of the vertices having degree larger than $\dcool$, first notice that the expected sum of green, red and blue degrees to yet unrevealed vertices is upper bounded by the expected sum of green, red and blue degrees of a vertex in a graph where nothing was revealed, which is $1$. We know by \Cref{max_deg} that the probability of any vertex having degree $>\dmax$ is $o\left(\frac1N\right)$. Even if all of the so far revealed vertices had degree $\dmax$, using \Cref{cond_green_red_deg}  we get that the expected number of edges between these and a given $v_i$ is $\lesssim\dmax\sum_{y:\:|y|\le\left(\frac{R}{dr}r^d\right)^{\frac1d}}p_{0,y}\lesssim\frac{\log N}{\log\log N}\sum_{y:\:|y|\le(C\log N)^{\frac1d}}\frac{1}{\log N}\frac{1}{|y|^d}\asymp1$. Markov's inequality gives that the probability of at least one $v_i$ having degree $>\dcool$ is $\lesssim\frac{r^d}{\dcool}$.

Using these and \Cref{Bin_rough_tail_bound} we get that the probability of seeing less than $\tcool$ different cool boxes is
\begin{align*}
&\le\:\pr{\Bin{\frac{R}{dr}}{1-\puncool}\le\tcool}\:\le\:\left(\frac{R}{dr}\right)^{\tcool}\puncool^{\frac{R}{dr}-\tcool}\:\le\:\left(\frac{R}{dr}\right)^{\tcool}\left(Ce^{-cr^d}+C\frac{r^d}{\dcool}\right)^{\frac12\frac{R}{dr}}
\\&=\:\exp\left(\Ccool\frac{\log N}{\log\log N}\left(\log\log N-\log\left(\frac{dr}{c_R}\right)\right)+\frac12\frac{c_R \log N}{dr}\log\left(Ce^{-cr^d}+C\frac{r^d}{\dcool}\right)\right)
\end{align*}
This can be made $\ll\frac{1}{N^2}$ by setting $r$ sufficiently large in terms of $\Ccool, C, c, c_R$ and $d$, so that $\Ccool+\log(2C)\frac{c_R}{2dr}-r^{d-1}c\frac{c_R}{2d}<-2 $ and setting $\dcool$ to satisfy $r^de^{cr^d}\le \dcool$. The proof now follows by Markov's inequality. 
\end{proof}

\begin{figure}[h]
\centering
\includegraphics[width=100mm]{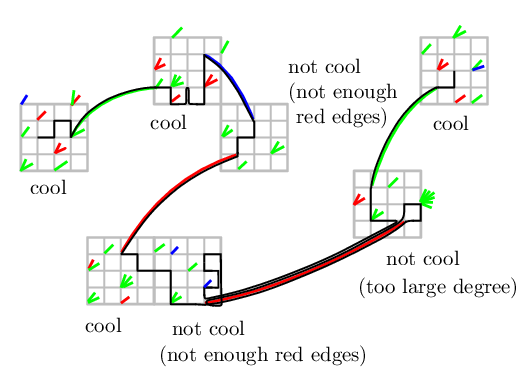}
\caption{Illustration for the proof of \Cref{nice:visit_many_cool_boxes}. The walk is marked with black, the boxes revealed are shown in grey, and the green, red and blue degrees of the vertices within these boxes are also indicated.}
\end{figure}

\begin{proof}[Proof of \Cref{nice:cross_many_red_edges}]
Note that each time the walk $X$ enters a cool box, there is a positive probability that it exits the box via a red edge that is different from the edge it entered through. Say this probability is at least $q$ where $q$ is a small positive constant depending on $r$ and $\dcool$.

Then the probability of the walk crossing $<\frac{\log N}{(\log\log N)^2}$ different red edges by the time it visits $\frac{\log N}{\log\log N}$ different cool boxes is
\[
\le\quad\pr{\Bin{\frac{\log N}{\log\log N}}{q}<\frac{\log N}{(\log\log N)^2}}\quad\ll\quad1.\qedhere
\]
\end{proof}

\begin{proof}[Proof of \Cref{nice:reach_level_CloglogN}]
Let $\ell\in\left\{1,...,C\log\log N\right\}$. We know from \Cref{nice:stopped_walk_transience} that every time the walk crosses from level $(\ell-1)$ to level $\ell$, there is a positive probability that it will not return to level $(\ell-1)$ without hitting the boundary of a ball first. Therefore the probability of crossing from level $(\ell-1)$ to $\ell$ more than $\frac{\log N}{C(\log\log N)^3}$ times without hitting a boundary is $\ll\frac{1}{\log\log N}$. Taking the union bound over $\ell$ gives the result.
\end{proof}

\subsection{Concluding the proof of \Cref{nice:nbhd_properties}}

In what follows, we present some auxiliary results to establish \Cref{nice:nbhd_properties}\eqref{item:nice:bound_time}, then we conclude the proof of \Cref{nice:nbhd_properties}.

\begin{lemma}\label{bound_sum_of_first_k_deg}
For any $C_0>0$ there exists a $C>0$ such that for any $c\in(0,C_0)$ the following is satisfied. For any $x$, with probability $1-o\left(\frac1N\right)$, the graph $G^*$ is such that, whp, the sum of the degrees of the first $c\log N$ different vertices visited by a walk $X$ from $x$ is $\le Cc\log N$.
\end{lemma}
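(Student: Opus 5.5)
We plan to reveal $G^*$ along the walk, exactly as in the proof of \Cref{nice:visit_many_cool_boxes}, and to show that the degrees of the successive new vertices are stochastically dominated by i.i.d.\ variables with an exponential moment. Let $v_1,v_2,\dots$ denote the distinct vertices visited by $X$, in order of first visit. We explore $G^*$ as the walk moves. When $X$ first arrives at $v_k$, we reveal the green, red and blue half-edges of $v_k$ whose other endpoints are still unrevealed. When the walk crosses a half-edge, we sample its other endpoint at that moment. The degree $\deg(v_k)$ is then $2d$, plus the number of already-revealed edges joining $v_k$ to $\{v_1,\dots,v_{k-1}\}$, plus a fresh number of added edges to unrevealed vertices. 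The fresh part is stochastically dominated by $\dgrb$, independently of the past, because its indicators are independent Bernoullis with the unconditional probabilities. By \Cref{degree_exp_moment} this part has an exponential moment.

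The main obstacle is the term counting edges from $v_k$ back to earlier vertices. Its contribution, summed over $k$, is at most twice the number of added edges among $\{v_1,\dots,v_m\}$ with $m=c\log N$. Each such edge was revealed as a half-edge from some $v_j$ before it was matched to $v_k$, so we bound this total by the sum of the fresh degrees already counted. Every added edge inside the visited set is counted once as a fresh half-edge at the endpoint visited first, and once more at the later endpoint. This gives the deterministic bound
\[
\sum_{k\le m}\deg(v_k)\;\le\;2dm+2\sum_{k\le m}D_k ,
\]
where $D_k$ is the number of fresh added half-edges revealed at $v_k$. The only subtlety is that the walk's choice of which vertex becomes $v_k$ depends on the past. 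The domination nevertheless holds conditionally on $\F_{k-1}$, because the fresh degree of whichever vertex is chosen is still dominated by $\dgrb$: \Cref{cond_green_red_deg} gives that conditioning on the absence of edges to explored vertices only decreases the probabilities. Hence $(D_k)_{k\le m}\stle$ an i.i.d.\ sequence with law $\dgrb$.

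A Chernoff bound using the exponential moment then gives $\pr{\sum_{k\le m}D_k> C'm}\le e^{-c''m}$ once $C'$ is large in terms of $C_0$. The probability here is annealed, over the graph and the walk together. Taking $C=2d+2C'$, we get $\pr{\sum_{k\le m}\deg(v_k)>Cc\log N}\le N^{-c''c}$. This is not $o(1/N)$ when $c$ is small, so we also need a bound that is uniform in small $c$. For that we use the crude bound $\deg\le\dmax$ from \Cref{max_deg} on a high-probability event, and replace $m$ by $\max(m,C_1\log N)$ in the Chernoff step, enlarging $C$ accordingly. This yields an annealed failure probability $\le N^{-2}$. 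Markov's inequality then converts this into the quenched statement: with probability $1-o(1/N)$ over $G^*$, the walk-probability of failure is at most $N^{-1/2}=o(1)$.
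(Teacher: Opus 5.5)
Your argument is correct at its core, and it handles the edges from $v_k$ back to earlier visited vertices differently from the paper. The paper dominates each $\deg(v_k)$ directly. For an earlier vertex $y$ it uses \Cref{cond_green_red_deg} to replace the edge probability by $C_1\dg(y)\pgreen_{v_k,y}$ (and similarly for red and blue). This forces it to work on the max-degree event of \Cref{max_deg}. It must also check that these boosted parameters, placed on the $C_0\log N$ nearest sites, still sum to $O(1)$: the factor $\frac{\log N}{\log\log N}$ is cancelled by $\frac{1}{\log N}\sum_{|y|\lesssim(\log N)^{1/d}}|y|^{-d}\asymp\frac{\log\log N}{\log N}$. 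Your double counting instead observes that every added edge inside $\{v_1,\dots,v_m\}$ is a fresh edge at its earlier endpoint. This gives the deterministic bound $\sum_{k\le m}\deg(v_k)\le 2dm+2\sum_{k\le m}D_k$, so only the fresh parts need control, and these are dominated by i.i.d.\ copies of $\dgrb$ with no loss. Your route is more elementary and costs only a factor $2$; the paper's route gives a per-vertex domination, which this lemma does not need.

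Two justifications should be fixed. First, the domination of $D_k$ does not come from \Cref{cond_green_red_deg}, which concerns conditioning on the degree of an endpoint and multiplies the edge probability by about $k$. The right reason is as follows. Everything revealed before $D_k$, including the identity of $v_k$, is a function of the walk's own randomness and of pairs meeting $\{v_1,\dots,v_{k-1}\}$. By contrast, $D_k$ involves only pairs $\{v_k,w\}$ with $w\notin\{v_1,\dots,v_k\}$. Second, the edges from $v_k$ to earlier vertices are not all ``already revealed'' when $v_k$ is first reached, since unmatched half-edges of earlier $v_j$ may land on $v_k$. They must be revealed at that moment so the walk knows $\deg(v_k)$. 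Since they are pairs meeting the old set, nothing else changes.

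Your patch for small $c$ does not work. Replacing $m$ by $\max(m,C_1\log N)$ gives only a bound of order $C_1\log N$, which is at most $Cc\log N$ only if $C\gtrsim C_1/c$. The bound $\deg\le\dmax$ gives only $m\dmax\gg m$. In fact no annealed bound followed by Markov can be uniform as $c\to0$. Conditionally on the past, each fresh degree exceeds $C$ with probability at least some $q(C)>0$. Hence the annealed probability of the bad event is at least $q(C)^{c\log N}=N^{-c\log(1/q(C))}$, which is not $o(1/N)$ for small $c$. The paper's Chernoff step has the same defect, since its exponent $C_3$ is proportional to $c$. So drop the patch. Your argument gives the lemma with $C$ depending on a lower bound $c_0$ for $c$, which is all that is used later, where $c$ is a fixed constant.
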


\begin{proof}
Let us explore (part of) the graph $G^*$ as we take steps with the walk $X$ as follows.

If the walk $X$ is at a vertex $v$ that is visited for the first time, let us sample the green, red and blue degrees of $v$ according to the correct distributions based on what we have revealed so far. Then pick one of the half-edges from $X$ uniformly randomly and if its other endpoint is not known yet, let us sample it according to the correct distribution. Then let $X$ step to the other endpoint of the edge.

If the walk $X$ is at a vertex $v$ that has been visited before, then we already know the green, red and blue degree of $v$. Let us pick a half-edge from $v$ uniformly at random, if its other endpoint is not revealed yet, then sample it according to the correct distribution, and let $X$ step to the other endpoint of the edge.

Let us continue this until $X$ visits $c\log N$ different vertices. Let $v_k$ denote the $k$th new vertex visited (let $v_0=x$) and let $\mathcal{H}_{k-1}$ be the $\sigma$-algebra generated by everything we know until then. Using \Cref{cond_green_red_deg} and the independence of edges of different colours, we get that for any realisation of $\mathcal{H}_{k-1}$ with $\deg(v_i)\ll\log N$ for all $i\in\{0,1,...,k-1\}$ we have
\begin{align*}
\left(\dg(v_k),\dr(v_k),\db(v_k)\mid\mathcal{H}_{k-1}\right)\quad\stle\quad\left(\sum_{y}B_{g,y},\:\sum_{y}B_{r,y},\:\sum_{y}B_{b,y}\right)
\end{align*}
where $B_{g,y}$, $B_{r,y}$ and $B_{b,y}$ are independent Bernoulli random variables, with $B_{g,y}$, $B_{r,y}$ and $B_{b,y}$ having parameters $\pgreen_{v_k,y}$, $\pred_{v_k,y}$ and $\pblue_{v_k,y}$ respectively for $y\not\in\{v_0,...,v_{k-1}\}$, and having parameters $C_1\dg(y)\pgreen_{v_k,y}$, $C_1\dr(y)\pred_{v_k,y}$ and $C_1(\log N)\db(y)\pblue_{v_k,y}$ respectively for $y\in\{v_0,...,v_{k-1}\}$, where $C_1$ is a positive constant.

If $k\le C_0\log N$ and $\dg(v_i),\dr(v_i),\db(v_i)\le C_2\frac{\log N}{\log\log N}$ for all $i\in\{0,1,...,k-1\}$, then we have 
\begin{align*}
\left(\sum_{y}B_{g,y},\:\sum_{y}B_{r,y},\:\sum_{y}B_{b,y}\right)\quad\stle\quad\left(\sum_{y}I_{g,y},\:\sum_{y}I_{r,y},\:\sum_{y}I_{b,y}\right)
\end{align*}
where $I_{g,y}$, $I_{r,y}$ and $I_{b,y}$ are independent Bernoulli random variables, with $I_{g,y}$ and $I_{b,y}$ having parameters $C_1C_2\frac{\log N}{\log\log N}\pgreen_{0,y}$ and $C_1C_2\frac{(\log N)^2}{\log\log N}\pblue_{0,y}$ respectively for the $C_0\log N$ vertices $y$ with the smallest graph distance from $0$; $I_{g,y}$ and $I_{b,y}$ having parameters $\pgreen_{0,y}$ and $\pblue_{0,y}$ respectively for the other vertices $y$; $I_{r,y}$ having parameter $C_1C_2\frac{\log N}{\log\log N}\pred_{0,y}$ for the $C_0\log N$ vertices $y$ that have the smallest graph distance from $0$ while satisfying $|y|>D^{1-\beta}$; and $I_{r,y}$ having parameter $\pred_{0,y}$ for the other vertices $y$\footnote{{The stochastic domination follows from the fact that if $p>q$ and $s>1$ then for independent Bernoulli random variables we have $\Bern{p}+\Bern{sq}\stle\Bern{sp}+\Bern{q}$.}}.

Let $(I_{g,k,y},I_{r,k,y},I_{b,k,y})$ be independent copies of $(I_{g,y},I_{r,y},I_{b,y})$ for each $k$. Then by the above, there exists a coupling between $(G^*,X)$ and $(I_{g,k,y},I_{r,k,y},I_{b,k,y})_{k}$ such that on the event that $\deg(v_k)\le C_2\frac{\log N}{\log\log N}$ for all $k\le C_0\log N$, we have $\dg(v_k)\le\sum_{y}I_{g,k,y}$, $\dr(v_k)\le\sum_{y}I_{r,k,y}$ and $\db(v_k)\le\sum_{y}I_{b,k,y}$ for all $k\le C_0\log N$.

Note that $\E{\sum_{y}I_{g,y}}\asymp\E{\sum_{y}I_{r,y}}\asymp1$ and $\E{\sum_{y}I_{b,y}}\ll1$, hence from \Cref{sum_of_Berns} we get that for sufficiently large $C$ for any $c\le C_0$ we have
\begin{align*}
\pr{\sum_{k=0}^{c\log N}\sum_{y}(I_{g,k,y}+I_{r,k,y}+I_{b,k,y})\ge Cc\log N}\quad \le\quad\exp\left(-C_3\log N\right),
\end{align*}
where $C_3$ is a constant that can be made arbitrarily large by choosing $C$ large. In particular, we get a $o\left(\frac{1}{N}\right)$ bound if $C$ is sufficiently large.

From \Cref{max_deg} we know that with probability $1-o\left(\frac{1}{N}\right)$ the graph $G^*$ is such that all vertices have degree $\le C_2\frac{\log N}{\log\log N}$ where $C_2$ is a sufficiently large constant.

Putting these estimates together and using Markov's inequality finishes the proof.
\end{proof}

\begin{lemma}\label{edge_expected_num_crossings}
For any realisation $T$ of a random quasi-tree and any directed edge $(x,y)$ in $T$, the expected number of times a walk on $T$ started from $x$ or $y$ crosses $(x,y)$ before hitting the boundary can be bounded as
\[
\estart{\#\text{crossings of }(x,y)\text{ by time }\taubound}{y}\quad\le\quad\estart{\#\text{crossings of }(x,y)\text{ by time }\taubound}{x}\quad\lesssim\quad1.
\]
\end{lemma}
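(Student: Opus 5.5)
The plan is to use the strong Markov property at the first visit to $x$, and then bound the expected number of crossings from $x$ by a geometric series built from the escape estimate of \Cref{nice:stopped_walk_transience}.

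\textbf{First inequality.} Write $N_{x,y}$ for the number of crossings of the directed edge $(x,y)$ before $\taubound$. Starting from $y$, every crossing of $(x,y)$ must happen after the walk has reached $x$. If the walk starts at $y$ and we let $\tau_x$ be its hitting time of $x$, the strong Markov property gives
\[\estart{N_{x,y}}{y}=\estart{\1{\tau_x<\taubound}\,\estart{N_{x,y}}{x}}{y}\le\estart{N_{x,y}}{x}.\]
If $y$ itself lies on the boundary, then $\taubound=0$ from $y$ and the left-hand side is $0$, so that case is trivial.

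\textbf{Second inequality.} Starting from $x$, each crossing of $(x,y)$ occurs at a visit to $x$ before $\taubound$. Let $V_x$ be the number of visits to $x$ (the time-$0$ visit included) before $\taubound$. Each such visit is followed by a step to $y$ with probability at most $\frac{1}{\deg_T(x)}$; if there are multiple edges the count of edges to $y$ adds a factor, which I would handle by counting crossings of the specific edge. Hence
\[\estart{N_{x,y}}{x}=\frac{1}{\deg_T(x)}\,\estart{V_x}{x}.\]
By the strong Markov property $V_x$ is geometric with success probability $\prstart{\tau_x^+>\taubound}{x}$. \Cref{nice:stopped_walk_transience} bounds this probability below by $\frac{\qesc}{\deg_T(x)}$, so $\estart{V_x}{x}\le \frac{\deg_T(x)}{\qesc}$. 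Combining the two displays gives $\estart{N_{x,y}}{x}\le \frac{1}{\qesc}\lesssim 1$.

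\textbf{Edge cases.} The only delicate point is that \Cref{nice:stopped_walk_transience} must be valid for every realisation and every vertex, including boundary vertices and vertices with long-range edges; the lemma is stated in exactly this generality. Finiteness of $\taubound$ a.s. is needed to make the geometric argument rigorous, and follows as in the proof of \Cref{transience_until_boundary}. If $x$ is a boundary vertex, then $\taubound=0$ and the bound holds trivially. I expect no real obstacle beyond making sure the crossing count is per directed edge, so that the factor $\frac{1}{\deg_T(x)}$ is correct.
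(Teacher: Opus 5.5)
Your proposal is correct and is essentially the paper's argument: the first inequality follows from the strong Markov property at the hitting time of $x$. For the second, you write the expected number of crossings as $\frac{1}{\deg_T(x)}$ times the expected number of visits to $x$ before $\taubound$, which is geometric with escape probability at least $\frac{\qesc}{\deg_T(x)}$ by \Cref{nice:stopped_walk_transience}, exactly as in the paper. One point is superfluous: you do not need a.s.\ finiteness of $\taubound$, since the geometric bound only uses $\prstart{\tau_x^+<\taubound}{x}\le 1-\prstart{\tau_x^+>\taubound}{x}$, which holds regardless.
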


\begin{proof}
The $\le$ is immediate. The $\lesssim$ can be derived as follows.
\begin{align*}
&\estart{\#\text{crossings of }(x,y)\text{ by }\taubound}{x}\quad=\quad\sum_{k\ge0}\prstart{\ge k\text{ returns to }x\text{ by }\taubound,\text{ step to }y\text{ after the }k\text{th one}}{x}\\
&=\quad\sum_{k\ge0}\prstart{\ge k\text{ returns to }x\text{ by }\taubound}{x}\frac{1}{\deg(x)}\quad\lesssim\quad1,
\end{align*}
where in the last estimate we used \Cref{nice:stopped_walk_transience}. \end{proof}

\begin{lemma}\label{nice:reach_range_clogN}
For any $C_0>0$ and $\theta\in(0,1)$ there exists a $C>0$ such that for any $c\in(0,C_0)$ the following is satisfied. For any $x$, with probability $1-o\left(\frac1N\right)$, the graph $G^*$ is such that with probability at least $1-\theta$ the time it takes for a walk $X$ from $x$ on the quasi-tree $T(G^*)$ induced by $G^*$ to visit $c\log N$ different vertices or hit the boundary of a ball is $\le Cc\log N$.
\end{lemma}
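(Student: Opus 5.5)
Fix $c<C_0$, write $k:=c\log N$, let $v_1,\dots,v_k$ be the first $k$ distinct vertices visited by $X$ on $T(G^*)$, and let $s_j$ be the time of the first visit to $v_j$. The plan is to bound the expected time spent before $s_k\wedge\taubound$ by the total degree of the vertices that are visited, and then to control that total degree with \Cref{bound_sum_of_first_k_deg}.

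\textbf{Step 1: occupation time through edge crossings.} The number of steps taken before $s_k\wedge\taubound$ equals the number of edge crossings made by then. Each such crossing leaves some $v_j$, with $j<k$, along an edge incident to $v_j$. For a fixed directed edge $(v,w)$, \Cref{edge_expected_num_crossings} bounds the expected number of crossings of $(v,w)$ before $\taubound$, started from $v$ or $w$, by a constant. Combined with the strong Markov property at the first visit to $v$, this shows that, conditionally on the exploration up to $s_j$, the expected number of crossings of edges out of $v_j$ before $\taubound$ is $\lesssim\deg(v_j)$. Summing over $j$,
\[
\E{s_k\wedge\taubound}\quad\lesssim\quad\E{\sum_{j\le k}\deg(v_j)},
\]
where, for a fixed realisation of $T(G^*)$, the expectation is over the walk only. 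There is a subtlety: which vertices become $v_j$ depends on the walk. I would handle this by writing the time as $\sum_v\sum_{w\sim v}\#\{\text{crossings of }(v,w)\}\1{v\text{ visited before }s_k\wedge\taubound}$ and applying the edge bound after the hitting time of $v$.

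\textbf{Step 2: high-probability control of the degree sum.} \Cref{bound_sum_of_first_k_deg} is stated for the walk on $G^*$, not on $T(G^*)$. Its proof only uses an exploration along the walk together with the stochastic domination from \Cref{cond_green_red_deg}, and the same exploration run on the quasi-tree induced by $G^*$ gives the same bound. It yields: with probability $1-o(1/N)$ over $G^*$, whp over the walk, $\sum_{j\le k}\deg(v_j)\le C_1k$. The probability is at least as good on the tree, because vertices at distinct tree positions can share an image under $\iota$ only if several copies of the same vertex are counted, and then the domination is applied copy by copy.

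\textbf{Step 3: combining the two bounds.} This is the main obstacle. Step 1 controls an expectation, while Step 2 controls only a probability, so the two cannot be combined directly. To avoid needing an expectation of the degree sum, I would use a stopping rule. Let $\sigma$ be the first time the running degree sum $\sum_{j\le J(t)}\deg(v_j)$ exceeds $C_1k$, where $J(t)$ is the number of distinct vertices visited by time $t$; note that $\deg(v_j)$ is known once $v_j$ is first visited. Repeating Step 1 up to $s_k\wedge\taubound\wedge\sigma$ bounds the expected time by $\lesssim C_1k+\dmax$, since at most one additional vertex, of degree at most $\dmax\ll\log N$, is added at $\sigma$. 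Markov's inequality then gives
\[
\pr{s_k\wedge\taubound\wedge\sigma>Ck}\quad\le\quad\frac{\theta}{2}
\]
for $C$ large. Step 2 gives $\pr{\sigma<s_k\wedge\taubound}\le\theta/2$. Together these show that with probability at least $1-\theta$ the walk visits $c\log N$ distinct vertices or hits the boundary within $Cc\log N$ steps, as claimed.
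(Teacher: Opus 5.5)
Your proposal is correct and is essentially the paper's proof: Lemma~\ref{bound_sum_of_first_k_deg} bounds the degree sum (the paper also invokes it directly for the walk on $T(G^*)$), Lemma~\ref{edge_expected_num_crossings} gives $O(1)$ expected crossings per directed edge before $\taubound$, and Markov's inequality plus a union bound finish. The only difference is how you combine the expectation bound with the high-probability bound: instead of your stopping time $\sigma$, the paper lets $e_1,e_2,\dots$ be the successive newly crossed directed edges and bounds the expected total number of crossings of $e_1,\dots,e_{C_1c\log N}$ before $\taubound$; this is a deterministic number of edges, and on the degree-sum event it includes every edge crossed before the $(c\log N)$-th new vertex is reached.
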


\begin{proof}
Let $s:=c\log N$ and let $\alpha_s$ be the number of steps it takes for the walk to visit $s$ different vertices.
By \Cref{bound_sum_of_first_k_deg} we know that w.p.\ $1-o\left(\frac{1}{N}\right)$ the graph $G^*$ is such that whp the degrees of the first $s$ different vertices visited by $X$ sum to $\le C_1s$ where $C_1$ is a large positive constant. If the degrees sum to $\le C_1s$ then by time $\alpha_s$ the walk crosses at most $C_1s$ different directed edges. Let $e_k$ be the $k$th new directed edge crossed. By \Cref{edge_expected_num_crossings} the expected number of total crossings of $e_1$, ..., $e_{C_1s}$ by time $\taubound$ is $\le C_2C_1s$ where $C_2$ is a large constant. Markov's inequality gives that with prob $\ge1-\frac12\theta$ the number of crossings is $\le \frac2\theta C_2C_1s$, hence $\alpha_s\le\frac2\theta C_2C_1s$.
\end{proof}

\begin{proof}[Proof of \Cref{nice:nbhd_properties}] Note that on the event that the exploration of \Cref{def:nbhd_for_hitting_nice} succeeds, a walk on $T$ can be coupled to a walk on $T(G^*)$ so that they agree until they first hit $A\cup\{\til{e}\}$.

From \Cref{nice:bound_nontrunc_size} we know that the walk on $T(G^*)$ hits $A\cup\{\til{e}\}$ by the time it reaches level $C\log\log N$, where $C$ is a large constant. Combining \Cref{nice:visit_many_cool_boxes}, \Cref{nice:cross_many_red_edges} and \Cref{nice:reach_level_CloglogN} we get that with probability $1-o\left(\frac1N\right)$ the graph $G^*$ is such that with high probability the walk $X$ on $T(G^*)$ reaches level $C\log\log N$ or hits $\partial$ by the time it reaches graph distance $R$ from $x$. {This shows \eqref{item:nice:not_hit_boundary}.} Also, whp the walk visits $\le\frac{r^{d-1}c_R}{d}\log N$ different vertices by the time it reaches level $C\log\log N$ or hits $\partial$. This proves \eqref{item:nice:bound_num_visited}.

From \Cref{nice:reach_range_clogN} we also get \eqref{item:nice:bound_time}. Part \eqref{item:nice:not_hit_etil} follows from \Cref{nice:stopped_walk_transience}, and part \eqref{item:nice:bound_prob_given_e} follows by noting that by definition $\theta_T(e)\le\frac{\ctrunc}{\log N}$ for all $e\in A$, and by \Cref{nice:thetaT_properties} we have $\prstart{\tau_e<\taubound}{\rho}\le\theta_T(e)$.
\end{proof}

\subsection{Concluding the proof of \Cref{hit_almost_nice_quickly}}

Now we are ready to conclude the proof of \Cref{hit_almost_nice_quickly}.

Let us fix a vertex $x$ of $G$ and let us consider an exploration of a random neighbourhood of $x$ in $G^*$ as in \Cref{def:nbhd_for_hitting_nice}.

Let $\G_0$ be the $\sigma$-algebra generated by this exploration. Let $\Omeganbhd$ be the $1-o\left(\frac1N\right)$ probability event that $G^*$ satisfies the properties in \Cref{nice:nbhd_properties}, and in what follows let us work on this event. Let $A=\{e_1,...,e_a\}$ and let us do the following for $k=1,2,...,a$ in this order. In the $k$th step, let us consider $e_k$ and let us sample its yet unrevealed endpoint $e_k^+$ in $G^*$. Let us also sample the neighbourhood of the other endpoint in $G^*\setminus\{e_{{k}}\}$ up to $K$ levels (according to the definition of a quasi-tree with blue half-edges in \Cref{sec:Gstar_and_T}). If $e_k^+$ is almost nice according to \Cref{def:almost_nice} with distinguished edge $e_k$, then let us say that the edge $e_k$ is nice. Let $\G_k$ be the $\sigma$-algebra generated by $\G_{k-1}$ and this exploration around the other endpoint of $e_k$.

For each $k\in\{1,2,...,a\}$ let $q_k=\prscond{\tau_{A}=\tau_{e_k}<\tau_{\partial}\wedge\tau_{\til{e}}}{\G_0}{x}$ and let $I_k=\1{e_k\text{ nice}}$. Note that all $q_k$ are $\G_0$-measurable, while $I_k$ is $\G_k$-measurable.

We start by showing that at each step of the exploration there is a positive probability that the currently considered endpoint is nice.

\begin{lemma}
There exist positive constants $c_1$ and $c_2$, not depending on the choice of $\ctrunc$, and a positive constant $C_3$, with the following property. For any $k\in\{1,2,...,a\}$ and any realisation of the exploration in the first $(k-1)$ steps such that $\Omeganbhd$ is satisfied, and the number of explored vertices and long-range edges is $\le N^{c_1}$, we have
\begin{align*}
\prscond{I_k=1}{\G_{k-1}}{}\quad&\ge\quad c_2,\\
\prscond{\#\left(\text{vertices and half-edges revealed in step }k\right)\le e^{C_3(\log\log N)^2}}{\G_{k-1}}{} \quad&=\quad1-o\left(\frac{1}{N^2}\right).
\end{align*}
\end{lemma}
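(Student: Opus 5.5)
The plan is to handle the $k$th step by comparing the conditional law of what is revealed around $e_k^+$ with the unconditioned law. We first sample the endpoint $e_k^+$ given $\G_{k-1}$. Since $e_k$ is a red or blue half-edge, \Cref{cond_green_red_deg} shows that, after conditioning on everything explored so far, the law of $e_k^+$ is comparable (up to constant factors) to its unconditioned law. That unconditioned law puts mass $\lesssim\frac{1}{N^{1-\beta}\log N}$ (red) or $\lesssim\frac1N$ (blue) on each vertex. The explored set has size $\le N^{c_1}$, so for $c_1$ small (e.g.\ $c_1<(1-\beta)/4$) the probability that $e_k^+$ lands within distance $e^{C(\log\log N)^2}$ of the explored region is $\le N^{2c_1}\cdot N^{-(1-\beta)}=o(1)$. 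This is exactly the overlap estimate used in the proof of \Cref{nice:coupling_success_and_nbhd_size} and in \Cref{prob_nice}.

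Next we reveal the level-$K$ neighbourhood of $e_k^+$ in $G^*\setminus\{e_k\}$, ball by ball, and couple it with a fresh unconditioned neighbourhood using the optimal coupling, as in \Cref{sec:coupling}. Each vertex, green edge, red endpoint and blue degree is sampled conditionally on the past. The coupling fails only if some newly revealed edge hits the explored set, or a vertex within $2R$ of it. By \Cref{volume_tail_bound} (or \Cref{T_volume_tail_bound}) applied with radius $\asymp K R\asymp(\log\log N)^2$, the neighbourhood has volume $\le e^{C_3(\log\log N)^2}$ with probability $1-o(N^{-2})$. This already gives the second claimed bound. Note that a tail $e^{-s/4}$ with $s=e^{C(\log\log N)^2}/(2d+3)^{r}$ is superpolynomially small once $C_3$ is large, and the volume bound holds under the conditional law too, since conditioning only removes possible connections up to constant-factor changes in the edge probabilities. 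On this volume event, each of the $e^{C_3(\log\log N)^2}$ reveals fails the coupling with probability $\lesssim N^{c_1}e^{C(\log\log N)^2}\log N/N^{1-\beta}$. A union bound makes the total failure probability $o(1)$.

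On the event that the coupling succeeds, the neighbourhood of $e_k^+$ has exactly the unconditioned law of the level-$K$ neighbourhood of a fixed vertex $z$ in $G^*\setminus\{\{z,w\}\}$. By the second part of \Cref{prob_nice}, that neighbourhood fails to be nice with probability $\lesssim(\log N)^{-3/2}$. The remaining condition of \Cref{def:almost_nice}, that the old endpoint $e_k^-$ is not in this neighbourhood, holds automatically on the no-overlap event. Hence $\prscond{I_k=1}{\G_{k-1}}{}\ge 1-o(1)$, which is more than enough to take any fixed $c_2<1$, say $c_2=\frac12$. Neither this constant nor $c_1$ involves $\ctrunc$: they depend only on $\beta$, $K$, $R$ and the volume constants.

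The step I expect to be the main obstacle is justifying rigorously that the conditional law of the revealed neighbourhood given $\G_{k-1}$ is close to the unconditioned law. The explored region includes $\Omeganbhd$-type information and the absence of edges between explored vertices, and conditioning on absences slightly raises the probabilities of other edges. I would handle this as in the footnote to the proof of \Cref{bound_range_in_ball_until_tauatyp}. Given the revealed information, unmatched half-edges from explored vertices are the only source of dependence, and edges between unexplored pairs remain independent with their original probabilities. So the coupling fails only via edges touching the explored set, whose total probability is controlled by \Cref{cond_green_red_deg} and the $N^{c_1}$ bound.
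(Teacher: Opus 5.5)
Your overall plan matches the paper's. You sample the landing point $e_k^+$ and compare the conditional law of its level-$K$ neighbourhood in $G^*\setminus\{e_k\}$ with the unconditioned law for a fixed vertex. You then pay an overlap probability and conclude with the second part of \Cref{prob_nice}. Your volume bound via \Cref{volume_tail_bound} is also the one the paper uses.

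The step that fails is the overlap estimate in Step 2. You claim that each of the $e^{C_3(\log\log N)^2}$ reveals, green edges included, fails the coupling with probability $\lesssim N^{c_1}e^{C(\log\log N)^2}\log N/N^{1-\beta}$. That holds only for reveals of red or blue endpoints, whose per-target probability is $\lesssim\frac{1}{N^{1-\beta}\log N}$.
\begin{itemize}
\item A green edge from a vertex $u$ of the new neighbourhood hits a given $w$ with probability $\asymp\frac{1}{\log N\,|u-w|^{d}}$. This is of order $\frac{1}{\log N}$ when $w$ is torus-close to $u$.
\item The black part of an $(R+R)$-ball contains every explored vertex within torus distance $R$ of one of its green-ball vertices, deterministically.
\item Step 1 only controls where the root $e_k^+$ lands, and only in torus distance (judging by your count of $N^{2c_1}$ sites). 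Green edges have torus length up to $D^{1-\beta}$, so a vertex reached after one green step can sit right next to the explored set.
\end{itemize}
So there is no uniform per-reveal bound of order $N^{c_1-(1-\beta)+o(1)}$. \Cref{cond_green_red_deg} does not rescue it: it only compares conditional with unconditional edge probabilities, and here the unconditional ones are not small.

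The missing idea is to bound the overlap on average over the landing point, not reveal by reveal. Take the unconditioned copy independent of $\G_{k-1}$ and of $e_k^+$.
\begin{itemize}
\item The level-$K$ neighbourhood of $z$ in that copy can meet the explored set only if $z$ is within $G^*$-distance $O(KR)\asymp(\log\log N)^2$ of it in the copy.
\item By symmetry of graph distance, the expected number of such $z$ is at most the number of explored vertices times the expected volume of a ball of radius $O(KR)$. By \Cref{volume_exp_moment} this is $\le N^{c_1}e^{O((\log\log N)^2)}$.
\item Multiplying by $\max_z\prscond{e_k^+=z}{\G_{k-1}}{}\lesssim\frac{1}{N^{1-\beta}\log N}$ gives $o(1)$ once $c_1<1-\beta$.
\end{itemize}
This single estimate handles the black, green, red and blue edges of the whole neighbourhood at once. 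It is the same idea as the translation-invariance argument in the proof of \Cref{nice:coupling_success_and_nbhd_size}, and it is exactly how the paper argues. With it, your Step 1 phrased in $G^*$-distance already suffices, and the reveal-by-reveal union bound of Step 2 can be dropped.
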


\begin{proof}
The second bound follows from \Cref{volume_tail_bound}. We can prove the first bound as follows.

Let $y=e_k^-$ be the endpoint of $e_k$ in the already explored region. For a given vertex $z$ we know from \Cref{prob_nice} that without any conditioning we would have $\pr{z\text{ not nice in }G^*\setminus\{\{y,z\}\}}\lesssim\frac{1}{(\log N)^{3/2}}$. Also, for any given $z$ we have $\prscond{e_k^+=z}{\G_{k-1}}{}\lesssim\frac{1}{N^{1-\beta}\log N}$. Similarly to the proof of \Cref{nice:coupling_success_and_nbhd_size} we  can couple a copy of the unconditioned graph $G^*\setminus\{\{y,z\}\}$ and a copy of $\left(G^*\setminus\{\{y,z\}\}\mid\G_{k-1},\:e_k^+=z\right)$ such that on the event that the level $K$ neighbourhood of $z$ in $G^*\setminus\{\{y,z\}\}$ does not intersect the set of explored vertices in $\left(G^*\setminus\{\{y,z\}\}\mid\G_{k-1},\:e_k^+=z\right)$, the level $K$ neighbourhood of $z$ agrees in the two copies.

Because of this, we can write
\begin{align*}
&\prscond{z\text{ is not almost nice with distinguished edge }\{y,z\}}{\G_{k-1},\:e_k^+=z}{}\quad\\
&\le\pr{z\text{ not nice in }G^*\setminus\{\{y,z\}\}}\\
&\:+\prscond{\text{the level }K\text{ nbhd of }z\text{ in an indep copy of }G^*\setminus\{\{y,z\}\}\text{ intersects the explored region}}{\G_{k-1}}{}.
\end{align*}

Then for sufficiently small values of $c_1$ (for $c_1<1-\beta$) we have
\begin{align*}
&\prscond{e_k\text{ not nice}}{\G_{k-1}}{}\quad\\
&=\quad\sum_{z}\prscond{e_k^+=z}{\G_{k-1}}{}\prscond{z\text{ is not almost nice with distinguished edge }\{y,z\}}{\G_{k-1},\:e_k^+=z}{}\\
&\lesssim\quad\sum_{z}\prscond{e_k^+=z}{\G_{k-1}}{}\frac{1}{(\log N)^{3/2}}\\
&\quad+\quad\frac{1}{N^{1-\beta}\log N}\econd{\#(z\text{ within graph dist. }C_KC_R(\log\log N)^2\text{ from the explored region})}{\G_{k-1}}\\
&\lesssim\quad\frac{1}{(\log N)^{3/2}}+\frac{1}{N^{1-\beta}\log N}N^{c_1}e^{C_3(\log\log N)^2}+\frac{1}{N^{1-\beta}\log N}N\frac{1}{N^2}\quad\lesssim\quad\frac{1}{(\log N)^{3/2}}\quad\ll\quad1,
\end{align*}
where in the first inequality we have used that the set of vertices $z$ whose level-$K$ neighbourhood intersects the explored region is a subset of the set of vertices which have graph distance at most $C_KC_R(\log\log N)^2$ from the explored region. In the second inequality, we use that the probability that all $C_KC_R(\log\log N)^2$ balls in $G^*$ have size at most $e^{C_3(\log\log N)^2}$ is at least $1-\frac{1}{N^2}$, and we otherwise bound the quantity in the expectation by $N$.
\end{proof}

\begin{proof}[Proof of \Cref{hit_almost_nice_quickly}]We have just shown that there exists a constant $c_2$, not depending on any of the parameters, such that for any realisation of $\G_0$ satisfying $\Omeganbhd$, we can couple $(I_k)$ with a sequence $(J_k)$ of iid $\Bern{c_2}$ random variables such that with probability $1-o\left(\frac1N\right)$ we have $I_k\ge J_k$ for all $k$. Using this, using that on $\Omeganbhd$ we have $\sum_{k}q_k\ge c_4$ (where $c_4$ is a positive constant, not depending on any other parameters) and $q_k\le\frac{\ctrunc}{\log N}$ for all $k$, and using the concentration bound in \Cref{sum_indep_RVs} we get that on $\Omeganbhd$ we have
\begin{align*}
&\prscond{\sum_{k=1}^{a}q_kI_k\le\frac12c_2c_4}{\G_0}{}\quad\le\quad\pr{\sum_{k=1}^{a}q_kJ_k\le\frac12c_2c_4}+o\left(\frac1N\right)\quad\\
&=\quad\pr{\sum_{k=1}^{a}\frac{\log N}{\ctrunc}q_kJ_k\le\frac{c_2c_4}{2\ctrunc}\log N}+o\left(\frac1N\right)\quad \le\quad\exp\left(-\frac{3c_2c_4}{20\ctrunc}\log N\right)+o\left(\frac1N\right).
\end{align*}
By choosing $\ctrunc$ sufficiently small, we can make the right-hand side $o\left(\frac1N\right)$.

Using this and \Cref{nice:nbhd_properties} with a sufficiently small $\theta'$ in \eqref{item:nice:bound_time}, we get that there is a constant $C$ depending on $\theta'$ such that for any starting vertex $x$, with probability $1-o\left(\frac1N\right)$ the graph $G^*$ is such that with probability at least $\theta'$ a walk from $x$ hits an almost nice vertex in $\le C\log N$ steps. By the union bound, we now know that whp the graph is such that for all starting points, with probability at least $\theta'$, the walk hits an almost nice vertex in $C\log N$ steps. Then, using repeated trials, we also see that for any $\theta\in(0,1)$ with probability at least $\theta$ the walk hits an almost nice vertex in $\lesssim\log N$ steps, uniformly over the starting point. This finishes the proof of \Cref{hit_almost_nice_quickly}.
\end{proof}

\section{Proof of the main results}\label{sec:conclusion}

\subsection{Mixing at a random time}

In this section we prove \Cref{tauhat_transition_probs}. We remark that for our application, namely, for the sake of proving \Cref{hit_upper_bound}, it would have been enough to establish \Cref{tauhat_transition_probs} for $y\in\Vniceprime$ rather than any $y$, since by \Cref{prob_nice} $\pi(\Vniceprime)=1-o(1)$.

In the proof of  \Cref{tauhat_transition_probs}, we use the following auxiliary result.

\begin{lemma}\label{most_blue_nice}
Let $\pblue=\frac{\cb}{N\log N}$ where $\cb\asymp1$. Then for any $\theta\in(0,1)$ there exists $C>0$ such that whp the following properties hold. The set $\A:=\left\{y\in\Vnice:\:\db(y)\ge1\right\}$ has size $|\A|\le (1+\theta)\cb\frac{N}{\log N}$. The set $\A_C:=\left\{y\in\A:\:\deg(y)\le C,\db(y)= 1, \text{ and } \deg(z)\le C\, \mathrm{for}\, z\text{ with }z\simb y\right\}$ has size $|\A_C|\ge(1-\theta)\cb\frac{N}{\log N}$. For any $x\in\Vniceprime$ the set $\A_x:=\left\{y\in\A:\:K\text{-neighbourhoods of }x,y\text{ are disjoint}\right\}$ has size $|\A_x|\ge(1-\theta)\cb\frac{N}{\log N}$.
\end{lemma}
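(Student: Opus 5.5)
The plan is to prove the three bounds separately. Each is a first-moment or concentration estimate over the $N$ vertices, combined with the earlier facts about blue degrees (\Cref{total_blue_degree}), degrees (\Cref{max_deg}, \Cref{bound_sum_of_first_k_deg}) and niceness (\Cref{prob_nice}).

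\emph{Upper bound on $|\A|$.} We use $|\A|\le\#\{y:\db(y)\ge1\}\le\frac12\sum_v\db(v)$ only up to a factor, so we instead argue directly. The indicators $\1{\db(y)\ge1}$ have mean $1-(1-\pblue)^{N-1}=\cb\frac{1}{\log N}(1+o(1))$. Their sum is a function of independent edge indicators in which each blue edge affects at most two terms, so a second-moment computation (the covariances are $O(\pblue)$ per pair) gives concentration around $\cb\frac{N}{\log N}$ with error $o(N/\log N)$. Hence whp $|\A|\le(1+\theta)\cb\frac{N}{\log N}$.

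\emph{Lower bound on $|\A_C|$.} We start from $\#\{y:\db(y)=1\}$. Its expectation is $(1+o(1))\cb\frac{N}{\log N}$, since $\pr{\db\ge2}\lesssim(\log N)^{-2}$, and the same second-moment argument gives concentration. We then subtract three bad sets.
\begin{enumerate}[(a)]
\item Vertices $y$ with $\db(y)=1$ but $y\notin\Vnice$. We need the expected count to be $o(N/\log N)$, which requires near-independence of niceness and having a blue edge. We condition on the blue edge $\{y,z\}$; since niceness of $y$ only involves the blue half-edge count at $y$ and the structure of $\Ggr$ near $y$, we repeat the argument of \Cref{prob_nice} with one blue half-edge forced at $y$. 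The condition $\taublue>\tau_K$ then costs only a constant factor, because the walk leaves $y$ along a non-blue edge with probability at least $1-1/\deg(y)$ and, by \Cref{transience_until_nonbinary}, returns to $y$ only $O(1)$ times in expectation. This gives $\pr{y\notin\Vnice\mid\db(y)=1}\lesssim(\log N)^{-3/2}$ after possibly shrinking $\cnice$. That is the main technical point, and we can absorb it by choosing $\cnice$ smaller.
\item Vertices with $\deg(y)>C$. Given $\db(y)=1$, $\deg(y)-1$ is stochastically dominated by $2d$ plus the green and red degree, which is independent of the blue edge. This gives probability at most $\eps(C)\to0$ as $C\to\infty$ by \Cref{degree_exp_moment} and Markov.
\item Vertices whose blue partner $z$ has $\deg(z)>C$. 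The partner's location is essentially uniform and independent of its other degrees, so the same bound holds.
\end{enumerate}
Taking expectations and applying Markov's inequality, the removed set has size at most $2\eps(C)\cb\frac{N}{\log N}$ whp, and choosing $C$ large gives $|\A_C|\ge(1-\theta)\cb\frac{N}{\log N}$.

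\emph{Lower bound on $|\A_x|$, uniformly in $x\in\Vniceprime$.} By \Cref{volume_high_prob_bound}, whp every $K$-level neighbourhood in $G^*$ is contained in a $G^*$-ball of radius $O((\log\log N)^2)$, so it has volume at most $e^{C(\log\log N)^2}$. The $K$-neighbourhoods of $x$ and $y$ can intersect only if $y$ lies within $G^*$-distance $2C_KC_R(\log\log N)^2$ of $x$. That set has size $N^{o(1)}$ for every $x$ simultaneously whp. Hence $|\A\setminus\A_x|\le N^{o(1)}$ uniformly, and
\[
|\A_x|\ \ge\ |\A_C|-N^{o(1)}\ \ge\ (1-2\theta)\cb\frac{N}{\log N}.
\]
Replacing $\theta$ by $\theta/2$ finishes the proof.

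The expected obstacle is step (a), namely decoupling niceness from having a blue edge. The first thing to try is rerunning the proof of \Cref{prob_nice} conditioned on one blue half-edge at $y$.
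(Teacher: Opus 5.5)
\textbf{The gap: Markov does not give ``whp'' in steps (b) and (c).} Your decomposition matches the paper's: count vertices with one blue edge, discard the non-nice ones, the high-degree ones and those with a high-degree blue partner, then use the $(\log\log N)^2$-radius volume bound for $\A_x$. The $\A_x$ step is fine. What fails is the conclusion of the $\A_C$ part.

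For fixed $C$ one has $\pr{\deg(y)>C\mid\db(y)=1}=\pr{\dgr>C-2d-1}=:\eps(C)$. This is a positive constant that does not depend on $N$, so the removed set has expectation of order $\eps(C)\cb N/\log N$. Markov at level $2\eps(C)\cb N/\log N$ bounds the failure probability only by $1/2$. At level $\frac{\theta}{2}\cb N/\log N$ it bounds it only by $O(\eps(C)/\theta)$. Neither tends to $0$ as $N\to\infty$. You cannot let $C$ grow with $N$: $C$ must be a constant, since it supplies the order-one backtracking probability $1/C$ in the proof of \Cref{tauhat_transition_probs}.

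What is missing is a concentration estimate. The paper conditions on the blue edges, which are independent of the green and red ones in $G^*$. It then dominates $(\dgr(v))_v$ by $(\deg^{\mathrm{out}}(v)+\deg^{\mathrm{in}}(v))_v$ in an auxiliary directed graph whose out-degrees are iid and whose in-degrees are iid, each distributed as $\dgr$. The number of blue vertices with large in- or out-degree is then a sum of independent indicators, and a Chernoff bound gives whp. A Chebyshev argument also works. Write the indicator in (b) as $\1{\db(y)=1}\1{\dgr(y)>C-2d-1}$; the two factors are independent, and for $y\neq y'$ they interact only through the edges joining $y$ and $y'$. Hence the covariances sum to $O(N/\log N)=o\bigl((N/\log N)^2\bigr)$. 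Step (c) is handled similarly.

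\textbf{Two simplifications.} Step (a), which you flag as the main obstacle, needs no decoupling at all. \Cref{prob_nice} already gives whp $|\Vnice^c|\le N/(\log N)^{4/3}=o(N/\log N)$. So you can discard all non-nice vertices regardless of their blue degree, without rerunning that proof or shrinking $\cnice$; this is what the paper does.

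Likewise, $\#\{y:\db(y)\ge1\}\le\sum_v\db(v)$ (not $\frac12\sum_v\db(v)$). \Cref{total_blue_degree} gives $\sum_v\db(v)=(1+o(1))\cb N/\log N$ whp, so the upper bound on $|\A|$ is immediate and loses no factor. Your second-moment lower bound on $\#\{y:\db(y)=1\}$ is a valid substitute for the paper's sequential pair-sampling argument.
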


The proof of \Cref{most_blue_nice} is deferred to \Cref{app:deg_vol}. Once we have this, the proof of \Cref{tauhat_transition_probs} proceeds as follows.

\begin{proof}[Proof of \Cref{tauhat_transition_probs}]
Throughout the proof, the probabilities are conditional on $G^*$, but this is dropped from the notation. {We work on the high probability event that $G^*$ satisfies the statement of \Cref{hit_almost_nice_quickly} with $\theta=\frac78$, the statement of \Cref{transition_probs_on_Vnice} and the statement of \Cref{most_blue_nice} with a sufficiently small value of $\theta$.}

\begin{itemize}
\item Let $C_1$ be the constant from \Cref{hit_almost_nice_quickly} such that for any starting point with probability at least $\frac78$ we have $\tauniceprime\le C_1\log N$, where $\tauniceprime$ denotes the first time that the walk visits $\Vniceprime$. Let $C_2$ be the constant bounding $\frac{\taublue^{(2)}}{\log N}$ in \Cref{transition_probs_on_Vnice}.
\item Let us consider any $x\in V$. If $x\in\Vniceprime$ then let $\A_x$ be as in \Cref{most_blue_nice}, while if $x\not\in\Vniceprime$, let $\A_x:=\left\{z:\:\sum_{y\in \Vniceprime}\prstart{X_{\tauniceprime}=y,\tauniceprime\le C_1\log N}{x}\1{z\in\A_y}\ge\frac12\right\}$. (Note that this latter definition is also consistent for $x\in\Vniceprime$.) Intuitively, $\A_x$ is the set of nice vertices with blue edges for which there is probability at least $\frac12$ that their $K$-neighbourhoods are disjoint from the $K$-neighbourhood of the first nice vertex that a walk from $x$ hits. (If the walk does not hit a nice vertex by time $C_1\log N$, then the second neighbourhood is treated as not well-defined and the neighbourhoods are not treated as disjoint).
\item {We can lower bound the size of $|\A_x|$ as follows.
\begin{align*}
|\A_x|\quad&=\quad\sum_{z\in\A}\1{\sum_{y}\prstart{X_{\tauniceprime}=y,\tauniceprime\le C_1\log N}{x}\1{z\in\A_y}\ge\frac12}\quad\\
&\ge \quad\sum_{z\in\A}\left(2\sum_{y}\prstart{X_{\tauniceprime}=y,\tauniceprime\le C_1\log N}{x}\1{z\in\A_y}-1\right)\\
&=\quad2\sum_{y}\prstart{X_{\tauniceprime}=y,\tauniceprime\le C_1\log N}{x}|\A_y|\quad-\quad|\A|
\end{align*}
By \Cref{most_blue_nice} we have $|\A_y|\ge(1-\theta)c_b\frac{N}{\log N}$ for all $y$ and $|\A|\le(1+\theta)c_b\frac{N}{\log N}$. Using this and that $\sum_{y}\prstart{X_{\tauniceprime}=y,\tauniceprime\le C_1\log N}{x}\ge\frac78$ we get that $|\A_x|\ge\left(\frac34-\frac{11}{4}\theta\right)c_b\frac{N}{\log N}$.}
\item Let $\tau_1=\tauniceprime\wedge C_1\log N$ and let $\tau_2=\taublue^{(2)}\wedge C_2\log N$.
\item For any $x\in V$ and $z\in\A_x$, using \Cref{transition_probs_on_Vnice} and the definition of $\A_x$ we get that
\begin{align}\label{eq:Xtau1tau2}
&\prstart{X_{\tau_1+\tau_2-1}=z}{x}\quad=\quad\sum_{y}\prstart{X_{\tau_1}=y}{x}\prstart{X_{\tau_2-1}=z}{y}\quad\\
\nonumber
&\ge\quad\sum_{y}\prstart{X_{\tau_1}=y}{x}\1{z\in\A_y}\prstart{X_{\tau_2-1}=z}{y}\quad
\gtrsim\quad\sum_{y}\prstart{X_{\tau_1}=y}{x}\1{z\in\A_y}\frac{\log N}{N}\:\gtrsim\:\frac{\log N}{N}.
\end{align}
\item Let $C_3=C_1+C_2$ and let $\tau_3\sim\Unif{\{1,...,C_3\log N+1\}}$, independently of everything else. Let $C$ be a sufficiently large constant, to be specified later.
\item We will now show that for any $x,y\in V$ we have,
\begin{align*}
&\prstart{X_{\tau_1+\tau_2+\tau_3}=y}{x}\quad\ge\quad\sum_{{z\in\A_x\cap\A_y\cap\A_C}}\prstart{X_{\tau_1+\tau_2-1}=z, X_{\tau_1+\tau_2+1}=z}{x}\prstart{X_{\tau_3-1}=y}{z}\\
&\gtrsim\quad\frac{\log N}{N}\sum_{z\in\A_x\cap\A_y\cap\A_C}\frac{1}{\log N}\sum_{k=0}^{C_3\log N}\prstart{X_k=y}{z}\:
=\:\frac{1}{N}\sum_{z\in\A_x\cap\A_y\cap\A_C}\sum_{k=0}^{C_3\log N}\frac{\deg(y)}{\deg(z)}\prstart{X_k=z}{y}\\
&\gtrsim\quad\frac{\deg(y)}{N}\sum_{z\in\A_x\cap\A_y\cap\A_C}\prstart{X_{\tau_1+\tau_2-1}=z}{y}\quad\gtrsim\quad\frac{\deg(y)}{N}\left|\A_x\cap\A_y\cap\A_C\right|\frac{\log N}{N}.
\end{align*}
For the second inequality, we used~\eqref{eq:Xtau1tau2} and the fact that each $z\in\A_C$ has blue degree 1 and its blue neighbour has degree at most $C$, so the probability that the walk immediately backtracks the blue edge crossed at $z$ is at least $\frac{1}{C}\gtrsim 1$. For the penultimate inequality, we used that by definition $\tau_1+\tau_2\le C_3\log N$, while for the final inequality we used~\eqref{eq:Xtau1tau2} again. Intuitively, the above inequalities say the following. The probability of getting from any $x$ to any $y$ in time $\tau_1+\tau_2+\tau_3$  can be lower bounded by summing over all suitable $z$ \footnote{More precisely, the sum is over all nice $z$ with one blue edge which likely have disjoint $K$-neighbourhood from the first nice vertex walk from $x$ or from $y$ would hit and which have bounded degree and for which the other endpoint of their blue edge has bounded degree. The number of such vertices is of the same order as the number of vertices with blue edges.} the probability that the walk from $x$ crosses a blue edge at $z$ at time $\tau_1+\tau_2$, goes back on that edge immediately (this is why we require the other endpoint to have bounded degree) and then hits $y$ in another $\tau_3-1$ steps. The probability of hitting $y$ from $z$ after reversing is a suitable ratio of degrees times the probability that at time $\tau_3-1$ the walk from $y$ hits $z$. However, as  $z$ has one blue edge and bounded degree, there is a constant probability that if the walk is at $z$ at time $\tau_3-1$, it would go on to cross that blue edge in the next time step. Moreover, as $\tau_1+\tau_2\le C_3\log N$, by definition, this means that we can lower bound the probability of getting from $y$ to $z$ at time $\tau_3-1$ by the probability that the time $\tau_3$ exactly happens to agree with the time $\tau_1+\tau_2$, which as $\tau_3$ is uniform, is of order $\frac{1}{\log N}$, multiplied with the probability of walk at time  $\tau_1+\tau_2-1$ being at vertex $z$. The reason this final lower bound is not actually very wasteful is that  $z$ has a blue edge, and typically the walk visits a constant number of vertices with blue edges in logarithmic time, which means that unless $\tau_3-1$ agrees with some of these constant number of times, it could not possibly be at $z$. Therefore, it is not unreasonable that insisting that $\tau_3-1$ agrees with one specific such time (namely $\tau_1+\tau_2-1$) when the walk is at a vertex with a blue edge would give a lower bound of the correct order.

\item {Using \Cref{most_blue_nice}, the lower bound on $|\A_x|$ and $|\A_y|$ that we obtained above, and \Cref{hit_almost_nice_quickly} we see that for sufficiently small values of $\theta$ and sufficiently large values of $C$ we have $\left|\A_x\cap\A_y\cap\A_C\right|\asymp\frac{N}{\log N}$, hence we get that $\prstart{X_{\tau_1+\tau_2+\tau_3}=y}{x}\gtrsim\frac{\deg(y)}{N}$. By definition $\tau_1+\tau_2+\tau_3\le(C_1+C_2+C_3)\log N$. This finishes the proof.}\qedhere
\end{itemize}
\end{proof}

\subsection{Bounding the hit time and the lazy mixing time}

Now we are ready to conclude the proof of \Cref{hit_upper_bound} and \Cref{tmixlazy_upper_bound}.

\begin{proof}[Proof of \Cref{hit_upper_bound}]
Combining \Cref{hit_almost_nice_quickly} and \Cref{tauhat_transition_probs} we get that for any $\alpha\in(0,1)$ we have $\hit{\alpha}{\theta}\lesssim\log N$ for some $\theta\in(0,1)$. By considering multiple trials, we can extend this for all $\alpha,\theta\in(0,1)$.
\end{proof}

\begin{proof}[Proof of \Cref{tmixlazy_upper_bound}]
From~\cite[Theorem 1.1]{mixing-hitting-large-sets} we know that for any $\alpha\in\left(0,\frac12\right)$ the lazy mixing time is of the same order as
\[\tH{\alpha}:=\quad\max_{x,A:\pi(A)\ge\alpha}\estart{\tau_A}{x}.\]
From \Cref{tauhat_transition_probs} we know that for any $x$ and any $A$ we have $\prstart{X_{\tau}\in A,\:\tau\le C\log N}{x}\ge\frac{\theta}{N}\sum_{v\in A}\deg(v)-o(1)$, and from \Cref{total_degree} we know that $\frac1N\sum_{v\in A}\deg(v)\asymp\pi(A)$. Using these we get that $\estart{\tau_A}{x}\lesssim\frac1\alpha\log N$ for any $A$ with $\pi(A)\ge\alpha$.
\end{proof}

\subsection{Bounding $\trel$ and $\trelabs$}

In this section we present the proof of \Cref{trelabs_lower_bound} and \Cref{trel_asymp_trelabs}.

\begin{proof}[Proof of \Cref{trelabs_lower_bound}]
We will show that whp the Cheeger constant satisfies $\Phi^*\lesssim\frac{1}{\log N}$, and then use that $\trelabs\gtrsim\frac{1}{\Phi^*}$.

Let $S=\{1,...,\frac{n}{2}\}\times\Z_n^{d-1}$. Then
\[\Phi^*\quad\le\quad\frac{Q(S,S^c)}{\pi(S)\wedge\pi(S^c)}\quad=\quad\frac{\sum_{x\in S,\:y\in S^c}\1{x\sim y}}{\left(\sum_{v\in S}\deg(v)\right)\wedge\left(\sum_{v\in S^c}\deg(v)\right)}.\]

Note that $\E{\sum_{v\in S}\deg(v)}\asymp\vr{\sum_{v\in S}\deg(v)}\asymp n^d$, so whp $\sum_{v\in S}\deg(v)\asymp n^d$. Similarly, whp $\sum_{v\in S^c}\deg(v)\asymp n^d$.

Also, we have
\[\E{\sum_{x\in S,\:y\in S^c}\1{x\simrgb y}}\quad=\quad\sum_{x\in S,\:y\in S^c}p_{x,y}\quad\asymp\quad\frac{1}{\log n}\sum_{x\in S,\:y\in S^c}\frac{1}{|x-y|^d},\]
\[\vr{\sum_{x\in S,\:y\in S^c}\1{x\simrgb y}}\quad=\quad\sum_{x\in S,\:y\in S^c}p_{x,y}(1-p_{x,y})\quad\asymp\quad\sum_{x\in S,\:y\in S^c}p_{x,y}.\]

For a given distance $r$, we get that there are $\asymp n^{d-1}r^d$ pairs $x\in S$, $y\in S^c$ with $|x-y|=r$. Hence $\sum_{x\in S,\:y\in S^c}p_{x,y}\asymp\frac{n^d}{\log n}$. This shows that whp $\sum_{x\in S,\:y\in S^c}\1{x\sim y}=\sum_{x\in S,\:y\in S^c}\1{x\simrgb y}+n^{d-1}\asymp\frac{n^d}{\log n}$.

So overall we get that whp $\Phi^*\le\frac{Q(S,S^c)}{\pi(S)\wedge\pi(S^c)}\asymp\frac{1}{\log n}\asymp\frac{1}{\log N}$.
\end{proof}

For the proof of \Cref{trel_asymp_trelabs} we use the following result, which roughly speaking says that if a graph is `far from being bipartite', then the relaxation time and the absolute relaxation time of a simple random walk on it are of the same order.

\begin{proposition}[based on \cite{near_bipartiteness}]\label{far_from_bipartite_asymp} For any constant $\eps>0$ there exists a constant $C>0$ with the following property. If a graph $H=(V,E)$ is such that for any partition $A\sqcup B $ of $V$ we have $|E(A)|+|E(B)|\ge\eps|E|$, then a simple random walk on $H$ satisfies $\trel\le\trelabs\le C\trel$.

(For a set $S\subseteq V$ of vertices, $E(S)$ denotes the set of edges in $E$ with both endpoints in $S$.)
\end{proposition}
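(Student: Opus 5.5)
The inequality $\trel\le\trelabs$ is immediate from the definitions, since $1-\lambda^*\le 1-\lambda_2$. So the content is the bound $\trelabs\le C\trel$. This amounts to controlling the bottom of the spectrum: we must show $1+\lambda_{\min}\gtrsim 1-\lambda_2$, where $\lambda_{\min}$ is the smallest eigenvalue of the transition matrix $P$ of the simple random walk on $H$.

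I would work with the signless Laplacian form. For $f:V\to\mathbb{R}$,
\[
\langle (I+P)f,f\rangle_\pi \;=\; \frac{1}{2|E|}\sum_{\{u,v\}\in E}(f(u)+f(v))^2 .
\]
Hence $1+\lambda_{\min}$ is the minimum of this quantity over $f$ with $\|f\|_\pi=1$. Bipartiteness-type ratios are the natural tool here. The result of \cite{near_bipartiteness} (a dual Cheeger inequality in the style of Trevisan or Bauer--Jost) states that $1+\lambda_{\min}\gtrsim \beta^2$. Here $\beta$ is the bipartiteness ratio
\[
\beta \;=\; \min_{\substack{A,B \text{ disjoint},\\ A\cup B\ne\emptyset}} \frac{2|E(A)|+2|E(B)|+|E(A\cup B, (A\cup B)^c)|}{\mathrm{vol}(A\cup B)} .
\]
This bound alone would give only $\trelabs\lesssim\beta^{-2}$, which is not comparable to $\trel$. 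So the key step is a more refined comparison that uses the global hypothesis.

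The plan is as follows. Take an eigenfunction $f$ for $\lambda_{\min}$, set $g=|f|$, and compare the forms of $f$ and $g$. Since $(|a|-|b|)^2\le(a+b)^2$ whenever $ab\le 0$, and $(|a|-|b|)^2=(a-b)^2\le\ldots$ on edges where $f$ has the same sign, one gets
\[
\mathcal{E}(g,g)\;\lesssim\;\sum_{\{u,v\}\in E}(f(u)+f(v))^2 .
\]
If $g$ had small variance relative to its norm, we could use the spectral gap: $\mathcal{E}(g,g)\ge(1-\lambda_2)\mathrm{Var}_\pi(g)$. In the complementary case $g$ is nearly constant, say $g\approx c$. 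Then $f\approx c\,\sigma$ for a sign pattern $\sigma=\mathbf 1_A-\mathbf 1_B$ with $A\sqcup B=V$. The hypothesis $|E(A)|+|E(B)|\ge\varepsilon|E|$ then forces $\sum_{\{u,v\}}(f(u)+f(v))^2\gtrsim\varepsilon c^2|E|$, up to an error controlled by $\|f-c\sigma\|$, which is itself bounded via $\mathrm{Var}(g)$. Combining the two cases yields
\[
1+\lambda_{\min}\;\gtrsim_\varepsilon\; \min\bigl(1,\,1-\lambda_2\bigr).
\]
This is exactly the claim $\trelabs\le C\trel$, with $C$ depending only on $\varepsilon$.

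The main obstacle is making the near-constant case quantitative. The error terms arise from edges where $f$ deviates from $\pm c$, and these must be bounded by $\mathrm{Var}(g)$ together with the form $\sum(f(u)+f(v))^2$, without losing degree factors. I would handle this with a threshold argument: choose the sign sets $A=\{f>0\}$ and $B=\{f\le 0\}$, and bound $\sum_{E(A)\cup E(B)}(g(u)+g(v))^2$ from below by $4c^2|E(A)\cup E(B)|$ minus a Cauchy--Schwarz error of order $\|g-c\|_\pi\cdot c\,|E|$. Alternatively, I would simply cite \cite{near_bipartiteness}, where this comparison is carried out in exactly this form.
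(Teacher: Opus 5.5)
Your argument is correct, and it takes a different route from the paper. The paper uses the full lemma of \cite{near_bipartiteness} and argues by contradiction. It assumes $\trelabs\ge\trel/\eta$ for a small $\eta$, so that $\frac{1+\lambda_n}{1-\lambda_2}\le\eta$ and $\beta\ge|\lambda_n|$ is close to $1$. It then takes the parity-breaking time $S$ of the sign partition $F_\pm$ of $f_n$, for which the lemma gives $\prstart{S>2}{\pi}+\prstart{S>3}{\pi}\ge(1-2\eta)(\beta^2+\beta^3)\approx 2$. The hypothesis gives $\prstart{S=1}{\pi}\ge\eps$, since the first edge crossed from stationarity is uniform, and this contradicts the lower bound.

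You need only the first conclusion of that lemma, $\mathrm{Var}_\pi|f|\le\frac{1+\lambda_{\min}}{1-\lambda_2}$. You re-derive it directly: in fact $\bigl||a|-|b|\bigr|\le|a+b|$ for all $a,b$, so $\mathcal{E}(|f|,|f|)\le\langle(I+P)f,f\rangle_\pi$ with constant $1$. You then lower bound the signless form directly over the monochromatic edges of the same sign partition. The identity $\sum_{\{u,v\}\in E}(|h(u)|+|h(v)|)=2|E|\,\mathbb{E}_\pi|h|$ shows that no degree factors are lost, so your stated obstacle is not really one. The paper's route is shorter given the citation. Yours is elementary and needs neither the parity-breaking-time estimate nor the side condition $1+\lambda_n\le c'(1-\lambda_2)$ of the cited lemma. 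Note also that what the paper takes from \cite{near_bipartiteness} is this variance and parity-breaking statement, not a $\beta^2$-type dual Cheeger bound.

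There are two small points. First, your case labels are swapped: the spectral gap handles the case where $|f|$ has \emph{large} variance, and the nearly constant case is the small-variance one. Second, dropping the square and using Cauchy--Schwarz costs you $C\asymp\eps^{-2}$. You can do better: take $\|f\|_\pi=1$, $c=\mathbb{E}_\pi|f|$ and $h=|f|-c$, and use $(2c+h(u)+h(v))^2\ge 2c^2-2h(u)^2-2h(v)^2$. This gives $1+\lambda_{\min}\ge\eps(1-V)-2V$ with $V=\mathrm{Var}_\pi|f|\le\frac{1+\lambda_{\min}}{1-\lambda_2}$. Hence $1+\lambda_{\min}\ge\frac{\eps}{5}(1-\lambda_2)$ with no case split, i.e.\ $\trelabs\le\frac{5}{\eps}\trel$, which matches the linear dependence implicit in the paper's argument.
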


\Cref{far_from_bipartite_asymp} is a consequence of a result in \cite{near_bipartiteness}, and its proof is presented in \Cref{sec:near_bipartite}.

In what follows, we prove that whp $G^*$ satisfies this `far-from-bipartite' property. Once we have this, the proof of \Cref{trel_asymp_trelabs} immediately follows.

\begin{lemma}\label{Gstar_far_from_bipartite}
There exists a positive constant $c$ such that whp $G^*$ has the following property. For any partition of $V=A\sqcup B$ of its vertices, we have $|E(A)|+|E(B)|\ge c|E|$.
\end{lemma}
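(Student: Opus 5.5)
The torus is bipartite when $n$ is even, so the black edges alone cannot give the property. The plan is to find many short odd cycles, each using at most a bounded number of edges, spread so that every vertex region sees a constant fraction of them. I would look at the green edges of length exactly $1$ in a coordinate direction. More simply, take any added edge $\{x,y\}$ with $|x-y|$ even, or more generally any added edge closing an odd cycle with a shortest torus path. Concretely, an added edge $\{x,x+e_1+e_2\}$ together with the two black edges $x\to x+e_1\to x+e_1+e_2$ forms a triangle. Each such "short diagonal" edge is present independently with probability $p\asymp 1/\log N$. That density is too small to give a constant fraction of $|E|\asymp N$, so triangles alone will not suffice.

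Instead, I would work with bounded-size boxes and the counting inequality for odd cycles. Call an edge-disjoint family of odd cycles $\mathcal C$. Every bipartition $A\sqcup B$ must leave at least one edge of each odd cycle inside $A$ or inside $B$. Hence $|E(A)|+|E(B)|\ge|\mathcal C|$, and it suffices to find whp $\gtrsim N$ edge-disjoint odd cycles, since $|E|\asymp N$ by \Cref{total_degree}.

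To produce these, I would partition $G$ into boxes of a large constant side $r$, as in \Cref{def:cool_box}. Within a box, an added edge $\{x,y\}$ between two vertices at even torus distance, where $|x-y|\le r$, closes an odd cycle with a geodesic black path inside the box of length $|x-y|$. For a fixed box, the probability that it contains such an internal added edge is
\[\sum_{x,y\text{ in box}}p_{x,y}\asymp\frac{1}{\log N}\,r^d\log r,\]
which still vanishes. So bounded boxes give only $N/\log N$ cycles. This density problem is the main obstacle, and it forces the use of long edges and longer cycles.

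My revised plan uses cycles of length about $\log N$ in a greedy or flow argument: for a constant fraction of added edges $\{x,y\}$, pair the edge with a black geodesic of the parity making the cycle odd. These cycles are not edge-disjoint, so I would instead use the averaging argument directly. Fix the partition, and suppose $|E(A)|+|E(B)|<c|E|$. Then a $1-c$ fraction of all edges, in particular most black edges, are cut edges. In the torus, a set where almost all black edges are cut must agree, outside a set of size $O(cN)$, with one of the two parity classes $P_0,P_1$, using edge isoperimetry on the torus. I expect this step to be a clean combinatorial lemma.

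Given that, an added edge is uncut whenever its endpoints have even distance and both lie in the good region, or odd distance with exactly one endpoint misplaced. The added edges with even $|x-y|$ form about half of all added edges, whp $\gtrsim N$ of them by a concentration bound as in \Cref{total_degree}. Removing those touching the $O(cN)$ misplaced vertices costs at most $\sum_{v\text{ misplaced}}\deg(v)$. By \Cref{max_deg} this is crude, so instead I would use a uniform bound, valid whp for all sets $S$ of size $\le cN$: $\sum_{v\in S}\dgrb(v)\le \eta(c)N$ with $\eta(c)\to0$. This follows from exponential tails of degrees (\Cref{degree_exp_moment}) plus a union bound over the sorted degree sequence. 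Thus $\gtrsim N$ added edges lie inside $A$ or inside $B$, which gives a contradiction for small $c$. The isoperimetric step, handled with parity for odd $n$ as well, is the part needing care.
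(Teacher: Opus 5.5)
There is a genuine gap in the step you expect to be ``a clean combinatorial lemma'': the claim is false. For even $n$, put $D=A\triangle P_0$. A black edge lies inside $A$ or inside $B$ exactly when it has one endpoint in $D$, so the number of uncut black edges is $|\partial_E D|$. Edge isoperimetry on $\Z_n^d$ only gives $|\partial_E D|\gtrsim\min(|D|,N-|D|)^{(d-1)/d}$, and this says nothing when the boundary may have size $cN$.

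For example, take $H=\{0\le x_1<n/2\}$ and $A=(P_0\cap H)\cup(P_1\setminus H)$. This partition leaves only $2n^{d-1}=o(N)$ black edges uncut, yet it differs from each of $P_0$ and $P_1$ on $N/2$ vertices. A checkerboard of boxes of constant side $m$ behaves the same way, with $\asymp N/m$ uncut black edges. So the misplaced set can be half of $V$, and your uniform bound on $\sum_{v\in S}\dgrb(v)$ over small $S$ never gets to act. In such patchwork partitions, every even-distance added edge joining patches of opposite type is cut. Whether $\gtrsim N$ added edges stay uncut then depends on the patch geometry, which may be chosen after seeing $G^*$. No fixed whp statistic of $G^*$ of the kind you list (number of even-length edges, degree sums over small sets) controls all these partitions at once.

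The missing ingredient is a union bound over near-parity partitions, played against an exponential concentration bound; this is what the paper does.

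\begin{enumerate}[(i)]
\item Pair the vertices along $e_1$. Let $S$ be the set of pairs lying inside one part, and $U$ the set of pairs whose even vertex is in $A$.
\item If $|S|\ge\eps N$, or $U$ has edge boundary $\ge\eps N$ in $\Z_{n/2}\times\Z_n^{d-1}$, then black edges alone give $\ge\eps N$ uncut edges.
\item The remaining partitions number at most $2^{n^{d-1}}e^{h(\eps)N}$, where $h(\eps)\to0$ as $\eps\to0$.
\item For each fixed such partition, $A$ contains a set $A'$ with exactly one vertex from each pair outside $S$. Since $p_{x,y}$ changes by at most a constant factor when $y$ is replaced by its partner, $\sum_{x,y\in A'}p_{x,y}\ge 2cN$ with $c$ independent of $\eps$.
\item A Chernoff bound then shows that fewer than $cN$ added edges fall inside $A'$ with probability at most $e^{-(1-\log 2)cN}$. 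Choosing $\eps$ small makes the union bound $o(1)$.
\end{enumerate}

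Your remarks that short odd cycles are too sparse, and that black edges dispose of partitions far from the parity pattern, are correct. It is the near-parity case that needs this counting-versus-concentration step, rather than a deterministic reduction to $P_0$ or $P_1$.
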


\begin{proof}
In what follows, we assume for convenience that $n$ is even, but a very similar proof works for odd $n$. From \Cref{total_degree} we know that whp $|E|\le(2d+2)N$, so it is sufficient to show that whp for any partition $V=A\sqcup B$ we have $|E(A)|+|E(B)|\gtrsim N$.

Let us say that a vertex $v\in\Z_n^d$ is even if the sum of its coordinates is even, and it is odd otherwise. Let us partition $V$ into pairs of adjacent vertices, of the form $\left((2k,i_2,...,i_d),(2k+1,i_2,...,i_d)\right)$ and let us identify the pairs with $W:=\Z_{n/2}\times\Z_{n}^{d-1}$ the natural way.

Consider a partition $A\sqcup B$ of $V$.

Let $S\subseteq W$ correspond to the pairs where both vertices are in $A$ or both are in $B$, and let $U\subseteq W$ correspond to the pairs where the even vertex (i.e.\ the vertex from the pair where the sum of the coordinates is even) is in $A$. This defines a bijection between partitions $A\sqcup B$ of $V$ and pairs $(S,U)$ of subsets of $W$.

Note that for any $s\in S$, the black edge between the two vertices of the corresponding pair runs within $A$ or within $B$. Also note that for any $u\in U$, $v\in U^c$ with $u\sim v$ in $\Z_{n/2}\times\Z_{n}^{d-1}$, there is a black edge between two of the four vertices corresponding to the pairs $u$ and $v$ that runs within $A$ or within $B$. Hence for any positive constant $\eps$: if $|S|\ge \eps N$ or $\sum_{u\in U,\:v\in U^c}\1{u\sim v}\ge \eps N$, then we have $|E(A)|+|E(B)|\ge \eps N$.

In what follows, for any partition satisfying $|S|\le \eps N$ with a sufficiently small $\eps$, we bound the probability of $|E(A)|+|E(B)|\le c N$ (where $c$ is an absolute constant). Then we bound the number of partitions with $|S|\le \eps N$ and $\sum_{u\in U,\:v\in U^c}\1{u\sim v}\le\eps N$. We conclude the proof by taking a union bound over these partitions.

Let us consider a partition $V=A\sqcup B$ with $|S|\le \eps N$.

Let $S'$ be a set that contains exactly one vertex from each of the pairs in $S$, and let $A'$ be a set that contains the vertices in $A$ except for the ones in the pairs in $S$.

A quick calculation shows that $\sum\limits_{x,y\in A'\cup S'}p_{x,y}\asymp N$. Also, $\sum\limits_{x\in S',y\in A'\cup S'}p_{x,y}\le|S'|\le \eps N$, hence for sufficiently small values of $\eps$ we have $\sum\limits_{x,y\in A'}p_{x,y}\ge 2cN$ where $c$ is an absolute constant.

Then using \Cref{sum_of_Berns} we get that
\begin{align}\label{eq:EA_concentration}
\pr{\sum\limits_{x,y\in A'}\1{x\sim y}\le cN}\quad&\le\quad \exp\left(-(1-\log2)c N\right).
\end{align}

Now let us turn to bounding the number of partitions with $|S|\le \eps N$ and $\sum_{u\in U,\:v\in U^c}\1{u\sim v}\le\eps N$.

For each $U\subseteq W$ let us define a corresponding $\what{U}\subseteq W$ such that $u\in \what{U}$ if and only if $u\in U$, $u+e_1\in U^c$ or $u\in U^c$, $u+e_1\in U$ (where $+e_1$ means increasing the first coordinate by one mod $\frac{n}{2}$). Then each $\what{U}\subseteq W$ corresponds to at most $2^{n^{d-1}}$ different choices of $U$.\footnote{More precisely each $\what{U}\subseteq W$ with an even number of elements in each `row' of the form $\{k\}\times\Z_{n}^{d-1}$ corresponds to exactly $2^{n^{d-1}}$ different choices of $U$.} Also, if $\sum_{u\in U,\:v\in U^c}\1{u\sim v}\le \eps N$ then $|\what{U}|\le\eps N$.

Therefore the number of pairs $(S,U)$ with $|S|\le \eps N$, $\sum_{u\in U,\:v\in U^c}\1{u\sim v}\le \eps N$ can be bounded as
\begin{align*}
&\le\quad2^{n^{d-1}}\left(\sum_{m=0}^{\eps N}\binom{N/2}{m}\right)^2\quad\lesssim\quad 2^{n^{d-1}}\left(\eps N\:\binom{N/2}{\eps N}\right)^2 \quad\lesssim\quad 2^{n^{d-1}}\left(\frac{\sqrt{\eps N}}{(2\eps)^{\eps N}(1-2\eps)^{N/2-\eps N}}\right)^2\quad\\
&\lesssim\quad\exp\left(N^{\frac{d-1}{d}}(\log 2)+ N\left(2\eps\log\left(\frac{1}{2\eps}\right)+(1-2\eps)\log\left(\frac{1}{1-2\eps}\right)\right)+\log(\eps N)\right).
\end{align*}

We have $2\eps\log\left(\frac{1}{2\eps}\right)+(1-2\eps)\log\left(\frac{1}{1-2\eps}\right)\to0$ as $\eps\to0$, so by choosing $\eps$ sufficiently small we can make the right-hand side $\lesssim\exp\left(\frac12(1-\log 2)cN\right)$. Then using \eqref{eq:EA_concentration} and taking a union bound finishes the proof.
\end{proof}

\subsection*{Acknowledgements}

Zsuzsanna Baran was supported by DPMMS EPSRC DTP. Jonathan Hermon was supported by NSERC grants. Allan Sly was supported by a Simons Investigator Grant.

\appendix

\section{Some concentration inequalities}\label{app:concentration}

\begin{lemma}[see e.g. \cite{HDP}, Theorem 2.3.1, Exercise 2.3.2]\label{sum_of_Berns}
$ $\\
Let $S=\sum_{i=1}^{k}X_i$, where $X_i\sim\Bern{p_i}$ are independent, and let $\mu=\E{S}$.\\
Then for any $t>\mu$ we have
\[
\pr{S\ge t}\le e^{-\mu}\left(\frac{e\mu}{t}\right)^t,
\]
and for any $t<\mu$ we have
\[
\pr{S\le t}\le e^{-\mu}\left(\frac{e\mu}{t}\right)^t.
\]
\end{lemma}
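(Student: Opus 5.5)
This is the standard Chernoff bound, and I will prove it by the exponential moment method. For any $\lambda\in\mathbb{R}$, independence gives
\[
\E{e^{\lambda S}}=\prod_{i=1}^k\left(1+p_i(e^\lambda-1)\right)\le\prod_{i=1}^k\exp\left(p_i(e^\lambda-1)\right)=\exp\left(\mu(e^\lambda-1)\right),
\]
using $1+x\le e^x$. This holds for negative $\lambda$ as well, since then $e^\lambda-1\in(-1,0)$ and $1+x\le e^x$ still applies.

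For the upper tail with $t>\mu$, I take $\lambda=\log(t/\mu)>0$ and apply Markov's inequality to $e^{\lambda S}$:
\[
\pr{S\ge t}\le e^{-\lambda t}\E{e^{\lambda S}}\le\left(\frac{\mu}{t}\right)^t\exp\left(\mu\left(\frac t\mu-1\right)\right)=e^{-\mu}\left(\frac{e\mu}{t}\right)^t.
\]

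For the lower tail with $0<t<\mu$, I again take $\lambda=\log(t/\mu)$, which is now negative. The event $\{S\le t\}$ equals $\{e^{\lambda S}\ge e^{\lambda t}\}$, so Markov gives the identical expression $e^{-\lambda t}\exp(\mu(e^\lambda-1))=e^{-\mu}(e\mu/t)^t$. The degenerate case $t=0$ is handled directly: $\pr{S=0}=\prod_i(1-p_i)\le e^{-\mu}$, which agrees with the formula under the convention $(e\mu/t)^t=1$ at $t=0$. If $t<0$ the probability is zero and there is nothing to prove.

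None of these steps is a real obstacle. The only points needing care are the sign of $\lambda$ in the lower tail, which reverses the direction of the event when passing to $e^{\lambda S}$, and the $t=0$ convention.
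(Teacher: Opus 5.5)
Your proof is correct and is the standard exponential-moment (Chernoff) argument with $\lambda=\log(t/\mu)$ of either sign, which is what the cited reference (Vershynin, Theorem 2.3.1 and Exercise 2.3.2) does; the paper itself gives no proof beyond that citation. One degenerate case is not covered: $\mu=0$ in the upper tail, where your $\lambda$ is undefined. There $S=0$ almost surely, so both sides are $0$ and the bound holds trivially.
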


\begin{lemma}[see e.g.~\cite{intro_to_random_graphs}, Theorem 21.6, Corollary 21.9]\label{sum_indep_RVs}
$ $\\
Let $S=\sum_{i=1}^{k}X_i$, where $X_i$ are independent, taking values in $[0,1]$, and let $\mu=\E{S}$. Then
\begin{align*}
\pr{S\ge \mu+t}\le& \exp\left(-\frac{t^2}{2(\mu+t/3)}\right)&\forall t>0,\\
\pr{S\le \mu-t}\le& \exp\left(-\frac{t^2}{2(\mu-t/3)}\right)&\forall t\in(0,\mu),\\
\pr{S\ge c\mu}\le& \exp\left(-\mu\left(c\log\left(\frac{c}{e}\right)+1\right)\right)&\forall c>1.
\end{align*}
\end{lemma}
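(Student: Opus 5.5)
The plan is the standard exponential-moment (Chernoff) argument. Its only input is a bound on the moment generating function of a $[0,1]$-valued variable. Write $p_i=\E{X_i}$, so that $\mu=\sum_i p_i$. Since $x\mapsto e^{\lambda x}$ is convex, on $[0,1]$ we have $e^{\lambda x}\le 1+x(e^\lambda-1)$ for every real $\lambda$. Taking expectations and using $1+y\le e^y$ gives
\[
\E{e^{\lambda X_i}}\quad\le\quad 1+p_i(e^\lambda-1)\quad\le\quad\exp\left(p_i(e^\lambda-1)\right).
\]
By independence, $\E{e^{\lambda S}}\le\exp\left(\mu(e^\lambda-1)\right)$ for all $\lambda\in\mathbb{R}$.

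\emph{Upper tail.} For $\lambda>0$, Markov's inequality applied to $e^{\lambda S}$ gives $\pr{S\ge\mu+t}\le\exp\left(\mu(e^\lambda-1)-\lambda(\mu+t)\right)$. The optimal choice is $\lambda=\log(1+t/\mu)$, which yields $\pr{S\ge\mu+t}\le\exp\left(-\mu\varphi(t/\mu)\right)$ with $\varphi(x)=(1+x)\log(1+x)-x$.

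The third inequality follows directly by taking $t=(c-1)\mu$. Indeed, $\mu\varphi(c-1)=\mu(c\log c-c+1)=\mu\left(c\log\frac ce+1\right)$.

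For the first inequality it remains to prove the elementary bound $\varphi(x)\ge\frac{x^2}{2(1+x/3)}$ for $x\ge0$, since $\mu\cdot\frac{(t/\mu)^2}{2(1+t/(3\mu))}=\frac{t^2}{2(\mu+t/3)}$. Set $g(x)=\varphi(x)-\frac{x^2}{2(1+x/3)}$. Then $g(0)=g'(0)=0$. Moreover $\varphi''(x)=\frac1{1+x}$ and $\left(\frac{x^2}{2(1+x/3)}\right)''=\frac{1}{(1+x/3)^3}$. Because $(1+x/3)^3\ge1+x$, we get $g''\ge0$, and hence $g\ge0$.

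\emph{Lower tail.} Take $\lambda<0$ and apply Markov's inequality to $e^{\lambda S}$: $\pr{S\le\mu-t}\le\exp\left(\mu(e^\lambda-1)-\lambda(\mu-t)\right)$. Choosing $\lambda=\log(1-t/\mu)$, which is finite since $t<\mu$, gives the bound $\exp\left(-\mu\varphi(-t/\mu)\right)$. Here $\varphi(-x)=(1-x)\log(1-x)+x$ for $x\in(0,1)$.

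So we need $\varphi(-x)\ge\frac{x^2}{2(1-x/3)}$ on $(0,1)$. I would compare power series. We have $\varphi(-x)=\sum_{k\ge2}\frac{x^k}{k(k-1)}$ and $\frac{x^2}{2(1-x/3)}=\sum_{k\ge2}\frac{x^k}{2\cdot3^{k-2}}$, both with nonnegative coefficients. It therefore suffices to have $k(k-1)\le2\cdot3^{k-2}$ for all $k\ge2$. This holds with equality at $k=2,3$, and it propagates by induction because the ratio $\frac{(k+1)k}{k(k-1)}=\frac{k+1}{k-1}\le3$ for $k\ge2$. Multiplying by $\mu$ then gives exactly $\frac{t^2}{2(\mu-t/3)}$.

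There is no real obstacle here; everything is routine. The only step needing care is this lower-tail elementary inequality. It is slightly sharper than the more commonly quoted $\exp(-t^2/(2\mu))$, so it has to be checked coefficientwise as above rather than quoted.
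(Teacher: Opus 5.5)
Your proof is correct, and it is the standard Chernoff argument (the convexity bound $\E{e^{\lambda X_i}}\le\exp(p_i(e^\lambda-1))$, optimisation at $\lambda=\log(1\pm t/\mu)$, then elementary lower bounds on $\varphi$) behind the cited results of Frieze and Karo\'nski; the paper gives no proof of its own and simply quotes them. Two small asides. The power-series comparison for the lower tail is not needed: your second-derivative argument already gives $\varphi(x)\ge\frac{x^2}{2(1+x/3)}$ on all of $(-1,\infty)$, since $(1+x/3)^3-(1+x)=x^2\left(\frac13+\frac{x}{27}\right)\ge0$ there. Also, the degenerate case $\mu=0$, where your choice of $\lambda$ is undefined, is trivial because then $S=0$ almost surely.
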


\begin{lemma}\label{Bin_rough_tail_bound}
For any $M\in\Z_{\ge2}$, $t\in\{0,1,...,M\}$, and $q\in(0,1)$ we have
\[
\pr{\Bin{M}{1-q}<t}\quad\le\quad M^tq^{M-t}.
\]
\end{lemma}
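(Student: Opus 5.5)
This bound, $\pr{\Bin{M}{1-q}<t}\le M^tq^{M-t}$, is a crude union bound over which trials succeed. Write $Y\sim\Bin{M}{1-q}$ as the number of successes among $M$ independent trials, each succeeding with probability $1-q$ and failing with probability $q$.

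If $Y<t$, then $Y\le t-1$. Hence at least $M-t+1\ge M-t$ of the trials fail, and in particular some set of exactly $M-t$ trials all fail. For $t=0$ the event $\{Y<0\}$ is empty, so the bound holds trivially. For $t\ge1$ the union bound over such sets gives
\[
\pr{Y<t}\;\le\;\binom{M}{M-t}q^{M-t}\;=\;\binom{M}{t}q^{M-t}.
\]

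It remains to bound the binomial coefficient. We have $\binom{M}{t}\le \frac{M^t}{t!}\le M^t$, which yields $\pr{Y<t}\le M^tq^{M-t}$ as claimed. The boundary case $t=M$ is harmless: the right-hand side is then $M^M\ge1$, so the bound holds trivially. The hypothesis $M\ge2$ is not actually needed beyond ensuring that the expression is meaningful.

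There is no real obstacle here. The only point requiring care is the convention $q^0=1$ when $t=M$, and this is covered by the trivial bound.
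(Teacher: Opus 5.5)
Your proof is correct, but it takes a different route from the paper's. The paper expands the lower tail term by term, $\pr{\Bin{M}{1-q}<t}=\sum_{s=0}^{t-1}\binom{M}{s}(1-q)^sq^{M-s}$, and bounds each summand by $M^sq^{M-s}$. It sums the resulting geometric series to $q^M\frac{(Mq^{-1})^t-1}{Mq^{-1}-1}$. It then needs $Mq^{-1}>2$ to bound this by $M^tq^{M-t}$, and that step is exactly where the hypothesis $M\ge2$ is used.

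Your union bound over the $\binom{M}{t}$ sets of $M-t$ trials that must all fail avoids the summation entirely. It lands directly on the intermediate bound $\binom{M}{t}q^{M-t}$, which is sharper by a factor of $t!$. As you observe, it also makes the assumption $M\ge2$ unnecessary. You could sharpen it further by keeping all $M-t+1$ failures, which gives $\binom{M}{t-1}q^{M-t+1}\le M^{t-1}q^{M-t+1}$. Both arguments are equally elementary; yours is slightly cleaner and gives a stronger statement.
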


\begin{proof}
\begin{align*}
\pr{\Bin{M}{1-q}<t}\:=\:\sum_{s=0}^{t-1}\binom{M}{s}(1-q)^{s}q^{M-s}\:\le\:\sum_{s=0}^{t-1}M^{s}q^{M-s}\:=\: q^M\frac{(Mq^{-1})^t-1}{Mq^{-1}-1}\:\le\: M^{t}q^{M-t}.
\end{align*}
\end{proof}

\section{Entropy and related quantities}\label{app:entropy}

Below we list some simple properties of $H_b$ and $H_b^A$ that were introduced in Definitions~\ref{def:Hb}, \ref{def:Hb_conditional} and \ref{def:HbA_and_HbA_conditional}. The proofs of these are similar in flavour to standard proofs about the entropy $H_1$ and hence they are omitted.

\begin{lemma}\label{Hb_properties}
Let $b\in\Zpos$ and let $(p_i)$, $(q_i)$ and $(r_i)$ be sequences of real numbers taking values in $[0,\theta]$ where $\theta\in(0,1)$ is a sufficiently small constant (depending on $b$). Let $W$ be a random variable taking values in a countable set $\mathcal{W}$ {of size larger than some constant depending on $b$}. Then the following properties are satisfied.
\begin{enumerate}[(i)]
\item\label{property:decreasing}
If $p_i\ge q_i$ for all $i$, then
\[
\ent{b}{p_1,p_2,...}\quad\ge\quad \ent{b}{q_1,q_2,...}.
\]
\item\label{property:splitting}
If $p_i=q_i+r_i$ for all $i$ then
\[
\ent{b}{p_1,p_2,...}\quad\le\quad \ent{b}{q_1,q_2,...}+\ent{b}{r_1,r_2,...}.
\]
\item\label{property:factorising}
If $p_i=q_ir_i$ for all $i$ then
\[
\ent{b}{p_1,p_2,...}\quad\le\quad \ent{b}{q_1,q_2,...}+\ent{b}{r_1,r_2,...}.
\]
\item\label{property:bound_by_set_size}
\[
\ent{b}{W}\quad\le\quad\left(\log|\mathcal{W}|\right)^b.
\]
\item\label{property:bound_by_set_size_two}
If $p_1+...+p_N=p<1$ and $p_i=0$ for $i>N$ then
\[
\ent{b}{p_1,p_2,...}\quad\lesssim\quad p\left(\log N\right)^b+p(-\log p)^b.
\]
\end{enumerate}
\end{lemma}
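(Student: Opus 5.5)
The plan is to reduce every item to elementary properties of the scalar function $f(p):=p(-\log p)^b$, with $f(0)=0$. The first step is to record its shape near $0$. Writing $u=-\log p$, we have $f'(p)=u^{b-1}(u-b)$ and $f''(p)=-\frac{b}{p}u^{b-2}\left(u-(b-1)\right)$. Hence on $[0,e^{-b}]$ the function $f$ is nondecreasing and concave. I would take $\theta\le e^{-b}2^{-(b-1)}$ and use the following facts.

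\textbf{Items (i) and (ii).} Item (i) follows termwise from monotonicity of $f$ on $[0,\theta]$. For item (ii), concavity together with $f(0)=0$ gives subadditivity, $f(q+r)\le f(q)+f(r)$, for $q,r\ge0$ with $q+r$ in the concave range. If $p_i=q_i+r_i\le\theta$, this applies termwise, and summing gives (ii).

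\textbf{Item (iii).} Use $(a+c)^b\le 2^{b-1}(a^b+c^b)$ for $a,c\ge0$. Then
\[
f(q r)=qr(-\log q-\log r)^b\le 2^{b-1}\left(r f(q)+q f(r)\right)\le 2^{b-1}\theta\left(f(q)+f(r)\right)\le f(q)+f(r),
\]
by the choice of $\theta$. Summing over $i$ gives (iii).

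\textbf{Item (iv).} Here $\ent{b}{W}=\E{\phi(1/P(W))}$ with $\phi(x)=(\log x)^b$, and the probabilities need not be small. The approach is Jensen's inequality, but $\phi$ is concave only on $[e^{b-1},\infty)$. I would replace $\phi$ by a concave nondecreasing majorant $\psi$: for $x\ge x_0$ let $\psi=\phi$, and for $x<x_0$ let $\psi$ be the tangent line to $\phi$ at $x_0$, for a suitable $x_0\ge e^{b-1}$ depending only on $b$. Jensen then gives $\ent{b}{W}\le\psi\left(\E{1/P(W)}\right)=\psi(|\mathcal{W}|)$. This equals $(\log|\mathcal{W}|)^b$ once $|\mathcal{W}|\ge x_0$, which is exactly where the hypothesis that $|\mathcal{W}|$ exceeds a constant depending on $b$ is used. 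The main obstacle is choosing $x_0$ so that the tangent extension really dominates $\phi$ on $(1,x_0)$; this only affects constants depending on $b$.

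\textbf{Item (v).} Normalise by setting $\pi_i=p_i/p$ for $i\le N$. Then
\[
\ent{b}{p_1,p_2,...}=p\sum_i\pi_i\left(-\log p-\log\pi_i\right)^b\le 2^{b-1}p\left((-\log p)^b+\ent{b}{\pi}\right).
\]
Apply (iv) to $\pi$, a distribution on $N$ points, to get $\ent{b}{\pi}\le(\log N)^b$ for $N$ large. For the finitely many small $N$, bound $\ent{b}{\pi}$ directly by a constant depending on $b$. This yields $\ent{b}{p_1,p_2,...}\lesssim p(\log N)^b+p(-\log p)^b$.
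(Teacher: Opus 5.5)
Your proposal is correct and is essentially the argument the paper has in mind when it omits the proof as ``similar in flavour to standard proofs about the entropy $H_1$'': monotonicity and concavity of $p(-\log p)^b$ on $[0,e^{-b}]$, the convexity bound $(a+c)^b\le 2^{b-1}(a^b+c^b)$, Jensen's inequality, and normalisation. The one step you left unchecked in (iv) does work with $x_0=e^b$, and here is why: the gap $g$ between the tangent line at $x_0$ and $(\log x)^b$ is convex on $[e^{b-1},x_0]$ with $g(x_0)=g'(x_0)=0$ and concave on $[1,e^{b-1}]$, so it only remains to check $g(1)\ge 0$, i.e.\ $\log x_0\ge b(1-1/x_0)$. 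Also, in (v) the case $N=1$ is trivial, since then the normalised term $\sum_i\pi_i(-\log\pi_i)^b$ vanishes.
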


\begin{lemma}\label{Hb_cond_properties}Let $b\in\Zpos$ and let $W$ and $Z$ be random variables as in \Cref{def:Hb_conditional}. Then
\begin{align*}
\entc{b}{W}{Z}\quad&\le\quad \ent{b}{W}+O(1),\\
\ent{b}{W}\quad&\lesssim\quad \entc{b}{W}{Z}+\ent{b}{Z}.
\end{align*}
\end{lemma}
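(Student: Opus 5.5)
The second inequality is a pointwise estimate, and the first follows from a concavity (Jensen) argument once we correct for the small non-concave piece of $p\mapsto p(-\log p)^b$ near $p=1$. Throughout write $\phi(p):=p(-\log p)^b$ on $[0,1]$, and abbreviate $P(w)=\pr{W=w}$, $P(z)=\pr{Z=z}$, $P(w\mid z)=\prcond{W=w}{Z=z}$.

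For the second inequality, fix $w$ and any $z$ with $P(z)P(w\mid z)>0$. Since $P(w)\ge P(z)P(w\mid z)$, we have
\[
-\log P(w)\le(-\log P(z))+(-\log P(w\mid z)).
\]
Using $(a+c)^b\le 2^{b-1}(a^b+c^b)$ for $a,c\ge0$, this gives $(-\log P(w))^b\le 2^{b-1}\big((-\log P(z))^b+(-\log P(w\mid z))^b\big)$. Now expand
\[
\ent{b}{W}=\sum_{z,w}P(z)P(w\mid z)(-\log P(w))^b
\]
and insert this bound term by term. Summing the first piece over $w$ gives $\sum_z P(z)(-\log P(z))^b=\ent{b}{Z}$, and the second piece is exactly $\entc{b}{W}{Z}$. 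Hence $\ent{b}{W}\le 2^{b-1}\big(\ent{b}{Z}+\entc{b}{W}{Z}\big)$. When $Z$ is uncountable, $\ent{b}{Z}$ should be read as infinite unless $Z$ is effectively discrete, so this case is only meaningful for countable $Z$ and nothing more is needed.

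For the first inequality, compute $\phi''(p)=\frac bp(-\log p)^{b-2}\big((b-1)-(-\log p)\big)$. So $\phi$ is concave on $[0,p_b]$ with $p_b:=e^{-(b-1)}$, but not necessarily on $[p_b,1]$. Let $\psi$ be the least concave majorant of $\phi$ on $[0,1]$. Because $\phi$ is bounded (by a constant depending on $b$) and already concave near $0$, the majorant $\psi$ agrees with $\phi$ on some $[0,c_b]$ with $c_b>0$, and satisfies $0\le\psi-\phi\le C_b$ everywhere. For each $w$, Jensen's inequality over $z$ (or over the law of $Z$ in the uncountable case) gives
\[
\sum_zP(z)\phi(P(w\mid z))\le\sum_zP(z)\psi(P(w\mid z))\le\psi\Big(\sum_zP(z)P(w\mid z)\Big)=\psi(P(w)).
\]
Summing over $w$ yields $\entc{b}{W}{Z}\le\sum_w\psi(P(w))$. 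Finally, $\psi(P(w))-\phi(P(w))$ is nonzero only when $P(w)\ge c_b$, which happens for at most $1/c_b$ values of $w$, each contributing at most $C_b$. So $\sum_w\psi(P(w))\le\ent{b}{W}+C_b/c_b=\ent{b}{W}+O(1)$.

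The main obstacle is the failure of concavity of $\phi$ for $b\ge2$. I would handle it via the concave majorant as above and check two points: that $\psi=\phi$ on a neighbourhood of $0$, and that the discrepancy is bounded and affects only boundedly many atoms. The first point needs some care, since the majorant could a priori touch $\phi$ only at $0$ and at points beyond $p_b$; even so, $\psi-\phi$ is bounded on $[0,1]$. If $\psi\neq\phi$ near $0$, I would instead use that $\psi(p)-\phi(p)\to0$ as $p\to0$ in a summable way, or simply split the sum over $w$ by whether $P(w)\ge c_b$.
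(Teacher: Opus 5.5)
Your argument is correct and is the natural way to carry out what the paper only indicates; the paper omits the proof as ``similar in flavour to standard proofs about the entropy $H_1$''. The second inequality is the pointwise chain-rule bound $P(w)\ge P(z)P(w\mid z)$ combined with $(a+c)^b\le 2^{b-1}(a^b+c^b)$. The first is Jensen's inequality, made to work for $b\ge2$ by replacing $\phi(p)=p(-\log p)^b$ with a concave majorant $\psi$ that is bounded and agrees with $\phi$ near $0$.

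The one step you leave hedged should be closed rather than covered by fallbacks. ``Bounded and concave near $0$'' does not by itself force $\psi=\phi$ near $0$: a concave majorant can be linear from the origin when $\phi$ has finite slope at $0$ and a bump further out.

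What makes the claim true here is that $\phi'(p)=(-\log p)^{b-1}(-\log p-b)\to+\infty$ as $p\downarrow0$. Concretely, for $b\ge2$ set $g(q):=\phi(q)+\phi'(q)(1-q)$, the value at $1$ of the tangent line at $q$. Then $g'(q)=\phi''(q)(1-q)$. So $g$ strictly decreases from $+\infty$ on $(0,e^{-(b-1)}]$, and strictly increases to $g(1)=0$ on $[e^{-(b-1)},1]$. Hence there is a unique $q_b\in(0,e^{-(b-1)})$ with $g(q_b)=0$.

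Define $\psi=\phi$ on $[0,q_b]$, and on $[q_b,1]$ let $\psi$ be the tangent line at $q_b$, which passes through $(1,0)$. This $\psi$ is concave. It dominates $\phi$: on the concave part $[0,e^{-(b-1)}]$ by concavity of $\phi$, and on the convex part because $\phi$ lies below its chord from $e^{-(b-1)}$ to $1$. For $b=1$, $\phi$ itself is concave and no correction is needed.

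This gives your constants explicitly: $c_b=q_b$ and $C_b\le\max\phi=b^be^{-b}$. You do not need the least concave majorant abstractly, and the fallbacks in your last paragraph become unnecessary. Note that ``splitting the sum by whether $P(w)\ge c_b$'' would not have sufficed on its own, since it does not control the discrepancy $\psi-\phi$ on the many small atoms.
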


\begin{lemma}\label{HbA_cond_properties} Let $b\in\Zpos$, let $W$ and $Z$ be random variables as in \Cref{def:Hb_conditional}, and let $A\subseteq B$ be events. Then the following are satisfied.
\begin{enumerate}[(i)]
\item\label{item:HbA_cond_i}
\begin{align*}
\entrc{b}{A}{W}{Z}\quad&\le\quad \entr{b}{A}{W}+O(1),\\
\entr{b}{A}{W}\quad&\lesssim\quad \entrc{b}{A}{W}{Z}+\entr{b}{A}{Z}.
\end{align*}
\item\label{item:HbA_cond_ii}
\begin{align*}
\entr{b}{A}{W}\quad&\le\quad\entr{b}{B}{W}+O(1),\\
\entrc{b}{A}{W}{Z}\quad&\le\quad\entrc{b}{B}{W}{Z}+O(1).
\end{align*}
\item\label{item:HbA_cond_iii} For each $z\in\mathcal{Z}$ let $\mathcal{W}_z=\{w:\:\prcond{W=w}{Z=z}{}>0\}$. Then we have
\begin{align*}
\entrc{b}{A}{W}{Z}\quad\lesssim\quad\E{\left(\log|\mathcal{W}_Z|\right)^b\1{A}}+\E{\prcond{A}{Z}{}\left(-\log\prcond{A}{Z}{}\right)^b}.
\end{align*}
\end{enumerate}
\end{lemma}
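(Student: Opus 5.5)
The basic tool for all three parts is a regularised version of the function $f(p):=p(-\log p)^b$. Writing $L=-\log p$, a direct computation gives $f''(p)=bL^{b-2}(b-1-L)/p$. Hence $f$ is concave and increasing on $[0,e^{-b}]$, and on $[e^{-b},1]$ it is bounded by a constant depending only on $b$. I would therefore fix a function $\bar f$ which is concave and nondecreasing on $[0,1]$, agrees with $f$ on $[0,e^{-b}]$, and satisfies $|f-\bar f|\le C_b$ everywhere. Any sequence of probabilities $(p_w)$ with $\sum_w p_w\le1$ has at most $e^{b}$ entries exceeding $e^{-b}$. Consequently $\sum_w f(p_w)$ and $\sum_w\bar f(p_w)$ differ by $O(1)$, and in every part I may replace $f$ by $\bar f$ at the cost of an additive constant. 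The countable/uncountable distinction for $Z$ plays no role: every step below is done pointwise in $z$ and then averaged.

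\emph{Part (i), first inequality.} For fixed $w$ we have $\pr{W=w,A}=\sum_z\pr{Z=z}\prcond{W=w,A}{Z=z}$. Jensen's inequality for the concave $\bar f$ gives
\[
\sum_z\pr{Z=z}\bar f\left(\prcond{W=w,A}{Z=z}\right)\le\bar f\left(\pr{W=w,A}\right).
\]
Summing over $w$ and applying the $O(1)$ comparison on both sides yields $\entrc{b}{A}{W}{Z}\le\entr{b}{A}{W}+O(1)$.

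\emph{Part (i), second inequality.} For every $z$ we have $\pr{W=w,A}\ge\pr{Z=z}\prcond{W=w,A}{Z=z}$, and therefore
\[
(-\log\pr{W=w,A})^b\le\left(-\log\pr{Z=z}-\log\prcond{W=w,A}{Z=z}\right)^b .
\]
Expand $f(\pr{W=w,A})$ as the mixture $\sum_z\pr{Z=z}\prcond{W=w,A}{Z=z}(-\log\pr{W=w,A})^b$, insert the bound above term by term, and use $(a+c)^b\le2^{b-1}(a^b+c^b)$. After summing over $w$, one term is $2^{b-1}\entrc{b}{A}{W}{Z}$. The other is $2^{b-1}\sum_z\pr{Z=z,A}(-\log\pr{Z=z})^b$, and since $\pr{Z=z,A}\le\pr{Z=z}$ this is at most $2^{b-1}\entr{b}{A}{Z}$.

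\emph{Part (ii).} Since $A\subseteq B$, we have $\pr{W=w,A}\le\pr{W=w,B}$ for each $w$. Monotonicity of $\bar f$ gives $\bar f(\pr{W=w,A})\le\bar f(\pr{W=w,B})$; summing and passing back from $\bar f$ to $f$ gives $\entr{b}{A}{W}\le\entr{b}{B}{W}+O(1)$. The conditional version follows by applying this for each fixed $z$ to the measure conditioned on $Z=z$ and then averaging over $Z$, since the constant is uniform.

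\emph{Part (iii).} Fix $z$ and set $p_w:=\prcond{W=w,A}{Z=z}$, so that $p:=\sum_w p_w=\prcond{A}{Z}{}$ evaluated at $z$, and $p_w=0$ outside $\mathcal{W}_z$. I would prove the inequality of \Cref{Hb_properties}\eqref{property:bound_by_set_size_two} directly in this form. First pass to $\bar f$. Then write $-\log p_w=-\log p-\log(p_w/p)$ and use $(a+c)^b\le2^{b-1}(a^b+c^b)$, which gives
\[
\sum_w f(p_w)\le 2^{b-1}\left(p(-\log p)^b+p\sum_w\tfrac{p_w}{p}\left(-\log\tfrac{p_w}{p}\right)^b\right)+O(1).
\]
The last sum is $\ent{b}{\cdot}$ of a probability vector supported on $\mathcal{W}_z$, which by \Cref{Hb_properties}\eqref{property:bound_by_set_size} is $\le(\log|\mathcal{W}_z|)^b$, or is $O(1)$ when $|\mathcal{W}_z|$ is small. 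Averaging over $Z$ and using $\E{\prcond{A}{Z}{}(\log|\mathcal{W}_Z|)^b}=\E{(\log|\mathcal{W}_Z|)^b\1{A}}$ gives the claim.

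The only delicate step is the $O(1)$ bookkeeping in part (iii). The comparison between $f$ and $\bar f$ costs an additive constant rather than a multiple of $p$, so it must be absorbed into the implied constant of $\lesssim$. This is legitimate because the right-hand side of (iii) is bounded below by a positive constant as soon as $|\mathcal{W}_Z|$ exceeds a constant depending on $b$, which matches the size assumption already made in \Cref{Hb_properties}.
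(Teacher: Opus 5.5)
\textbf{Gap in part (iii).} Parts (i) and (ii) are correct, but part (iii) does not go through as you wrote it. Statement (iii) is a purely multiplicative bound. Your chain for it gives only $\lesssim(\text{right-hand side})+O(1)$, and you absorb the additive constant by claiming that the right-hand side is bounded below by a positive constant once $|\mathcal{W}_Z|$ is large. That claim is false because of the factors $\1{A}$ and $\prcond{A}{Z}{}$. Take $Z$ constant, $W$ uniform on $m$ points, and $A$ independent of $W$ with $\pr{A}=\delta$. Then the right-hand side equals $\delta(\log m)^b+\delta(\log(1/\delta))^b$, which tends to $0$ as $\delta\to0$ for every fixed $m$. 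So no additive constant can be absorbed. In the application in \Cref{bound_entropy_T} an additive $O(1)$ would be harmless, but it does not prove the lemma as stated.

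\textbf{How to repair it.} The additive constant comes only from your initial passage to $\bar f$ in (iii), which your argument never uses. The splitting $-\log p_w=-\log p+\bigl(-\log(p_w/p)\bigr)$, with both summands nonnegative, already gives, exactly,
\[
\sum_w p_w(-\log p_w)^b\le 2^{b-1}\Bigl(p(-\log p)^b+p\,\ent{b}{(p_w/p)_w}\Bigr).
\]
The only remaining constant is the one from \Cref{Hb_properties}\eqref{property:bound_by_set_size} when $|\mathcal{W}_z|$ is small, and it appears multiplied by $p$.
\begin{itemize}
\item If $|\mathcal{W}_z|\ge2$, that term is $\lesssim p(\log|\mathcal{W}_z|)^b$.
\item If $|\mathcal{W}_z|=1$, the normalised vector is a point mass, so $\ent{b}{\cdot}=0$.
\end{itemize}
Averaging over $Z$ and using $\E{\prcond{A}{Z}{}(\log|\mathcal{W}_Z|)^b}=\E{(\log|\mathcal{W}_Z|)^b\1{A}}$ then gives (iii) with a constant depending only on $b$. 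In parts (i) and (ii) the truncation $\bar f$ is legitimate, since the statement allows an additive $O(1)$ there. The second inequality of (i) holds with constant $2^{b-1}$ for countable $Z$, which is the only case where $\entr{b}{A}{Z}$ is defined.
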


\section{Auxiliary estimates regarding degrees and volumes}\label{app:deg_vol}

\subsection{Distribution of the blue edges in $G^*$}\label{sec:blue_edges}

In this section, we prove the following statement.

\begin{lemma}\label{blue_almost_unif}
Let $(a_v)_{v\in V}$ be a collection of random variables that is distributed as the collection of blue degrees $\db(v)$ in $G^*$. Given $(a_v)_{v\in V}$, let $H_{(a_v)}$ be a random graph that can be obtained by adding $a_v$ blue half-edges to each vertex of the torus $G$ and then matching the blue half-edges uniformly randomly. Let $G_{(a_v)}$ be a random graph that is distributed like $G_b$ conditional on $\left\{\db(v)=a_v\:\forall v\in V\right\}$.

Then the graph $H_{(a_v)}$ conditioned on not having any loops or double edges has the same distribution as the graph $G_{(a_v)}$. Also, on the event
\begin{align*}
A:=\left\{\sum_{v}a_v\ge\frac{N}{C\log N},\:\sum_{v}a_v(a_v-1)\le\frac{CN}{(\log N)^{\frac32}},\:\sum_{u,v:u\ne v}a_v(a_v-1)a_u(a_u-1)\le\frac{CN^2}{(\log N)^3}\right\}.
\end{align*}
the probability of $H_{(a_v)}$ not having loops or double edges is $\ge1-\frac{C'}{(\log N)^{\frac12}}$ where $C'$ is a constant depending on $C$. Finally, for sufficiently large values of the constant $C$, we have $\pr{A}=1-o(1)$.
\end{lemma}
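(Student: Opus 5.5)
The plan has three parts: identify the conditional law of the blue edge set as a configuration-type model, bound the probability that the uniform matching creates a loop or a double edge on $A$, and check that $A$ is likely.

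\textbf{Step 1: identifying the law.} In $G^*$ the blue edges form an Erd\H{o}s--R\'enyi graph on $V$ with a constant edge probability $\pblue$. Hence conditional on the degree sequence $\{\db(v)=a_v\}$, every simple graph with that degree sequence has probability $(\pblue)^{m}(1-\pblue)^{\binom N2-m}$ divided by the same normalisation, where $m=\frac12\sum_v a_v$. So the conditional law is uniform over simple graphs with degrees $(a_v)$.

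On the other side, in the configuration model $H_{(a_v)}$ each simple graph with degrees $(a_v)$ arises from exactly $\prod_v a_v!$ perfect matchings of the half-edges. Since this count does not depend on the graph, conditioning $H_{(a_v)}$ on simplicity also gives the uniform law over simple graphs. The two laws therefore agree.

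\textbf{Step 2: no loops or double edges on $A$.} Write $M=\sum_v a_v$ for the number of half-edges. I would use a first moment bound.

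For loops, the expected number is $\sum_v\binom{a_v}{2}\frac{1}{M-1}\lesssim \frac{\sum_v a_v(a_v-1)}{M}$. On $A$ this is at most $\frac{CN(\log N)^{-3/2}}{N/(C\log N)}=C^2(\log N)^{-1/2}$.

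For double edges, the expected number of pairs of half-edge pairs at $u$ and at $v$ that are both matched is at most
\[\sum_{u\ne v}\tfrac{a_u(a_u-1)a_v(a_v-1)}{2}\cdot\tfrac{2}{(M-1)(M-3)}\lesssim \frac{\sum_{u\ne v}a_u(a_u-1)a_v(a_v-1)}{M^2}.\]
On $A$ this is at most $\frac{CN^2(\log N)^{-3}}{N^2/(C\log N)^2}\lesssim (\log N)^{-1}$. Markov's inequality then gives the stated bound $1-\frac{C'}{(\log N)^{1/2}}$.

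\textbf{Step 3: $\pr{A}=1-o(1)$.} The first condition in $A$ follows from \Cref{total_blue_degree}.

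For the second condition, each $\db(v)$ is a sum of Bernoulli variables with mean $\asymp\frac1{\log N}$, so $\E{\db(v)(\db(v)-1)}=\sum_{y\ne z}\pblue^2\lesssim(\log N)^{-2}$. Hence $\E{\sum_v a_v(a_v-1)}\lesssim N(\log N)^{-2}$. Markov's inequality gives the bound $CN(\log N)^{-3/2}$ with probability $1-O((\log N)^{-1/2})$.

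For the third condition, I would compute $\E{a_u(a_u-1)a_v(a_v-1)}$. The degrees $a_u$ and $a_v$ share only the single edge indicator for $\{u,v\}$, so this expectation is $\lesssim(\log N)^{-4}+\pblue(\log N)^{-2}$. Summing over pairs gives expectation $\lesssim N^2(\log N)^{-4}$, and Markov's inequality finishes the argument.

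The only step needing care is this near-independence computation for $u\ne v$, where one has to handle the shared edge and the cross terms when expanding $a_u(a_u-1)a_v(a_v-1)$ as sums over ordered pairs of distinct neighbours. This is routine bookkeeping.
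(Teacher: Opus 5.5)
Your proposal is correct and follows essentially the same route as the paper. You identify the conditional law by noting that blue-edge configurations with a given degree sequence are equally likely and that each simple graph comes from exactly $\prod_v a_v!$ matchings. You then bound loops and double edges on $A$ by first moments, and obtain $\pr{A}=1-o(1)$ from \Cref{total_blue_degree} together with Markov's inequality and the moment estimates $\E{\sum_v a_v(a_v-1)}\lesssim N(\log N)^{-2}$ and $\E{\sum_{u\ne v}a_u(a_u-1)a_v(a_v-1)}\lesssim N^2(\log N)^{-4}$, including the correct handling of the shared edge $\{u,v\}$.
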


\begin{proof}
The first part of the statement follows by noting that the probability of a given realisation of $\Gb$ only depends on the number of its blue edges, and that for a given sequence $(a_v)$, all configurations of $H_{(a_v)}$ without loops and double edges have the same probability.

Given the sequence $(a_v)$, the probability of $H_{(a_v)}$ having a loop is $\lesssim\sum_{v}\frac{a_v(a_v-1)}{(\sum_{x}a_x)-1}$ and the probability of $H_{(a_v)}$ having a double edge is $\lesssim\sum_{u,v:u\ne v}\frac{a_v(a_v-1)a_u(a_u-1)}{\left((\sum_{x}a_x)-1\right)^2}$. On the event $A$ these sums are both $\lesssim\frac{1}{(\log N)^{\frac12}}$ as required.

Finally, note that $\E{\sum_{v}a_v(a_v-1)}\asymp\sum_x\sum_{y,z:\:y\ne z}\pr{x\simb y,x\simb z}\asymp N^3(\pblue)^2\asymp\frac{N}{(\log N)^2}$ and\\ 
$\sum_{u,v:u\ne v}a_v(a_v-1)a_u(a_u-1)\lesssim\E{\sum_{v}a_v(a_v-1)}^2+\sum_{u,v:u\ne v}\pr{u\simb v}\E{a_v(a_v+1)}^2\lesssim\frac{N^2}{(\log N)^4}$. Using these and \Cref{total_blue_degree} we get that $\pr{A}=1-o(1)$.
\end{proof}

\subsection{Proof of \Cref{volume_tail_bound}}\label{app:volume_tail_bound}

To prove \Cref{volume_tail_bound}, we first prove the following two auxiliary lemmas.

\begin{lemma}\label{degree_exp_moment}
For any $\theta\in\left[0,\frac14\right]$ we have $\E{e^{\theta\dgrb}}\le e^{2\theta}$.
\end{lemma}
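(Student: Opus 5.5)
Since $\dgrb=\dg+\dr+\db$ is the total number of added edges at a fixed vertex $x$, it is a sum of independent Bernoulli variables: $\dgrb=\sum_{y\ne x}I_y$. Here $I_y$ is the indicator that at least one of the green, red or blue edges between $x$ and $y$ is present. (In $G^*$ these three edges are sampled independently, so $\dgrb$ is really a sum of independent Bernoullis with parameters $\pblue_{x,y}$, $\pgreen_{x,y}$, $\pred_{x,y}$.) In either description the parameters $q_i$ satisfy $\sum_i q_i=\sum_y p_{x,y}=1$, by the choice of $Z_n$ together with $\pblue_{x,y}+\pgreen_{x,y}+\pred_{x,y}=p_{x,y}$. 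The plan is the standard exponential-moment computation for a sum of independent Bernoullis.

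First, independence lets us factorise:
\[
\E{e^{\theta\dgrb}}=\prod_i\left(1+q_i(e^\theta-1)\right)\le\prod_i\exp\left(q_i(e^\theta-1)\right)=\exp\left(e^\theta-1\right),
\]
using $1+u\le e^u$ and $\sum_i q_i=1$. Second, it remains to check that $e^\theta-1\le2\theta$ for $\theta\in[0,\frac14]$. Since $e^\theta$ is convex, $e^\theta-1\le\theta e^\theta$ on this range. And $e^{1/4}<2$, so $e^\theta-1\le\theta e^{1/4}\le2\theta$. Combining the two steps gives $\E{e^{\theta\dgrb}}\le e^{2\theta}$.

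There is no real obstacle. The only point needing care is that the degree decomposes into independent indicators whose means sum to exactly $1$. In $G^*$ this holds because the three colours are sampled independently and their probabilities add up to $p_{x,y}$. (A multi-edge between the same pair is simply counted with multiplicity, which is consistent with how degree is counted in the multigraph $G^*$.)
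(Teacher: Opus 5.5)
Your proof is correct and is essentially the paper's: factorise $\E{e^{\theta\dgrb}}$ over independent Bernoulli indicators, apply $1+u\le e^u$ (the paper phrases it as $\log(1+x)\le x$) together with $\sum_y p_{x,y}=1$ to get $\exp(e^\theta-1)$, and then use $e^\theta-1\le2\theta$ for $\theta\in[0,\frac14]$. Your version is, if anything, slightly more careful, because it accounts for the three independent colour indicators per pair in $G^*$ (the paper writes one Bernoulli per pair) and it justifies the final elementary inequality.
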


\begin{lemma}\label{volume_exp_moment}
Let $v$ be a fixed vertex and let $B_r$ denote the ball of radius $r$ around $v$ in $G^*$.\\
Then for any $\theta\in\left[0,\frac14\frac{1}{(2d+2)^r}\right]$ we have
\[\E{e^{\theta\left|B_r\setminus B_{r-1}\right|}}\quad\le\quad e^{(2d+2)^r\theta},\]
and for any $\theta\in\left[0,\frac14\frac{1}{(2d+3)^r}\right]$ we have
\[\E{e^{\theta\left|B_r\right|}}\quad\le\quad e^{(2d+3)^r\theta}.\]

Furthermore, the same bounds hold if we consider a ball $B_r$ of radius $r$ around the root of a random quasi-tree $T$.
\end{lemma}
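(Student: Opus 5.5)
The plan is to prove the two bounds by induction on $r$, exposing the ball in breadth-first layers. The input is \Cref{degree_exp_moment}, together with the observation that, conditional on everything revealed so far, the number of new neighbours of any vertex is stochastically dominated by $2d+\dgrb$, where $\dgrb$ is an independent copy of the unconditioned green+red+blue degree. The black edges contribute at most $2d$ deterministically. For the added edges, each potential edge to a yet-unrevealed vertex is an independent Bernoulli variable that has not yet been examined. Edges to already revealed vertices only lower the count of new vertices, so we can simply ignore them. For the quasi-tree the same domination holds with equality in distribution of the long-range and green degrees. This is exactly why the statement also holds for $T$.

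Step 1 (one layer, conditional). Write $S_r=|B_r\setminus B_{r-1}|$. Conditional on the $\sigma$-algebra $\mathcal F_{r-1}$ of the exploration up to layer $r-1$, we have $S_r\stle\sum_{i=1}^{S_{r-1}}(2d+D_i)$ with $D_i$ i.i.d.\ copies of $\dgrb$. For $\theta\le\frac14$, \Cref{degree_exp_moment} then gives
\[
\econd{e^{\theta S_r}}{\mathcal F_{r-1}}\le \left(e^{2d\theta}e^{2\theta}\right)^{S_{r-1}}=e^{(2d+2)\theta S_{r-1}}.
\]

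Step 2 (iterate for the shell). Set $\theta_r=\theta$ and $\theta_{k-1}=(2d+2)\theta_k$, so $\theta_k=(2d+2)^{r-k}\theta$. The hypothesis $\theta\le\frac14(2d+2)^{-r}$ guarantees $\theta_k\le\frac14$ for all $k\ge1$, so Step 1 applies at every stage. Taking expectations repeatedly, with $S_0=1$, yields
\[
\E{e^{\theta S_r}}\le \E{e^{\theta_{1}(2d+2)S_0}}=e^{(2d+2)^r\theta}.
\]

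Step 3 (whole ball). Here the quantity to track is $|B_r|=|B_{r-1}|+S_r$, so we carry both quantities jointly. Conditionally on $\mathcal F_{r-1}$,
\[
\econd{e^{\theta|B_r|}}{\mathcal F_{r-1}}\le e^{\theta|B_{r-1}|+(2d+2)\theta S_{r-1}}\le e^{(2d+3)\theta|B_{r-1}|},
\]
using $S_{r-1}\le|B_{r-1}|$. We then iterate with $\theta_k=(2d+3)^{r-k}\theta\le\frac14$, exactly as in Step 2, to get $e^{(2d+3)^r\theta}$.

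The only delicate point is the conditional domination in Step 1. The conditioning does not concern the unexamined edges, but the torus ball and the repeated vertices require care. This is handled by revealing edges one at a time, so that each new vertex's edges to unexplored vertices are fresh Bernoulli variables with the original parameters. Everything else is routine iteration of the moment generating function.
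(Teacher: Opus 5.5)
Your proposal is correct and is essentially the paper's proof. The paper dominates the shell sizes by a Galton--Watson process with offspring distribution $2d+\dgrb$ and runs the same induction on the moment generating function via \Cref{degree_exp_moment}, rescaling $\theta$ by $2d+2$ per generation; for the whole ball it uses offspring $2d+1+\dgrb$, which is equivalent to your bound $S_{r-1}\le|B_{r-1}|$ with rescaling by $2d+3$.
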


Once we have these, \Cref{volume_tail_bound} follows from a Chernoff bound and using the second part of \Cref{volume_exp_moment} with $\theta=\frac14\frac{1}{(2d+3)^r}$.

\begin{proof}[Proof of \Cref{degree_exp_moment}]
Note that
\begin{align*}
&\E{e^{\theta\dgrb}}\quad=\quad\prod_{v}\left(p_{0,v}e^\theta+(1-p_{0,v})\right)\quad=\quad \prod_{v}\left(1+(e^\theta-1)p_{0,v}\right)\\
&=\quad\exp\left(\sum_{v}\log\left(1+(e^\theta-1)p_{0,v}\right)\right)\quad\le\quad \exp\left(\sum_{v}(e^\theta-1)p_{0,v}\right)\quad=\quad\exp\left(e^\theta-1\right),
\end{align*}
where we used the independence of edges and for the $\le$ we used that for any $x\ge0$ we have $\log(1+x)\le x$.

For $\theta\in\left[0,\frac14\right]$ we have $e^\theta-1\le2\theta$ which finishes the proof.
\end{proof}

\begin{proof}[Proof of \Cref{volume_exp_moment}]
Note that both for balls in $G^*$ and for balls around the root of $T$ we have $\left(\left|B_r\setminus B_{r-1}\right|\right)_{r\ge1}\stle(Z_r)_{r\ge1}$ where $Z_r$ denotes the population size of generation $r$ of a Galton-Watson process with offspring distribution $\zeta\eqdist2d+\dgrb$.

We prove by induction on $r$ that $\E{e^{\theta Z_r}}\le e^{(2d+2)^r\theta}$ for all $\theta\in\left[0,\frac14\frac{1}{(2d+2)^r}\right]$. For $r=0$ the statement is immediate.

Given the statement for $r=k$, and using \Cref{degree_exp_moment}, we can write
\begin{align*}
\econd{e^{\theta Z_{k+1}}}{Z_k}\quad=\quad\E{e^{\theta\zeta}}^{Z_k}\quad=\quad\left(e^{2d\theta}\E{e^{\theta\dgrb}}\right)^{Z_k}\quad\le\quad e^{(2d\theta+2\theta)Z_k},
\end{align*}
hence
\begin{align*}
\E{e^{\theta Z_{k+1}}}\quad\le\quad\E{e^{\theta(2d+2)Z_k}}\quad\le\quad e^{(2d+2)^{k+1}\theta}
\end{align*}
as required.

The second part of the lemma can be proved analogously by noting that $\left(\left|B_r\right|\right)_{r\ge1}\stle(Z_r)_{r\ge1}$ where $Z_r$ denotes the population size of generation $r$ of a Galton-Watson process with offspring distribution $\zeta\eqdist2d+1+\dgrb$.
\end{proof}

\subsection{Proof of \Cref{most_blue_nice}}

\begin{proof}[Proof of \Cref{most_blue_nice}] Using that $\left(\1{x\simb y}\right)_{x,y\in V,\: x\ne y}$ are independent $\Bern{\frac{\cb}{N\log N}}$ random variables, and the concentration bound \Cref{sum_of_Berns}, we get that whp $\left(1-\theta\right)c_b\frac{N}{\log N}\le \sum_{v}\db(v)\le\left(1+\theta\right)c_b\frac{N}{\log N}$.

This implies the required upper bound on $|\A|$. To facilitate the lower bounds on $|\A_C|$ and $|\A_x|$, we first prove that whp $|\A|\ge\left(1-\frac34\theta\right)c_b\frac{N}{\log N}$.

Note that conditional on $\sum_{v}\db(v)=S$, the distribution of the blue degrees can be obtained by choosing $\frac12S$ distinct pairs $\{x,y\}$ uniformly randomly and counting how many times each vertex appears. If $S\ge\left(1-\frac14\theta\right)c_b\frac{N}{\log N}=:S_0$, then this stochastically dominates the numbers obtained by choosing $\frac12S_0$ distinct pairs $\{x,y\}$ uniformly randomly and counting the number of appearances of each vertex.

Let us consider choosing $\frac12S_0$ distinct pairs $\{x,y\}$ one by one, always choosing uniformly from the yet unchosen pairs. At each step, the probability of choosing a pair that contains a vertex that already appeared earlier is $\le\frac{S_0N}{\frac12N(N-1)-\frac12S_0}\asymp\frac{1}{\log N}$. Then using \Cref{sum_of_Berns} we get that whp there are $\lesssim \frac{N}{(\log N)^2}\le\frac18\theta c_b\frac{N}{\log N}$ pairs containing a vertex that already appeared earlier.

Putting these together, we get that whp the number of vertices with $\db\ge1$ is $\ge\left(1-\frac12\theta\right)c_b\frac{N}{\log N}$.

From \Cref{prob_nice} whp $\#\left\{v:\:\db(v)\ge1,\:v\not\in\Vnice\right\}\le \frac{N}{(\log N)^{4/3}}\le\frac14\theta c_b\frac{N}{\log N}$. This finishes the proof of the lower bound on $|\A|$.

To bound the size of $\A_C$, we note that the green and red degrees of the vertices of $G^*$ can be stochastically dominated as follows. Let us consider a random directed graph $\Ggr^{(2)}$ where for each pair $(x,y)$ in $V$ there is a red directed edge from $x$ to $y$ with probability $\pred_{x,y}$, and there is a green directed edge with probability $\pgreen_{x,y}$, independently. Then we have
\[
\left(\deg^{(\mathrm{out})}(v)+\deg^{(\mathrm{in})}(v)\right)_{v\in V}\quad \stge\quad\left(\dgr(v)\right)_{v\in V},
\]
and $\deg^{(\mathrm{out})}(v)\eqdist\dgr$ are iid, and $\deg^{(\mathrm{in})}(v)\eqdist\dgr$ are iid, but the outdegrees are not independent of the indegrees.

For any $v$ we have $\pr{\db(v)\ge2}\ll\frac{1}{\log N}$, hence whp $\#\left\{v:\:\db(v)\ge2\right\}\le\frac18\theta\cb\frac{N}{\log N}$. Conditioning on the blue edges, working on the high probability event that the number of vertices with $\db\ge1$ is $\ge\left(1-\frac34\theta\right)c_b\frac{N}{\log N}$ and $\le (1+\theta)c_b\frac{N}{\log N}$, and using the above domination, we get that for any~$\theta$ for sufficiently large values of $C$ whp we have $\#\left\{v:\db(v)\ge1 \text{ and }\deg(v)\ge C\right\}\le\frac{1}{16}\theta c_b\frac{N}{\log N}$. Therefore, $\#\left\{v\in \A:\db(v)\ge 2 \text{ or }\deg(v)\ge C \text{ or } (v'\simb v\text{ and } \deg(v')\ge C )\right\}\le\frac24\theta c_b\frac{N}{\log N}$.\\ As whp $\left(1-\frac34\theta\right)c_b\frac{N}{\log N}\le|\A|\le(1+\theta)c_b\frac{N}{\log N}$ this finishes the proof of the lower bound on $|\A_C|$.

From \Cref{volume_high_prob_bound} we know that whp $\Ggr$ is such that for each vertex $x$ the number of vertices in its $2K$-neighbourhood is $\ll\frac{N}{\log N}$. This finishes the proof of the lower bound on $|\A_x|$.
\end{proof}

\subsection{Conditional distributions}

\begin{lemma}\label{cond_green_red_deg}
There exists a positive constant $c$ with the following properties. For any $x,y\in V$, any $W\subseteq V\setminus\{x,y\}$ with $|W|\le N^{c}$, and any $k\ll\log N$ we have
\begin{align}\label{eq:simg_cond_prob}
\prcond{x\simg y}{\dg(y)=k,\:y\not\simg w\:\:\:\forall w\in W}{}\quad\asymp\quad k\cdot\pr{x\simg y}.
\end{align}
Furthermore, for any $x,y\in V$, any $W\subseteq V\setminus\{x,y\}$ with $|W|\le N^{1-\beta+c}$, and any $k\ll N^{1-\beta}\log N$ we have
\begin{align}\label{eq:simr_cond_prob}
\prcond{x\simr y}{\dr(y)=k,\:y\not\simr w\:\:\:\forall w\in W}{}\quad\asymp\quad k\cdot\pr{x\simr y},
\end{align}
and for any $x,y\in V$, any $W\subseteq V\setminus\{x,y\}$ with $|W|\le cN$, and any $k\ll N$ we have
\begin{align}\label{eq:simb_cond_prob}
\prcond{x\simb y}{\db(y)=k,\:y\not\simb w\:\:\:\forall w\in W}{}\quad\asymp\quad 
k\cdot(\log N)\cdot\pr{x\simb y}.
\end{align}
\end{lemma}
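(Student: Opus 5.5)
We reduce the statement to a computation with independent Bernoulli variables. Fix the colour; the green case is representative. The indicators $\1{y\simg w}$, $w\ne y$, are independent $\Bern{\pgreen_{y,w}}$. Conditioning on $\{y\not\simg w\ \forall w\in W\}$ simply sets those indicators to zero and leaves the remaining ones, over $w\in V\setminus(W\cup\{y\})$, independent with unchanged parameters. Writing $V'=V\setminus(W\cup\{y\})$, $p_w=\pgreen_{y,w}$ and $S=\sum_{w\in V'}\1{y\simg w}$, the quantity in \eqref{eq:simg_cond_prob} is
\[
\prcond{\1{y\simg x}=1}{S=k}{}\;=\;\frac{p_x\,\pr{S_{-x}=k-1}}{p_x\,\pr{S_{-x}=k-1}+(1-p_x)\,\pr{S_{-x}=k}},
\]
where $S_{-x}=\sum_{w\in V'\setminus\{x\}}\1{y\simg w}$. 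It therefore suffices to show that, uniformly over $x$ and $W$ in the allowed range, the ratio $\pr{S_{-x}=k}/\pr{S_{-x}=k-1}$ is of order $\mu/k$, where $\mu=\E{S_{-x}}$, and that $\mu\asymp 1$ (for green and red) or $\mu\asymp 1/\log N$ (for blue). Once this holds, and since $k p_x/\mu\ll1$ in the relevant regime, the expression equals $\asymp p_x k/\mu$. This gives $k\,\pr{x\simg y}$ and $k\,\pr{x\simr y}$ when $\mu\asymp1$, and $k(\log N)\pr{x\simb y}$ when $\mu\asymp1/\log N$.

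To obtain the ratio estimate we use the standard Poisson-binomial identity
\[
k\,\pr{S'=k}=\sum_{w}\frac{p_w}{1-p_w}\pr{S'_{-w}=k-1,\ \1{w}=0}.
\]
Because $\max_w p_w\to0$, this is $\asymp\mu\,\pr{S'=k-1}$, up to a correction that accounts for removing a single indicator. Concretely, we will show that $\pr{S'_{-w}=k-1}\asymp\pr{S'=k-1}$ uniformly in $w$, which again follows from $p_w\to0$ through the same one-step comparison. The relative error in this comparison is of order $k\max_w p_w$, and this is where the restriction on $k$ is used: green has $\max p\asymp1/\log N$ and $k\ll\log N$; red has $\max p\lesssim 1/(N^{1-\beta}\log N)$ and $k\ll N^{1-\beta}\log N$; blue has $p=\pblue$ and $k\ll N$.

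It remains to check that $\mu$ keeps its unconditioned order after $W$ is removed. This is the main point and depends on the colour.
\begin{itemize}
\item \emph{Blue:} $\mu=\pblue(N-|W|-2)\asymp 1/\log N$, provided $|W|\le cN$ with $c<1$.
\item \emph{Red:} each $\pred_{y,w}\lesssim 1/(N^{1-\beta}\log N)$, so removing $N^{1-\beta+c}$ terms changes $\mu$ by at most $N^{c}/\log N$. This is not small, so we must argue more carefully. The bulk of the red mass comes from $w$ at distance $\asymp n$, since the number of vertices at distance $r$ is $\asymp r^{d-1}$ and $p\asymp 1/(r^d\log N)$, giving mass $\asymp 1/(r\log N)$ per shell. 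Summing over shells $r\in[D^{1-\beta},D]$ gives total mass $\asymp\beta\asymp1$, and a set of size $N^{1-\beta+c}$ can only cover a small fraction of the shells' vertices at scale $r\ge n^{1-\beta/2}$ (roughly). So for $c$ small, at least half of $\mu$ survives.
\item \emph{Green:} the worst case places $W$ on the nearest $N^c$ vertices. These carry mass $\asymp\sum_{r\le N^{c/d}}1/(r\log N)\asymp c$, so for $c$ small enough a constant fraction of $\mu\asymp1-\beta$ remains.
\end{itemize}

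The upper bound $\mu\lesssim1$ is trivial in all cases, so $\mu\asymp$ its unconditioned value. Finally, the event $y\simg x$ is disjoint from the conditioning on $W$, since $x\notin W$, so no additional dependence arises.
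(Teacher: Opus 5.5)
Your proposal is correct and is essentially the paper's proof. You use the same reduction to comparing $\pr{S_{-x}=k-1}$ with $k\,\pr{S_{-x}=k}$ and the same size-biasing identity, and the inner sum stays of order $\mu$ because $W\cup\{x\}$ and the $k-1$ present neighbours carry only a small part of the mass; the paper bounds the green mass of any set $A$ by $\lesssim\log|A|/\log N$ and calls the red and blue cases analogous, so your explicit shell argument for red is what ``analogous'' has to mean there.

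The one step to tighten is the lower bound. The per-$w$ claim $\pr{S'_{-w}=k-1}\asymp\pr{S'=k-1}$, as you justify it, relies on the ratio estimate one level down and so would need an induction on $k$. Instead, let $\mathcal{U}$ be the random set of $w$ whose indicator is $1$ and note $\pr{S'_{-w}=k-1,\,w\notin\mathcal{U}}=\pr{S'=k-1,\,w\notin\mathcal{U}}$. This turns your sum into $\E{\1{S'=k-1}\sum_{w\notin\mathcal{U}}\frac{p_w}{1-p_w}}\ge\left(\mu-(k-1)\max_w p_w\right)\pr{S'=k-1}$ directly, which is exactly how the paper concludes.
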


\begin{proof} We present the proof \eqref{eq:simg_cond_prob}. {The proof of \eqref{eq:simr_cond_prob} is analogous, while \eqref{eq:simb_cond_prob} can be proved analogously or by direct computation of the two sides.}

We assume that $|x-y|<D^{1-\beta}$ as otherwise the result is immediate. In the proof, we only consider green edges, so to simplify notation we drop the $g$ subscripts from $\simg$ and $\dg$. For a set $A\subseteq V$, let us write $d^A(y)=\sum_{z\in A}\1{z\sim y}$, $d^{-A}(y)=d^{V\setminus A}(y)$, and $d^{-A-x}(y)=d^{-(A\cup\{x\})}(y)$.

Note that \eqref{eq:simg_cond_prob} is equivalent to each of the following.
\begin{align*}
\pr{x\sim y,\:d(y)=k,\:y\not\sim w\:\forall w\in W}\quad&\asymp
\quad k\cdot\pr{x\sim y}\pr{d(y)=k,\:y\not\sim w\:\forall w\in W},\\
\pr{x\sim y}\pr{y\not\sim w\:\forall w\in W}\pr{d^{-W-x}(y)=k-1}\:&\asymp
\:k\cdot\pr{x\sim y}\pr{y\not\sim w\:\forall w\in W}\pr{d^{-W}(y)=k},\\
\pr{d^{-W-x}(y)=k-1}\quad&\asymp\quad k\cdot\pr{d^{-W}(y)=k}.
\end{align*}
Note that
\begin{align*}
k\cdot\pr{d^{-W}(y)=k}\quad=\quad k\cdot\pr{x\sim y}\pr{d^{-W-x}(y)=k-1}+ k\cdot\pr{x\not\sim y}\pr{d^{-W-x}(y)=k}.
\end{align*}
We know that $k\cdot\pr{x\sim y}\lesssim\frac{k}{\log N}\ll1$, while $\pr{x\not\sim y}\asymp1$, hence \eqref{eq:simg_cond_prob} is equivalent to
\begin{align}\label{eq:deg_eq_k}
\pr{d^{-W-x}(y)=k-1}\quad\asymp\quad k\cdot\pr{d^{-W-x}(y)=k}.
\end{align}
We can prove \eqref{eq:deg_eq_k} as follows. Note that we have
\begin{align*}
&k\cdot\pr{d^{-W-x}(y)=k}\quad\\
&=\quad\sum_{\substack{U\subseteq(W\cup\{x,y\})^c\\|U|=k-1}}\sum_{z\in(U\cup W\cup\{x,y\})^c}\left(\prod_{u\in U}\pr{u\sim y}\right)\pr{z\sim y}\left(\prod_{w\in(U\cup W\cup\{x,y,z\})^c}\pr{w\not\sim y}\right)\\
&=\quad\sum_{\substack{U\subseteq(W\cup\{x,y\})^c\\|U|=k-1}}\left(\prod_{u\in U}\pr{u\sim y}\right)\left(\prod_{w\in(U\cup W\cup\{x,y\})^c}\pr{w\not\sim y}\right) \sum_{z\in(U\cup W\cup\{x,y\})^c}\frac{\pr{z\sim y}}{\pr{z\not\sim y}}.
\end{align*}
For any given $U$, we have
\[
\sum\limits_{z\in(U\cup W\cup\{x,y\})^c}\frac{\pr{z\sim y}}{\pr{z\not\sim y}}\quad\asymp\quad\sum\limits_{z\in(U\cup W\cup\{x,y\})^c}\pr{z\sim y}\quad=\quad \E{d(y)}-\E{d^{U\cup W\cup\{x\}}(y)},
\]
which is $\asymp1$, since we know that $\E{d(y)}\asymp1$, while $\E{d^{U\cup W\cup\{x\}}(y)}\lesssim\frac{\log|U\cup W\cup\{x\}|}{\log N}$ can be made arbitrarily small by choosing the constant $c$ in the bound $|W|\le N^c$ sufficiently small.

This shows that
\begin{align*}
&k\cdot\pr{d^{-W-x}(y)=k}\quad\\
&\asymp\quad\sum_{\substack{U\subseteq(W\cup\{x,y\})^c\\|U|=k-1}}\left(\prod_{u\in U}\pr{u\sim y}\right)\left(\prod_{w\in(U\cup W\cup\{x,y\})^c}\pr{w\not\sim y}\right)
\quad=\quad\pr{d^{-W}(y)=k-1},
\end{align*}
which finishes the proof of \eqref{eq:deg_eq_k}, and hence the proof of \eqref{eq:simg_cond_prob}.
\end{proof}

\section{Some further auxiliary results}\label{app:further}

\begin{lemma}\label{unit_flow_in_box}
There exists a constant $C$ with the following property. For any $d\ge3$, any $r$, and any two vertices $x$ and $y$ in a box $[0,r]^d$ (with edges induced by $\Z^d$) there exists a unit flow $\theta$ from $x$ to $y$ in the box that has energy $\le C$.
\end{lemma}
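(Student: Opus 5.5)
The plan is to bound the energy of a flow of the form $\theta=\E{\theta_P}$, where $P$ is a random path from $x$ to $y$ in the box and $\theta_P$ is the unit flow along $P$. By the standard random-path bound (see \cite{prob-on-trees-and-networks}),
\[
\energy(\theta)\le \E{|P\cap P'|},
\]
where $P'$ is an independent copy of $P$ and $|P\cap P'|$ counts the undirected edges used by both paths. So it suffices to construct a random path law, for every $d\ge3$, every $r$ and every pair $x,y\in[0,r]^d$, for which this expected overlap is bounded by an absolute constant. The point of working with overlaps of random paths, rather than with resistances of sub-boxes, is that adding coordinates only gives the paths more room to separate. Cruder approaches fail here. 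Chaining $3$-dimensional flows along groups of coordinates, or using the triangle inequality for effective resistance, gives bounds growing with $d$.

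The random path will go from $x$ to a random intermediate point $w$ and then from $w$ to $y$. Let $m=\max_i|x_i-y_i|$ (if $m=0$ the claim is trivial). Choose $w$ uniformly in a sub-box $Q\subseteq[0,r]^d$ with the following properties:
\begin{enumerate}[(a)]
\item $Q$ has side length $\asymp m\wedge r$ in every coordinate;
\item $x$ and $y$ lie within $\ell^\infty$-distance $\lesssim m$ of $Q$;
\item in every coordinate $i$, the interval of $Q$ lies on a fixed side of $x_i$ (and likewise of $y_i$).
\end{enumerate}
Property (c) makes the leg from $x$ to $w$ monotone in each coordinate. The leg from $x$ to $w$ is the monotone lattice path in which the unit steps, one for each required coordinate increment, are performed in a uniformly random order. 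The leg from $w$ to $y$ is chosen in the same way, independently, and $P$ is their concatenation. The overlap $\E{|P\cap P'|}$ then splits into four terms: leg one of $P$ against leg one of $P'$, leg two against leg two, and the two cross terms. Each term is a sum over edges $e$ of
\[
\pr{e\in\text{one leg}}\cdot\pr{e\in\text{other leg}}.
\]

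The main step, and the one I expect to be the obstacle, is a bound on the edge-occupation probability $\pr{e\in\text{leg}}$ that is uniform in $d$. For an edge $e$ whose lower endpoint is at $\ell^1$-distance $k$ from $x$ along the leg, the averaging over both the random order of steps and the random endpoint $w\in Q$ should give
\[
\pr{e\in\text{leg}}\lesssim (k+1)^{-3/2}\,\Big/\,\#\{\text{edges at step }k\},
\]
so that the contribution of step $k$ to $\sum_e\pr{e\in\text{leg}}^2$ is at most $\lesssim (k+1)^{-3/2}$ times the largest occupation probability at that step. I would try to prove this by a local limit / anticoncentration bound for the position at step $k$. That position is a sum over coordinates of essentially hypergeometric increments, and the uniform averaging over $w$ adds further independent smoothing in each coordinate. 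The delicate point is that the number of active coordinates $d$ and the per-coordinate lengths $|x_i-w_i|$ may be very unbalanced; for instance, all but three coordinates may be essentially frozen. The estimate must therefore rest only on the fact that at least three coordinates carry spread $\asymp k$, which is exactly where $d\ge3$ enters. More active coordinates should only lower the occupation probabilities, and this monotonicity in $d$ is what I would aim to make rigorous in order to obtain an absolute constant.

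Granting the occupation bound, summing over steps gives $\sum_k (k+1)^{-3/2}\le C$ for each of the two legs. The cross terms are handled by Cauchy--Schwarz, each being at most the geometric mean of the two diagonal terms. This gives $\energy(\theta)\le C$ with $C$ independent of $d$ and $r$. Finally, if $w$ happens to lie very close to $x$ or to $y$, the corresponding leg is short. That case only changes the constant, since the flow along a path of length $O(1)$ has energy $O(1)$.
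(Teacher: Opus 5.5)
The proposal is a plan rather than a proof, and its one non-standard step is missing. The edge-occupation estimate is only asserted (``should give'', ``I would try'', ``what I would aim to make rigorous''), and the form you display cannot hold. Because each leg is monotone, every edge it can cross as its $(k+1)$st step has its endpoint nearer to $x$ at $\ell^1$-distance exactly $k$ from $x$. Summing $\mathbb{P}(e\in\text{leg})$ over these edges therefore gives the probability that the leg has more than $k$ steps. That probability is at least a constant for $k\le cm$, so it is not $\lesssim (k+1)^{-3/2}$.

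What the energy bound actually needs is control of the largest atom, e.g.\ $\max_e \mathbb{P}(e\text{ is the }(k+1)\text{st step})\lesssim (k+1)^{-3/2}$, since the step-$k$ contribution $\sum_e\mathbb{P}(e\in\text{leg})^2$ is at most that maximum. This is a local anticoncentration bound for a mixture, over $w\in Q$, of multivariate hypergeometric laws. It must be uniform in $k$, $m$, the position of $x,y$ relative to $Q$, and (for your goal) $d$, and it is the whole content of the lemma.

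Your heuristic for this bound also does not fit your own construction. Since $Q$ has side $\asymp m$ in all $d$ coordinates, the first $k$ steps are split among all $d$ coordinates. So for large $d$ no coordinate carries spread $\asymp k$, and you give no argument for the claimed monotonicity in $d$ of the maximal atom.

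You should also know that the paper does not prove the $d$-uniform version you are aiming at. It reduces to $x=0$ by taking $\theta_y-\theta_x$. It then builds, by induction on dimension, a chain $z_0,\dots,z_d=y$ with $|z_0-x|_\infty\le1$, in which consecutive points are opposite corners of a $d$-dimensional cube inside the box. On each link it uses the bounded corner-to-corner energy for cubes in $d\ge3$ \cite[Exercise 9.1]{MTMC}, and summing the $d+1$ flows gives energy at most $(d+1)^2(C'\vee d)$.

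So although the statement is phrased with a single $C$, the constant produced depends on $d$. This is harmless: $d$ is fixed throughout, and the lemma is only used in the transience lemma for binary quasi-trees, where $p_{\mathrm{esc}}$ may depend on $d$.

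In particular, the chaining you dismissed as too crude is exactly the right tool here. Chaining a three-dimensional bound through Rayleigh monotonicity and the triangle inequality for effective resistance even gives a bound linear in $d$, once the $d=3$ case is available. If you prefer your random-path route, you must at least prove the occupation bound for fixed $d$. There the averaging over $w$ spreads the step-$k$ position over $\gtrsim_d k^{d-1}$ sites for $k\lesssim m$, which is standard but still has to be written out. The $d$-uniform estimate is genuinely harder, and the proposal does not supply it.
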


\begin{proof}
It is sufficient to prove the statement in the case $x=(0,...,0)$. (If $\theta_x$ is a unit flow $0\to x$ and $\theta_y$ is a unit flow $0\to y$, then $\theta_y-\theta_x$ is a unit flow $x\to y$ with energy $\le2\left(\energy(\theta_x)+\energy(\theta_y)\right)$.)

Let $z_0$, ..., $z_{d}$ be a sequence of vertices in the box such that $|z_0-x|_{\infty}\le1$, $z_{d}=y$, and for each $i\in\{1,2,...,d\}$ the vertices $z_{i-1}$ and $z_{i}$ are opposing corners of a $d$-dimensional cube within the box.

We can prove that such a sequence always exists by induction on $d$ as follows.

For $d=1$ the statement is true. Now let $d\ge2$ and assume that the statement is true for smaller dimensions. Wlog $y=(y_1,...,y_d)$ where $y_1\ge ...\ge y_d\ge0$. Let $r_d=\lfloor\frac{y_{d-1}-y_{d}}{2}\rfloor$ and let $z_{d-1}=y+(-r_d,...,-r_d,r_d)$. Note that in $z_{d-1}$ all coordinates are nonnegative, they are all $\le r$, and the last two coordinates differ by at most 1. Let $x_{d-1}$ agree with $x$ in the first $d-1$ coordinates and differ in the last coordinate by at most 1 such that in $z_{d-1}-x_{d-1}$ the last two coordinates agree. Now use the result for $d-1$ dimensions for $x_{d-1}$ and $z_{d-1}$, identifying the last two coordinates.\hfill //

From~\cite[Exercise 9.1]{MTMC} we know that for each $i$ there exists a unit flow $\theta_i$ in the box from $z_{i-1}$ to $z_i$, with energy bounded by an absolute constant $C'$. Also, there is a unit flow $\theta_0$ in the box from $x$ to $z_0$ with energy bounded by $d$. Then $\theta:=\sum_{i=0}^{d}\theta_i$ is a unit flow in the box from $x$ to $y$ with energy bounded by $(d+1)^2(C'\vee d)$.
\end{proof}

\subsection{Proof of \Cref{far_from_bipartite_asymp}}\label{sec:near_bipartite}

In the proof of \Cref{far_from_bipartite_asymp} we use the following result from~\cite{near_bipartiteness}.

\begin{lemma}[from \cite{near_bipartiteness}, paraphrased]\label{near_bipartiteness}
Let $P$ be the transition matrix of a reversible Markov chain on a state space $\Omega$ of size $n$, with stationary distribution $\pi$.
Let $1=\lambda_1\geq\lambda_2\geq\ldots\geq \lambda_n$ be the eigenvalues of $P$, and let $f_i$ be the corresponding unit eigenfunctions, such that $P f_i=\lambda_i f_i$ and $\estart{f_i f_j}{\pi}=\1{i=j}$.
Then if $1+\lambda_n \leq c'\left(1-\lambda_2\right)$ for some absolute constant $c' \in (0,1) $ then
$\text{Var}_{\pi}\left|f_n\right| \leq \frac{1+\lambda_n}{1-\lambda_2}$.

Moreover, if we let
\[F_{+}:=\left\{x: f_n(x) \geq 0\right\}\quad \text { and }\quad F_{-}:=\left\{x: f_n(x)<0\right\}\]
then $\left|\pi\left(F_{+}\right)-1 / 2\right|=\left|\pi\left(F_{-}\right)-1 / 2 \right|\lesssim\frac{1+\lambda_n}{1-\lambda_2}$, and the parity breaking time defined by
\[S:=\quad\inf\left\{i:\left(X_{i-1}, X_i\right) \in (F_{+}\times F_{+}) \cup (F_{-}\times F_{-})\right\}\] satisfies for some $\beta>\left|\lambda_n\right|$ that for all $k$ we have
\[\left(1-\frac{2\left(1+\lambda_n\right)}{1-\lambda_2}\right)(\beta^{2k}+\beta^{2k+1})\quad\leq\quad \prstart{S>2k}{\pi}+\prstart{S>2k+1}{\pi}\quad\leq\quad (\beta^{2k}+\beta^{2k+1}).\]
\end{lemma}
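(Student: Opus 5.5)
The plan is spectral throughout, and it has three parts matching the three claims: a Dirichlet form bound for $|f_n|$, a centring argument for the balance of $F_\pm$, and an analysis of the sign-alternating kernel for $S$. Throughout write $f:=f_n$ and $\mathcal E(g,g)=\frac12\sum_{x,y}\pi(x)P(x,y)(g(x)-g(y))^2$.

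\emph{Variance of $|f|$.} Reversibility gives the identity
\[
\frac12\sum_{x,y}\pi(x)P(x,y)\bigl(f(x)+f(y)\bigr)^2=\langle f,(I+P)f\rangle_\pi=1+\lambda_n .
\]
Since $\bigl||a|-|b|\bigr|\le|a+b|$ for all reals $a,b$, this yields $\mathcal E(|f|,|f|)\le 1+\lambda_n$. The Poincaré inequality $\mathrm{Var}_\pi g\le \mathcal E(g,g)/(1-\lambda_2)$, applied to $g=|f|$, then gives $\mathrm{Var}_\pi|f|\le\frac{1+\lambda_n}{1-\lambda_2}$. Consequently $\mu:=\mathbb E_\pi|f|$ satisfies $1-\mu^2\le \frac{1+\lambda_n}{1-\lambda_2}$, so $\mu$ is close to $1$.

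\emph{Balance of $F_\pm$.} Since $f\perp 1$, write $0=\mathbb E_\pi f=\mu\bigl(\pi(F_+)-\pi(F_-)\bigr)+\mathbb E_\pi\bigl[(|f|-\mu)(\mathbf 1_{F_+}-\mathbf 1_{F_-})\bigr]$. Bounding the last term by Cauchy--Schwarz immediately gives $|\pi(F_+)-\frac12|\lesssim\sqrt{(1+\lambda_n)/(1-\lambda_2)}$.

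\textbf{Main obstacle.} Upgrading this square-root bound to the stated linear bound is the step I expect to be hardest. I would first try to show that the mass of $|f|-\mu$ is spread symmetrically across $F_\pm$. The idea is to use that edges inside $F_+$ or inside $F_-$ carry only $O(1+\lambda_n)$ stationary flow, which comes from the same identity as above. This would let me compare the correlation term with $\mathcal E(|f|,|f|)$ rather than with $\sqrt{\mathrm{Var}}$. If that fails, I note that the square-root bound already suffices for the application in \Cref{far_from_bipartite_asymp}.

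\emph{Parity breaking time.} Let $Q(x,y):=P(x,y)\mathbf 1\{x,y \text{ in different sets among } F_\pm\}$. This kernel is substochastic, self-adjoint in $L^2(\pi)$, and bipartite between $F_+$ and $F_-$. Its spectrum is therefore symmetric, contained in $[-\beta,\beta]$ with $\beta$ its spectral radius, and we have $\mathbb P_\pi(S>k)=\langle 1,Q^k1\rangle_\pi$. Hence
\[
\mathbb P_\pi(S>2k)+\mathbb P_\pi(S>2k+1)=\langle 1,Q^{2k}(I+Q)1\rangle_\pi ,
\]
where the operator $Q^{2k}(I+Q)$ is positive semidefinite. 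The upper bound $\le\beta^{2k}(1+\beta)$ follows from the spectral decomposition.

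For the lower bound, let $\psi$ be the eigenfunction of $Q$ with eigenvalue $+\beta$; it may be taken nonnegative by Perron--Frobenius applied to the positive matrix $Q^2$. Keeping only the $\psi$-component gives a lower bound of $\langle 1,\psi\rangle_\pi^2(\beta^{2k}+\beta^{2k+1})$. To bound the overlap, use that $|f|$ is nearly the constant $\mu\approx1$ and is close to $\psi$ in $L^2$. The reason is that $\langle |f|,Q|f|\rangle$ is close to $-\langle f,Qf\rangle\ge-\lambda_n$, up to errors controlled by $1+\lambda_n$, and the spectral gap of $Q$ away from $\pm\beta$ is inherited from $1-\lambda_2$. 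Together these should give $\langle 1,\psi\rangle^2\ge 1-\frac{2(1+\lambda_n)}{1-\lambda_2}$.

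Finally, $\beta>|\lambda_n|$. Indeed, $\lambda_n=\langle f,Pf\rangle=\langle f,Qf\rangle+\langle f,(P-Q)f\rangle$, and the second term is nonnegative because $P-Q$ only joins vertices where $f$ has the same sign. So $\min\mathrm{spec}(Q)\le\lambda_n$, and by symmetry of the spectrum $\beta\ge|\lambda_n|$. Strictness comes from $\langle f,(P-Q)f\rangle>0$ generically, or else follows after a perturbation.
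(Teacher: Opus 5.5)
The paper quotes this lemma from the literature without proof, so your argument has to stand on its own. Several pieces are correct: the variance bound, the formula $\mathbb{P}_\pi(S>k)=\langle 1,Q^k1\rangle_\pi$, the upper bound with $\beta$ the spectral radius of $Q$, and $\beta\ge|\lambda_n|$. The first gap is the balance claim. You only reach $|\pi(F_+)-\tfrac12|\lesssim\sqrt{\delta}$, where $\delta:=\frac{1+\lambda_n}{1-\lambda_2}$, and you leave the linear bound open. The missing step is short. By stationarity the flows from $F_+$ to $F_-$ and from $F_-$ to $F_+$ are equal, so
\[
\pi(F_+)-\pi(F_-)=\sum_{x,y\in F_+}\pi(x)P(x,y)-\sum_{x,y\in F_-}\pi(x)P(x,y),
\qquad\text{hence}\qquad
\Bigl|\pi(F_+)-\tfrac12\Bigr|\le\tfrac12\,\mathbb{P}_\pi(S=1).
\]
Your identity does not bound $\mathbb{P}_\pi(S=1)$ by $O(1+\lambda_n)$ directly. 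It only controls same-sign pairs weighted by $(|f(x)|+|f(y)|)^2$, and $|f|$ may be small on part of the space. Restricting the identity to same-sign pairs and using reversibility does give $\sum\pi(x)P(x,y)f(x)^2\le 1+\lambda_n$, with the sum over pairs lying in the same set $F_\pm$. Combine this with $\mu^2\le 2f(x)^2+2(\mu-|f(x)|)^2$ and your variance bound. This yields $\mu^2\,\mathbb{P}_\pi(S=1)\le 2(1+\lambda_n)+2\delta\le 6\delta$, i.e.\ $\mathbb{P}_\pi(S=1)\lesssim\delta$, which is the linear bound.

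The second gap is in the lower bound for $S$. The overlap estimate $\langle 1,\psi\rangle_\pi^2\ge 1-2\delta$ is asserted rather than derived. The route you sketch relies on an unproved ``spectral gap of $Q$ inherited from $1-\lambda_2$'', and it is not clear this holds. For sign-changing $h\perp\psi$, $\langle h,(P-Q)h\rangle_\pi$ can be negative, so the Rayleigh quotient of $Q$ is not dominated by that of $P$. No gap for $Q$ is needed: apply the Poincar\'e inequality for $P$ to $\psi$ itself. Since $\psi\ge 0$ and $P-Q$ is entrywise nonnegative,
\[
\mathcal{E}_P(\psi,\psi)=1-\langle\psi,P\psi\rangle_\pi\le 1-\langle\psi,Q\psi\rangle_\pi=1-\beta\le 1+\lambda_n .
\]
The last step uses your bound $\beta\ge|\lambda_n|$ together with $\lambda_n<0$, which follows from the hypothesis. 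Hence $1-\langle 1,\psi\rangle_\pi^2=\mathrm{Var}_\pi\psi\le\delta$, which is even better than needed.

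Two smaller points. First, $Q^2$ is never entrywise positive, since it vanishes between $F_+$ and $F_-$. A nonnegative top eigenvector exists anyway, because $\langle |h|,Q|h|\rangle_\pi\ge\langle h,Qh\rangle_\pi$. Second, the strict inequality $\beta>|\lambda_n|$ cannot be obtained by perturbation. For an irreducible bipartite chain, $\delta=0$ and $S=\infty$ almost surely, which forces $\beta=1=|\lambda_n|$. What your argument gives, $\beta\ge|\lambda_n|$, is exactly what the proof of \Cref{far_from_bipartite_asymp} uses.
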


\begin{proof}[Proof of \Cref{far_from_bipartite_asymp}]
We apply \Cref{near_bipartiteness} to the simple random walk on $H$. We assume, for the sake of contradiction, that $\eps\trelabs\ge \trel$ for a constant $\eps\in(0,1)$ to be chosen later.
Since $F_+$ and $F_-$ form a partition of $V$, by the assumption of the proposition we have $|E(F_-)|+|E(F_+)|\ge c|E|$. As the first edge crossed by a random walk starting from an invariant distribution is a uniformly chosen edge of the graph, this means that $\prstart{S=1}{\pi}\ge c$.

On the other hand, by \Cref{near_bipartiteness} we also have
\[\left(1-\frac{2\left(1+\lambda_n\right)}{1-\lambda_2}\right)(\beta^{2}+\beta^{3}) \quad\leq\quad \prstart{S>2}{\pi}+\prstart{S>3}{\pi}\quad\le\quad 2\prstart{S>1}{\pi}\quad{\le}\quad 2(1-c).\]
We assumed that $\frac{2\left(1+\lambda_n\right)}{1-\lambda_2}\le2\frac{\trel}{\trelabs}\le 2\eps$, and we have that $\beta\ge |\lambda_n|\ge 1-\frac{\eps}{\trel}\ge 1-\eps$. Therefore,
\[(1-2\eps)((1-\eps)^2+(1-\eps)^3)\quad\le\quad 2(1-c).\]
Choosing a sufficiently small constant value of $\eps$ (in terms of $c$), this gives a contradiction. Therefore, we have $\trelabs\lesssim \trel$.
We know that $\trelabs\ge\trel$ always holds, so this completes the proof.
\end{proof}

\bibliography{LRP_arXiv_v1.bib}
\bibliographystyle{plain}

\end{document}